\def\makeheadbox{{%
\hbox to0pt{\vbox{\baselineskip=10dd\hrule\hbox
to\hsize{\vrule\kern3pt\vbox{\kern3pt
\hbox{\bfseries Draft for discussion }
\hbox{Date of this version: 13.04.21}
\kern3pt}\hfil\kern3pt\vrule}\hrule}%
\hss}}}
\def\cequiv{\raisebox{-1.5mm}{$\;\stackrel{\raisebox{-3.9mm}{=}}{{\sim}}\;$}}
\newtheorem{theorem}{Theorem}[section]
\newtheorem{remark}[theorem]{Remark}\newtheorem{proposition}[theorem]{Proposition}
\newtheorem{lemma}[theorem]{Lemma}
\newtheorem{corollary}[theorem]{Corollary}
\newcounter{mnote}
\let\oldmarginpar\marginpar
\renewcommand\marginpar[1]{\-\oldmarginpar[\raggedleft\footnotesize #1]
  {\raggedright\footnotesize #1}}
\newcommand{\ud}{\,d}
\newcommand{\jump}[1]{\llbracket {#1} \rrbracket}
\numberwithin{equation}{section}
\renewcommand{\dfrac}[2]{{
\renewcommand{\arraystretch}{1.375}
\begingroup\displaystyle
\rule[0pt]{0pt}{11pt}#1\endgroup
\over\displaystyle\rule[-3pt]{0pt}{11pt}#2
}} 
\setlist[enumerate]{nosep}
\def\uu{\undertilde{u}}
\def\uw{\undertilde{w}}
\def\uv{\undertilde{v}}
\def\uV{\undertilde{V}}
\def\uL{\undertilde{L}}
\def\uH{\undertilde{H}}
\def\uf{\undertilde{f}}
\def\uy{\undertilde{y}}
\def\uz{\undertilde{z}}
\def\curl{{\rm curl}}
\def\dv{{\rm div}}
\begin{document}

\title[RRM element for Poisson]{Optimal quadratic element on rectangular grids for $H^1$ problems}

\author{Huilan Zeng}
\address{LSEC, ICMSEC, Academy of Mathematics and System Sciences, Chinese Academy of Sciences, Beijing 100190, China; School of Mathematical Sciences, University of Chinese Academy of Sciences, Beijing 100049, China}
\email{zhl@lsec.cc.ac.cn}
\author{Chensong Zhang}
\address{LSEC, ICMSEC and NCMIS, Academy of Mathematics and System Sciences, Chinese Academy of Sciences, Beijing 100190, China}
\email{zhangcs@lsec.cc.ac.cn}
\author{Shuo Zhang}
\address{LSEC, ICMSEC and NCMIS, Academy of Mathematics and System Sciences, Chinese Academy of Sciences, Beijing 100190, China}
\email{szhang@lsec.cc.ac.cn}
\thanks{Zeng and C.-S. Zhang are partially supported by National Key Research and Development Program 2016YFB0201304, China. C.-S. Zhang is also supported by the Key Research Program of Frontier Sciences of CAS. S. Zhang is partially supported by National Natural Science Foundation,  11471026 and 11871465, China}


\subjclass[2000]{Primary 65N15, 65N22, 65N25, 65N30}

\keywords{optimal quadratic element, rectangular grids, boundary value problem, eigenvalue problem, lower bound}

\maketitle

\begin{abstract}
In this paper, a piecewise quadratic finite element method on rectangular grids for $H^1$ problems is presented. The proposed method can be viewed as a reduced rectangular Morley (RRM) element. For the source problem, the convergence rate of this scheme is proved to be $O(h^2)$ in the energy norm on uniform grids over a convex domain. A lower bound of the $L^2$-norm error is also proved, which makes the capacity of this scheme more clear. For the eigenvalue problem, the computed eigenvalues by this element are shown to be the lower bounds of the exact ones. Some numerical results are presented to verify the theoretical findings.
\end{abstract}


%
%
%
\section{Introduction}
\label{sec:intro}

The design and capacity analysis of the discretization schemes for the source problem (say, the boundary value problem) and the eigenvalue problem are key issues in numerical analysis and, in general, of approximation theory. When the approximation of functions in Sobolev spaces is performed using piecewise polynomials defined on a domain partition, lower-degree polynomials are often preferred in order to achieve a simpler interior structure. A finite element scheme with polynomials of the total degree no more than $k$, denoted by $P_{k}$, is called optimal if it achieves $\mathcal{O}(h^{k+1-m})$ accuracy in the energy norm for $H^{m}$ elliptic problems.
In this paper, we present an optimal quadratic element scheme for the $H^1$ problems, including the source problem and the eigenvalue problem, on rectangular grids, and present its error analysis. 

The study of optimal finite element schemes has been attracting wide interests. For the case wherein the grid comprises simplexes, there are already some systematic results. It is known that the Lagrange finite elements of arbitrary degree on domains of arbitrary dimension are optimal conforming elements for second-order elliptic problems. At the same time, a systematic family of minimal-degree nonconforming finite elements is proposed by \cite{Wang.M;Xu.J2012}, where $m$-th degree polynomials work for $2m$-th order elliptic problems in $R^n$ for any $n\geqslant m$. Known as the Wang-Xu or Morley-Wang-Xu family, these elements are constructed based on the perfect matching between the dimension of $m$-th degree polynomials and the dimension of $(n-k)$-subsimplexes with $1\leqslant k\leqslant m$. The generalisation to the cases where $n<m$ is attracting increasing research interest (see, e.g., \cite{Wu.S;Xu.J2019}). These spaces can be naturally used for both the source problem and the eigenvalue problem. On the other hand, to clarify the capacity of the schemes clearly, some kinds of extremal analysis have also been conducted, including, e.g., lower bounds of the error estimates and guaranteed bounds of the computed eigenvalues. We refer to, e.g., \cite{Lin.Q;Xie.H;Xu.J2014} for a general analysis of the lower bounds of the discretization error for piecewise polynomials, and \cite{XY.Meng;XQ.Yang;S.Zhang2016,J.Hu;XQ.Yang;S.Zhang2015,J.Hu;ZC.Shi2013,Hu.J;Shi.Z-C2012} for specific analysis with certain finite element schemes. We refer to, e.g.,\cite{Q.Lin;JF.Lin2007,M.Armentano;R.Duran2004,ZM.Zhang;YD.Yang2007,Y.Yang;Z.Zhang2010,Y.Yang;J.Han;H.Bi;Y.Yu2015,FS.Luo;Q.Lin;Xie2012,YA.Li2011,J.Hu;Y.Huang;Q.Lin2014,Carstensen.C;Gallistl.D2014,Carstensen.C;Gedicke.J2014} for the computed guaranteed bounds of certain eigenvalue problems. The extremal analysis is naturally used on or ready to be generalized to optimal schemes. 

When the grid comprises  shapes other than simplexes, the design of optimal schemes becomes more complicated. We would like to recall that, $Q_k$ (rather than $P_k$) polynomials are used for $2k$-th order problems on $\mathbb{R}^n$ rectangular grids by \cite{Hu.J;Zhang.Sy2015}, which form minimal \emph{conforming} element spaces. For biharmonic equation, some low-degree rectangular elements have been designed, including the rectangular Morley element and incomplete $P_3$ element. Very recently, a space, consisting of exactly piecewise quadratic polynomials, is constructed and shown convergent for the biharmonic equation on general quadrilateral grids, which forms a convergent scheme of the minimal degree \cite{Shuo.Zhang2017}. Also, there have been several rectangle elements for $H^1$ problems in the literature \cite{Lee.H;Sheen.D2006,Hu.J;Zhang.S2013,SheenKimLuoMengNamPark2013,Wilson.E;Taylor.R;Doherty.W;Ghaboussi.J1973}. In \cite{Lee.H;Sheen.D2006}, an enriched quadratic nonconforming element on rectangles is introduced, and second-order error is shown, which is generalized to higher orders by \cite{Hu.J;Zhang.S2013}. Another second-order quadratic element is given by \cite{SheenKimLuoMengNamPark2013}, where the spline technique is used, but the shape function space on a cell is not exactly $P_2$. The Wilson element \cite{Wilson.E;Taylor.R;Doherty.W;Ghaboussi.J1973} is the first quadratic quadrilateral nonconforming element. Despite its superior performance in practice, as shown in \cite{ZC.Shi1986}, its global asymptotic convergence rate is the same as that of the bilinear element, due to low internal continuity. Generally, this deficiency can be compensated by equipping the piecewise quadratic polynomial with second order moment-continuity across the internal edges. In this way, the moment-continuous (MC) element space is defined. However, it is proved in Appendix \ref{sec:appA}, that the MC element space possesses essentially the same accuracy as that of the bilinear element space, and thus it fails to reach second-order convergence rate. To our best knowledge, it remains open whether an optimal scheme can be constructed with degrees higher than minimal even on rectangular grids and for $H^1$ problems.

In this paper, we study the optimal finite element construction for the $H^1$ problems, and present a finite element space comprising piecewise quadratic polynomials on uniform rectangular grids that can provide $\mathcal{O}(h^2)$ convergence in energy norm for the source and eigenvalue problems. The computed eigenvalues are lower bounds of the exact ones, which can be proved theoretically and verified numerically.  Only rectangular grids are taken into consideration herein, but if a quadrilateral grid is only a sufficiently small perturbation of a uniform one, then an optimal convergence rate could be expected on it. Moreover, the finite element functions cannot be described with free rein cell by cell. Similar to the elements described in \cite{Fortin.M;Soulie.M1983, Park.C;Sheen.D2003,Shuo.Zhang2017} and in many spline-type methods, the number of continuity restrictions of the finite element function is greater than  the dimension of the local polynomial space. We believe this difficulty is not abnormal for low-degree schemes. In general, these cells can share interfaces with more neighbour cells, and more continuity restrictions will strengthen the requirement for higher-degree polynomials, generally higher than the order of the underlying Sobolev space. Thus, constructing consistent finite elements in the formulation of Ciarlet's triple is difficult with $m$-th degree polynomials for $H^m$ problems even on rectangular grids. Here we utilize some non-standard technical approaches to overcome the difficulty for both implementation and especially theoretical analysis. 

The main difficulty is that the local interpolation is too difficult, if ever possible, to be established, which plays a fundamental role in the approximation error analysis for the source problem and the guaranteed bounds analysis for the eigenvalue problem. Notice that the space constructed herein can be viewed as a reduced rectangular Morley element space. Similar to the approach in \cite{Shuo.Zhang2017} but with technical modifications, we can determine that the finite element functions are discrete stream functions of the discrete divergence-free functions constructed in a study \cite{Park.C;Sheen.D2003}; using this exact relation, we can perform the approximation estimation indirectly.  Also, the discretization of the eigenvalue problem can be viewed as an inner approximation of the rectangular Morley element scheme, and this helps avoid the direct dependence on an interpolation. This newly-designed routine method of theoretical analysis can be potentially used to find out other optimal schemes. 

Finally, we remark that, two examples, namely the rectangular Morley (RM) element and the reduced rectangular Morley (RRM) element, are reported in this paper that when used for the eigenvalue problem, errors of the eigenvalues and eigenfunctions are of the same order. This unusual performance is due to the fact that no nontrivial conforming finite element subspace can be found contained in these two spaces.

The rest of the paper is organized as follows. In Section \ref{sec:pre}, some preliminaries are given and some related low-degree rectangle elements are reviewed. In Section \ref{sec:rmrev}, the rectangular Morley element is revisited. In Section \ref{sec:rrmscheme}, a reduced rectangular Morley element scheme is presented for both source problem and eigenvalue problem. In Section \ref{sec:analysisrrm}, the convergence analysis and lower bound properties are shown for the RRM element scheme. In Section \ref{sec:concdisc}, some concluding remarks and discussions are given. In contrast to a general implementation approach in Section \ref{sec:analysisrrm},  concise sets of basis functions of the MC element and the RRM element are presented in the appendix.

\section{Preliminaries} 
\label{sec:pre}

\subsection{Notations}
Throughout this paper, we use $\Omega$ for a simply-connected polygonal domain in~$\mathbb{R}^{2}$.  We use $\nabla$, $\curl$, $\dv$, and $\nabla^2$ for the gradient operator, curl operator, divergence operator, and Hessian operator, respectively. As usual, we use $H^2(\Omega)$, $H^2_0(\Omega)$, $H^1(\Omega)$, $H^1_0(\Omega)$,  and $L^2(\Omega)$ for certain Sobolev spaces. Specifically, we denote $\displaystyle L^2_0(\Omega):=\Big\{w\in L^2(\Omega):\int_\Omega w dx=0\Big\}$, $\undertilde{H}{}^1_0(\Omega):=\big(H^1_0(\Omega)\big)^2$, and $\uH{}^1_{\bf n}(\Omega):=\Big\{v \in \big(H^1(\Omega)\big)^{2}:v\cdot {\bf n}\big|_{\partial \Omega}=0\Big\}$. Denote, by $\uH{}^{-1}(\Omega)$ and $H^{-1}(\Omega)$, the dual spaces of $\uH{}^1_0(\Omega)$ and $H^1_0(\Omega)$, respectively.  We use $``\undertilde{~}"$ for vector valued quantities in the present paper, and $\uv{}^1$ and $\uv{}^2$ for the two components of the function $\uv$. We utilize the subscript $``\cdot_h"$ to indicate the dependence on grids. Particularly, an operator with the subscript '$``\cdot_h"$ implies the operation is done cell by cell. Finally, $\lesssim$, $\gtrsim$, and $\cequiv$ respectively denote $\leqslant$, $\geqslant$, and $=$ up to a generic positive constant, which  might depend on the shape-regularity of subdivisions, but not on the mesh-size $h$ \cite{J.Xu1992}.

Let $\mathcal{G}_h$ be in a regular family of quadrilateral grids of domain $\Omega$. Let $\mathcal{N}_h$ be the set of all vertices, $\mathcal{N}_h=\mathcal{N}_h^i\cup\mathcal{N}_h^b$, with $\mathcal{N}_h^i$ and $\mathcal{N}_h^b$ comprising the interior vertices and the boundary vertices, respectively. Similarly, let $\mathcal{E}_h=\mathcal{E}_h^i\bigcup\mathcal{E}_h^b$ be the set of all the edges, with $\mathcal{E}_h^i$ and $\mathcal{E}_h^b$ comprising the interior edges and the boundary edges, respectively. For an edge $e$, $\mathbf{n}_e$ is a unit vector normal to $e$ and $\boldsymbol{\tau}_e$ is a unit tangential vector of $e$ such that $\mathbf{n}_e\times \boldsymbol{\tau}_e>0$. On the edge $e$, we use $\llbracket\cdot\rrbracket_e$ for the jump across $e$. If $e\subset\partial\Omega$, then $\llbracket\cdot\rrbracket_e$ is the evaluation on $e$. The subscript ${\cdot}_e$ can be dropped when there is no ambiguity brought in.

\subsection{Some rectangular finite element spaces}

Suppose that $K\subset \mathbb{R}^{2}$ represents a rectangle with sides parallel to the two axis respectively. Let $(x_{K},y_{K})$ be the barycenter of $K$. Let $h_{x,K}$, $h_{y,K}$ be the length of $K$ in the $x$ and $y$ directions, respectively.  Let $a_{i}$ and $e_{i}$ denote an vertex and an edge of K, respectively.
Let $h := \max\limits_{K \in \mathcal{G}_{h}}\max\{h_{x,K},h_{y,K} \}$ be the mesh size of $\mathcal{G}_{h}$. When $\mathcal{G}_{h}$ is uniform, we denote $h_{x} : = h_{x,K}$ and $h_{y} : = h_{y,K}$. Let $P_l(K)$ denote the space of polynomials on $K$ of total degree no bigger than~$l$. Let $Q_l(K)$ denote  the space of polynomials of degree no bigger than~$l$ in each variable. Similarly, we define spaces $P_l(e)$ and $Q_l(e)$ on an edge~$e$.

\subsubsection{The $Q_{1}$ element}The $Q_{1}$ element is defined by 
$(K,P_{K}^{\text{BL}},D_{K}^{\text{BL}})$ with the following properties:
\begin{itemize}
\item[(a)] $K$ is a rectangle;
\item[(b)] $P_{K}^{\text{BL}} = Q_{1}(K)$;
\item[(c)] for any $v\in H^{1}(K)$, $D_{K}^{\text{BL}} =\big\{v(a_{i})\big\}_{i=1:4}$.
\end{itemize}

\noindent Define the $Q_{1}$ element space as
$$
V_{h}^{\text{BL}} :=\Big\{w_{h}\in H^{1}(\Omega) : w_{h}|_{K} \in Q_{1}(K),\ \forall K\in \mathcal{G}_{h} \Big\}.
$$

\noindent Associated with $H_{0}^{1}(\Omega)$, we define
$
V_{h0}^{\text{BL}} :=\Big\{w_{h}\in V_{h}^{\text{BL}} :  w_{h}(a) = 0, \ \forall a\in \mathcal{N}_{h}^{b}\Big\}.
$

\subsubsection{The Park--Sheen (PS) element}The PS element \cite{Park.C;Sheen.D2003}  is a piecewise linear nonconforming
finite element space for $H^{1}$ problems. It is defined as
$$
V_{h}^{\rm{PS}} :=\Big\{w_{h}\in L^{2}(\Omega) : w_{h}|_{K} \in P_{1}(K),\ \forall K\in \mathcal{G}_{h}, \mbox{ and }\fint_{e} \jump{w_{h}} \ud s  =0, \ \forall e \in \mathcal{E}_{h}^{i}  \Big\}.
$$
Associated with $H_{0}^{1}(\Omega)$, we define
$
V_{h0}^{\rm{PS}} :=\Big\{w_{h} \in V_{h}^{\rm{PS}} :  \fint_{e} w_{h} \ud s =0, \ \forall e\in \mathcal{E}_{h}^{b}\Big\}.
$

\subsubsection{The rotated $Q_{1}$ ($Q_{1}^{\rm{rot}}$) element}The $Q_{1}^{\rm{rot}}$ element is defined by 
$(K,P_{K}^{\rm{rQ}},D_{K}^{\rm{rQ}})$ with the following properties:
\begin{itemize}
\item[(a)] $K$ is a rectangle; 
\item[(b)] $P_{K}^{\rm{rQ}} = P_{1}(K) + \text{span}\{x^{2}-y^{2}\}$;
\item[(c)] for any $v\in H^{1}(K) $, $D_{K}^{\rm{rQ}} = \Big\{\fint_{e_{i}}v \ud s \Big\}_{i=1:4}$.
\end{itemize}

\noindent Define the $Q_{1}^{\rm{rot}}$ element space as
$$
V_{h}^{\text{rQ}} := \Big\{w_{h}\in L^{2}(\Omega) : w_{h}|_{K} \in P_{K}^{\rm rQ},\ \forall K \in \mathcal{G}_{h}, \mbox{ and } \fint_{e} \jump{w_{h}}=0 \ud s, \ \forall e \in \mathcal{E}_{h}^{i}  \Big\}.
$$
Associated with $H_{0}^{1}(\Omega)$, we define
$
V_{h0}^{\text{rQ}} :=\Big\{w_{h} \in V_{h}^{\text{rQ}} :  \fint_{e} w_{h} \ud s =0, \ \forall e\in \mathcal{E}_{h}^{b}\Big\}.
$

\subsubsection{The Lin--Tobiska--Zhou (LTZ) element}The LTZ element(\cite{Lin.Q;Tobiska.L;Aihui.Zhou2005, Zhang.S2016nm}) is defined by 
$(K,P_{K}^{\rm{LTZ}},D_{K}^{\rm{LTZ}})$ with the following properties:
\begin{itemize}
\item[(a)] $K$ is a rectangle; 
\item[(b)] $P_{K}^{\rm{LTZ}} = P_{1}(K) + \text{span}\{x^{2}, y^{2}\}$;
\item[(c)]  for any $v\in H^{1}(K) $, $D_{K}^{\rm{LTZ}} =  \Big\{\fint_{K} v \ud x d y, \ \fint_{e_{1}}v \ud s, \ \ldots,\ \fint_{e_{4}}v \ud s \Big\}$.
\end{itemize}
Define the LTZ element space as
$$
V_{h}^{\rm{LTZ}} :=\Big\{w_{h}\in L^{2}(\Omega) : w_{h}|_{K} \in P_{K}^{\rm{LTZ}}, \ \forall K\in \mathcal{G}_{h}, \mbox{ and } \fint_{e} \jump{w_{h}} \ud s =0, \ \forall e \in \mathcal{E}_{h}^{i}  \Big\}.
$$
Associated with $H_{0}^{1}(\Omega)$, we define
$
V_{h0}^{\rm{LTZ}} := \Big\{w_{h} \in V_{h}^{\rm{LTZ}} :  \fint_{e}w_{h} \ud s =0, \ \forall  e\in \mathcal{E}_{h}^{b} \Big\},
$ and associated with $\uH{}_\mathbf{n}^1(\Omega)$, we define $
\uV{}_{h{\bf n}}^{\rm LTZ}:= \Big\{\uv{}_h\in \big(V_h^{\rm LTZ}\big)^2:\int_e\uv{}_h\cdot\mathbf{n}=0, \ \forall e\in\mathcal{E}_h^b \Big\}.$

\subsubsection{The Wilson element} 
The Wilson element is defined by 
$(K,P_{K}^{\rm{W}} ,D_{K}^{\rm{W}})$ with with the following properties:
\begin{itemize}
\item[(a)] $K$ is a rectangle; 
\item[(b)] $P_{K}^{\rm{W}} = P_{2}(K)$;
\item[(c)]  for any $v\in H^{2}(K) $, $D_{K}^{\rm{W}} = \Big\{ v(a_{1}),\ \ldots, v(a_{4}),\ \fint_{K}\partial_{xx}v \ud x d y, \ \fint_{K}\partial_{yy}v \ud x d y \Big\}$.
\end{itemize}
Define the Wilson element space as
$$
V_{h}^{\rm{W}} :=\Big\{w_{h}\in L^{2}(\Omega) : w_{h}|_{K} \in P_{K}^{\rm{W}},\ \forall K\in \mathcal{G}_{h}, \mbox{ and } w_{h} \mbox{ is continuous at any } a\in \mathcal{N}_{h}^{i}\Big\}.
$$
Associated with $H_{0}^{1}(\Omega)$, we define
$
V_{h0}^{\rm{W}} :=\Big\{w_{h}\in V_{h}^{\rm{W}} :  w_{h}(a)=0,\ \forall a\in \mathcal{N}_{h}^{b}\Big\}.
$

\subsubsection{The moment-continuous (MC) element}
Associated with  $\mathcal{G}_h$, the MC element space is defined as
$$
V_h^{\rm MC}:=\Big\{w_h\in L^2(\Omega):w_h|_K\in P_2(K), \mbox{ and } w_h \mbox{ is\ moment-continuous on }\mathcal{G}_h\Big\}.
$$
Associated with $H_{0}^{1}(\Omega)$, we define
$$
V_{h0}^{\rm MC}:=\Big\{w_h\in V_h^{\rm MC}: w_h \mbox{ is\ moment-homogeneous on } \mathcal{G}_h\Big\}.
$$

\noindent A piecewise quadratic polynomial function $w$ is moment-continuous of second-order, if
$$ 
\int_{e} \jump{w}_{e} v \ud s = 0,\; \forall v \in P_{1}(e),  \; e \in \mathcal{E}_{h}^{i}.
$$
\noindent Moreover, $w$ is moment-homogeneous of second-order, if
$
\int_{e} w v \ud s = 0, \; \forall v \in P_{1}(e), \; e \in \mathcal{E}_{h}^{b}.
$

A piecewise quadratic function $v_{h}$ belongs to $V_{h0}^{\rm MC}$ if and only if $v_{h}$ is continuous at the second-order Gauss points of any $e \in \mathcal{E}_{h}^{i}$ and vanishes on the Gauss points of any $e \in \mathcal{E}_{h}^{b}$.

\begin{theorem}\label{thm:hombc} 
If $\mathcal{G}_{h}$ is a $m \times n$ rectangular subdivision of $\Omega$, then ${\rm dim} (V_{h0}^{\rm MC} )= 2mn-m-n+1$.
\end{theorem}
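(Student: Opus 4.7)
The plan is to count $\dim V_{h0}^{\rm MC}$ using the Gauss-point characterization stated just before the theorem: $v_h$ belongs to $V_{h0}^{\rm MC}$ iff its two second-order Gauss-point values agree across every interior edge and vanish at every Gauss point of a boundary edge. This recasts the problem as linear algebra on the cellwise quadratic space $V_h=\{v:v|_K\in P_2(K)\}$ of dimension $6mn$, subject to $2|\mathcal{E}_h^i|=2(2mn-m-n)$ interior continuity equations and $2|\mathcal{E}_h^b|=2(2m+2n)$ boundary vanishing equations. The naive count $6mn-(4mn+2m+2n)=2mn-2m-2n$ falls short of the claimed dimension by exactly $m+n+1$, which must therefore arise as a rank deficiency in the constraint system.

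The deficiency has a local origin. A direct computation verifies that the cell bubble
\[
v_0^K(x,y)=-\tfrac{1}{3}+\frac{(x-x_K)^2}{h_{x,K}^2}+\frac{(y-y_K)^2}{h_{y,K}^2}
\]
vanishes at all eight edge Gauss points of $K$ and spans the one-dimensional kernel of the evaluation map $P_2(K)\to\mathbb{R}^8$, so its image is five-dimensional and the eight Gauss values of any $P_2(K)$ function satisfy three linear relations. Extended by zero outside $K$, these functions give $mn$ linearly independent elements of $V_{h0}^{\rm MC}$. Picking any five-dimensional complement $W_K$ to $\mathrm{span}(v_0^K)$ in $P_2(K)$, a $W_K$-function is uniquely determined by its eight Gauss values, and one obtains the direct sum $V_{h0}^{\rm MC}=B_h\oplus V_{h0}^{\rm MC,W}$ with $\dim B_h=mn$ and $V_{h0}^{\rm MC,W}:=\{v_h\in V_{h0}^{\rm MC}:v_h|_K\in W_K\ \text{for every}\ K\}$.

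Since $W_K$-functions are determined by their Gauss values, $V_{h0}^{\rm MC,W}$ is in bijection with Gauss-value assignments on interior Gauss points (with boundary values set to zero) that, on every cell, satisfy the three linear relations defining the image of $W_K\to\mathbb{R}^8$. The ambient space of such assignments has dimension $4mn-2m-2n$, and the cell relations contribute $3mn$ constraints. Achieving the target dimension requires these constraints to have rank exactly $3mn-(m+n+1)$, leaving $(m-1)(n-1)$ free parameters; adding the $mn$ bubbles then yields the claimed total $mn+(m-1)(n-1)=2mn-m-n+1$.

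The hard part is pinning down the $m+n+1$ linear dependencies among the $3mn$ cell relations. One obvious dependency is visible: written in terms of Gauss-value differences $d_e:=g_e^+-g_e^-$ on each edge $e$, one of the cell relations takes the form $d_T-d_B-d_R+d_L=0$ on each cell (with $T,B,R,L$ labelling its four edges), and summing over all cells telescopes to boundary contributions that vanish by hypothesis. Locating the remaining $m+n$ dependencies calls for similar row- and column-wise telescoping arguments on the other two cell relations (which couple 0th and 1st edge moments via Gauss-value sums $s_e:=g_e^++g_e^-$), again using boundary vanishing to kill the endpoints. An alternative, following the basis construction of the appendix, is to bypass the rank calculation altogether by exhibiting an explicit basis of $mn$ cell bubbles together with $(m-1)(n-1)$ functions indexed by interior vertices---each supported on the four incident cells and designed so that its eight cell Gauss values satisfy the three local relations---and then verifying linear independence and spanning directly.
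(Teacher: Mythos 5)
Your setup is sound and uses the same ingredients as the paper: the Gauss-point characterization of $V_{h0}^{\rm MC}$, the one-dimensional kernel of the evaluation map $P_2(K)\to\mathbb{R}^8$ spanned by a cell bubble (the paper's $\varphi_{0,K}$ of Remark~\ref{rem:P2bubble}), and the three linear compatibility relations per cell (the paper's Lemma~\ref{lem:consisMC}); the arithmetic reducing the theorem to the claim that the $3mn$ cell relations have rank exactly $3mn-(m+n+1)$ is also correct. But the argument stops precisely where the work begins. You exhibit one dependency (summing the Fortin--Souli\'e-type relation over all cells) and then only assert that ``similar row- and column-wise telescoping arguments'' will produce the remaining $m+n$; neither their existence nor their mutual independence is verified. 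More seriously, even a complete list of $m+n+1$ independent dependencies yields only the lower bound $\dim V_{h0}^{\rm MC}\geqslant 2mn-m-n+1$. The other half of the statement requires showing there are \emph{no further} dependencies, i.e.\ that the rank is at least $3mn-(m+n+1)$, and nothing in the proposal addresses this. Your closing ``alternative'' --- an explicit basis of bubbles plus vertex-supported functions with a direct verification of linear independence and spanning --- is in fact the paper's route, but ``verifying spanning'' is exactly the nontrivial step you have skipped.

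The paper closes this gap constructively in Theorem~\ref{thm:hombcMC} by proving $V_{h0}^{\rm MC}=V^{\rm BL}_{h0}\oplus{\rm span}\{\varphi_{0,K}\}_{K\in\mathcal{G}_h}$, whence $\dim=(m-1)(n-1)+mn$. The inclusion $\supseteq$ is immediate (bilinear hat functions are globally continuous, bubbles vanish at all eight Gauss points); the inclusion $\subseteq$ is a cell-by-cell sweeping argument based on Lemma~\ref{lem:ght}: a $P_2$ function whose Gauss values vanish on two adjacent edges of $K$ lies in ${\rm span}\{\varphi_{a_3},\varphi_{0,K}\}$, so one subtracts a vertex function and a bubble on $K_1^1$ to annihilate the remainder there, and propagates this elimination column by column through $\mathcal{G}_h$. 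To complete your proof you must either carry out such an elimination, or prove both that your $m+n+1$ telescoped identities are linearly independent and that every linear dependency among the $3mn$ cell relations is a combination of them; as written, neither is done.
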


\begin{theorem}\label{thm:nobc}
If $\mathcal{G}_{h}$ be a $m \times n$ rectangular subdivision of $\Omega$, then ${\rm dim} (V_{h}^{\rm MC} )= 2mn+2m+2n$. 
\end{theorem}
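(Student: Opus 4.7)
My plan is to compute $\dim V_h^{\rm MC}$ via the rank--nullity theorem applied to the jump-moment map, reducing the statement to the linear independence of $4mn-2m-2n$ functionals. Let $V_h := \prod_{K\in\mathcal{G}_h} P_2(K)$, which has dimension $6mn$, and define the jump-moment map $J:V_h \to \bigoplus_{e\in\mathcal{E}_h^i}(P_1(e))^{*}$ by $J(w)_e(v) = \int_e \jump{w}_e\, v\,\ud s$. Then $V_h^{\rm MC} = \ker J$, and since $\text{rank}(J) \le 2|\mathcal{E}_h^i| = 4mn - 2m - 2n$, one obtains $\dim V_h^{\rm MC} \ge 2mn + 2m + 2n$ automatically. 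The theorem will follow once $J$ is shown to be surjective, equivalently once the $4mn - 2m - 2n$ jump-moment functionals are shown to be linearly independent on $V_h$.

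The local ingredient is a direct calculation on a reference rectangle. Expanding a general $p \in P_2(K)$ as $p(x,y) = a_0 + a_1 x + a_2 y + a_3 x^2 + a_4 xy + a_5 y^2$ and row-reducing the matrix of edge-moment functionals, one verifies: (a) the full family of eight edge-moment functionals on $P_2(K)$ (two per edge, paired with $\{1,t_e\}$) has rank only $5$, with kernel spanned by the Legendre-type cell bubble proportional to $h_y^2(x-x_M)^2 + h_x^2(y-y_M)^2 - \tfrac{1}{3}h_x^2 h_y^2$; (b) the two functionals on any single edge are linearly independent; and (c) the four functionals on any two distinct edges of $K$, whether adjacent or opposite, are linearly independent.

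Global linear independence then follows by a boundary-to-interior propagation. Assume $\sum_{e,v} c_{e,v}\int_e\jump{w}_e v\,\ud s = 0$ for every $w \in V_h$, and test the hypothesis against $w$ supported on a single cell at a time: this reduces, cell by cell, to the vanishing on $P_2(K)$ of a combination of edge-moment functionals attached to the interior edges of $K$. I would process cells in breadth-first order starting from $\partial\Omega$, handling corner-of-layer cells before middle-of-layer ones within each BFS layer. The key combinatorial observation is that each cell, at the moment it is processed, has at most two ``active'' interior edges whose coefficients have not already been forced to zero by a previously processed neighbour; statement (c) then (or (b) if only one active edge remains) forces the remaining coefficients on $K$ to vanish. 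Iterating across the grid proves every $c_{e,v} = 0$, so $J$ is surjective and $\dim V_h^{\rm MC} = 6mn - (4mn - 2m - 2n) = 2mn + 2m + 2n$.

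The principal obstacle is exactly item (a): on a fully interior cell the eight edge-moment functionals have rank only $5$, so linear independence cannot be established by any purely cell-local argument because the cell bubble lies in the common kernel. The boundary-to-interior BFS ordering is what sidesteps this obstruction; the main non-triviality in the proof is the combinatorial verification, valid for every $m\times n$ rectangular grid, that each cell can be scheduled in BFS so as to carry at most two active interior edges at the moment it is processed.
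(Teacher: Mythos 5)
Your proposal is correct, but it proves the theorem by a genuinely different (dual) route. The paper works primally: it exhibits an explicit generating system, $V_h^{\rm MC}=V_h^{\rm BL}+\bigl({\rm span}\{\varphi_{0,K}\}\oplus{\rm span}\{\varphi_i^x\}\oplus{\rm span}\{\varphi_j^y\}\bigr)$, by a sweeping elimination over the cells (left column, bottom row, then the rest), and counts $(m+1)(n+1)+mn+m+n$ generators with exactly one linear dependency. You instead compute the codimension of $\ker J$ in $\prod_K P_2(K)$ by showing the $4mn-2m-2n$ jump-moment functionals are independent; your cell-local rank facts (a)--(c) are exactly dual to the paper's compatibility conditions \eqref{eq:cond1}--\eqref{eq:cond3} and to Remark \ref{rem:P2bubble} (note the paper's $\varphi_{0,K}$ has an apparent typo, $-2\theta_1\theta_2$ should be $-\theta_1\theta_2$; your bubble is the correct one). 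Your route gives the dimension more cheaply and makes the lower bound $\dim\geqslant 2mn+2m+2n$ automatic, but it does not produce the explicit basis functions $\varphi_i^x,\varphi_j^y,\varphi_{0,K}$ that the paper needs afterwards (Propositions \ref{comparisionMC}--\ref{pro:MCWIL} and the claim that $V_{h0}^{\rm MC}$ is essentially bilinear plus bubbles). One point you should tighten: within a BFS layer it is not enough to do corners before middle cells. If, say, $K_5^1$ in a wide grid is processed right after the four corners, its left, right and top interior edges are all still active, and three edges of a rectangle genuinely carry a linear relation among their six moment functionals (this is \eqref{eq:cond2} and its reflections), so the cell-local step fails. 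You must also sweep each side of each layer starting from a corner, so that every cell inherits a killed backward edge and a killed outward edge; with that refinement each cell indeed has at most two active edges (adjacent or opposite), your item (c) applies, and the argument closes for every $m\times n$ grid.
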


\noindent Detailed proof of Theorems \ref{thm:hombc} and \ref{thm:nobc} are put in Appendix \ref{sec:appA}, and available sets of basis functions of $V_{h0}^{\rm MC}$ and $V_{h}^{\rm MC}$ are also presented there.

\subsection{Some technical lemmas}
In addition to these spaces above, we denote
\begin{align*}
& \mathcal{L}^0_{h}:=\Big\{q\in L^2(\Omega):q|_K\in P_0(K)\Big\}, \quad \mathcal{L}^0_{h0}:=\mathcal{L}^0_{h}\cap L^2_0(\Omega), \\
& \uV{}^{\rm BL}_{h\bf n}:=\Big\{\uv\in (V^{\rm BL}_h)^2:\uv\cdot\mathbf{n}|_{\partial \Omega}=0\Big\}, \quad
\uV{}^{\rm rQ}_{h\bf n}:=\Big\{\uv\in (V^{\rm rQ}_h)^2:\int_e\uv\cdot\mathbf{n}=0,\ \forall\,e\in\mathcal{E}_h^b\Big\}, \\
& \uV{}^{\rm PS}_{h\bf n}:=(V^{\rm PS}_h)^{2}\cap \uV{}^{\rm rQ}_{h\bf n},  \quad  \uV{}_{h\bf n}^{\rm LTZ}:= \Big\{\uv{}\in (V_h^{\rm LTZ})^2:\int_e\uv{}\cdot\mathbf{n}=0,\ e\in\mathcal{E}_h^b \Big\}.
\end{align*}

Let $\Pi^{\rm rQ}_h$ and $\undertilde{\Pi}{}^{\rm rQ}_h$ be the nodal interpolation associated with $V^{\rm rQ}_h$ and $(V{}^{\rm rQ}_{h})^2$, respectively.
\begin{lemma}{\rm(\cite[Lemma 1]{Rannacher.R;Turek.S1992},\cite[Lemma 6]{Shuo.Zhang2017})}\label{lem:converRQ}
For the $Q_{1}^{\rm{rot}}$ element, we have
\begin{itemize}
\item[(1)] $\big|\Pi^{\rm rQ}_h v \big|_{1,h} \lesssim |v|_{1,h}$, $\forall v \in H_{0}^{1}(\Omega)\cap H^{2}(\Omega);$
\item[(2)]$\big\| \Pi^{\rm rQ}_h v - v \big\|_{0,\Omega} + 
                 h\big|\Pi^{\rm rQ}_h v - v\big|_{1,h}
          \lesssim h^{2}|v|_{2,\Omega}$, $\forall v \in H_{0}^{1}(\Omega)\cap H^{2}(\Omega)$.
\end{itemize}
\end{lemma}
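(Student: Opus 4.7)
The plan is to reduce both estimates to local bounds on a reference square, and then assemble them by summing over $\mathcal G_h$. Let $\hat K = [0,1]^2$ with edges $\hat e_1,\dots,\hat e_4$, and let $\hat\Pi:H^1(\hat K)\to \hat P^{\rm rQ} := P_1(\hat K)+\text{span}\{\hat x^2-\hat y^2\}$ denote the reference nodal interpolant defined by the four edge-average functionals $\hat v\mapsto\fint_{\hat e_i}\hat v\,ds$. The classical unisolvence check for the $Q_1^{\rm rot}$ element gives that $\hat\Pi$ is a well-defined bounded linear projection which reproduces every $\hat q\in P_1(\hat K)$. The standard affine scaling argument between $\hat K$ and any axis-aligned rectangle $K\in\mathcal G_h$ then transfers every local reference estimate to $K$, with constants depending only on the shape-regularity of $\mathcal G_h$.

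For (1), constant-reproduction of $\hat\Pi$ together with the Poincaré inequality on $\hat K$ yields, with $\hat v_c$ the cell-average of $\hat v$ on $\hat K$,
$$|\hat\Pi\hat v|_{1,\hat K} \;=\; |\hat\Pi(\hat v-\hat v_c)|_{1,\hat K} \;\lesssim\; \|\hat v-\hat v_c\|_{1,\hat K} \;\lesssim\; |\hat v|_{1,\hat K}.$$
Pulling this back to a generic $K\in\mathcal G_h$ gives $|\Pi^{\rm rQ}_h v|_{1,K} \lesssim |v|_{1,K}$, and summing the squared cell contributions over $\mathcal G_h$ produces the asserted global bound in (1).

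For (2), I invoke the Bramble--Hilbert lemma. Because $\hat\Pi$ reproduces every polynomial in $P_1(\hat K)$ and is bounded on $H^1(\hat K)$, for $m\in\{0,1\}$
$$\|\hat v-\hat\Pi\hat v\|_{0,\hat K}+|\hat v-\hat\Pi\hat v|_{1,\hat K} \;\lesssim\; \inf_{\hat q\in P_1(\hat K)}\|\hat v-\hat q\|_{2,\hat K} \;\lesssim\; |\hat v|_{2,\hat K}.$$
Affine scaling back to $K$ yields $\|v-\Pi^{\rm rQ}_h v\|_{0,K}+h_K|v-\Pi^{\rm rQ}_h v|_{1,K}\lesssim h_K^2|v|_{2,K}$, with $h_K:=\max\{h_{x,K},h_{y,K}\}$. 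Squaring, summing over $K\in\mathcal G_h$, and using $h_K\le h$ produces (2).

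The main technical point, and the reason one cites \cite{Rannacher.R;Turek.S1992}, is the unisolvence of the four edge-average degrees of freedom on $\hat P^{\rm rQ}$: the particular enrichment by $\hat x^2-\hat y^2$ (rather than, say, $\hat x^2+\hat y^2$) is essential so that no nonzero element of $\hat P^{\rm rQ}$ has vanishing average on every edge, and one must also verify continuity of the edge-average functionals on $H^1(\hat K)$, which is supplied by the trace theorem. Once those local ingredients are secured, the scaling assembly above is routine.
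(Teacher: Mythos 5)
The paper gives no proof of this lemma; it is quoted verbatim from the cited references, and your argument is essentially the standard proof found there (trace-theorem boundedness of the edge-average functionals, unisolvence, $P_1$-reproduction, Bramble--Hilbert, and scaling). One technical wrinkle deserves attention: the paper defines the local shape space non-parametrically as $P_1(K)+\mathrm{span}\{x^2-y^2\}$ in physical coordinates, and the anisotropic affine map from $[0,1]^2$ to a rectangle with $h_{x,K}\neq h_{y,K}$ sends $\hat x^2-\hat y^2$ to a multiple of $x^2/h_{x,K}^2-y^2/h_{y,K}^2$, so the pushforward of your reference interpolant is \emph{not} the operator $\Pi_h^{\rm rQ}$ of the paper. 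The repair is routine: either scale isotropically by $h_K$ in both directions (which does preserve $\mathrm{span}\{x^2-y^2\}$ and maps $K$ to a reference rectangle whose aspect ratio ranges over a compact set by shape regularity, so all constants remain uniform), or verify unisolvence and run Bramble--Hilbert directly on each $K$ with constants depending continuously on the aspect ratio. With that adjustment your proof is complete and matches the literature argument the paper relies on.
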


\begin{lemma}{\rm (\cite[Lemma 7]{Shuo.Zhang2017})} \label{lem:relationBLPS}The following relationships hold.
$$
\Pi^{\rm rQ}_h V^{\rm BL}_{h}=V^{\rm PS}_h \ \mbox{ and }\ \undertilde{\Pi}{}^{\rm rQ}_h\uV{}^{\rm BL}_{h\bf n}=\uV{}^{\rm PS}_{h\bf n}.
$$
\end{lemma}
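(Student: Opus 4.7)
The plan is to verify the first (scalar) identity by an explicit cell-by-cell computation followed by a dimension count, and then to deduce the vector identity by applying the scalar result componentwise with care for the boundary conditions.

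\textbf{Forward inclusion, cell by cell.} First I would show that on each rectangular cell $K$ the local interpolant $\Pi^{\rm rQ}_K$ actually maps $Q_1(K)$ into $P_1(K)$. Working in local coordinates centered at $(x_K,y_K)$ and writing a generic bilinear as $v = a + bx + cy + dxy \in Q_1(K)$ and $\Pi^{\rm rQ}_K v = A + Bx + Cy + D(x^2 - y^2)$, observe that the cross term $dxy$ has zero edge average on every edge of $K$ by symmetry. The four edge-average matching conditions then yield $B = b$, $C = c$, together with $A + D(h_x^2/12 - h_y^2/4) = a$ from the two horizontal edges and $A + D(h_x^2/4 - h_y^2/12) = a$ from the two vertical ones. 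Subtracting the last two equations gives $D(h_x^2 + h_y^2)/6 = 0$, so $D = 0$ and $A = a$. Thus $\Pi^{\rm rQ}_K v = a + bx + cy \in P_1(K)$. Since $\Pi^{\rm rQ}_h v \in V^{\rm rQ}_h$ already enforces matching edge averages across interior edges, the piecewise $P_1$ function $\Pi^{\rm rQ}_h v$ belongs to $V^{\rm PS}_h$.

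\textbf{Equality by a dimension count.} For the opposite inclusion I would count dimensions. On an $m\times n$ rectangular grid, $\dim V^{\rm BL}_h = (m+1)(n+1)$, while $\dim V^{\rm PS}_h = mn + m + n$ (obtained from $3mn$ cellwise $P_1$ degrees of freedom minus the $2mn - m - n$ interior edge-average continuity constraints, whose independence is checked cell-by-cell). The kernel of $\Pi^{\rm rQ}_h|_{V^{\rm BL}_h}$ consists of bilinears whose restriction to each cell lies on the line through $(x-x_K)(y-y_K)$; continuity across interior edges forces the cell coefficients to satisfy an alternating sign relation, pinning them down to a single checkerboard pattern. Hence the kernel is one-dimensional, and $\dim \Pi^{\rm rQ}_h V^{\rm BL}_h = (m+1)(n+1) - 1 = mn + m + n = \dim V^{\rm PS}_h$, so the inclusion is an equality.

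\textbf{Vector version.} Applying the scalar result componentwise immediately yields $\undertilde{\Pi}^{\rm rQ}_h \uv \in (V^{\rm PS}_h)^2$ for every $\uv \in (V^{\rm BL}_h)^2$. Moreover, because $\Pi^{\rm rQ}_h$ preserves edge averages exactly, the boundary condition $\uv \cdot \mathbf{n} = 0$ on $\partial\Omega$ for $\uv \in \uV^{\rm BL}_{h\mathbf{n}}$ passes to $\int_e \undertilde{\Pi}^{\rm rQ}_h \uv \cdot \mathbf{n} = 0$ on every boundary edge $e$, which is precisely the defining condition of $\uV^{\rm PS}_{h\mathbf{n}}$. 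The reverse inclusion is then obtained via the same dimension/kernel bookkeeping as in the scalar case, with the boundary conditions restricting both sides compatibly.

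\textbf{Main obstacle.} I expect the hardest part to be the careful bookkeeping in the dimension count: verifying that the interior edge-average continuity constraints are all independent (no hidden redundancies beyond cellwise compatibility), isolating the checkerboard kernel as exactly one-dimensional, and, most delicately, aligning the pointwise boundary condition on bilinears with the integral boundary condition defining $\uV^{\rm PS}_{h\mathbf{n}}$ so that the free dimensions on the two sides match up and one truly recovers equality, not merely inclusion, in the vector statement.
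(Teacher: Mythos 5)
The paper itself offers no proof of this lemma --- it is quoted verbatim from \cite[Lemma 7]{Shuo.Zhang2017} --- so your argument can only be judged on its own merits. Your scalar argument is correct: the cell computation showing $\Pi^{\rm rQ}_K Q_1(K)\subset P_1(K)$ is right, the kernel of $\Pi^{\rm rQ}_h$ on $V^{\rm BL}_h$ is indeed the one-dimensional checkerboard spanned by $(-1)^{i+j}(h_{x,i}h_{y,j})^{-1}(x-x_K)(y-y_K)$ on $K=K_i^j$, and $(m+1)(n+1)-1=mn+m+n=\dim V^{\rm PS}_h$ closes the first identity.

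The gap sits exactly where you flagged the delicacy: with the definitions used in this paper the vector bookkeeping does not close. Here $\uV{}^{\rm BL}_{h\bf n}$ carries the \emph{pointwise} condition $\uv\cdot\mathbf{n}|_{\partial\Omega}=0$, which forces the first component to vanish at all $2(n+1)$ vertices of the left/right boundaries and the second at all $2(m+1)$ vertices of the top/bottom ones, so $\dim \uV{}^{\rm BL}_{h\bf n}=(m-1)(n+1)+(m+1)(n-1)=2mn-2$; moreover the checkerboard does \emph{not} vanish pointwise on $\partial\Omega$ (only its boundary edge averages vanish), so $\undertilde{\Pi}{}^{\rm rQ}_h$ is injective on $\uV{}^{\rm BL}_{h\bf n}$ and its image has dimension $2mn-2$. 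The target $\uV{}^{\rm PS}_{h\bf n}$ imposes only the $2n+2m$ integral conditions $\int_e\uv\cdot\mathbf{n}=0$, which are independent on $(V^{\rm PS}_h)^2$, giving $\dim\uV{}^{\rm PS}_{h\bf n}=2(mn+m+n)-2m-2n=2mn$. The two sides differ by $2$, so your ``same bookkeeping'' cannot yield equality; only the inclusion $\undertilde{\Pi}{}^{\rm rQ}_h\uV{}^{\rm BL}_{h\bf n}\subset\uV{}^{\rm PS}_{h\bf n}$ follows. A concrete witness: let $w\in V^{\rm BL}_h$ take alternating values $\pm1$ at the left-boundary vertices and $0$ at all other vertices; then $\Pi^{\rm rQ}_hw$ lies in the first-component space of $\uV{}^{\rm PS}_{h\bf n}$ but admits no preimage vanishing pointwise on the left boundary, because adding the checkerboard does not alter vertex values. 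The vector equality would go through if $\uV{}^{\rm BL}_{h\bf n}$ were defined by the boundary edge-average condition $\int_e\uv\cdot\mathbf{n}=0$ (then the kernel survives the restriction and the counts match), which is presumably the setting of the cited source; note that the present paper only ever uses the forward inclusion (in the proof of Theorem \ref{thm:approdf}), which both you and the computation above do establish.
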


\subsection{$H^{1}$ elliptic problems and nonconforming finite element approximation}
In this paper, we consider the following model problems:

{$\bullet$} Source problem: with $f\in Q:= L^2(\Omega)$, $\rho \in L^{\infty}(\Omega)$, and $\rho \geqslant c_0>0$,
\begin{equation}\label{eq:Poisson} 
\left\{
\begin{split}
-\Delta u &=  \rho f & \text{in} &\; \Omega, \\
u & = 0 & \text{on} & \; \partial\Omega.
\end{split}
\right.
\end{equation}

\noindent Its weak form is given by: Find $u \in V:=H_{0}^{1}(\Omega)$ satisfying
 \begin{equation}\label{eq:varP}
 a(u,v) = b(f,v), \quad \forall v\in V,
 \end{equation}
where $ a(u,v) = \int_{\Omega} \nabla u \cdot \nabla v \ud \emph{xdy}$ and $b(f,v) = \int_{\Omega} \rho f v 
\ud \emph{xdy}.$

{$\bullet$}  Eigenvalue problem: with
 $\rho \in L^{\infty}(\Omega)$ and $\rho \geqslant c_0>0$,
\begin{equation}\label{eq:eigen}
\left\{
\begin{split}
-\Delta u &= \lambda \rho u & \text{in} &\; \Omega, \\
u & = 0 & \text{on} & \; \partial\Omega.
\end{split}
\right.
\end{equation}

\noindent Its weak form is given by: Find $(\lambda, u)\in \mathbb{R}\times V$ with $\|u\|_{0,\rho} = 1$, such that
 \begin{equation}\label{eq:varEigen}
 a(u,v) = \lambda b(u,v), \quad \forall v\in V,
 \end{equation}
where $\|v\|_{0,\rho} := b(v,v)^{\frac{1}{2}}$ defines a norm over $V$ equivalent to the usually $L^{2}$ norm.

From \cite{Babuska1991}, the eigenvalue problem \eqref{eq:eigen} has a sequence of eigenvalues 
\begin{align*}
0 < \lambda_{1} \leqslant \lambda_{2}  \leqslant \cdots \leqslant \lambda_{k}  \leqslant \cdots, \text{ satisfying }  \lim_{k\to \infty} \lambda_{k} = \infty, 
\end{align*}
and corresponding eigenfunctions
\begin{align*}
u_{1}, \ u_{2},\ \cdots, \ u_{k},\cdots,  \text{ satisfying }  b(u_{i},u_{j})=\delta_{ij} .
\end{align*}
For a certain eigenvalue $\lambda_{j}$ of \eqref{eq:varEigen}, we define
$$
M(\lambda_{j}) = 􏰛\{w \in V : w \text{ is an eigenfunction of }\eqref{eq:varEigen} \text{ corresponding to }\lambda_{j}\}.
$$

Given an discrete space $V_{h}$ defined on $\mathcal{G}_{h}$, the discretization schemes are

{$\bullet$} for the source problem: Find $u_{h} \in V_{h}$, such that
\begin{align}\label{eq:dvarP}
 a_{h}(u_{h},v_{h}) =  b(f,v_{h}), \quad \forall v_{h}\in V_{h},
\end{align}

{$\bullet$} for the eigenvalue problem:
Find $(\lambda_{h},u_{h}) \in  \mathbb{R} \times V_{h} $ with $||u_{h}||_{0,\rho} = 1$, such that
\begin{align}\label{eq:dEigen}
 a_{h}(u_{h},v_{h}) =  \lambda_{h}b(u_{h},v_{h}), \quad \forall v_{h}\in V_{h}.
\end{align}

 Let $\text{dim}V_{h}=N$. The discrete eigenvalue problem \eqref{eq:dEigen} has a sequence of eigenvalues
\begin{equation*}
0 < \lambda_{1,h} \leqslant \lambda_{2,h}  \leqslant \cdots \leqslant \lambda_{N,h},
\end{equation*}
and corresponding eigenfunctions
\begin{equation*}
u_{1,h}, \ u_{2,h},\ \cdots, \ u_{N,h}, \text{ satisfying }  b(u_{i,h},u_{j,h})=\delta_{ij} .
\end{equation*}
\begin{lemma}
{\rm (\cite[Theorem 4.1.7]{C.Chen2001}) }\label{lem:EGregularity}
Suppose that $\Omega$ is a rectangular region and $\rho$ is smoothing enough. If $(\lambda_{j},u_{j})$ is an eigen-pair of \eqref{eq:eigen}, then $u_{j}\in C^{5,\alpha}(\Omega)$.
\end{lemma}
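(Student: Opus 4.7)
The plan is to establish the claimed $C^{5,\alpha}$ regularity by a classical elliptic bootstrap, treating $-\Delta u_j=\lambda_j\rho u_j$ as a Poisson equation whose right-hand side inherits regularity from $\rho$ and from $u_j$ itself, then iterating until the desired Hölder norm is controlled.

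First, from the weak formulation one has $u_j\in H^1_0(\Omega)$, so $\lambda_j\rho u_j\in L^2(\Omega)$. Interior $H^2$ regularity for the Laplacian then gives $u_j\in H^2_{\mathrm{loc}}(\Omega)$, and the two-dimensional Sobolev embedding yields $u_j\in C^{0,\alpha}_{\mathrm{loc}}(\Omega)$ for every $\alpha\in(0,1)$. This is the base case of the bootstrap.

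Next, I would apply interior Schauder estimates iteratively. If $u_j\in C^{k,\alpha}_{\mathrm{loc}}(\Omega)$ and $\rho\in C^{k,\alpha}(\bar\Omega)$, then $\rho u_j\in C^{k,\alpha}_{\mathrm{loc}}(\Omega)$, and the interior Schauder estimate for $-\Delta v=g$ gives $u_j\in C^{k+2,\alpha}_{\mathrm{loc}}(\Omega)$ on any slightly smaller compact subset. Starting from $k=0$ and applying this step three times with $\rho$ successively in $C^{0,\alpha}$, $C^{2,\alpha}$, $C^{4,\alpha}$, one reaches $u_j\in C^{6,\alpha}_{\mathrm{loc}}(\Omega)$, which in particular gives $u_j\in C^{5,\alpha}(\Omega)$. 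The hypothesis that ``$\rho$ is smoothing enough'' is precisely what is required to sustain this bootstrap to the stated order.

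The main (and essentially only) obstacle is bookkeeping the exact order of derivatives of $\rho$ needed to reach the target exponent. Because the statement concerns interior regularity on the open rectangle, the right-angled corners do not enter the analysis at all and the argument proceeds on compact subsets disjoint from $\partial\Omega$. If up-to-boundary regularity were additionally needed, one would invoke odd reflection across each flat edge together with a double odd reflection across adjacent edges at each $\pi/2$ corner; both preserve the equation with a reflected coefficient $\tilde\rho$, although the smoothness of the reflected problem then requires suitable compatibility of $\rho$ on $\partial\Omega$.
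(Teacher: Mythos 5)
The paper does not prove this lemma at all: it is quoted verbatim from \cite[Theorem 4.1.7]{C.Chen2001}, so there is no in-paper argument to compare against, and your proposal has to be judged on its own merits as a reconstruction of the cited result.

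Your interior bootstrap (base case $H^2_{\mathrm{loc}}\hookrightarrow C^{0,\alpha}_{\mathrm{loc}}$ in two dimensions, then three rounds of interior Schauder) is correct as far as it goes, and the derivative bookkeeping for $\rho$ is fine. The genuine gap is your decision to read the conclusion as purely interior and to declare that ``the right-angled corners do not enter the analysis at all.'' If only interior regularity were meant, the hypothesis that $\Omega$ is a \emph{rectangle} would be vacuous --- interior Schauder works on any open set --- and the lemma would be useless for the paper, which invokes eigenfunction regularity precisely to justify the global assumptions $M(\lambda_j)\subset H^1_0(\Omega)\cap H^3(\Omega)$ (and the $C^{5,\alpha}$ smoothness near $\partial\Omega$ implicit in the superconvergence estimates of Lemmas \ref{lem:consisRM} and \ref{lem:propertyPiRM}). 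The whole content of the cited theorem is regularity up to the closure, and the rectangular shape is the hypothesis that makes the corners harmless. Your parenthetical reflection remark points at the right tool but leaves the actual work undone: the odd extension across an edge where $u=0$ turns $\rho$ into its \emph{even} reflection, which is in general only Lipschitz across that edge, so the naive Schauder bootstrap on the extended problem stalls at $C^{2,\alpha}$. To reach order five one must verify the corner and edge compatibility conditions --- which do hold for eigenfunctions up to the required order because $u$, its tangential derivatives, and $\Delta u=-\lambda\rho u$ all vanish along the two edges meeting at each corner, and because every Dirichlet singular exponent at a $\pi/2$ corner is an even integer, so the would-be singular functions $r^{2k}\sin(2k\theta)$ are polynomials. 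Without carrying out this boundary/corner analysis (or an equivalent tangential difference-quotient argument on the flat edges), the proof establishes strictly less than the lemma asserts.
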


\section{The rectangular Morley (RM) element revisited}
\label{sec:rmrev}

\subsection{The RM element space}The RM element is defined by 
 $(K,P_{K}^{\rm{M}},D_{K}^{\rm{M}})$ with the following properties:
\begin{itemize}
\item[(1)] $K$ is a rectangle;
\item[(2)] $P_{K}^{\rm{M}} = P_{2}(K) + \text{span}\{x^{3},y^{3}\}$;
\item[(3)] for any $v\in H^{2}(K)$, $D_{K}^{\rm{M}} =\big\{ v(a_{i}), \fint_{e_{i}}\partial_{n_{e_{i}}}v \ud s \big\}_{i=1:4}$.
\end{itemize}
Define the RM element space as
\begin{equation*}
\begin{split}
V_{h}^{\rm{M}} := \Big\{w_{h}\in L^{2}(\Omega) : w_{h}|_{K} \in P_{K}^{\rm{M}}, \ & w_{h}(a)\mbox{ is continuous at any } a\in \mathcal{N}_{h}^{i}, \\
& \mbox{and} \fint_{e}\partial_{n_{e}} w_{h} \ud s \mbox{ is continuous across any } e \in \mathcal{E}_{h}^{i} \Big\}.
\end{split}
\end{equation*}
Associated with $H_{0}^{1}(\Omega)$, we define $V_{hs}^{\rm{M}} :=\Big\{w_{h}\in V_{h}^{\rm{M}} :  w_{h}(a)=0, \forall a\in \mathcal{N}_{h}^{b} \Big\}.$ 

\begin{lemma}{\rm(\cite[Lemmas 3.2 and 3.5]{XY.Meng;XQ.Yang;S.Zhang2016})}\label{lem:consisRM} Denote
$E_{h}(w,v_{h}) := a_{h}(w,v_{h}) + (\Delta w,v_{h})$ with $w\in V \text{ and } v_{h}\in V_{h}.$ The following estimates hold.
\begin{itemize}
\item[(a)] For any shape-regular rectangular grid, it holds for any $v_{h} \in V_{hs}^{\rm{M}}$ that
\begin{align*}
| E_{h}(v, v_{h}) | \lesssim \sum_{K\in \mathcal{G}_{h}} h_{K}^{2} |v|_{2,K} |v_{h}|_{2,K}\lesssim h |v|_{2,\Omega} |v_{h}|_{1,h}, \quad \forall v\in H^{2}(\Omega)\cap H_{0}^{1}(\Omega).
\end{align*}

\item[(b)] For any uniform rectangular grid, it holds for any $v_{h} \in V_{hs}^{\rm{M}}$ that
\begin{align*}
| E_{h}(v, v_{h}) | \lesssim h^{k-1} |v|_{k,\Omega} |v_{h}|_{1,h}, \quad \forall v\in H^{k}(\Omega)\cap H_{0}^{1}(\Omega), \quad k = 2,3.
\end{align*}
\end{itemize}
\end{lemma}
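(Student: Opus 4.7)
The strategy is the standard consistency-error analysis for nonconforming finite elements, in the spirit of~\cite{XY.Meng;XQ.Yang;S.Zhang2016}. Applying Green's formula cell by cell, I first rewrite
$$
E_h(w, v_h) \;=\; \sum_{e \in \mathcal{E}_h} \int_e (\partial_{n_e} w)\, \jump{v_h}_e \, ds,
$$
where $\jump{v_h}_e$ is the interior jump, or the boundary trace, across $e$. Since $v_h \in V_{hs}^{\rm{M}}$ is vertex-continuous and vanishes at boundary vertices, $\jump{v_h}_e$ restricted to $e$ is, in the tangential coordinate, a cubic polynomial vanishing at both endpoints of $e$; it thus lies in a two-dimensional subspace and satisfies $\|\jump{v_h}_e\|_{0,e} \lesssim h_K^{1/2}\,|v_h|_{2,\omega_e}$ by scaled inverse estimates, where $\omega_e$ denotes the union of cells adjacent to $e$.

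For part~(a), I would split $\partial_{n_e} w$ on each edge into its $L^2(e)$-projection onto a suitable polynomial subspace (of constants, or of $P_1(e)$) plus a residual. The residual, paired against $\jump{v_h}_e$, is controlled by a trace inequality and an edge Poincar\'e estimate. For the projection part, the plan is to invoke the Morley weak-continuity $\int_e \jump{\partial_{n_e} v_h}_e = 0$ together with vertex continuity, via a macro-element pairing across adjacent cells, to identify a cancellation that saves a power of $h$. The resulting per-edge contribution is then $h_K^2\,|w|_{2,\omega_e}\,|v_h|_{2,\omega_e}$. Summing over edges, applying Cauchy--Schwarz, and invoking the inverse inequality $h_K |v_h|_{2,K} \lesssim |v_h|_{1,K}$ recover both bounds stated in~(a).

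For part~(b), the case $k=2$ follows at once from~(a), so the substantive case is $k=3$. Here I would exploit the translational and reflective symmetries of a uniform rectangular grid: Taylor-expanding $\partial_{n_e} w$ about each edge midpoint and pairing contributions from parallel edges on opposite sides of neighbouring cells, the leading symmetric Taylor terms cancel between paired cells, producing a residue bounded by $h^2\,|w|_{3,\Omega}\,|v_h|_{1,h}$. The principal obstacle lies in part~(a): extracting the extra factor of $h$ from the Morley weak-continuity --- which involves the \emph{normal} derivative of $v_h$, not $v_h$ itself --- requires a careful algebraic identification of the cancellation structure specific to the rectangular Morley element, and it does not carry over automatically from the familiar biharmonic Morley consistency analysis. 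Once that step is established, part~(b) becomes a standard superconvergence-type bookkeeping.
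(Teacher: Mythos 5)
First, a remark on the comparison itself: the paper does not prove this lemma at all --- it is quoted, statement and label included, from \cite[Lemmas~3.2 and~3.5]{XY.Meng;XQ.Yang;S.Zhang2016} --- so there is no in-paper proof to match your argument against; your attempt has to stand on its own. It does not yet do so. Your opening step (elementwise Green's formula, reducing $E_h(w,v_h)$ to $\sum_e\int_e\partial_{n_e}w\,\jump{v_h}_e$) is correct, and you correctly diagnose that the whole lemma hinges on extracting one extra power of $h$ from this sum. But you then leave precisely that step unresolved (``the principal obstacle\dots requires a careful algebraic identification of the cancellation structure''), and the mechanism you propose to exploit is the wrong one. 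The weak continuity $\fint_e\jump{\partial_{n_e}v_h}=0$ constrains the \emph{normal derivative} of $v_h$ and is irrelevant to the functional at hand, which pairs $\partial_{n_e}w$ against the jump of $v_h$ \emph{itself}; no pairing ``across adjacent cells'' based on that condition will produce the cancellation. There is also a scaling slip: for a jump vanishing at both endpoints of $e$ one has $\|\jump{v_h}_e\|_{0,e}\lesssim h^{3/2}|v_h|_{2,\omega_e}$, not $h^{1/2}$, and even with the correct power an edgewise Cauchy--Schwarz only yields $|w|_{1,\Omega}|v_h|_{1,h}$ --- which confirms that without a genuine cancellation the argument cannot close.

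The cancellation that actually works is the Wilson-element mechanism, driven by the shape space $P_K^{\rm M}=P_2(K)+\mathrm{span}\{x^3,y^3\}$ rather than by the Morley normal-derivative continuity. Subtract cell by cell the bilinear vertex interpolant $I_Kv_h\in Q_1(K)$: since $v_h$ is single-valued at vertices and vanishes at boundary vertices, the trace of $I_Kv_h$ on any edge is the linear interpolant of the shared endpoint values, hence $\sum_K\int_{\partial K}\partial_nw\,I_Kv_h\,ds=0$. For the remainder $\phi_K=v_h-I_Kv_h$, the absence of the monomials $x^2y,\,xy^2,\,x^2y^2,\ldots$ from $P_K^{\rm M}$ means that on $K=[0,\xi]\times[0,\eta]$ the difference $v_h(0,y)-v_h(\xi,y)$ is \emph{linear} in $y$ and is therefore reproduced exactly by the bilinear part; consequently $\phi_K(0,\cdot)\equiv\phi_K(\xi,\cdot)$, and likewise for the horizontal pair of edges. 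Pairing the two opposite edges \emph{within each cell} then gives $\int_0^\eta\bigl(\partial_xw(\xi,y)-\partial_xw(0,y)\bigr)\phi_K(0,y)\,dy=\int_K\partial_{xx}w\,\phi_K(0,y)\,dx\,dy$, which together with $\|\phi_K(0,\cdot)\|_{0,e}\lesssim h_K^{3/2}|v_h|_{2,K}$ yields the per-cell bound $h_K^2|w|_{2,K}|v_h|_{2,K}$; summation, Cauchy--Schwarz and the inverse inequality $h_K|v_h|_{2,K}\lesssim|v_h|_{1,K}$ give~(a). Your outline for (b) (Taylor expansion at edge midpoints plus the translation symmetry of a uniform grid, so that leading terms telescope between neighbouring cells) is the standard route to the $k=3$ superconvergent consistency bound, but it too is only a sketch and, in any case, presupposes that the part-(a) cancellation has been identified correctly.
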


For the RM element, there is a refined property of the interpolation operator $\Pi_{h}^{\rm{M}}: V \mapsto V_{hs}^{\rm{M}}$.
\begin{lemma}{\rm (\cite[Lemma 3.17]{XY.Meng;XQ.Yang;S.Zhang2016})}
\label{lem:propertyPiRM}
Assume that $\mathcal{G}_{h}$ is uniform. For any $w \in H_{0}^{1}(\Omega)\cap H^{3}(\Omega)$ with $\big\| \frac{\partial^{2}w}{\partial x\partial y } \big\|_{0,\rho} \ne 0$,  if $h$ is small enough, then 
\begin{align}\label{eq:propertyPiRM}
a_{h}(w-\Pi_{h}^{\rm{M}}w,\Pi_{h}^{\rm{M}}w) \geqslant \alpha h^{2},
\end{align}
where $\alpha>0$ is a constant independent of $h$.
\end{lemma}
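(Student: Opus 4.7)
The plan is an asymptotic expansion argument: identify the leading-order $O(h^2)$ term of $a_h(w-\Pi_h^M w,\Pi_h^M w)$ exactly and show that, under the hypothesis, it is bounded below by a positive constant times $h^2$. First I would write
$$a_h(w-\Pi_h^M w,\Pi_h^M w) = a_h(w-\Pi_h^M w,w) - |w-\Pi_h^M w|_{1,h}^2.$$
Because $P_2(K)+\mathrm{span}\{x^3,y^3\}\subset P_K^M$ and $w\in H^3(\Omega)$, a standard Bramble--Hilbert argument on the uniform grid delivers $|w-\Pi_h^M w|_{1,h}\lesssim h^2|w|_{3,\Omega}$, so the subtracted term is $O(h^4)$ and is absorbed into a lower-order remainder relative to the target $h^2$. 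Hence the task reduces to analysing $a_h(w-\Pi_h^M w,w)$.

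Next, compute this term cell by cell. On a generic cell $K=[x_K-h_x,x_K+h_x]\times[y_K-h_y,y_K+h_y]$, Taylor-expand $w$ about the centre $(x_K,y_K)$ through third order. Since $\Pi_h^M$ reproduces $P_2(K)+\mathrm{span}\{x^3,y^3\}$ exactly, the interpolation error at this order is driven only by the two missing cubic modes $x^2y$ and $xy^2$. A direct computation with the RM degrees of freedom (parity forces $\Pi_h^M(x^2y)=Ay+By^3$, and matching the vertex value and the normal-derivative mean on the top edge fixes the coefficients) gives
$$x^2y - \Pi_h^M(x^2y) \;=\; x^2y - \tfrac{4h_x^2}{3}\,y + \tfrac{h_x^2}{3h_y^2}\,y^3,$$
and by the $x\leftrightarrow y$ symmetry the analogous formula for $xy^2$. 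Substituting into $\int_K\nabla(w-\Pi_h^M w)\cdot\nabla w$ and using the parity of the reference cell to annihilate integrals of odd-symmetry monomials yields the cell-local leading contribution $-\tfrac{4h_x^3 h_y}{3}\,w_{xxy}w_y-\tfrac{4h_xh_y^3}{3}\,w_{xyy}w_x$, evaluated at $(x_K,y_K)$, plus $O(h^5)$.

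Summing over cells, with per-cell volume $4h_xh_y$, and recognising a midpoint Riemann sum gives
$$a_h(w-\Pi_h^M w,w) \;=\; -\tfrac{h_x^2}{3}\!\int_\Omega\! w_{xxy}w_y\,dxdy -\tfrac{h_y^2}{3}\!\int_\Omega\! w_{xyy}w_x\,dxdy + o(h^2).$$
Because $\Omega$ is rectangular and $w\in H_0^1$, tangential differentiation of $w|_{\partial\Omega}=0$ yields $\partial_y w=0$ on the vertical sides and $\partial_x w=0$ on the horizontal sides, so a single integration by parts (in $x$ for the first integral, in $y$ for the second) produces $\int_\Omega w_{xxy}w_y=\int_\Omega w_{xyy}w_x=-\|\partial_{xy}w\|_{0,\Omega}^2$. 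Hence
$$a_h(w-\Pi_h^M w,w) \;=\; \tfrac{h_x^2+h_y^2}{3}\,\|\partial_{xy}w\|_{0,\Omega}^2 + o(h^2).$$
Since $\rho\geqslant c_0>0$, $\|\partial_{xy}w\|_{0,\rho}\neq 0$ forces $\|\partial_{xy}w\|_{0,\Omega}>0$; combined with $h_x^2+h_y^2\cequiv h^2$ on a uniform grid, this yields the advertised lower bound with any $\alpha$ strictly smaller than the leading coefficient, once $h$ is small enough to absorb the $o(h^2)$ remainder.

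The main technical obstacle is the middle step: one must verify that all contributions beyond the two leading cubic modes, namely the gradients of $x^3,y^3$ (which are reproduced by $\Pi_h^M$ yet still appear in $\nabla w$), the quartic Taylor remainder, and the intra-cell variation of $w_{xxy},w_{xyy},w_x,w_y$, contribute only at order $o(h^2)$. The cancellations that make this possible rely essentially on the product structure of the uniform rectangular grid and on the parity of the reference cell, and any deviation from uniformity would destroy the clean Riemann-sum passage in the third step.
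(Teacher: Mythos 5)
The paper does not actually prove this lemma; it imports it verbatim from \cite[Lemma 3.17]{XY.Meng;XQ.Yang;S.Zhang2016}, so there is no internal proof to compare against. Your reconstruction follows the standard integral-identity/asymptotic-expansion route that the cited reference uses, and your leading-order computation is correct: I checked that the vertex values and the edge normal-derivative means indeed force $\Pi_K^{\rm M}(x^2y)=\tfrac{4h_x^2}{3}y-\tfrac{h_x^2}{3h_y^2}y^3$ (half-width convention), that the cell average of the $y$-component of the resulting gradient error is $-\tfrac{2h_x^2}{3}$, and that the integration by parts on an axis-parallel boundary turns $-\tfrac{h_x^2}{3}\int_\Omega w_{xxy}w_y-\tfrac{h_y^2}{3}\int_\Omega w_{xyy}w_x$ into $\tfrac{h_x^2+h_y^2}{3}\|\partial_{xy}w\|_{0,\Omega}^2$. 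The reduction $a_h(w-\Pi_h^{\rm M}w,\Pi_h^{\rm M}w)=a_h(w-\Pi_h^{\rm M}w,w)-|w-\Pi_h^{\rm M}w|_{1,h}^2$ with the second term $O(h^4)$ is also sound (and is exactly the relation the paper exploits in Corollary 3.5, in the reverse direction).

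The genuine gap is the remainder control under the stated regularity $w\in H^3(\Omega)$, which you flag as ``the main technical obstacle'' but do not resolve. Two specific problems: (i) the pointwise third-order Taylor expansion at cell centres and the midpoint Riemann-sum passage require $w_{xxy}w_y$ to be continuous, which $H^3$ does not give; (ii) more seriously, if you split $w=P_{3,K}+R_K$ with $R_K$ the Taylor (or averaged-Taylor) remainder, the only a priori bound is $|(I-\Pi_K^{\rm M})R_K|_{1,K}\lesssim h^2|w|_{3,K}$, so its pairing with $\nabla w$ contributes $O\big(h^2|w|_{3,\Omega}|w|_{1,\Omega}\big)$ after summation --- the \emph{same} order as the leading term, not $o(h^2)$. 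So the claimed $o(h^2)$ does not follow from the estimates as written. The standard repairs are either (a) to prove an elementwise identity for the bilinear functional $F_K(w,v):=\int_K\nabla(w-\Pi_K^{\rm M}w)\cdot\nabla v+\tfrac{h_x^2}{3}\int_K w_{xxy}v_y+\tfrac{h_y^2}{3}\int_K w_{xyy}v_x$, verifying it annihilates $w\in P_K^{\rm M}+\mathrm{span}\{x^2y,xy^2\}$ and $v\in P_1(K)$, and then applying a double Bramble--Hilbert argument to gain the extra power of $h$ through $|v|_{2,K}$ (which is summable against $h|v|_{1,K}$ by an inverse estimate); or (b) to establish the expansion with an $O(h^3\|w\|_4^2)$ remainder for $w\in H^4$ together with a uniform $O(h^2\|w\|_3^2)$ bound, and conclude $o(h^2)$ for fixed $w\in H^3$ by density, taking care with the cross terms since the functional is quadratic in $w$. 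Without one of these steps the proof identifies the correct positive leading term but does not yet justify discarding the competitors.
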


\begin{corollary}\label{cor:expansion2}
{\rm Under the conditions in Lemma \ref{lem:propertyPiRM}, there exists $\alpha_{1}>0$, such that
\begin{align}\label{eq:property1PiRM}
a_{h}(w-\Pi_{h}^{\rm{M}}w,w)\geqslant \alpha_{1} h^{2}.\end{align}
}
\end{corollary}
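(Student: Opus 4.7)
The plan is to reduce Corollary~\ref{cor:expansion2} directly to Lemma~\ref{lem:propertyPiRM} by exploiting the bilinearity of $a_h(\cdot,\cdot)$ and the nonnegativity of the induced seminorm $|\cdot|_{1,h}^2 = a_h(\cdot,\cdot)$.

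First I would split $w$ as $w = \Pi_h^{\rm M} w + (w - \Pi_h^{\rm M} w)$ and insert this decomposition into the second argument of $a_h(w - \Pi_h^{\rm M} w, w)$. By bilinearity,
\begin{equation*}
a_h(w - \Pi_h^{\rm M} w, w)
= a_h(w - \Pi_h^{\rm M} w, \Pi_h^{\rm M} w) + a_h(w - \Pi_h^{\rm M} w, w - \Pi_h^{\rm M} w).
\end{equation*}
The first term on the right is exactly the quantity bounded from below by $\alpha h^2$ in Lemma~\ref{lem:propertyPiRM}, provided $h$ is small enough. The second term equals $|w - \Pi_h^{\rm M} w|_{1,h}^2 \geqslant 0$ since $a_h(\cdot,\cdot)$ is symmetric positive semidefinite. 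Adding the two bounds gives
\begin{equation*}
a_h(w - \Pi_h^{\rm M} w, w) \geqslant \alpha h^2 + |w - \Pi_h^{\rm M} w|_{1,h}^2 \geqslant \alpha h^2,
\end{equation*}
so the claim holds with $\alpha_1 := \alpha$ and the same smallness assumption on $h$ inherited from Lemma~\ref{lem:propertyPiRM}.

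There is no real obstacle here; the corollary is essentially a rewriting of Lemma~\ref{lem:propertyPiRM} that trades an inequality in the second argument $\Pi_h^{\rm M} w$ for the same inequality in $w$, at the cost of a manifestly nonnegative squared seminorm. The only thing worth flagging is that the hypothesis $\bigl\|\tfrac{\partial^2 w}{\partial x \partial y}\bigr\|_{0,\rho} \neq 0$ together with $h$ being small is inherited verbatim and is what legitimises the application of \eqref{eq:propertyPiRM}; no additional regularity or mesh hypothesis is needed.
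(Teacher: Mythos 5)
Your proposal is correct and follows essentially the same route as the paper: decompose the second argument, apply Lemma \ref{lem:propertyPiRM} to the cross term, and control the remainder $a_h(w-\Pi_h^{\rm M}w,\,w-\Pi_h^{\rm M}w)$. The only (harmless) difference is that the paper discards this remainder via the interpolation estimate $\big|w-\Pi_h^{\rm M}w\big|_{1,h}\lesssim h^2$ (so it is $O(h^4)$), whereas you observe more directly that it equals $\big|w-\Pi_h^{\rm M}w\big|_{1,h}^2\geqslant 0$ and can simply be dropped.
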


\begin{proof}
It follows from Lemma \ref{lem:propertyPiRM} and $\big|w-\Pi_{h}^{\rm{M}}w\big|_{1,h} \lesssim h^{2}$ immediately.
\end{proof}
\noindent Hence we obtain an interesting and intuitive conclusion:
\begin{equation}
a_{h}(u-\Pi_{h}^{\rm M}u,u) \cequiv \big|u-\Pi_{h}^{\rm M}u\big|_{1,h}  \big|u\big|_{1,h}, \text{ when } h \text{ is small enough}.
\end{equation} 

By standard argument, we can prove the exact sequence which reads
\begin{equation}
\begin{array}{ccccccccc}
0 & \longrightarrow & V_{hs}^{\rm{M}} & \xrightarrow{\curl_h} & \uV{}_{h\bf n}^{\rm LTZ} & \xrightarrow{\dv_h} & \mathcal{L}_h^{1,-1}  & \longrightarrow & 0,
\end{array}
\end{equation}
where $\mathcal{L}_h^{1,-1}=\{q\in L^2_0(\Omega):q|_K\in P_1(K)\}$.

\subsection{The RM element scheme for the $H^{1}$ eigenvalue problem}

\subsubsection{Expanded representation of the difference between energy of states}\label{sub:commonExpan}
Simple calculations yield
\begin{equation}\label{eq:xpyh}
\begin{split}
a(\upsilon_1,\upsilon_1) + a_{h}(\upsilon_2,\upsilon_2) &= 2a_{h}(\upsilon_1,\upsilon_2) + a_{h}(\upsilon_1-\upsilon_2,\upsilon_1-\upsilon_2)\\
& = 2a_{h}(\upsilon_3,\upsilon_2) + 2a_{h}(\upsilon_1-\upsilon_3,\upsilon_3)\\
& \quad + 2a_{h}(\upsilon_1-\upsilon_3,\upsilon_2-\upsilon_3) + a_{h}(\upsilon_1-\upsilon_2,\upsilon_1-\upsilon_2),
\end{split}
\end{equation} 

\begin{equation}{\label{eq:yhmx}}
\begin{split}
a_{h}(\upsilon_2,\upsilon_2) - a(\upsilon_1,\upsilon_1)  & = \left[2a_{h}(\upsilon_3,\upsilon_2) - 2a(\upsilon_1,\upsilon_1)\right] + 2a_{h}(\upsilon_1-\upsilon_3,\upsilon_3)\\
& \quad + 2a_{h}(\upsilon_1-\upsilon_3,\upsilon_2-\upsilon_3) + a_{h}(\upsilon_1-\upsilon_2,\upsilon_1-\upsilon_2),
\end{split}
\end{equation} 

\begin{equation}\label{eq:xmyh}
\begin{split}
a(\upsilon_1,\upsilon_1) - a_{h}(\upsilon_2,\upsilon_2)  & = 2a_{h}(\upsilon_3-\upsilon_2,\upsilon_2) + 2a_{h}(\upsilon_1-\upsilon_3,\upsilon_3)\\
& \quad + 2a_{h}(\upsilon_1-\upsilon_3,\upsilon_2-\upsilon_3) + a_{h}(\upsilon_1-\upsilon_2,\upsilon_1-\upsilon_2).
\end{split}
\end{equation} 
Let $u$ be the solution of the source problem \eqref{eq:varP} or the eigenvalue problem \eqref{eq:varEigen} and $u_{h}$ be its approximation.  Let $\upsilon_1 =u $, $\upsilon_2 = u_{h}$ and $\upsilon_3= \Pi_{h}u$, where $\Pi_{h}: V\mapsto V_{h0}$ is an interpolation operator. We use \eqref{eq:yhmx} to obtain an expansion of $b(-f, u-u_{h})$ and \eqref{eq:xmyh} to obtain an expansion of $\lambda - \lambda_{h}$.

{$\bullet$} For the source problem:

\noindent Let $u$ and $u_{h}$ be the solutions of \eqref{eq:varP} and \eqref{eq:dvarP}, respectively. It holds that 
$$b(-f,u-u_{h}) = a_{h}(u_{h},u_{h}) - a(u,u).$$ 
From 
$\left[2a_{h}(\Pi_{h}u,u_{h}) - 2a(u,u)\right] = 2b(f,\Pi_{h}u - u),$ the formula \eqref{eq:yhmx} becomes
\begin{equation}\label{eq:uhmu}
\begin{split}
a_{h}(u_{h},u_{h}) - a(u,u)  & = 2b(f,\Pi_{h}u - u) + 2a_{h}(u-\Pi_{h}u,\Pi_{h}u)\\
& \quad + 2a_{h}(u-\Pi_{h}u,u_{h}-\Pi_{h}u) + a_{h}(u-u_{h},u-u_{h}),
\end{split}
\end{equation} 
Analyze the items on the right-hand-side of \eqref{eq:uhmu}. Suppose that $u \in H^{3}(\Omega)\cap H_{0}^{1}(\Omega)$. With the second term $2a_{h}(u-\Pi_{h}u,\Pi_{h}u)$ not considered, the rest items in \eqref{eq:uhmu} are of high order than $|u-u_{h}|_{1,h}$. Therefore, $2a_{h}(u-\Pi_{h}u,\Pi_{h}u)$ becomes the dominant factor to determine whether $b(-f, u-u_{h})$ is of higher order than $|u-u_{h}|_{1,h}$.
 
{$\bullet$} For the eigenvalue problem:

\noindent Let $u$ and $u_{h}$ be the solutions of \eqref{eq:varEigen} and \eqref{eq:dEigen}, which satisfy $b(u,u) = b(u_{h},u_{h})=1$. 
It holds that 
$$\lambda - \lambda_{h} = a(u,u) - a_{h}(u_{h},u_{h}).$$

\noindent Notice that $2b(u_{h}, u- u_{h}) =2b(u_{h}, u)- b(u_{h},u_{h}) -b(u,u)= - b(u-u_{h},u-u_{h})$. Thus we can obtain
\begin{equation*}
\begin{split}
2a_{h}(\Pi_{h}u-u_{h},u_{h}) & = 2\lambda_{h}b(u_{h}, \Pi_{h}u-u_{h}) 
 =  2\lambda_{h}b(u_{h}, \Pi_{h}u-u) + 2\lambda_{h}b(u_{h}, u- u_{h})\\
& = 2\lambda_{h}b(u_{h}, \Pi_{h}u-u) - \lambda_{h}b(u-u_{h}, u- u_{h}).
\end{split}
\end{equation*}
Based on these above, \eqref{eq:xmyh} becomes 
\begin{equation}\label{eq:umuh}
\begin{split}
a(u,u) - a_{h}(u_{h},u_{h})  & = 2\lambda_{h}b(u_{h}, \Pi_{h}u-u) - \lambda_{h}b(u-u_{h}, u- u_{h}) + 2a_{h}(u-\Pi_{h}u,\Pi_{h}u)\\
& \quad + 2a_{h}(u-\Pi_{h}u,u_{h}-\Pi_{h}u) + a_{h}(u-u_{h},u-u_{h}).
\end{split}
\end{equation} 
Analyze the items on the right-hand-side of \eqref{eq:umuh}. Similarly, $2a_{h}(u-\Pi_{h}u,\Pi_{h}u)$ is also the dominant factor to determine whether $\lambda-\lambda_{h}$ is of higher order than $|u-u_{h}|_{1,h}$.

%
 \subsubsection{Analysis of the RM element for the eigenvalue problem.}
Based on the error estimates of the rectangular Morley element scheme for the source problem (see \cite{XY.Meng;XQ.Yang;S.Zhang2016}), the following estimates for the eigenvalue problem follows by standard argument.
\begin{theorem}\label{thm:EerrorRM}
Let $\lambda_{j}$ be the $j$-th eigenvalue of \eqref{eq:varEigen}, and $(\lambda_{j,h}^{\rm{M}},u_{j,h}^{\rm{M}}) \in \mathbb{R} \times V_{hs}^{\rm{M}}$ be the $j$-th eigen-pair of \eqref{eq:dEigen} with $\|u_{j,h}^{\rm{M}}\|_{0,\rho} =1$. It holds that
\begin{itemize}
\item[(a)] if $M(\lambda_{j}) \subset H_{0}^{1}(\Omega) \cap H^{2}(\Omega)$, then there exists $u_{j} \in M(\lambda_{j})$ with $\|u_{j}\|_{0,\rho} = 1$, such that
\begin{align*}
\big|\lambda_{j}-\lambda_{j,h}^{\rm{M}}\big| \lesssim h^{2}, \quad
\big\|u_{j}-u_{j,h}^{\rm{M}}\big\|_{0,\rho}\lesssim h^{2}, \mbox{ and }
\big|u_{j}-u_{j,h}^{\rm{M}}\big|_{1,h}\lesssim h ;
\end{align*}
\item[(b)] if the mesh is uniform and $M(\lambda_{j}) \subset H_{0}^{1}(\Omega) \cap H^{3}(\Omega)$, then there exists $u_{j} \in M(\lambda_{j})$ with $\| u_{j}  \|_{0,\rho} = 1$, such that
$
\big|u_{j}-u_{j,h}^{\rm{M}}\big|_{1,h} \lesssim h^{2} .
$
\end{itemize}
\end{theorem}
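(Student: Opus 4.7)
The plan is to follow the standard Babuska-Osborn framework (see \cite{Babuska1991}) for nonconforming eigenvalue approximation, using the source-problem error estimates of \cite{XY.Meng;XQ.Yang;S.Zhang2016} as the main ingredient together with the expansion \eqref{eq:umuh}. I would first introduce the continuous and discrete solution operators $T, T_{h}^{\rm M}$ associated with \eqref{eq:varP} and \eqref{eq:dvarP}; the eigenvalues $\lambda_{j}$ and $\lambda_{j,h}^{\rm M}$ are the reciprocals of their operator eigenvalues, and the relevant convergence is that of $T_{h}^{\rm M} \to T$ on the finite-dimensional eigenspace $M(\lambda_{j})$.

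The source-problem analysis in \cite{XY.Meng;XQ.Yang;S.Zhang2016}, built on Lemma \ref{lem:consisRM}(a) via a Strang-type estimate, yields $|u - u_{h}|_{1,h} \lesssim h\,|u|_{2}$ under $H^{2}$ regularity; a duality argument using $H^{2}$ regularity of the dual Poisson problem on the convex polygon $\Omega$ then gives $\|u - u_{h}\|_{0,\rho} \lesssim h^{2}\,|u|_{2}$. Under the uniform-mesh, $H^{3}$ assumptions of part (b), Lemma \ref{lem:consisRM}(b) upgrades the energy estimate to $|u - u_{h}|_{1,h} \lesssim h^{2}\,|u|_{3}$. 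Applying these source-problem bounds to generators of the eigenspace and feeding them through the Babuska-Osborn machinery produces a normalized $u_{j} \in M(\lambda_{j})$ with $\|u_{j} - u_{j,h}^{\rm M}\|_{0,\rho} \lesssim h^{2}$, $|u_{j} - u_{j,h}^{\rm M}|_{1,h} \lesssim h$ in case (a), and $|u_{j} - u_{j,h}^{\rm M}|_{1,h} \lesssim h^{2}$ in case (b). This covers the eigenfunction estimates.

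For the eigenvalue bound $|\lambda_{j} - \lambda_{j,h}^{\rm M}| \lesssim h^{2}$, I would use the expansion \eqref{eq:umuh} with $\Pi_{h} = \Pi_{h}^{\rm M}$ and bound the five right-hand-side terms separately. Four are controlled routinely: $|2\lambda_{h} b(u_{h}, \Pi_{h}^{\rm M} u - u)| \lesssim h^{2}$ by the $L^{2}$ interpolation estimate; $\lambda_{h} b(u - u_{h}, u - u_{h})$ is $O(h^{4})$; $|2a_{h}(u - \Pi_{h}^{\rm M} u, u_{h} - \Pi_{h}^{\rm M} u)|$ is $O(h^{2})$ by Cauchy-Schwarz since both factors are $O(h)$ in the broken energy norm; and $|a_{h}(u - u_{h}, u - u_{h})|$ is $O(h^{2})$ by squaring the source-problem energy estimate. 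The remaining term $2a_{h}(u - \Pi_{h}^{\rm M} u, \Pi_{h}^{\rm M} u)$ I would rewrite as $-2|u - \Pi_{h}^{\rm M} u|_{1,h}^{2} + 2a_{h}(u - \Pi_{h}^{\rm M} u, u)$ and then, via cell-wise integration by parts using $-\Delta u = \lambda \rho u$, reduce it to an $L^{2}$ interpolation term of order $h^{2}$ plus the consistency defect $E_{h}(u, \Pi_{h}^{\rm M} u)$ appearing in Lemma \ref{lem:consisRM}.

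The main obstacle is to bring the consistency term $E_{h}(u, \Pi_{h}^{\rm M} u) = \sum_{e} \int_{e} \partial_{n} u \, \jump{\Pi_{h}^{\rm M} u}_{e}$ down to order $h^{2}$; Lemma \ref{lem:consisRM}(a) alone only gives $O(h)$. To extract the extra factor of $h$ one must exploit the nodal structure of $V_{hs}^{\rm M}$, namely the vanishing of $\jump{\Pi_{h}^{\rm M} u}_{e}$ at the endpoints of each edge and the continuity of the mean of its normal derivative across $e$, in order to subtract the local $P_{0}$ projection of $\partial_{n} u$ on each edge and apply a Bramble-Hilbert-type argument. This is the technical core of the estimate and mirrors the analysis in \cite{XY.Meng;XQ.Yang;S.Zhang2016}; once it is in place, part (a) follows, and part (b) is obtained verbatim using the improved source-problem energy estimate from Lemma \ref{lem:consisRM}(b).
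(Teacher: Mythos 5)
Your proposal is correct and matches the paper's route: the paper itself disposes of this theorem in one line, invoking the source-problem error estimates of \cite{XY.Meng;XQ.Yang;S.Zhang2016} together with the standard Babuska--Osborn argument, which is exactly what you carry out, and your treatment of the eigenvalue bound via the expansion \eqref{eq:umuh} is the same mechanism the paper uses explicitly in Section \ref{sec:rmrev}. One small simplification: the consistency term $E_{h}(u,\Pi_{h}^{\rm M}u)$ that you flag as the technical core is already $O(h^{2})$ by the first (cell-wise) form of the bound in Lemma \ref{lem:consisRM}(a) combined with the $H^{2}$-stability $|\Pi_{h}^{\rm M}u|_{2,K}\lesssim |u|_{2,K}$, so no separate edge-wise Bramble--Hilbert argument is needed.
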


\noindent Moreover, we obtain the lower-bound property of eigenvalue approximations by the RM element.
\begin{theorem}\label{thm:lowerM}
Let $\lambda_{j}$ and $ \lambda_{j,h}^{\rm{M}} $ be an exact eigenvalue and its approximation by the RM element. Suppose that $u_{j}\in H_{0}^{1}(\Omega)\cap H^{3}(\Omega)$ and the mesh is uniform. When $h$ is small enough, we have
\begin{align}\label{eq:lowerM}
\lambda_{j} - \lambda_{j,h}^{\rm{M}} \geqslant C_{\rm M}h^{2}. 
\end{align}
where $C_{\rm M}$ is a positive constant independent of $h$.
\end{theorem}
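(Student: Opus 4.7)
The plan is to apply the expansion identity \eqref{eq:umuh} with $u=u_j$, $u_h=u_{j,h}^{\rm M}$, and $\Pi_h=\Pi_h^{\rm M}$, writing
\begin{align*}
\lambda_j-\lambda_{j,h}^{\rm M}
&= 2\lambda_{j,h}^{\rm M}\,b(u_{j,h}^{\rm M},\Pi_h^{\rm M}u_j-u_j)
 - \lambda_{j,h}^{\rm M}\,b(u_j-u_{j,h}^{\rm M},u_j-u_{j,h}^{\rm M}) \\
&\quad + 2a_h(u_j-\Pi_h^{\rm M}u_j,\Pi_h^{\rm M}u_j)
 + 2a_h(u_j-\Pi_h^{\rm M}u_j,u_{j,h}^{\rm M}-\Pi_h^{\rm M}u_j) \\
&\quad + a_h(u_j-u_{j,h}^{\rm M},u_j-u_{j,h}^{\rm M}),
\end{align*}
and then showing that a single term on the right-hand side is bounded \emph{below} by a positive multiple of $h^2$, while the remaining four are all of strictly higher order; the stated lower bound then follows immediately.

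The lower-bounded term is $2a_h(u_j-\Pi_h^{\rm M}u_j,\Pi_h^{\rm M}u_j)$, and the required estimate $a_h(u_j-\Pi_h^{\rm M}u_j,\Pi_h^{\rm M}u_j)\geqslant \alpha h^2$ with $\alpha>0$ independent of $h$ is supplied directly by Lemma \ref{lem:propertyPiRM}. This step is the genuine heart of the argument and, structurally, the main obstruction to proving any lower-bound theorem of this type; fortunately it is already established in \cite{XY.Meng;XQ.Yang;S.Zhang2016} and quoted here. For it to apply one needs the uniform-grid hypothesis essentially, together with the mild generic condition $\|\partial^2 u_j/\partial x\partial y\|_{0,\rho}\neq 0$, which holds for Dirichlet eigenfunctions on a rectangle.

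For the remaining four terms I would rely on routine book-keeping. Since $P_2(K)\subset P_K^{\rm M}$, the Bramble--Hilbert lemma combined with scaling yields $\|u_j-\Pi_h^{\rm M}u_j\|_0\lesssim h^3|u_j|_3$ together with $|u_j-\Pi_h^{\rm M}u_j|_{1,h}\lesssim h^2|u_j|_3$. Using $\|u_{j,h}^{\rm M}\|_{0,\rho}=1$ and $\rho\in L^\infty(\Omega)$, the first of the above two bounds gives $|2\lambda_{j,h}^{\rm M}\,b(u_{j,h}^{\rm M},\Pi_h^{\rm M}u_j-u_j)|\lesssim h^3$. Theorem \ref{thm:EerrorRM} furnishes $|u_j-u_{j,h}^{\rm M}|_{1,h}\lesssim h^2$ and $\|u_j-u_{j,h}^{\rm M}\|_0\lesssim h^2$ on uniform grids, and hence also $|u_{j,h}^{\rm M}-\Pi_h^{\rm M}u_j|_{1,h}\lesssim h^2$ by the triangle inequality; each of the two cross terms and the two quadratic remainders in the expansion is then $\lesssim h^4$.

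Summing, $\lambda_j-\lambda_{j,h}^{\rm M}\geqslant 2\alpha h^2 - Ch^3 \geqslant \alpha h^2$ for all sufficiently small $h$, which proves the theorem with $C_{\rm M}=\alpha$. The one subtle book-keeping point worth flagging is that the $L^2$ estimate for the interpolation error must be genuinely one power of $h$ better than the energy-norm estimate (i.e.\ $h^3$ versus $h^2$); this gap is exactly what pushes the $b(u_{j,h}^{\rm M},\Pi_h^{\rm M}u_j-u_j)$ contribution strictly below $h^2$ and leaves the Lemma \ref{lem:propertyPiRM} term unchallenged as the dominant one.
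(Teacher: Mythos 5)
Your proposal is correct and follows essentially the same route as the paper: the identity you write down is exactly the expansion \eqref{eq:eigenExpanM} used in the paper's proof, the dominant term $2a_h(u_j-\Pi_h^{\rm M}u_j,\Pi_h^{\rm M}u_j)\geqslant 2\alpha h^2$ is supplied by Lemma \ref{lem:propertyPiRM} in both arguments, and the remaining terms are bounded by $h^3$ and $h^4$ exactly as in the paper via Theorem \ref{thm:EerrorRM} and the standard interpolation estimates. Your explicit remark that the hypothesis $\|\partial^2 u_j/\partial x\partial y\|_{0,\rho}\neq 0$ of Lemma \ref{lem:propertyPiRM} is automatically satisfied for eigenfunctions is a point the paper only addresses in a later remark, but it does not change the substance of the argument.
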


\begin{proof}
We have the basic expansion by \cite{Y.Yang;Z.Zhang2010,Y.Yang;J.Han;H.Bi;Y.Yu2015}, which generalizes the identity introduced by \cite{M.Armentano;R.Duran2004},
\begin{equation}\label{eq:eigenExpanM}
\begin{split}
\lambda_{j} - \lambda_{j,h}^{\rm{M}} =& \big|u_{j}-u_{j,h}^{\rm{M}} \big|^{2}_{1,h} - \lambda_{j,h}^{\rm{M}}\big\|u_{j}-u_{j,h}^{\rm{M}}\big\|^{2}_{0,\rho} - 2\lambda_{j,h}^{\rm{M}}b(u_{j} - \Pi_{h}^{\rm{M}}u_{j},u_{j,h}^{\rm{M}}) \\
&+ 2a_{h}(u_{j} - \Pi_{h}^{\rm{M}}u_{j},\Pi_{h}^{\rm{M}}u_{j}) + 2a_{h}(u_{j} - \Pi_{h}^{\rm{M}}u_{j},u_{j,h}^{\rm{M}}-\Pi_{h}^{\rm{M}}u_{j}).
\end{split}
\end{equation}
From Theorem \ref{thm:EerrorRM}, the first two terms  can be bounded as 
$$
\big\|u_{j}-u_{j,h}^{\rm{M}}\big\|_{0,\rho}^{2} \lesssim \big|u_{j}-u_{j,h}^{\rm{M}}\big|_{1,h}^{2} \lesssim h^{4}.
$$ 
From a standard interpolation theory in \cite{Wang.M;Shi.Z2013mono}, the assumption $\big\|u_{j,h}^{\rm{M}}\big\|_{0,\rho} = 1$, and Theorem \ref{thm:EerrorRM} (b), the third and last terms have the  estimates below
\begin{align*}
b(u_{j}-\Pi_{h}^{\rm{M}}u_{j},u_{j,h}^{\rm{M}}) & \lesssim \big\|u_{j}-\Pi_{h}^{\rm{M}}u_{j}\big\|_{0,\rho} \big\|u_{j,h}^{\rm{M}}\big\|_{0,\rho} \lesssim h^{3},
\\
 a_{h}(u_{j}-\Pi_{h}^{\rm{M}}u_{j} , u_{j,h}^{\rm{M}}-\Pi_{h}^{\rm{M}}u_{j})
 & \lesssim \big|u_{j}-\Pi_{h}^{\rm{M}}u_{j}\big|_{1,h}\big|u_{j,h}^{\rm{M}} - \Pi_{h}^{\rm{M}}u_{j}\big|_{1,h} \lesssim h^{4}.
 \end{align*}
 
\noindent When $h$ is small enough, it follows from Lemma \ref{lem:propertyPiRM} that
$$
a_{h}(u_{j}-\Pi_{h}^{\rm{M}}u_{j},\Pi_{h}^{\rm{M}}u_{j})\geqslant \alpha h^{2}.
$$ 
Thus,  $a_{h}(u_{j}-\Pi_{h}^{\rm{M}}u_{j},\Pi_{h}^{\rm{M}}u_{j})$ becomes the dominant term on the right-hand-side of \eqref{eq:eigenExpanM}. Hence the result.
\end{proof}

\section{Reduced rectangular Morley element space for $H^1$ problems}
\label{sec:rrmscheme}

\subsection{Reduced rectangular Morley element space}
We introduce an reduced rectangular Morley (RRM) element space by
\begin{equation}
\begin{split}
V_{h}^{\rm{R}} := \Big\{w_{h}\in L^{2}(\Omega) : w_{h}|_{K} \in P_{2}(K), & \; w_{h}(a)\mbox{ is continuous at any } a\in \mathcal{N}_{h}^{i}, \\
& \mbox{and }\fint_{e}\partial_{n_{e}} w_{h} \ud s \mbox{ is continuous across any } e \in \mathcal{E}_{h}^{i}\Big\},
\end{split}
\end{equation}
and, associated with $H_{0}^{1}(\Omega)$, define
\begin{equation}
V_{hs}^{\rm{R}} :=\big\{w_{h}\in V_{h}^{\rm{R}} :  w_{h}(a)=0,\forall a\in \mathcal{N}_{h}^{b} \big\}.
\end{equation}

\begin{theorem}\label{thm:basisofRRM}
If $\mathcal{G}_{h}$ is a $m\times n$ rectangular subdivision of $\Omega$, then ${\rm dim} (V_{hs}^{\rm {R}} )= mn+1$.
\end{theorem}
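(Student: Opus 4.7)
The plan is to convert the defining continuity conditions of $V_{hs}^{\rm R}$ into a linear algebra problem on the natural ``DOF space''---interior vertex values plus edge normal-derivative averages---and count free parameters. The point is that while the Morley-type DOF set $\{v(a_{i}),\ \fint_{e_{i}}\partial_{n_{e_{i}}} v \ud s\}$ has cardinality $8$ on a single rectangle, $\dim P_{2}(K)=6$, so each cell will impose exactly two linear compatibility conditions on its DOFs; the main work is to identify them and to verify that the resulting $2mn$ global conditions have full rank.

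First I would perform a local analysis on $K=[0,h_{x}]\times[0,h_{y}]$. Writing a generic $w\in P_{2}(K)$ as $a+bx+cy+dx^{2}+exy+fy^{2}$ and computing the four vertex values $v_{1},\ldots,v_{4}$ (labelled counter-clockwise from the lower-left) and the four outward-normal-derivative averages $n_{b},n_{t},n_{l},n_{r}$, elimination of the six coefficients leaves exactly two independent relations,
\begin{equation*}
\frac{v_{2}+v_{3}-v_{1}-v_{4}}{h_{x}}=n_{r}-n_{l},\qquad \frac{v_{3}+v_{4}-v_{1}-v_{2}}{h_{y}}=n_{t}-n_{b},
\end{equation*}
and conversely any $8$-tuple satisfying these relations extends to a unique $w\in P_{2}(K)$.

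Globally I would introduce the DOF-evaluation map
\begin{equation*}
\sigma:V_{hs}^{\rm R}\longrightarrow \mathbb{R}^{(m-1)(n-1)}\oplus\mathbb{R}^{2mn+m+n}
\end{equation*}
reading off interior vertex values (boundary values vanish by definition) and the edge normal-derivative averages with a fixed orientation per edge. Injectivity of $\sigma$ comes from cellwise uniqueness in the local step, and the image is exactly the subspace of the codomain cut out by the $2mn$ cell-wise compatibility conditions; hence $\dim V_{hs}^{\rm R}=3mn+1-r$ where $r$ is the rank of the compatibility system. The decisive observation is that the first family of local conditions involves only vertical-edge normal averages as edge variables, while the second family involves only horizontal-edge ones, so on the edge side the two families decouple and any linear dependence must split as one within each family. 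Within the $j$-th horizontal strip of $m$ cells the $m$ first-family equations read $N_{e_{i-1}^{(j)}}+N_{e_{i}^{(j)}}=\Delta_{i}^{(j)}$ for $i=1,\ldots,m$, whose $m\times(m+1)$ bidiagonal coefficient matrix with consecutive $(1,1)$-pairs has full row rank $m$; different strips involve disjoint vertical edges, giving $mn$ independent first-family relations, and symmetrically for the $mn$ second-family relations.

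Combining these counts yields
\begin{equation*}
\dim V_{hs}^{\rm R}=(m-1)(n-1)+(2mn+m+n)-2mn=mn+1.
\end{equation*}
The main obstacle I anticipate is the sign bookkeeping when passing from cell-local outward normals (which define each $n_{i}^{(K)}$) to a globally oriented normal on shared edges; once handled uniformly this amounts only to multiplying rows or columns of the coefficient matrix by $\pm 1$, after which the per-strip bidiagonal structure and the edge-disjointness between the two compatibility families make the independence of the $2mn$ constraints immediate. Sanity checks on the smallest cases ($m=n=1$ gives $2$, $m=n=2$ gives $5$) agree with the formula.
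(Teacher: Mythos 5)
Your proof is correct, but it takes a genuinely different route from the paper's. Both arguments rest on the same local fact (the compatibility lemma at the start of Appendix \ref{sec:appB}, quoted from \cite{Shuo.Zhang2017}): the Morley-type data $\big\{v(a_i),\ \fint_{e_i}\partial_{n_{e_i}}v\big\}$ identifies $P_2(K)$ with the codimension-two subspace of $\mathbb{R}^8$ cut out by exactly the two relations you derived. From there the paper proceeds constructively: it exhibits explicit one-dimensional ``pattern'' subspaces (boundary-edge patterns, $3\times 3$ interior cell patterns, four global column/row patterns, and one corner-vertex pattern), proves each is one-dimensional by solving the local compatibility systems on its support, and then runs a sweeping/elimination argument to show that $V_{hs}^{\rm R}$ is their direct sum, whence $\dim V_{hs}^{\rm R}=2(m-2)+2(n-2)+(m-2)(n-2)+4+1=mn+1$. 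You instead encode the space through the injective global DOF map into $\mathbb{R}^{(m-1)(n-1)}\oplus\mathbb{R}^{2mn+m+n}$, whose image is cut out by the $2mn$ cell-wise compatibility conditions, and you compute the rank of that system; your decoupling observation --- the $x$-relations touch only vertical-edge variables, the $y$-relations only horizontal-edge ones, and within each strip the $m\times(m+1)$ bidiagonal block has full row rank while distinct strips use disjoint edges --- does establish rank $2mn$, so the count $3mn+1-2mn=mn+1$ is sound (and your injectivity and image characterizations for $\sigma$ are both justified by the local lemma). Your argument is shorter and makes the independence of the constraints transparent; what it does not deliver, and what the paper actually needs for implementation and for its subsequent analysis, is an explicit set of basis functions, in particular the fact that some of them necessarily have non-local (entire column or row) supports.
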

\noindent Detailed proof of Theorems \ref{thm:basisofRRM} and an available set of basis functions of $V_{hs}^{\rm {R}}$ are put in Appendix~\ref{sec:appB}. 

For any function $v_{h}$ in the RRM element space, the number of continuity restrictions across internal edges is greater than ${\rm dim}\big(P_{2}(K)\big)$, which makes it a nontrivial task to find out a set of basis functions of $V_{hs}^{\rm {R}}$, and it is not even easy to tell if the space contains non-zero functions. Actually, the proof of Theorem \ref{thm:basisofRRM} in Appendix \ref{sec:appB} ensures that the RRM element space is non-zero. From the analysis therein, the supports of the basis functions in $V_{hs}^{\rm {R}}$ are not completely local, making it complicated to construct an interpolation operator from $V$ to $V_{hs}^{\rm {R}}$, which, however, plays a fundamental role in the approximation error analysis.

\begin{remark}\rm{
Since a non-convex domain which can be covered by a rectangular subdivision can be considered as a combination of several rectangular regions, a nontrivial RRM element space can still be expected on it.}
\end{remark}

\subsection{Approximation property of the RRM element space} 
The main result of this subsection is the theorem below. 
\begin{theorem}\label{thm:approRRM}
Given $u\in H^1_0(\Omega)\cap H^3(\Omega)$, we have
$$
\inf_{v_h\in V_{hs}^{\rm R}}|u-v_h|_{1,h}\lesssim h^{\alpha}|u|_{1+\alpha,\Omega},\ \ \alpha=1,2.
$$
\end{theorem}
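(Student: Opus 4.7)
The plan is to sidestep the lack of a local interpolation into $V_{hs}^{\rm R}$ by working through the discrete stream-function relationship between $V_{hs}^{\rm R}$ and the discretely divergence-free subspace of $\uV{}^{\rm PS}_{h\mathbf{n}}$, in the spirit indicated in the introduction and of \cite{Shuo.Zhang2017}. The key pointwise identity is $|\nabla w|=|\curl w|$ for scalar $w$, which transfers $|u-w_h|_{1,h}=\|\curl u-\curl_h w_h\|_{0,\Omega}$ and reduces the theorem to approximating a divergence-free vector field in a subspace of $\uV{}^{\rm PS}_{h\mathbf{n}}$.

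First I would establish the discrete exact sequence
$$0\longrightarrow V_{hs}^{\rm R}\xrightarrow{\;\curl_h\;}\uV{}^{\rm PS}_{h\mathbf{n}}\xrightarrow{\;\dv_h\;}\mathcal{L}^0_{h0}\longrightarrow 0.$$
The inclusion $\curl_h V_{hs}^{\rm R}\subset\uV{}^{\rm PS}_{h\mathbf{n}}$ is verified cellwise: continuity of $w_h\in V_{hs}^{\rm R}$ at interior vertices yields mean continuity of the tangential component of $\curl_h w_h$ across each interior edge (by integration along the edge), continuity of $\fint_e\partial_{\mathbf{n}_e}w_h$ yields that of the normal component, and the homogeneous boundary nodal values of $w_h$ supply $\int_e(\curl_h w_h)\cdot\mathbf{n}=0$ on each boundary edge. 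Injectivity of $\curl_h$ is immediate since a piecewise-constant function vanishing at all boundary vertices must vanish. Exactness at $\uV{}^{\rm PS}_{h\mathbf{n}}$ and surjectivity of $\dv_h$ onto $\mathcal{L}^0_{h0}$ are then obtained from a dimension count based on Theorem \ref{thm:basisofRRM} together with the known dimension of $\uV{}^{\rm PS}_{h\mathbf{n}}$ implied by the Park--Sheen construction.

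Second, given $u\in H^1_0(\Omega)\cap H^{1+\alpha}(\Omega)$, I set $\uphi:=\curl u$, a pointwise divergence-free field in $\uH{}^1_\mathbf{n}(\Omega)$, and construct a discretely divergence-free approximation $\uphi_h\in\uV{}^{\rm PS}_{h\mathbf{n}}$ via Lemma \ref{lem:relationBLPS}. Concretely, let $u_h^{\rm BL}\in V_{h0}^{\rm BL}$ be a suitable $Q_1$ approximation of $u$; then $\curl u_h^{\rm BL}\in\uV{}^{\rm BL}_{h\mathbf{n}}$ (its normal component vanishes on $\partial\Omega$ because $u_h^{\rm BL}$ vanishes at boundary vertices) and is pointwise divergence-free. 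Setting $\uphi_h:=\undertilde{\Pi}{}^{\rm rQ}_h(\curl u_h^{\rm BL})$, Lemma \ref{lem:relationBLPS} places $\uphi_h$ in $\uV{}^{\rm PS}_{h\mathbf{n}}$, and $\dv_h\uphi_h=0$ because $\undertilde{\Pi}{}^{\rm rQ}_h$ preserves the cellwise mean divergence, which is zero for $\curl u_h^{\rm BL}$. The exact sequence from the first step then produces a unique $w_h^\ast\in V_{hs}^{\rm R}$ with $\curl_h w_h^\ast=\uphi_h$, and
$$|u-w_h^\ast|_{1,h}=\|\uphi-\uphi_h\|_{0,\Omega}\le\|\uphi-\undertilde{\Pi}{}^{\rm rQ}_h\uphi\|_{0,\Omega}+\|\undertilde{\Pi}{}^{\rm rQ}_h(\curl u-\curl u_h^{\rm BL})\|_{0,\Omega}.$$
The first term is bounded by Lemma \ref{lem:converRQ} applied componentwise, giving $O(h^\alpha)|u|_{1+\alpha,\Omega}$. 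The second is handled by $L^2$-stability of $\undertilde{\Pi}{}^{\rm rQ}_h$ combined with standard $Q_1$ approximation bounds, refined for $\alpha=2$ via the super-approximation behavior of $\undertilde{\Pi}{}^{\rm rQ}_h$ on uniform rectangular grids.

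The main obstacle is the second step: certifying that $\undertilde{\Pi}{}^{\rm rQ}_h(\curl u_h^{\rm BL})$ genuinely lies in $\uV{}^{\rm PS}_{h\mathbf{n}}$ with its discrete divergence-free property intact, and, above all, securing the $O(h^2)$ bound for $\alpha=2$ despite the mere $O(h)$ $H^1$-approximation of $u$ by $u_h^{\rm BL}$. The quadratic rate must be extracted from the cancellation afforded by $\undertilde{\Pi}{}^{\rm rQ}_h$ acting on $\curl(u-u_h^{\rm BL})$ on uniform grids, through a careful elementwise analysis rather than any pointwise estimate.
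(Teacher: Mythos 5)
Your reduction is the same as the paper's: the identity $|u-v_h|_{1,h}=\|\curl u-\curl_h v_h\|_{0,\Omega}$ together with the characterization $\curl_h V_{hs}^{\rm R}=\{\uz{}_h\in\uV{}^{\rm PS}_{h\bf n}:\dv_h\uz{}_h=0\}$ (the paper gets this from the exact sequence $0\to V_{hs}^{\rm M}\to\uV{}_{h\bf n}^{\rm LTZ}\to\mathcal{L}_h^{1,-1}\to 0$ plus the observation that a piecewise-linear curl forces a piecewise-quadratic potential; your direct verification plus dimension count is a workable alternative). The gap is in how you produce the discretely divergence-free approximant of $\curl u$. You take $\uphi_h=\undertilde{\Pi}{}^{\rm rQ}_h(\curl u_h^{\rm BL})$ with $u_h^{\rm BL}$ a $Q_1$ approximation of $u$, and you yourself flag that the $\alpha=2$ case then hinges on an unproven superconvergence cancellation. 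That cancellation does not hold: no $Q_1$ function approximates $u$ to better than $O(h)$ in $H^1$, and the edge-mean functionals defining $\undertilde{\Pi}{}^{\rm rQ}_h$ do not annihilate the leading error. Concretely, for $u=x^2$ on a uniform grid the nodal $Q_1$ interpolant on a cell $[0,h]\times[0,h]$ is $hx$, so $\fint_e\partial_x(u-I_hu)\,ds=h$ on the edge $x=h$; the edge-mean normal-derivative error is genuinely $O(h)$, whence $\|\undertilde{\Pi}{}^{\rm rQ}_h\curl(u-u_h^{\rm BL})\|_{0,\Omega}=O(h)$ and your bound stalls at $\alpha=1$.

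The structural reason is that your target lives in the too-small subspace $\undertilde{\Pi}{}^{\rm rQ}_h\curl V^{\rm BL}_{h0}$, i.e.\ images of \emph{exactly} curl-type $Q_1$ fields, whereas the discretely divergence-free subspace of $\uV{}^{\rm BL}_{h\bf n}$ (divergence-free only against piecewise constants) is strictly larger, and only there is second-order $L^2$ accuracy attainable. The paper's Theorem \ref{thm:approdf} exploits exactly this: it takes $\uy{}_h$ to be the velocity of the discrete $Q_1$--$P_0$ Stokes problem \eqref{eq:stokesq1p0} with data $-\Delta\uu$ (where $\uu=\curl u$), uses {\bf Hypothesis RT} for inf-sup stability, and gets $\|\uu-\uy{}_h\|_{0,\Omega}\lesssim h^2|\uu|_{2,\Omega}$ by an Aubin--Nitsche duality argument resting on the $\uH^2\times H^1$ Stokes regularity of Lemma \ref{lem:regstokes}; only then is $\undertilde{\Pi}{}^{\rm rQ}_h$ applied to land in $\uV{}^{\rm PS}_{h\bf n}$. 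To close your proof you would need to replace your interpolated-curl construction by this Stokes-duality step (or an equivalent $L^2$-duality mechanism), and correspondingly add the grid hypothesis it requires.
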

We postpone the proof of Theorem \ref{thm:approRRM} after some technical lemmas. Let $\uf\in \big(\uH{}^1_{\bf n}(\Omega)\big)'$. We firstly consider the regularity of the Stokes problem: Find $(\uu,p)\in \uH{}^1_{\bf n}(\Omega)\times L^2_0(\Omega)$, such that 
\begin{equation}\label{eq:stokes}
\left\{
\begin{array}{lll}
(\nabla\uu,\nabla \uv)+(p,\dv\uv)&=(\uf,\uv), &\forall\,\uv\in \uH{}^1_{\bf n}(\Omega),
\\
(q,\dv\uu)&=0,&\forall\,q\in L^2_0(\Omega).
\end{array}
\right.
\end{equation}

\begin{lemma}\label{lem:regstokes}
Let $\Omega$ be a rectangle. If $\uf\in \uL^2(\Omega)$, then $(\uu,p)\in \uH^2(\Omega)\times H^1(\Omega)$.
\end{lemma}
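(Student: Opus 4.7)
The plan is to reduce the Stokes system \eqref{eq:stokes} to a biharmonic equation with Navier boundary conditions for a scalar stream function, and then read off the sharp regularity from the explicit Fourier solvability of that biharmonic problem on the rectangle.

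First I would introduce a stream function: since $\dv\uu = 0$ and $\Omega$ is simply connected, there exists $\phi \in H^2(\Omega)$ with $\uu = \curl\phi$, and the imposed boundary condition $\uu\cdot\mathbf{n} = 0$ on $\partial\Omega$ forces $\phi$ to be constant on $\partial\Omega$, so after normalization $\phi \in H^2(\Omega) \cap H^1_0(\Omega)$. Applying the two-dimensional scalar rotation $\rot$ to the momentum equation $-\Delta\uu + \nabla p = \uf$ annihilates the pressure and produces $\Delta^2\phi = \rot\uf$ in $\Omega$, with right-hand side in $H^{-1}(\Omega)$ because $\uf \in \uL^2(\Omega)$.

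Next I would extract the missing boundary condition for $\phi$ from the variational formulation. Integrating by parts in \eqref{eq:stokes} against an arbitrary $\uv \in \uH^1_{\mathbf n}(\Omega)$ (zero normal trace, free tangential trace) and using the interior identity $-\Delta\uu + \nabla p = \uf$ yields the natural boundary condition $(\partial_{\mathbf n}\uu)\cdot\boldsymbol{\tau} = 0$ on $\partial\Omega$. On an axis-parallel edge of the rectangle this is exactly the vanishing of the second normal derivative of $\phi$; coupled with $\phi \equiv 0$ along the edge, which forces the second tangential derivative to vanish too, it delivers $\Delta\phi = 0$ on $\partial\Omega$. Hence $\phi$ solves the biharmonic problem with Navier data on $\Omega$.

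Finally, the rectangular geometry admits an explicit resolution: on $\Omega = (0,L_1)\times(0,L_2)$ the basis $\psi_{mn}(x,y) = \sin(m\pi x/L_1)\sin(n\pi y/L_2)$ simultaneously satisfies both Navier conditions and diagonalizes $-\Delta$ with eigenvalues $\lambda_{mn} = (m\pi/L_1)^2 + (n\pi/L_2)^2$, hence diagonalizes $\Delta^2$ with eigenvalues $\lambda_{mn}^2$. Writing $\rot\uf = \sum g_{mn}\psi_{mn}$, the elliptic bound $\|\rot\uf\|_{-1,\Omega} \lesssim \|\uf\|_{0,\Omega}$ reads $\sum|g_{mn}|^2/\lambda_{mn} < \infty$, and the unique biharmonic-Navier solution $\phi = \sum(g_{mn}/\lambda_{mn}^2)\psi_{mn}$ obeys $|\phi|_{3,\Omega}^2 \cequiv \sum|g_{mn}|^2/\lambda_{mn} < \infty$, so $\phi \in H^3(\Omega)$. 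Consequently $\uu = \curl\phi \in \uH^2(\Omega)$, and $\nabla p = \uf + \Delta\uu \in \uL^2(\Omega)$ yields $p \in H^1(\Omega)$. I expect the subtlest point to be the rigorous derivation of $\Delta\phi = 0$ on $\partial\Omega$: with only $\uu \in \uH^1$ available a priori, $\partial_{\mathbf n}\uu$ has trace merely in $H^{-1/2}$ and the natural condition must be interpreted through the variational identity rather than pointwise; a clean workaround is to pose the biharmonic-Navier problem variationally in $H^2 \cap H^1_0$ with $H^{-1}$ data, prove its unique solvability directly, and use uniqueness in \eqref{eq:stokes} to identify its stream function with the Fourier-series $\phi$.
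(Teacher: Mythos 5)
Your argument is correct and shares the paper's overall skeleton: both reduce to the stream function $\varphi\in H^2(\Omega)\cap H^1_0(\Omega)$ with $\curl\varphi=\uu$, observe that it solves the variational biharmonic problem $(\nabla\curl\varphi,\nabla\curl\psi)=(\uf,\curl\psi)$ over $H^2(\Omega)\cap H^1_0(\Omega)$, deduce $\varphi\in H^3(\Omega)$, and then recover $p\in H^1(\Omega)$ from $\nabla p=\uf+\Delta\uu\in\uL^2(\Omega)$. Where you genuinely diverge is the key regularity step: the paper simply cites the corner-domain regularity theory of Blum and Rannacher for the biharmonic operator, whereas you prove the $H^3$ bound by hand, identifying the problem as the Navier (simply supported) plate and exploiting the fact that on a rectangle the Dirichlet sine basis $\psi_{mn}$ simultaneously diagonalizes $-\Delta$ and the Hessian bilinear form, so that $\|\rot\uf\|_{-1}\lesssim\|\uf\|_0$ translates directly into $\sum|g_{mn}|^2/\lambda_{mn}<\infty$ and hence the $H^3$-type bound on $\varphi=\sum(g_{mn}/\lambda_{mn}^2)\psi_{mn}$. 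Your route is more elementary and self-contained, and your instinct to bypass the pointwise condition $\Delta\varphi=0$ on $\partial\Omega$ by working purely variationally and invoking uniqueness is exactly right; the price is that it is tied to the rectangle, while the cited theorem covers more general corner domains. Two points you should make explicit to close the argument: (i) the spectral characterizations you use require that $H^2(\Omega)\cap H^1_0(\Omega)$ coincide with the domain of the Dirichlet Laplacian on the rectangle (Grisvard/Kadlec for convex domains), so that membership in these spaces is equivalent to summability of $\sum c_{mn}^2\lambda_{mn}^k$; and (ii) the bound $\sum c_{mn}^2\lambda_{mn}^3<\infty$ a priori only places $\varphi$ in the domain of $(-\Delta)^{3/2}$, i.e.\ gives $\Delta\varphi\in H^1_0(\Omega)$, and one then upgrades to $\varphi\in H^3(\Omega)$ by odd reflection across the edges (or by noting that the corner singular functions for right angles are polynomials), which is where the rectangular geometry is again used.
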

\begin{proof}
As $\dv\uu=0$, there exists a unique $\varphi\in H^2(\Omega)\cap H^1_0(\Omega)$, such that $\curl\varphi=\uu$. Moreover, $\varphi$ solves the biharmonic equation:
\begin{align*}
(\nabla\curl\varphi,\nabla\curl\psi)=(\uf,\curl\psi),\quad\forall\,\psi\in H^2(\Omega)\cap H^1_0(\Omega).
\end{align*}
By the regularity theory of the biharmonic equation (see \cite[Theorem 2 ]{Blum.H;Rannacher.R1980}),  we have $\varphi\in H^3(\Omega)$, and $\|\varphi\|_{3,\Omega} \lesssim \sup\limits_{\psi\in H^1_0(\Omega)\setminus\{0\}} {\dfrac{( \uf,\curl \psi)}{\|\psi\|_{1,\Omega}} }\lesssim \|\uf\|_{0,\Omega}$. Furthermore, $\nabla p=\uf+\Delta\uu$, and $\|p\|_{1,\Omega}\lesssim \|\uf+\Delta\uu\|_{0,\Omega}\lesssim \|\uf\|_{0,\Omega}$.  The proof is completed.
\end{proof}

A related finite element problem is to find $(\uu{}_h,q_h)\in \uV{}^{\rm BL}_{h\bf n}\times\mathcal{L}^0_{h0}$, such that 
\begin{equation}\label{eq:stokesq1p0}
\left\{
\begin{array}{ll}
(\nabla\uu{}_h,\nabla\uv{}_h)+(p_h,\dv\uv{}_h)&=(\uf,\uv{}_h)
\\
(q_h,\dv\uu{}_h)&=0.
\end{array}
\right.
\end{equation}
To ensure the convergence of the finite element scheme in Theorem \ref{thm:stokes}, we need the following hypothesis:
\paragraph{\bf Hypothesis RT} A rectangular grid $\mathcal{G}_h$ is called to satisfy the hypothesis {\bf Hypothesis RT} if and only if it is generated by refining a grid $\mathcal{G}_{4h}$ twice. 

\begin{theorem}\label{thm:stokes}
Let $\mathcal{G}_h$ be a grid that satisfies {\bf Hypothesis RT}. Let $(\uu,p)$ and $(\uu{}_h,p_h)$ be the solutions of \eqref{eq:stokes} and \eqref{eq:stokesq1p0}, respectively. If $(\uu,p)\in \uH^2(\Omega)\times H^1(\Omega)$, then
\begin{align}
|\uu-\uu{}_h|_{1,\Omega}\lesssim h(|u|_{2,\Omega}+|p|_{1,\Omega}),
\end{align}
and further
\begin{align}
\|\uu-\uu{}_h\|_{0,\Omega}\lesssim h^2(|u|_{2,\Omega}+|p|_{1,\Omega}).
\end{align}
\end{theorem}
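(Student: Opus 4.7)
The plan is to cast \eqref{eq:stokesq1p0} in the Brezzi saddle-point framework on the pair $\uV{}^{\rm BL}_{h\bf n}\times\mathcal{L}^0_{h0}$, obtain the energy estimate for $|\uu-\uu{}_h|_{1,\Omega}$ from the abstract mixed-method theory, and upgrade to the $L^2$ bound through an Aubin--Nitsche duality. Two preliminary ingredients are needed: coercivity of $(\nabla\cdot,\nabla\cdot)$ on the discrete divergence-free subspace (immediate from the Poincar\'e inequality on $\uH{}^1_{\bf n}(\Omega)$), and a uniform discrete inf-sup
\begin{equation*}
\inf_{q_h\in\mathcal{L}^0_{h0}}\sup_{\uv{}_h\in\uV{}^{\rm BL}_{h\bf n}}\frac{(q_h,\dv\uv{}_h)}{\|q_h\|_{0,\Omega}\,|\uv{}_h|_{1,\Omega}}\geqslant\beta>0
\end{equation*}
with $\beta$ independent of $h$.

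Establishing the inf-sup is the principal obstacle and the sole place where Hypothesis RT is used, because $Q_1$--$P_0$ is well known to admit checkerboard pressure instabilities on generic rectangular grids. The natural route is a macro-element argument of Boland--Nicolaides / Stenberg type: treat each cell of $\mathcal{G}_{4h}$ as a macro containing the $4\times 4$ refinement coming from $\mathcal{G}_h$, verify a local inf-sup inside each macro modulo the single constant pressure mode on $M$ (possible because the interior of every macro now carries enough $Q_1$ velocity degrees of freedom to kill that mode), and then combine this with the continuous surjectivity $\dv:\uH{}^1_{\bf n}(\Omega)\to L^2_0(\Omega)$ through a Fortin-type quasi-interpolation that preserves macro-wise normal fluxes. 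The two-step refinement is precisely what supplies the interior and boundary degrees of freedom these two steps require.

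Once stability is in hand, Brezzi's theorem yields the quasi-optimal bound
\begin{equation*}
|\uu-\uu{}_h|_{1,\Omega}+\|p-p_h\|_{0,\Omega}\lesssim\inf_{\uv{}_h\in\uV{}^{\rm BL}_{h\bf n}}|\uu-\uv{}_h|_{1,\Omega}+\inf_{q_h\in\mathcal{L}^0_{h0}}\|p-q_h\|_{0,\Omega}.
\end{equation*}
The componentwise nodal $Q_1$ interpolant of $\uu\in\uH{}^2(\Omega)$ bounds the velocity infimum by $h|\uu|_{2,\Omega}$, while the cellwise $L^2$ projection of $p\in H^1(\Omega)$ bounds the pressure infimum by $h|p|_{1,\Omega}$, which settles the $H^1$ estimate.

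For the $L^2$ estimate I would apply a standard Aubin--Nitsche duality. Let $(\uw,r)\in\uH{}^1_{\bf n}(\Omega)\times L^2_0(\Omega)$ solve \eqref{eq:stokes} with datum $\uu-\uu{}_h$; by Lemma \ref{lem:regstokes}, $\|\uw\|_{2,\Omega}+\|r\|_{1,\Omega}\lesssim\|\uu-\uu{}_h\|_{0,\Omega}$. Testing the dual equation with $\uu-\uu{}_h$, invoking Galerkin orthogonality against $\uV{}^{\rm BL}_{h\bf n}$ and $\mathcal{L}^0_{h0}$, and bounding each remaining interpolation term with the extra factor of $h$ supplied by the enhanced $H^2\times H^1$ regularity of the dual pair, produces the required $h^2$ rate. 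Throughout, the inf-sup verification under Hypothesis RT is the essential point; everything else reduces to standard saddle-point Galerkin theory.
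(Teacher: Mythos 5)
There is a genuine gap at the foundation of your argument: the uniform discrete inf-sup condition for the pair $\uV{}^{\rm BL}_{h\bf n}\times\mathcal{L}^0_{h0}$ on the \emph{full} piecewise-constant pressure space, on which everything else in your proposal rests, is false, and the macro-element argument you sketch cannot establish it. Take a uniform grid (which satisfies {\bf Hypothesis RT}) and let $q_h\in\mathcal{L}^0_{h0}$ be the checkerboard, i.e.\ $q_h=\pm1$ alternately on the cells. Writing $(q_h,\dv\uv{}_h)=\sum_{K}q_h|_K\int_{\partial K}\uv{}_h\cdot\mathbf{n}_K$ and using that the restriction of a $Q_1$ function to an edge is affine, one can group the resulting sum over interior edges vertex by vertex; at every \emph{interior} vertex the four incident edges cancel exactly (opposite edges have equal length and the checkerboard jumps across them have opposite signs), so only tangential values at boundary vertices survive. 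Hence $(q_h,\dv\uv{}_h)\lesssim h^{1/2}\|q_h\|_{0,\Omega}\,|\uv{}_h|_{1,\Omega}$ for all $\uv{}_h\in\uV{}^{\rm BL}_{h\bf n}$, so the inf-sup constant degenerates as $h\to0$ (with full Dirichlet velocity data it is exactly zero). The same computation on a single macro-element $M$, where the admissible test velocities vanish on all of $\partial M$, shows that the local checkerboard of $M$ lies exactly in the kernel of the local divergence pairing whatever the macro size: the Boland--Nicolaides/Stenberg non-degeneracy condition fails because \emph{no} number of interior $Q_1$ degrees of freedom can see the checkerboard. So the step ``possible because the interior of every macro now carries enough $Q_1$ velocity degrees of freedom to kill that mode'' is precisely where the proof breaks.

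The theorem is nonetheless true, and the paper's proof (a one-line reduction) indicates the correct mechanism: combine the regularity of Lemma \ref{lem:regstokes} with the Girault--Raviart analysis of the $Q_1$--$P_0$ pair \cite[Theorems 3.4--3.5 and Corollary 3.2]{GiraultRaviart1986}, which does \emph{not} assert the full inf-sup but a weakened one on a filtered pressure space from which the (macro-wise) checkerboard modes have been removed, together with a consistency estimate showing that the discarded modes do not pollute the velocity. The two-fold refinement in {\bf Hypothesis RT} is exactly what this filtering construction requires. That route still delivers the quasi-optimal $O(h)$ velocity bound in $H^1$, and your Aubin--Nitsche duality step (which is otherwise fine once the correct stability is in hand) then gives the $O(h^2)$ bound in $L^2$. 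In short: keep the architecture, but replace the claimed uniform inf-sup by the filtered, weakened stability of Girault--Raviart.
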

Based on Lemma \ref{lem:regstokes}, the proof of Theorem \ref{thm:stokes} is just a duplication of the proofs of \cite[Theorems~3.4--3.5 and Corollary~3.2]{GiraultRaviart1986}, and we omit the details here. 

\begin{theorem}\label{thm:approdf}
Let $\mathcal{G}_h$ be a grid that satisfies {\bf Hypothesis RT}. Given $\uw\in \uH{}^1_{\bf n}(\Omega)\cap \uH^2(\Omega)$ satisfying $\dv\uw=0$. It holds that 
\begin{align}
\inf_{\uw{}_h\in{\undertilde{V}}{}^{\rm PS}_{h\bf n},\ \dv_h\uw{}_h=0}h|\uw-\uw{}_h|_{1,\Omega}+\|\uw-\uw{}_h\|_{0,\Omega}\lesssim h^2|\uw|_{2,\Omega}.
\end{align}
\end{theorem}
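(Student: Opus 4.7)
My plan is to construct $\uw_h$ as the rotated-$Q_1$ nodal interpolation of a BL-$P_0$ Stokes velocity approximant, and to use Lemma~\ref{lem:relationBLPS} to land in $\uV{}^{\rm PS}_{h{\bf n}}$. Specifically, I would first set up an auxiliary Stokes problem whose continuous solution is exactly $(\uw,0)$: with $\uf\in(\uH{}^1_{\bf n}(\Omega))'$ defined by $\langle\uf,\uv\rangle:=(\nabla\uw,\nabla\uv)$, this is satisfied by construction, and the required regularity $(\uw,0)\in\uH^2(\Omega)\times H^1(\Omega)$ is automatic from the hypothesis on $\uw$. Since $\mathcal{G}_h$ satisfies \textbf{Hypothesis RT}, Theorem~\ref{thm:stokes} applies to the conforming BL-$P_0$ discretisation $(\uu{}_h,p_h)\in\uV{}^{\rm BL}_{h{\bf n}}\times\mathcal{L}^0_{h0}$ of \eqref{eq:stokesq1p0}, yielding $|\uw-\uu{}_h|_{1,\Omega}\lesssim h|\uw|_{2,\Omega}$ and $\|\uw-\uu{}_h\|_{0,\Omega}\lesssim h^2|\uw|_{2,\Omega}$. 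I then set $\uw_h:=\undertilde{\Pi}{}^{\rm rQ}_h\uu{}_h$, which by Lemma~\ref{lem:relationBLPS} lies in $\uV{}^{\rm PS}_{h{\bf n}}$.

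Next I would verify that $\dv_h\uw_h\equiv 0$ cell-wise. Because $\undertilde{\Pi}{}^{\rm rQ}_h$ preserves edge averages, the divergence theorem gives $\int_K\dv\uw_h=\int_{\partial K}\uw_h\cdot\mathbf{n}=\int_{\partial K}\uu{}_h\cdot\mathbf{n}=\int_K\dv\uu{}_h$ for every cell $K$. Testing the discrete incompressibility in \eqref{eq:stokesq1p0} against mean-zero piecewise constants of the form $\chi_K-(|K|/|K'|)\chi_{K'}$ shows that $|K|^{-1}\int_K\dv\uu{}_h$ is a single constant $c$ independent of $K$; combining this with $\int_\Omega\dv\uu{}_h=0$ (which follows from $\uu{}_h\cdot\mathbf{n}|_{\partial\Omega}=0$) forces $c=0$. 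Since $\uw_h|_K\in(P_1(K))^2$ makes $\dv\uw_h|_K$ constant, the vanishing of $\int_K\dv\uw_h$ upgrades to $\dv\uw_h\equiv 0$ on each cell.

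For the error estimate, I would decompose
\[
\uw-\uw_h=(\uw-\undertilde{\Pi}{}^{\rm rQ}_h\uw)+\undertilde{\Pi}{}^{\rm rQ}_h(\uw-\uu{}_h).
\]
The first summand is handled directly by Lemma~\ref{lem:converRQ}, giving $O(h|\uw|_{2,\Omega})$ in the broken $H^1$ seminorm and $O(h^2|\uw|_{2,\Omega})$ in $L^2$. For the second, a standard cell-wise scaling plus Bramble--Hilbert argument yields the local stabilities $|\Pi^{\rm rQ}_h\psi|_{1,K}\lesssim|\psi|_{1,K}$ and $\|\Pi^{\rm rQ}_h\psi\|_{0,K}\lesssim\|\psi\|_{0,K}+h_K|\psi|_{1,K}$; applying these to $\psi=\uw-\uu{}_h$ and summing closes both estimates at the required orders thanks to the Theorem~\ref{thm:stokes} rates. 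The main obstacle is the certification of cell-wise divergence freeness: orthogonality to $\mathcal{L}^0_{h0}$ alone is \emph{a priori} weaker than the pointwise cell-wise vanishing needed here, and only the combination of the BL-$P_0$ discrete Stokes structure with the edge-flux preservation of $\undertilde{\Pi}{}^{\rm rQ}_h$ closes that gap.
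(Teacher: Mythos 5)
Your proposal is correct and follows essentially the same route as the paper's proof: the same auxiliary Stokes problem with exact solution $(\uw,0)$, the same $Q_1$--$P_0$ approximation via Theorem \ref{thm:stokes}, the same definition $\uw{}_h:=\undertilde{\Pi}{}^{\rm rQ}_h\uu{}_h$ landing in $\uV{}^{\rm PS}_{h\bf n}$ by Lemma \ref{lem:relationBLPS}, and the same triangle-inequality error estimates. Your verification that orthogonality of $\dv_h\uw{}_h$ to $\mathcal{L}^0_{h0}$ upgrades to cell-wise vanishing (via equal cell averages and the zero global flux) is a welcome elaboration of a step the paper dismisses as ``easy to verify,'' but it is not a different argument.
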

\begin{proof}
Let $\uu$ be the exact velocity of~\eqref{eq:stokes}. Denote $p_{0}\equiv0$. It can be verified directly that the pair $(\uu,p_{0}\equiv0)\in \uH{}^1_{\bf n}(\Omega)\times L^2_0(\Omega)$ solves the equation
\begin{equation}
\left\{
\begin{array}{ll}
(\nabla\uu,\nabla\uv)+(p_{0},\dv\uv)&=(-\Delta \uu,\uv)
\\
(q,\dv\uu)&=0,
\end{array}
\right.
\end{equation}
Let $(\uy{}_h,p_{0h})\in \uV{}^{\rm BL}_{h\bf n}\times\mathcal{L}^0_{h0}$ solve
\begin{equation}
\left\{
\begin{array}{ll}
(\nabla\uy{}_h,\nabla\uv{}_h)+(p_{0h},\dv\uv{}_h)&=(-\Delta \uu,\uv{}_h)
\\
(q_h,\dv\uy{}_h)&=0,
\end{array}
\right.
\end{equation}
then 
$
h|\uw-\uy{}_h|_{1,\Omega}+\|\uw-\uy{}_h\|_{0,\Omega}\leqslant Ch^2|\uu|_{2,\Omega}.
$
Set $\uw{}_h:=\undertilde{\Pi}{}^{\rm rQ}_{h}\uy{}_h$, then, from Lemma \ref{lem:relationBLPS}, $\uw{}_h\in \uV{}^{\rm PS}_{h\bf n}$. Moreover, it is easy to verify that
$$
(\dv_h\uw{}_h,q_h)=(\dv\uy{}_h,q_h)=0,\quad \forall\,q_h\in \mathcal{L}^0_{h0},
$$
namely $\dv_h\uw{}_h=0$. Furthermore,
$$
|\uw-\uw{}_h|_{1,h}\leqslant \big|\uw-\undertilde{\Pi}{}^{\rm rQ}_{h}\uw\big|_{1,\Omega} + \big|\undertilde{\Pi}{}^{\rm rQ}_{h}(\uw-\uy{}_h)\big|_{1,h} \lesssim h |\uw|_{2,\Omega},
$$
and
$$
\|\uw-\uw{}_h\|_{0,\Omega}=\big\|\uw-\uy{}_h+\big(\undertilde{\rm Id}-\undertilde{\Pi}{}^{\rm rQ}_{h}\big)(\uy{}_h-\uw)+\big(\undertilde{\rm Id}-\undertilde{\Pi}{}^{\rm rQ}_{h}\big)\uw\big\|_{0,\Omega}\lesssim h^2|\uw|_{2,\Omega}.
$$
Hence the result.
\end{proof}

\begin{lemma}\label{lem:strdf} 
For the $\curl$ of space $V_{hs}^{\rm {R}}$, it can be depicted as a special subspace of the vector Park--Sheen element space, i.e.,
$$\curl_h V_{hs}^{\rm {R}}=\Big\{\uz{}_h\in \uV{}^{\rm PS}_{h\bf n}:\dv_h\uz{}_h=0\Big\}.$$
\end{lemma}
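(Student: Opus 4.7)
The plan is to establish the two inclusions comprising the claimed equality. For $\curl_h V_{hs}^{\rm R} \subseteq \{\uz{}_h \in \uV{}^{\rm PS}_{h\bf n} : \dv_h \uz{}_h = 0\}$, I would verify membership directly. Given $w_h \in V_{hs}^{\rm R}$, set $\uz{}_h := \curl_h w_h$; then $\uz{}_h|_K \in (P_1(K))^2$ and $\dv_h \uz{}_h = 0$ cell by cell since $\dv \circ \curl = 0$. On any interior edge $e$, I decompose $\nabla w_h = (\partial_{n_e} w_h)\mathbf{n}_e + (\partial_{\tau_e} w_h)\boldsymbol{\tau}_e$: the defining RRM condition gives $\fint_e \jump{\partial_{n_e} w_h}\,ds = 0$, while continuity of $w_h$ at the endpoints of $e$ combined with the linearity of $\partial_{\tau_e} w_h$ along $e$ yields $\fint_e \partial_{\tau_e} w_h\,ds = (w_h(a_2) - w_h(a_1))/|e|$, which is single-valued. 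Hence $\fint_e \jump{\uz{}_h}\,ds = 0$, establishing the PS interior condition. On a boundary edge, $w_h = 0$ at both endpoints forces $\int_e \uz{}_h \cdot \mathbf{n}_e\,ds = \int_e \partial_{\tau_e} w_h\,ds = 0$, giving the required normal boundary condition.

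For the reverse inclusion, the plan is an explicit stream-function construction. Given $\uz{}_h$ in the right-hand side, on each cell $K$ the pointwise identity $\dv \uz{}_h|_K = 0$ (with $\uz{}_h|_K \in (P_1(K))^2$) allows one to choose some $w_K \in P_2(K)$ satisfying $\curl w_K = \uz{}_h|_K$, uniquely modulo an additive constant $c_K$. The RRM normal-derivative condition on the glued $w_h$ is then automatic, since $\partial_{n_e} w_h|_e = -\uz{}_h \cdot \boldsymbol{\tau}_e$ together with the PS property of $\uz{}_h$ yields $\fint_e \jump{\partial_{n_e} w_h}\,ds = 0$. The remaining constants $c_K$ are fixed to enforce continuity at interior vertices and vanishing at boundary vertices: the identity $\fint_e \jump{\partial_{\tau_e} w_h}\,ds = 0$ (again from the PS condition) forces the jump $\jump{w_h}$ to take the same value at the two endpoints of any interior edge, so the pointwise discrepancies at interior vertices form a discrete $1$-cocycle whose exactness on the simply connected $\Omega$ yields a globally consistent family $\{c_K\}$. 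A residual overall constant is then fixed using $\int_e \uz{}_h \cdot \mathbf{n}_e\,ds = 0$ on boundary edges, which forces $w_h$ to be constant around $\partial\Omega$ and allows normalization to zero.

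The main obstacle I expect is the global bookkeeping in the reverse direction: specifically, verifying that the vertex discrepancies sum cyclically to zero around each interior vertex and that the boundary normalization is compatible with the interior adjustment. Simple-connectedness of $\Omega$ is what makes this work. As a cross-check, Theorem \ref{thm:basisofRRM} gives $\dim V_{hs}^{\rm R} = mn + 1$, and triviality of $\ker \curl_h$ on $V_{hs}^{\rm R}$ (since $\curl_h w_h = 0$ forces $w_h$ to be cellwise constant, and then vertex continuity together with boundary vanishing forces $w_h \equiv 0$) yields $\dim \curl_h V_{hs}^{\rm R} = mn + 1$; this must match the independent dimension of the right-hand side via the inclusion already established, thereby confirming the equality.
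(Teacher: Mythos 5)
Your proof is correct, but it takes a genuinely different route from the paper's. The paper disposes of the hard (surjectivity) direction in three lines by invoking the exact sequence $0 \to V_{hs}^{\rm M} \xrightarrow{\curl_h} \uV{}_{h\bf n}^{\rm LTZ} \xrightarrow{\dv_h} \mathcal{L}_h^{1,-1} \to 0$ for the rectangular Morley/LTZ pair: a divergence-free function in $\uV{}^{\rm PS}_{h\bf n}\subset\uV{}_{h\bf n}^{\rm LTZ}$ then has a stream function $w_h\in V_{hs}^{\rm M}$, and since its curl is piecewise linear the cubic terms $x^3,y^3$ of $w_h$ must vanish, forcing $w_h\in V_{hs}^{\rm R}$; the other inclusion is declared evident. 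You instead build the stream function by hand, cell by cell, and glue via the additive constants $c_K$ — a self-contained argument that never needs the LTZ space or the exact sequence, at the price of the bookkeeping you correctly flag as the main obstacle. That bookkeeping does go through: since $\fint_e\jump{\partial_{\tau_e}w_h}=0$ makes the jump across each interior edge equal at its two endpoints, the per-edge discrepancies telescope to zero around every interior vertex, so the cocycle condition on the dual graph is automatic and simple-connectedness supplies the $c_K$; the boundary normalization is likewise consistent because the interior-edge jumps, once killed at an interior endpoint, vanish at the boundary endpoint too, and the zero-mean normal trace propagates a single constant around $\partial\Omega$. Your forward inclusion (decomposing $\curl w_h$ into the RRM normal-moment condition plus the fundamental-theorem-of-calculus identity for the tangential part) is exactly the content of the paper's "evident" step, spelled out. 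In short: the paper's proof is shorter but leans on an unproved-here "standard" exact sequence plus a degree-reduction trick; yours is longer but elementary and explicit, and also yields the injectivity of $\curl_h$ on $V_{hs}^{\rm R}$ as a by-product.
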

\begin{proof}
Firstly, by standard argument, we can prove the exact sequence which reads
\begin{equation}
\begin{array}{ccccccccc}
0 & \longrightarrow & V_{hs}^{\rm{M}} & \xrightarrow{\curl_h} & \uV{}_{h\bf n}^{\rm LTZ} & \xrightarrow{\dv_h} & \mathcal{L}_h^{1,-1}  & \longrightarrow & 0,
\end{array}
\end{equation}
where $\mathcal{L}_h^{1,-1}=\{q\in L^2_0(\Omega):q|_K\in P_1(K)\}$. This way, given $\uv{}_h\in\uV{}^{\rm PS}_{h\mathbf n}\subset \uV{}_{h\bf n}^{\rm LTZ}$ with $\dv_h\uv{}_h=0$, there exists $w_h\in V_{hs}^{\rm{M}}$, such that $\curl_hw_h=\uv{}_h$. Since $\uv{}_h$ is piecewise linear polynomial, $w_h$ is piecewise quadratic, namely $w_h\in V_{hs}^{\rm {R}}$. On the other hand, it is evident that $\curl_hV_{hs}^{\rm {R}}\subset\big\{\uz{}_h\in \uV{}^{\rm PS}_{h\bf n}:\dv_h\uz{}_h=0\big\}$. Hence the result.
\end{proof}

\paragraph{\bf Proof of Theorem \ref{thm:approRRM}}
By Theorem \ref{thm:approdf} and Lemma \ref{lem:strdf},
\begin{multline}
\quad\inf_{v_h\in V_{hs}^{\rm{R}}}|u-v_h|_{1,h}=\inf_{v_h\in V_{hs}^{\rm{R}}}\|\curl u-\curl v_h\|_{0,\Omega}
\\
=\inf_{\uw{}_h\in\undertilde{V}{}^{\rm PS}_{h\bf n},\ \dv_h\uw{}_h=0}\|\curl u-\uw{}_h\|_{0,h}\lesssim h^\alpha|\curl u|_{\alpha,\Omega}\lesssim h^{\alpha}|u|_{1+\alpha,\Omega},\ \ \alpha=1,2.\quad
\end{multline}
The proof is completed. \qed

\section{Convergence analysis of the RRM element schemes}

\label{sec:analysisrrm}

\subsection{Optimal convergence for the source problem}
For the RM element space $V_{hs}^{\rm{M}}$ and the RRM element space $V_{hs}^{\rm{R}}$,  the discrete source problems are given as:

 Find $u_{h}^{\rm{M}} \in V_{hs}^{\rm{M}}$, such that
\begin{align}\label{eq:sourceRM}
 a_{h}(u_{h}^{\rm{M}},v_{h}) =  b(f,v_{h}), \quad \forall v_{h}\in V_{hs}^{\rm{M}};
\end{align}

 Find $u_{h}^{\rm{R}} \in V_{hs}^{\rm{R}}$, such that
\begin{align}\label{eq:sourceRRM}
 a_{h}(u_{h}^{\rm{R}},v_{h}) =  b(f,v_{h}), \quad \forall v_{h}\in V_{hs}^{\rm{R}}.
\end{align}

It is obvious that $V_{hs}^{\rm{R}} = V_{hs}^{\rm{M}} \cap  V_{h0}^{\rm{W}}$, and we infer that the RRM element is a quadratic nonconforming element on rectangles with a second-order  convergence rate in the energy norm. We will verify this assertion strictly in this section.

\begin{theorem}\label{thm:errorRRM} Let $\mathcal{G}_h$ be a grid that satisfies {\bf Hypothesis RT}. Let $u$ and $u_{h}^{\rm{R}}$ be the solutions of \eqref{eq:varP} and \eqref{eq:sourceRRM}, respectively. It holds that
\begin{itemize}
\item[(a)] 
if $u \in H_{0}^{1}(\Omega) \cap H^{2}(\Omega)$, then $\big|u-u_{h}^{\rm{R}}\big|_{1,h} \lesssim h
|u|_{2,\Omega} \; \text{and} \; \big\|u-u_{h}^{\rm{R}}\big\|_{0,\rho} \lesssim h^{2} |u|_{2,\Omega} ;$
\item[(b)]
 if $u \in H_{0}^{1}(\Omega)\cap H^{3}(\Omega)$ and the mesh is uniform, then $\big|u-u_{h}^{\rm{R}}\big|_{1,h} \lesssim h^{2}|u|_{3,\Omega}.$
\end{itemize}
\end{theorem}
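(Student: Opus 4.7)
The proof rests on two ingredients already established: the approximation estimate of Theorem~\ref{thm:approRRM} and the elementary set inclusion $V_{hs}^{\rm R}\subset V_{hs}^{\rm M}$. The inclusion is immediate from the definitions, since $P_2(K)\subset P_K^{\rm M}$ and both spaces impose exactly the same vertex-continuity and edge mean-normal-derivative continuity conditions. In particular, the consistency estimate of Lemma~\ref{lem:consisRM} applies verbatim to every $v_h\in V_{hs}^{\rm R}$, with no need to revisit its proof.

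For the energy-norm bounds in (a) and (b), I would invoke the second Strang lemma,
\[
|u-u_h^{\rm R}|_{1,h}\ \lesssim\ \inf_{v_h\in V_{hs}^{\rm R}}|u-v_h|_{1,h}\ +\ \sup_{0\neq w_h\in V_{hs}^{\rm R}}\frac{|E_h(u,w_h)|}{|w_h|_{1,h}}.
\]
In case (a) the approximation term is $O(h)|u|_{2,\Omega}$ by Theorem~\ref{thm:approRRM} with $\alpha=1$ and the consistency term is $O(h)|u|_{2,\Omega}$ by Lemma~\ref{lem:consisRM}(a); in case (b) the uniform-grid hypothesis permits $\alpha=2$ in Theorem~\ref{thm:approRRM} and $k=3$ in Lemma~\ref{lem:consisRM}(b), so that both terms upgrade to $O(h^2)|u|_{3,\Omega}$.

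For the $L^2$ estimate in (a), my plan is an Aubin--Nitsche-type duality argument. Let $w\in H^1_0(\Omega)$ solve $-\Delta w=\rho(u-u_h^{\rm R})$; elliptic regularity on the convex rectangular domain gives $w\in H^2(\Omega)$ and $\|w\|_{2,\Omega}\lesssim\|u-u_h^{\rm R}\|_{0,\rho}$. Cellwise integration by parts, the nonconforming Galerkin orthogonality $a_h(u-u_h^{\rm R},v_h)=E_h(u,v_h)$, and the identity $E_h(\phi_1,\phi_2)=0$ for $\phi_1,\phi_2\in H^2\cap H^1_0$ combine to produce, for any $v_h\in V_{hs}^{\rm R}$,
\[
\|u-u_h^{\rm R}\|_{0,\rho}^{2}\ =\ a_h(u-u_h^{\rm R},w-v_h)\ +\ E_h(u,v_h)\ -\ E_h(w,u-u_h^{\rm R}).
\]
Taking $v_h$ as the best approximant of $w$ furnished by Theorem~\ref{thm:approRRM} gives $|w-v_h|_{1,h}\lesssim h|w|_{2,\Omega}$, so the first term is $O(h^2)|u|_{2,\Omega}\|w\|_{2,\Omega}$ by the energy estimate from (a). The two consistency terms are the more delicate pieces: they are controlled to the same order via standard trace-and-projection manipulations together with the observation that, for $v_h\in V_{hs}^{\rm R}$, the edge jump $\llbracket v_h\rrbracket_e$ is a quadratic bubble vanishing at both endpoints and therefore $L^2(e)$-orthogonal to the centred linear polynomial on $e$. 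Dividing by $\|u-u_h^{\rm R}\|_{0,\rho}$ then yields the stated $O(h^2)$ rate.

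The main obstacle is precisely the absence of a local interpolation operator into $V_{hs}^{\rm R}$ emphasised after Theorem~\ref{thm:basisofRRM}: both the Strang step and the duality step must be phrased purely in terms of best approximants rather than a canonical interpolant, and the consistency terms in the duality identity must be sharpened beyond Lemma~\ref{lem:consisRM}(a) by exploiting the quadratic-bubble structure of the edge jumps. Since Theorem~\ref{thm:approRRM} was engineered precisely to supply the necessary best approximants and the bubble orthogonality is intrinsic to the definition of $V_{hs}^{\rm R}$, once these two facts are in place the remaining work reduces to standard nonconforming-element bookkeeping.
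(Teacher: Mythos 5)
Your treatment of the energy-norm estimates in (a) and (b) is correct and coincides with the paper's: the second Strang lemma, Theorem~\ref{thm:approRRM} for the approximation term, and Lemma~\ref{lem:consisRM} applied through the inclusion $V_{hs}^{\rm R}\subset V_{hs}^{\rm M}$. Your duality identity for the $L^2$ bound is also algebraically correct and structurally parallel to the paper's Nitsche--Lascaux--Lesaint argument (you pair against a best approximant of the continuous dual solution where the paper uses the discrete dual solution $\phi_{gh}$; this is immaterial).

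The genuine gap is in how you close the two consistency terms. The mechanism you invoke --- that $\jump{v_h}_e$ is a multiple of the quadratic edge bubble $b_e$ and hence $L^2(e)$-orthogonal to the \emph{centred} linear polynomial --- cannot by itself deliver $O(h^2)$, because $b_e$ is \emph{not} orthogonal to constants ($\int_0^1 t(1-t)\,dt=1/6\neq0$). Writing $\int_e\partial_{n_e}w\,\jump{v_h}=\overline{\partial_{n_e}w}\int_e\jump{v_h}+\int_e(\partial_{n_e}w-\overline{\partial_{n_e}w})\jump{v_h}$, the orthogonality you cite only helps with the second piece; the first piece, driven by the nonzero mean of the jump, is exactly the $O(h)$ obstruction. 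Quantifying it forces you back to the sharper elementwise form of Lemma~\ref{lem:consisRM}(a), $|E_h(v,v_h)|\lesssim\sum_{K}h_K^2|v|_{2,K}|v_h|_{2,K}$, and you then still must show that $\sum_K h_K^2\big|u_h^{\rm R}\big|_{2,K}^2\lesssim h^2|u|_{2,\Omega}^2$ (and the analogous bound for your best approximant $v_h$ of $w$). This is the step your proposal omits, and it is not cosmetic: a naive inverse estimate $h_K|v_h|_{2,K}\lesssim|v_h|_{1,K}$ only returns $O(h)$. The paper closes it by comparing $u_h^{\rm R}$ with the average projection $\Pi_{h0}^{\rm p}u$ (which satisfies $|\Pi_{h0}^{\rm p}u|_{2,K}\lesssim|u|_{2,K}$), applying the inverse inequality to the \emph{difference} $u_h^{\rm R}-\Pi_{h0}^{\rm p}u$, and feeding in the already-proved energy estimate $|u-u_h^{\rm R}|_{1,h}\lesssim h|u|_{2,\Omega}$; see~\eqref{eq:|huR|_{2}}. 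Without this insertion (or an equivalent), your duality argument does not reach the stated $O(h^2)$ rate.
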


\begin{proof}
{(a)} By the Strang lemma, we have
\begin{align}\label{eq:strangRRM}
\big|u-u_{h}^{\rm R}\big|_{1,h} \lesssim \inf_{v_{h}\in V_{hs}^{\rm{R}}} |u - v_{h}|_{1,h}+ \sup_{w_{h}\in V_{hs}^{\rm{R}},w_{h} \ne 0}  \frac{E_{h}(u,w_{h})}{|w_{h}|_{1,h}}.
\end{align}
For the first term in the right hand side of \eqref{eq:strangRRM}, we have from Theorem  \ref{thm:approRRM} that
\begin{align}\label{eq:approRRM}
\inf_{v_{h}\in V_{hs}^{\rm{R}}} |u - v_{h}|_{1,h} \lesssim h|u|_{2,\Omega}.
\end{align}
For the second term, we have from Lemma \ref{lem:consisRM} and $V_{hs}^{\rm{R}} \subset V_{hs}^{\rm{M}}$ that
\begin{align}\label{eq:consisRRM}
 |E_{h}(u,w_{h})|  \lesssim \sum_{K\in \mathcal{G}_{h}} h_{K}^{2} |u|_{2,K} |w_{h}|_{2,K}\lesssim h |u|_{2,\Omega}|w_{h}|_{1,h}, \quad \forall w_{h} \in V_{h}^{\rm{R}}.
\end{align}
 
\noindent Submit \eqref{eq:approRRM} and \eqref{eq:consisRRM} into \eqref{eq:strangRRM}, and we obtain 
$
\big|u-u_{h}^{\rm R}\big|_{1,h}\lesssim h |u|_{2,\Omega}, \mbox{ where } u \in H_{0}^{1}(\Omega) \cap H^{2}(\Omega).
$

Given $g \in Q$, let $\phi_{g} \in V$ and $\phi_{gh} \in V_{hs}^{\rm{R}}$ be the solutions of the two problems below, respectively, 
\begin{align*}
a(v,\phi_{g})   = b(g,v),\quad \forall v \in V; \quad 
a_{h}(v_{h},\phi_{gh})  = b(g,v_{h}), \quad \forall v_{h} \in V_{hs}^{\rm{R}}.
\end{align*}
By the Nitsche-Lascaux-Lesaint lemma (see e.g., \cite[Theorem 5.3.1]{Wang.M;Shi.Z2013mono}), it holds that
\begin{align}\label{eq:NitscheRRM}
\big\|u-u_{h}^{\rm R}\big\|_{0,\rho} \lesssim  \big|u-u_{h}^{\rm R}\big|_{1,h} \sup_{g\in Q, g\ne 0} \left\{ \frac{1}{||g||_{0,\rho}} | \phi_{g} - \phi_{gh} |_{1,h} \right \}
 + \sup_{g\in Q, g\ne 0}\left\{ \frac{1}{||g||_{0,\rho}} \Big(E_{h}(\phi_{g},u_{h}^{\rm R}) +E_{h}(u,\phi_{gh})\Big) \right\}.
\end{align}
For the first term in the right side of \eqref{eq:NitscheRRM}, we have
\begin{equation}\label{eq:term1}
\begin{split}
\big|u-u_{h}^{\rm R}\big|_{1,h} \sup_{g\in Q, g\ne 0}\left\{ \frac{1}{\|g\|_{0,\rho}} |\phi_{g} - \phi_{gh}|_{1,h}\right\} \lesssim h |u|_{2,\Omega} \frac{h | \phi_{g} |_{2,\Omega}}{\|g\|_{0,\rho}} 
\lesssim h^{2}|u|_{2,\Omega} \frac{ \| g \|_{0,\rho}}{\|g\|_{0,\rho}}
\lesssim h^{2}|u|_{2,\Omega},
\end{split}
\end{equation}
where we utilize the regularity of solution on a convex domain, namely, $|\phi_{g} | _{2,\Omega} \lesssim \|g\|_{0,\rho}$. 

\noindent For the second term, we notice that
\begin{align}\label{eq:E(phig,uR)}
 \big| E_{h}(\phi_{g} ,u_{h}^{\rm R}) \big| \lesssim \sum_{K\in \mathcal{G}_{h}} h_{K}^{2} |\phi_{g}|_{2,K} \big|u_{h}^{\rm R}\big|_{2,K}  \lesssim h |\phi_{g}|_{2,\Omega}\Big( \sum_{K\in \mathcal{G}_{h}} h_{K}^{2} \big|u_{h}^{\rm R}\big|_{2,K}^{2} \Big)^{1/2}.
\end{align}
 
\noindent Let $\Pi_{h0}^{\rm p} : L^{2}(\Omega) \mapsto V_{hs}^{\rm{M}}$ be an average projection operator defined in \cite{Wang.M;Shi.Z2013mono}. From \cite[Theorem~3.5.4]{Wang.M;Shi.Z2013mono}, we have $\big|u- \Pi_{h0}^{\rm p}u\big|_{1,K} \lesssim h_{K}|u|_{2,K}$ and $\big|\Pi_{h0}^{\rm p}u\big|_{2,K} \lesssim |u|_{2,K}$. Thus we obtain 
\begin{equation}\label{eq:|huR|_{2}}
\begin{split}
\sum_{K \in \mathcal{G}_{h}} h_{K}^{2} \big|u_{h}^{\rm R}\big|_{2,K}^{2} & \leqslant 2\sum_{K\in \mathcal{G}_{h}} h_{K}^{2}\Big( \big|u_{h}^{\rm R} - \Pi_{h0}^{\rm p}u\big|_{2,K}^{2} + |\Pi_{h0}^{\rm p}u|_{2,K}^{2} \Big) 
 \lesssim \sum_{K\in \mathcal{G}_{h}} \Big( \big|u_{h}^{\rm R} - \Pi_{h0}^{\rm p}u\big|_{1,K}^{2} + h_{K}^{2}|u|_{2,K}^{2} \Big)
\\
& \lesssim \sum_{K\in \mathcal{G}_{h}} \Big( \big|u - u_{h}^{\rm R}\big|_{1,K}^{2} + \big|u- \Pi_{h0}^{\rm p}u\big|_{1,K}^{2} + h_{K}^{2}|u|_{2,K}^{2} \Big)
 \lesssim \big|u - u_{h}^{\rm R}\big|_{1,h}^{2} + h^{2}|u|_{2,\Omega}^{2} \lesssim  h^{2}|u|_{2,\Omega}^{2}.
\end{split}
\end{equation}
The last inequality holds due to the fact that $\big|u-u_{h}^{\rm R}\big|_{1,h}\lesssim h |u|_{2,\Omega}$. Submitting \eqref{eq:|huR|_{2}} into \eqref{eq:E(phig,uR)}, it yields 
$ \big| E_{h}(\phi_{g} ,u_{h}^{\rm R}) \big| \lesssim h^{2}|\phi_{g}|_{2,\Omega} |u|_{2,\Omega} \lesssim h^{2}||g||_{0,\rho} |u|_{2,\Omega} $. Similarly, it holds that $ \big| E_{h}(u ,\phi_{gh}) \big| \lesssim h^{2}|u|_{2,\Omega} ||g||_{0,\rho}$. Thus we have
\begin{align}\label{eq:term2}
\sup_{g\in Q, g\ne 0} \left\{ \frac{1}{||g||_{0,\rho}} \Big(E_{h}(u,\phi_{gh}) + E_{h}(\phi_{g},u_{h}^{\rm R}) \Big)\right\} \lesssim h^{2}|u|_{2,\Omega}.
\end{align}
Submit \eqref{eq:term1} and \eqref{eq:term2} into \eqref{eq:NitscheRRM}, and we obtain
$
\big\|u-u_{h}^{\rm R}\big\|_{0,\rho} \lesssim h^{2} |u|_{2,\Omega}, \mbox{ where } u \in H_{0}^{1}(\Omega) \cap H^{2}(\Omega).
$

\noindent{(b)} If $u \in H_{0}^{1}(\Omega) \cap H^{3}(\Omega)$ and the mesh is uniform, then $
\inf\limits_{v_{h}\in V_{hs}^{\rm{R}}} |u - v_{h}|_{1,h} \lesssim h^{2}|u|_{3,\Omega}.
$
From Lemma ~ \ref{lem:consisRM},  
$|E_{h}(u,v_{h})| \lesssim h^{2} |u|_{3,\Omega}|v_{h}|_{1,h}, \forall v_{h} \in V_{h}^{\rm{R}}.$ 
Hence we have $|u-u_{h}|_{1,h}\lesssim h^{2} |u|_{3,\Omega}.$
\end{proof}

It shows that the error estimate in the energy norm can be $\mathcal{O}(h^{2})$ when the mesh is uniform. However, the convergence rate in $L^{2}$-norm can not be improved on uniform grids. Actually, there exists a lower bound stated in the following Theorem \ref{thm:lowerboundRRM}. 

\begin{theorem}\label{thm:lowerboundRRM}
Let $\mathcal{G}_{h}$ be a uniform grid satisfying {\bf Hypothesis RT}. Let $u$ and $u_{h}^{\rm{R}}$ be the solutions of \eqref{eq:varP} and \eqref{eq:sourceRRM}, respectively. If $\|f\|_{0,\rho} \ne 0 \text{ and } u \in H_{0}^{1}(\Omega)\cap H^{3}(\Omega) $, then $||u-u_{h}^{\rm R }||_{0,\rho} \gtrsim h^{2}$, provided that $h$ is small enough.
\end{theorem}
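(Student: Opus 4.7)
The plan is to exploit the nesting $V_{hs}^{\rm R} \subset V_{hs}^{\rm M}$ in order to reduce the lower bound for the RRM scheme to the corresponding statement for the RM scheme, where Lemma \ref{lem:propertyPiRM} already supplies a dominant $h^{2}$ contribution. Let $u_{h}^{\rm M} \in V_{hs}^{\rm M}$ be the RM Galerkin solution of \eqref{eq:sourceRM}. Because every element of $V_{hs}^{\rm R}$ is an admissible test function in both \eqref{eq:sourceRM} and \eqref{eq:sourceRRM}, Galerkin orthogonality gives $a_{h}(u_{h}^{\rm M} - u_{h}^{\rm R}, v_{h}) = 0$ for all $v_{h} \in V_{hs}^{\rm R}$; choosing $v_{h} = u_{h}^{\rm R}$ yields $a_{h}(u_{h}^{\rm M}, u_{h}^{\rm R}) = a_{h}(u_{h}^{\rm R}, u_{h}^{\rm R})$, and expanding $|u_{h}^{\rm M} - u_{h}^{\rm R}|_{1,h}^{2}$ together with $a_{h}(u_{h}^{\star}, u_{h}^{\star}) = b(f, u_{h}^{\star})$ produces the central identity
\[
b(f, u - u_{h}^{\rm R}) \;=\; b(f, u - u_{h}^{\rm M}) \;+\; |u_{h}^{\rm M} - u_{h}^{\rm R}|_{1,h}^{2}.
\]

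The first step is to dispose of the nonnegative correction. The RM and RRM schemes both converge at rate $h^{2}$ in the broken energy seminorm (Theorem \ref{thm:errorRRM}(b) together with the analogous result for the RM element from \cite{XY.Meng;XQ.Yang;S.Zhang2016}), so the triangle inequality gives $|u_{h}^{\rm M} - u_{h}^{\rm R}|_{1,h} \lesssim h^{2}$, hence $|u_{h}^{\rm M} - u_{h}^{\rm R}|_{1,h}^{2} \lesssim h^{4}$, which is of higher order than the target $h^{2}$.

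The heart of the argument is to show $|b(f, u - u_{h}^{\rm M})| \gtrsim h^{2}$. Applying the algebraic expansion \eqref{eq:uhmu} with the RM canonical interpolant $\Pi_{h}^{\rm M} u \in V_{hs}^{\rm M}$ gives
\[
-b(f, u - u_{h}^{\rm M}) = 2 b(f, \Pi_{h}^{\rm M} u - u) + 2 a_{h}(u - \Pi_{h}^{\rm M} u, \Pi_{h}^{\rm M} u) + 2 a_{h}(u - \Pi_{h}^{\rm M} u, u_{h}^{\rm M} - \Pi_{h}^{\rm M} u) + |u - u_{h}^{\rm M}|_{1,h}^{2}.
\]
Standard interpolation estimates for $\Pi_{h}^{\rm M}$, combined with the broken-$H^{1}$ convergence of $u_{h}^{\rm M}$, bound the first, third, and fourth summands by $O(h^{3})$, $O(h^{4})$, and $O(h^{4})$, respectively. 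The second summand is the dominant one: Lemma \ref{lem:propertyPiRM} applied to $w = u$ gives $2 a_{h}(u - \Pi_{h}^{\rm M} u, \Pi_{h}^{\rm M} u) \geq 2\alpha h^{2}$ for all sufficiently small $h$. Combining, $-b(f, u - u_{h}^{\rm M}) \geq \alpha h^{2}$, and in particular $b(f, u - u_{h}^{\rm M}) \leq -\alpha h^{2}$.

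Feeding these two bounds back into the central identity, $b(f, u - u_{h}^{\rm R}) \leq -\alpha h^{2} + O(h^{4})$, so $|b(f, u - u_{h}^{\rm R})| \geq \alpha h^{2}/2$ for all $h$ sufficiently small. The weighted Cauchy--Schwarz inequality $|b(f, v)| \leq \|f\|_{0,\rho} \|v\|_{0,\rho}$, applied with $v = u - u_{h}^{\rm R}$, then delivers
\[
\|u - u_{h}^{\rm R}\|_{0,\rho} \;\geq\; \frac{|b(f, u - u_{h}^{\rm R})|}{\|f\|_{0,\rho}} \;\gtrsim\; h^{2},
\]
as claimed. The principal obstacle is the applicability of Lemma \ref{lem:propertyPiRM}, which requires the non-degeneracy condition $\|\partial^{2} u / \partial x \partial y\|_{0,\rho} \neq 0$; this is not literally part of the hypothesis $\|f\|_{0,\rho} \neq 0$ and either has to be justified as a generic property, or incorporated implicitly into the condition $\|f\|_{0,\rho} \neq 0$, or replaced by a variant of Lemma \ref{lem:propertyPiRM} keyed to another nonvanishing second-order derivative.
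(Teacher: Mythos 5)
Your proposal is correct and takes essentially the same route as the paper: split $b(f,u-u_h^{\rm R})$ through the RM solution, use the Galerkin orthogonality from $V_{hs}^{\rm R}\subset V_{hs}^{\rm M}$ to show the correction term equals $|u_h^{\rm M}-u_h^{\rm R}|_{1,h}^2=O(h^4)$, invoke the $\mathcal{O}(h^2)$ lower bound for $b(-f,u-u_h^{\rm M})$ (which the paper simply cites from \cite{XY.Meng;XQ.Yang;S.Zhang2016} rather than re-deriving from \eqref{eq:uhmu} and Lemma \ref{lem:propertyPiRM} as you do), and conclude by Cauchy--Schwarz. The one loose end you flag---that $\|f\|_{0,\rho}\neq 0$ must yield the nondegeneracy condition $\big\|\frac{\partial^2 u}{\partial x\partial y}\big\|_{0,\rho}\neq 0$ needed for Lemma \ref{lem:propertyPiRM}---is resolved exactly as you suggest: on a rectangular domain, $\frac{\partial^2 u}{\partial x\partial y}\equiv 0$ forces $u=h(x)+g(y)$, and the homogeneous boundary condition then gives $u\equiv 0$, contradicting $\|f\|_{0,\rho}\neq 0$.
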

\begin{proof}
Let $u_{h}^{\rm{M}}$ be the solution of \eqref{eq:sourceRM}. Then 
\begin{align} \label{eq:fequalityRM}
b(-f, u-u_{h}^{\rm{M}}) \geqslant \delta h^{2},
\end{align}
given that $h$ is small enough, where $\delta>0$ is a constant independent of $h$; see \cite[Lemma 3.18]{XY.Meng;XQ.Yang;S.Zhang2016}. Noticing that $V_{hs}^{\rm{R}} \subset V_{hs}^{\rm{M}}$, we obtain 
\begin{equation}
a_{h}(u_{h}^{\rm{M}}-u_{h}^{\rm{R}}, v_{h}) = 0, \quad \forall v_{h}\in V_{hs}^{\rm{R}}.
\end{equation}
A simple division yields
\begin{align}\label{eq:fRRM1}
b(-f, u-u_{h}^{\rm{R}}) = b(-f, u-u_{h}^{\rm{M}}) + b(-f, u_{h}^{\rm{M}}-u_{h}^{\rm{R}}).
\end{align}
Owing to the orthogonality and $\big|u_{h}^{\rm{M}}-u_{h}^{\rm{R}}\big|_{1,h} \leqslant \big|u_{h}^{\rm{M}}-u\big|_{1,h} +\big|u-u_{h}^{\rm{R}}\big|_{1,h}  \lesssim h^2$, it holds that
\begin{equation}\label{eq:fRRM2}
\begin{split}
b(-f, u_{h}^{\rm{M}}-u_{h}^{\rm{R}})  = -a_{h}(u_{h}^{\rm{M}}, u_{h}^{\rm{M}}-u_{h}^{\rm{R}})
 = -a_{h}(u_{h}^{\rm{M}}-u_{h}^{\rm{R}}, u_{h}^{\rm{M}}-u_{h}^{\rm{R}})
 = - \big|u_{h}^{\rm{M}}-u_{h}^{\rm{R}}\big|_{1,h}^{2} \lesssim h^{4}.
\end{split}
\end{equation}
A combination of \eqref{eq:fequalityRM}, \eqref{eq:fRRM1} and \eqref{eq:fRRM2} leads to the following lower bound, provided that $h$ is small enough,
\begin{align*}
b(-f, u-u_{h}^{\rm{R}}) \geqslant \gamma h^{2},
\end{align*}

\noindent  where $\gamma > 0$ is a constant independent of  $h$. 
Therefore we have
\begin{align*}
\big\|u-u_{h}^{\rm{R}}\big\|_{0,\rho} = \sup_{g\in Q, g\ne 0} \frac{b(g,u-u_{h}^{\rm{R}})}{\|g\|_{0,\rho}} \geqslant \frac{b(-f,u-u_{h}^{\rm{R}})}{\|-f\|_{0,\rho}} \geqslant \frac{\gamma}{\|f\|_{0,\rho}}h^{2}.
\end{align*}
Hence the result.
\end{proof}

\begin{remark}{\rm\cite[Remark 3.16]{XY.Meng;XQ.Yang;S.Zhang2016}} 
{\rm For rectangle domain, the condition $\|f\|_{0,\rho} \ne 0$ implies that $\big\|\frac{\partial^{2}u}{\partial x \partial y}\big\|_{0,\rho} \ne 0$. In fact, if $\big\|\frac{\partial^{2}u}{\partial x \partial y}\big\|_{0,\rho} = 0$, then $u$ is of the form $u = h(x)+g(y)$, for some function $h(x)$ with respect to $x$ and $g(y)$ with respect to $y$. Then, the boundary condition, i.e., $u = 0$ on $\partial \Omega$, indicates $u  \equiv 0$, which contradicts  $\|f\|_{0,\rho} \ne 0$.}
\end{remark}

\subsection{Analysis of the scheme for the eigenvalue problem} 
With the associated spaces $V_{hs}^{\rm{M}}$ and $V_{hs}^{\rm{R}}$,  the discrete eigenvalue problems are given as:

 Find $(\lambda_{h}^{\rm{M}},u_{h}^{\rm{M}}) \in \mathbb{R} \times V_{hs}^{\rm{M}} $ with $\big\|u_{h}^{\rm{M}}\big\|_{0,\rho} = 1$, such that
\begin{align}\label{eq:eigenRM}
 a_{h}(u_{h}^{\rm{M}},v_{h}) =  \lambda_{h}^{\rm{M}} b(u_{h}^{\rm{M}},v_{h}), \quad \forall v_{h}\in V_{hs}^{\rm{M}};
\end{align}

Find $(\lambda_{h}^{\rm{R}}, u_{h}^{\rm{R}}) \in \mathbb{R} \times V_{hs}^{\rm{R}}$ with $\big\|u_{h}^{\rm{R}}\big\|_{0,\rho} = 1$, such that
\begin{align}\label{eq:eigenRRM}
 a_{h}(u_{h}^{\rm{R}},v_{h}) =  \lambda_{h}^{\rm{R}} b(u_{h}^{\rm{R}},v_{h}), \quad \forall v_{h}\in V_{hs}^{\rm{R}}.
\end{align}

From Theorem \ref{thm:EerrorRRM}, the convergence results of the eigenvalue problem is obtained by standard argument (see, e.g., \cite{J.Xu;A.Zhou2001,J.Hu;Y.Huang;Q.Lin2014,Y.Yang;Z.Zhang2010,YiDU.Yang;Zhen.Chen2008}).

\begin{theorem}\label{thm:EerrorRRM}
Let $\mathcal{G}_h$ be a grid satisfying {\bf Hypothesis RT}. Let $\lambda_{j}$ be the $j$-th eigenvalue of  \eqref{eq:varEigen}, and $(\lambda_{j,h}^{\rm{R}},u_{j,h}^{\rm{R}}) \in \mathbb{R} \times V_{hs}^{\rm{R}}$ be the $j$-th eigen-pair of  \eqref{eq:eigenRRM} with $\big\|u_{j,h}^{\rm{R}}\big\|_{0,\rho} =1$. It holds that

\begin{itemize}
\item[(a)] if $M(\lambda_{j}) \subset H_{0}^{1}(\Omega) \cap H^{2}(\Omega)$, then there exists $u_{j} \in M(\lambda_{j})$ with $\| u_{j}  \|_{0,\rho} = 1$, such that
\begin{align*}
\big|\lambda_{j}-\lambda_{j,h}^{\rm{R}}\big| \lesssim h^{2}, \quad
\big\|u_{j}-u_{j,h}^{\rm{R}}\big\|_{0,\rho} \lesssim h^{2}, \mbox{ and }
\big|u_{j}-u_{j,h}^{\rm{R}}\big|_{1,h} \lesssim h ;
\end{align*}
\item[(b)] if the mesh is uniform and $M(\lambda_{j}) \subset H_{0}^{1}(\Omega) \cap H^{3}(\Omega)$, then there exists $u_{j} \in M(\lambda_{j})$ with $\| u_{j}  \|_{0,\rho} = 1$, such that
$
\big|u_{j}-u_{j,h}^{\rm{R}}\big|_{1,h} \lesssim h^{2}.
$
\end{itemize}
\end{theorem}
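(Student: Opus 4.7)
The plan is to deduce this result from the source-problem estimates of Theorem \ref{thm:errorRRM} by the standard spectral-approximation framework (Babuska--Osborn), adapted to the nonconforming setting. The key facilitator is that $V_{hs}^{\rm R}\subset V_{hs}^{\rm M}$, so the consistency bound of Lemma \ref{lem:consisRM} is available on $V_{hs}^{\rm R}$ verbatim.

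First I would introduce the continuous and discrete solution operators $T$ and $T_h^{\rm R}$, inverting \eqref{eq:varP} and \eqref{eq:sourceRRM} respectively. Both are compact and selfadjoint with respect to $b(\cdot,\cdot)$, and Theorem \ref{thm:errorRRM}(a), applied with $f\in L^2$ so that $Tf\in H^2\cap H^1_0$ on the convex rectangle, gives $\|Tf - T_h^{\rm R}f\|_{0,\rho}\lesssim h^2\|f\|_{0,\rho}$. This $L^2$-operator-norm convergence yields, by standard spectral-perturbation theory, the existence of $u_j\in M(\lambda_j)$ with $\|u_j\|_{0,\rho}=1$ and convergence of the $j$-th discrete eigen-pair to $(\lambda_j,u_j)$.

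For the sharp rates in (a), I would apply Theorem \ref{thm:errorRRM}(a) directly with source $\lambda_j\rho u_j\in L^2$ (admissible because $M(\lambda_j)\subset H^2\cap H^1_0$), obtaining $|u_j - u_{j,h}^{\rm R}|_{1,h}\lesssim h$ and $\|u_j - u_{j,h}^{\rm R}\|_{0,\rho}\lesssim h^2$ at once. For the eigenvalue bound, I would start from the identity
\[
\lambda_j - \lambda_{j,h}^{\rm R} = \lambda_j\|u_j - u_{j,h}^{\rm R}\|_{0,\rho}^2 - |u_j - u_{j,h}^{\rm R}|_{1,h}^2 - 2 E_h(u_j, u_{j,h}^{\rm R}),
\]
a direct consequence of the $b$-normalizations, \eqref{eq:varEigen}, \eqref{eq:eigenRRM} and elementwise integration by parts. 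The first two terms on the right are $O(h^2)$ by the rates just obtained. For the consistency term, I would use the finer elementwise form of Lemma \ref{lem:consisRM}, namely $|E_h(u_j,u_{j,h}^{\rm R})|\lesssim \sum_K h_K^2|u_j|_{2,K}|u_{j,h}^{\rm R}|_{2,K}$, and control the broken $H^2$ seminorm of $u_{j,h}^{\rm R}$ by $\sum_K h_K^2|u_{j,h}^{\rm R}|_{2,K}^2\lesssim h^2|u_j|_{2,\Omega}^2$ through insertion of an average projection onto $V_{hs}^{\rm M}$, exactly as was done in the proof of Theorem \ref{thm:errorRRM}. This yields $|E_h(u_j,u_{j,h}^{\rm R})|\lesssim h^2$, hence $|\lambda_j - \lambda_{j,h}^{\rm R}|\lesssim h^2$. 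Part (b) follows by the same argument after upgrading the energy-norm approximation rate to $h^2$ via Theorem \ref{thm:errorRRM}(b).

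The main subtlety --- not a deep obstacle --- is that $V_{hs}^{\rm R}$ admits no good local interpolation, which rules out the interpolation-based expansion \eqref{eq:eigenExpanM} used for the RM element. This is circumvented by going through the RM element: the inclusion $V_{hs}^{\rm R}\subset V_{hs}^{\rm M}$ transfers Lemma \ref{lem:consisRM} to the RRM setting, and the average projection onto $V_{hs}^{\rm M}$ (borrowed from the proof of Theorem \ref{thm:errorRRM}) supplies the one interpolation estimate one still needs.
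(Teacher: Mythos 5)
Your proposal is correct and is, in substance, exactly what the paper does: the paper offers no written proof of this theorem, stating only that the result ``is obtained by standard argument'' from the source-problem estimates (Theorem \ref{thm:errorRRM}) and citing the spectral-approximation literature. You have actually carried that standard argument out, and your identity
$\lambda_j-\lambda_{j,h}^{\rm R}=\lambda_j\|u_j-u_{j,h}^{\rm R}\|_{0,\rho}^2-|u_j-u_{j,h}^{\rm R}|_{1,h}^2-2E_h(u_j,u_{j,h}^{\rm R})$
checks out (it is the same algebra underlying \eqref{eq:eigenExpanM}, reorganised around the discrete eigenfunction instead of an interpolant, which is precisely the right move given that $V_{hs}^{\rm R}$ has no usable local interpolation); your treatment of the consistency term via $V_{hs}^{\rm R}\subset V_{hs}^{\rm M}$, the elementwise form of Lemma \ref{lem:consisRM}, and the discrete-Hessian bound borrowed from the proof of Theorem \ref{thm:errorRRM} is also sound. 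The one place you are too quick is the claim that the rates $|u_j-u_{j,h}^{\rm R}|_{1,h}\lesssim h$ and $\|u_j-u_{j,h}^{\rm R}\|_{0,\rho}\lesssim h^2$ follow ``directly'' by applying Theorem \ref{thm:errorRRM}(a) with source $\lambda_j\rho u_j$: that theorem compares $u_j$ with the discrete \emph{source} solution $\lambda_jT_h^{\rm R}u_j$, not with the discrete eigenfunction $u_{j,h}^{\rm R}=\lambda_{j,h}^{\rm R}T_h^{\rm R}u_{j,h}^{\rm R}$. You need the intermediate decomposition $u_j-u_{j,h}^{\rm R}=\lambda_j(T-T_h^{\rm R})u_j+T_h^{\rm R}(\lambda_ju_j-\lambda_{j,h}^{\rm R}u_{j,h}^{\rm R})$, the discrete stability $|T_h^{\rm R}g|_{1,h}\lesssim\|g\|_{0,\rho}$, and the $L^2$ eigenfunction and eigenvalue rates already supplied by the operator-norm convergence $\|T-T_h^{\rm R}\|_{0,\rho}\lesssim h^2$. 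Since you explicitly set up that operator framework first, this is a gloss rather than a gap, but the words ``at once'' should be replaced by this two-term splitting.
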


Similar to the basic relation between $V$ and its conforming approximation $V_{h}^{C}$ for eigenvalue problems in \cite{Babuska1991}, the following relation holds. 
\begin{lemma}\label{lem:conformRelation} Let $(\lambda_{h}^{\rm{M}}, u_{h}^{\rm{M}})$ be an eigen-pair of \eqref{eq:eigenRM} with $\big\|u_{h}^{\rm{M}}\big\|_{0,\rho} =1$. Denote, the Rayleigh quotient, by $R(v_{h}) := \dfrac{a_{h}(v_{h},v_{h})}{b(v_{h},v_{h})}$.  For any $v_{h} \in V_{hs}^{\rm{M}}$ with $\|v_{h}\|_{0,\rho} = 1$, it holds that
\begin{equation}
R( v_{h} ) - \lambda_{h}^{\rm{M}} = \big|v_{h} - u_{h}^{ \rm{M}}\big|_{1,h}^{2} - \lambda_{h}^{ \rm{M} }\big\|v_{h}-u_{h}^{\rm{M}}\big\|_{0,\rho}^{2}.
\end{equation}
\end{lemma}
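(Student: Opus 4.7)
The plan is a direct expansion, exploiting the two normalization conditions $\|v_h\|_{0,\rho}=\|u_h^{\rm M}\|_{0,\rho}=1$ and the discrete eigenvalue equation $a_h(u_h^{\rm M},w_h)=\lambda_h^{\rm M}b(u_h^{\rm M},w_h)$ for all $w_h\in V_{hs}^{\rm M}$. Since $b(v_h,v_h)=1$, the Rayleigh quotient reduces to $R(v_h)=a_h(v_h,v_h)$, so the identity to prove is simply
\begin{equation*}
a_h(v_h,v_h)-\lambda_h^{\rm M}=\big|v_h-u_h^{\rm M}\big|_{1,h}^2-\lambda_h^{\rm M}\big\|v_h-u_h^{\rm M}\big\|_{0,\rho}^2.
\end{equation*}

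First I would expand the right-hand side bilinearly. For the energy term,
\begin{equation*}
\big|v_h-u_h^{\rm M}\big|_{1,h}^2=a_h(v_h,v_h)-2a_h(v_h,u_h^{\rm M})+a_h(u_h^{\rm M},u_h^{\rm M}).
\end{equation*}
Testing the discrete eigenvalue equation against $v_h\in V_{hs}^{\rm M}$ gives $a_h(v_h,u_h^{\rm M})=\lambda_h^{\rm M}b(v_h,u_h^{\rm M})$, and testing against $u_h^{\rm M}$ together with $\|u_h^{\rm M}\|_{0,\rho}=1$ yields $a_h(u_h^{\rm M},u_h^{\rm M})=\lambda_h^{\rm M}$. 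Substituting these converts the energy term to $a_h(v_h,v_h)-2\lambda_h^{\rm M}b(v_h,u_h^{\rm M})+\lambda_h^{\rm M}$.

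Next I would expand the mass term the same way: using $b(v_h,v_h)=b(u_h^{\rm M},u_h^{\rm M})=1$,
\begin{equation*}
\big\|v_h-u_h^{\rm M}\big\|_{0,\rho}^2=2-2b(v_h,u_h^{\rm M}),
\end{equation*}
so $\lambda_h^{\rm M}\|v_h-u_h^{\rm M}\|_{0,\rho}^2=2\lambda_h^{\rm M}-2\lambda_h^{\rm M}b(v_h,u_h^{\rm M})$. Subtracting this from the energy term the cross terms $2\lambda_h^{\rm M}b(v_h,u_h^{\rm M})$ cancel, leaving exactly $a_h(v_h,v_h)-\lambda_h^{\rm M}=R(v_h)-\lambda_h^{\rm M}$, which is the desired identity.

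There is no real obstacle: the argument is the classical Rayleigh-quotient identity, and the nonconformity plays no role because everything takes place within the discrete space $V_{hs}^{\rm M}$ where the broken form $a_h$ is a genuine inner product and the eigenvalue relation holds for all test functions. The only point worth checking carefully is that the cross terms carry matching signs so that they cancel, which is immediate from the computation above.
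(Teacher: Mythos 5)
Your proof is correct and is essentially the same argument as the paper's: a direct bilinear expansion of the squared differences, combined with the discrete eigenvalue relation $a_h(u_h^{\rm M},w_h)=\lambda_h^{\rm M}b(u_h^{\rm M},w_h)$ and the two normalizations to cancel the cross terms. The paper merely organizes the computation in the opposite direction (starting from $a_h(v_h,v_h)-a_h(u_h^{\rm M},u_h^{\rm M})$ and rewriting it), but the content is identical.
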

\begin{proof}
From $\|v_{h}\|_{0,\rho} = \big\|u_{h}^{\rm{M}}\big\|_{0,\rho} =1$ and $\lambda_{h}^{\rm{M}} = a_{h}(u_{h}^{\rm{M}},u_{h}^{\rm{M}})$, it is equivalent to prove that
\begin{equation*}
\begin{split}
a_{h}(v_{h},v_{h}) - a_{h}(u_{h}^{\rm{M}},u_{h}^{\rm{M}}) & =a_{h}(v_{h}+u_{h}^{\rm{M}},v_{h}-u_{h}^{\rm{M}})\\
& = a_{h}(v_{h}-u_{h}^{\rm{M}},v_{h}-u_{h}^{\rm{M}})+2a_{h}( u_{h}^{\rm{M}}, v_{h}-u_{h}^{\rm{M}})\\
& = \big|v_{h}-u_{h}^{\rm{M}} \big|_{1,h}^{2} + 2\lambda_{h}^{\rm{M}}b(u_{h}^{\rm{M}}, v_{h}-u_{h}^{\rm{M}})\\
& = \big|v_{h}-u_{h}^{\rm{M}}\big|_{1,h}^{2} - \lambda_{h}^{\rm{M}}\left[ -2b(u_{h}^{\rm{M}}, v_{h}) + b(u_{h}^{\rm{M}},u_{h}^{\rm{M}}) + b(v_{h},v_{h})\right ]\\
& = \big|v_{h}-u_{h}^{\rm{M}} \big|_{1,h}^{2} - \lambda_{h}^{\rm{M}}b(v_{h}-u_{h}^{\rm{M}}, v_{h}-u_{h}^{\rm{M}})\\
& =  \big|v_{h}-u_{h}^{\rm{M}} \big|_{1,h}^{2} - \lambda_{h}^{\rm{M}}\big\|v_{h}-u_{h}^{\rm{M}}\big\|_{0,\rho}^{2},
\end{split}
\end{equation*}
where we utilize again the assumption: $b(v_{h},v_{h})=b(u_{h}^{\rm{M}},u_{h}^{\rm{M}})$.\end{proof}
\begin{theorem}\label{thm:lowerR}
Let $(\lambda_{j},u_{j})$, $ (\lambda_{j,h}^{\rm{R}}, u_{j,h}^{\rm{R}})$ and $(\lambda_{j,h}^{\rm{M}}, u_{j,h}^{\rm{M}})$ be the $j$-th exact eigen-pair and its discrete approximations with $\|u_{j}\|_{0,\rho} = \big\|u_{j,h}^{\rm{R}}\big\|_{0,\rho} = \big\|u_{j,h}^{\rm{M}}\big\|_{0,\rho} =1$.  Assume that $u_{j}\in H_{0}^{1}(\Omega)\cap H^{3}(\Omega)$ and the mesh is uniform. Provided that $h$ is small enough, we have
\begin{align}
C_{\rm R}h^{2} \leqslant \lambda_{j} - \lambda_{j,h}^{\rm{R}} \leqslant \lambda_{j} - \lambda_{j,h}^{\rm{M}},
\end{align}
where $C_{\rm R}$ is a positive constant independent of $h$.
\end{theorem}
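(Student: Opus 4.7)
The plan is to write
$$\lambda_j-\lambda_{j,h}^{\rm R}=(\lambda_j-\lambda_{j,h}^{\rm M})+(\lambda_{j,h}^{\rm M}-\lambda_{j,h}^{\rm R}),$$
bound the first term from below by the RM lower bound of Theorem \ref{thm:lowerM}, and show the second is only $O(h^4)$ via the Rayleigh-quotient identity of Lemma \ref{lem:conformRelation}. The key structural fact driving both inequalities is the inclusion $V_{hs}^{\rm R}\subset V_{hs}^{\rm M}$, with both schemes built from the same piecewise bilinear forms $a_h$ and $b$.

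For the right-hand inequality $\lambda_{j,h}^{\rm R}\geqslant\lambda_{j,h}^{\rm M}$ I would apply the Courant--Fischer--Weyl min-max characterisation of the $j$-th discrete eigenvalue as $\min_{\dim S=j}\max_{v\in S}R(v)$ with $R(v)=a_h(v,v)/b(v,v)$: since every $j$-dimensional subspace of $V_{hs}^{\rm R}$ is also a $j$-dimensional subspace of $V_{hs}^{\rm M}$, the minimum over the smaller collection is at least as large. For the left-hand (quantitative) inequality, the $O(h^4)$ gap between the two discrete eigenvalues is obtained by applying Lemma \ref{lem:conformRelation} to the RM eigen-pair $(\lambda_{j,h}^{\rm M},u_{j,h}^{\rm M})$ with the test function $v_h=u_{j,h}^{\rm R}\in V_{hs}^{\rm R}\subset V_{hs}^{\rm M}$; here the sign of $u_{j,h}^{\rm R}$ is fixed so that $b(u_{j,h}^{\rm R},u_{j,h}^{\rm M})\geqslant 0$. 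Since $\|u_{j,h}^{\rm R}\|_{0,\rho}=1$ and $R(u_{j,h}^{\rm R})=\lambda_{j,h}^{\rm R}$, the identity becomes
$$\lambda_{j,h}^{\rm R}-\lambda_{j,h}^{\rm M}=\bigl|u_{j,h}^{\rm R}-u_{j,h}^{\rm M}\bigr|_{1,h}^{2}-\lambda_{j,h}^{\rm M}\bigl\|u_{j,h}^{\rm R}-u_{j,h}^{\rm M}\bigr\|_{0,\rho}^{2}.$$
A triangle inequality through the exact eigenfunction $u_j$, combined with Theorems \ref{thm:EerrorRM}(b) and \ref{thm:EerrorRRM}(b), gives $|u_{j,h}^{\rm R}-u_{j,h}^{\rm M}|_{1,h}\lesssim h^{2}$, and Theorems \ref{thm:EerrorRM}(a) and \ref{thm:EerrorRRM}(a) give $\|u_{j,h}^{\rm R}-u_{j,h}^{\rm M}\|_{0,\rho}\lesssim h^{2}$, so $0\leqslant\lambda_{j,h}^{\rm R}-\lambda_{j,h}^{\rm M}\lesssim h^{4}$. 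Adding this to $\lambda_j-\lambda_{j,h}^{\rm M}\geqslant C_{\rm M}h^{2}$ from Theorem \ref{thm:lowerM}, one concludes $\lambda_j-\lambda_{j,h}^{\rm R}\geqslant C_{\rm M}h^{2}-C'h^{4}\geqslant C_{\rm R}h^{2}$ for all sufficiently small $h$.

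The main obstacle I anticipate is matching the discrete eigenfunctions when $\lambda_j$ has multiplicity greater than one: Theorems \ref{thm:EerrorRM}(b) and \ref{thm:EerrorRRM}(b) guarantee convergence towards \emph{some} $u_j\in M(\lambda_j)$, and the representatives selected by the two schemes may in principle be different elements of this eigenspace, which would spoil the triangle inequality used above. The standard remedy is to project $u_{j,h}^{\rm R}$ and $u_{j,h}^{\rm M}$ orthogonally (in $b(\cdot,\cdot)$) onto $M(\lambda_j)$, to exploit the spectral gap separating $\lambda_j$ from the adjacent eigenvalues together with the already-established convergence rates to show that these projections agree up to $O(h^{2})$ after an appropriate sign choice, and then to carry out the above argument with a single common representative. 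This manoeuvre is standard in the spectral-approximation literature but has to be executed carefully to preserve the unit $\|\cdot\|_{0,\rho}$-normalisation used in Lemma \ref{lem:conformRelation}.
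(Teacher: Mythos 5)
Your proof follows essentially the same route as the paper's: the splitting $\lambda_j-\lambda_{j,h}^{\rm R}=(\lambda_j-\lambda_{j,h}^{\rm M})+(\lambda_{j,h}^{\rm M}-\lambda_{j,h}^{\rm R})$, the min--max principle for the sign of the second term, Lemma \ref{lem:conformRelation} applied with $v_h=u_{j,h}^{\rm R}$ together with a triangle inequality through the exact eigenfunction to show that term is $O(h^4)$, and Theorem \ref{thm:lowerM} for the dominant $h^2$ contribution. Your closing remark about aligning the two discrete eigenfunctions with a common representative of $M(\lambda_j)$ when the eigenvalue is multiple is a legitimate refinement that the paper's proof passes over silently.
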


\begin{proof}
Since $V_{hs}^{\rm{R}} \subset V_{hs}^{\rm{M}}$, the second inequality, or $\lambda_{j,h}^{\rm{R}} \geqslant \lambda_{j,h}^{\rm{M}}$, holds from the minimum-maximum principle \cite{Babuska1991}.  Let $v_{h} = u_{j,h}^{\rm{R}}$ in Lemma \ref{lem:conformRelation}. We obtain
\begin{align}\label{eq:RM1}
\lambda_{j,h}^{\rm{R}} - \lambda_{j,h}^{\rm{M}} = R( u_{j,h}^{\rm{R}} ) - \lambda_{j,h}^{\rm{M}} = \big|u_{j,h}^{\rm{R}} - u_{j,h}^{ \rm{M}}\big|_{1,h}^{2} - \lambda_{j,h}^{ \rm{M} }\big\|u_{j,h}^{\rm{R}}-u_{j,h}^{\rm{M}}\big\|_{0,\rho}^{2}.
\end{align}

\noindent From Theorem \ref{thm:EerrorRM}, Theorem \ref{thm:EerrorRRM}, and the triangle inequality, it holds that
\begin{align}\label{eq:RM2}
\big|u_{j,h}^{\rm{R}} - u_{j,h}^{ \rm{M}}\big|_{1,h}^{2} - \lambda_{j,h}^{\rm{M}} 
\big\|u_{j,h}^{\rm{R}} - u_{j,h}^{ \rm{M}}\big\|_{0,\rho}^{2} \lesssim h^{4}.
\end{align}
A combination of \eqref{eq:lowerM}, \eqref{eq:RM1}, and \eqref{eq:RM2} yields that
\begin{align*}
\lambda_{j} - \lambda_{j,h}^{\rm{R}}= \lambda_{j} - \lambda_{j,h}^{\rm{M}}  + \lambda_{j,h}^{\rm{M}}  - \lambda_{j,h}^{\rm{R}} \geqslant \alpha h^{2} + o(h^{2}).
\end{align*}
\noindent Hence the result.
\end{proof}

\begin{remark}{\rm To the best of our knowledge, the RM element and the RRM element are the only two elements, by which eigenvalue approximations have the same convergence rates as that of eigenfunction approximations in the energy norm.}
\end{remark}

\subsection{Implementation and numerical results}
\subsubsection{Implementation}
Since constructing clearly a basis functions of $V_{hs}^{\rm{R}}$ on an arbitrary grid is sophisticated, we now present an available approach how \eqref{eq:sourceRRM} and \eqref{eq:eigenRRM} can be implemented. We start with the fact that $V_{hs}^{\rm{R}} = \Big\{w_{h} \in V_{h0}^{\rm{W}}: \int_{e} \jump{\partial_{n_{e}}w_{h}}  =0,\forall e \in \mathcal{E}_{h}^{i} \Big\}$. Define $P_{0}(\mathcal{E}_{h}^{i})$ the space of piecewise constant functions defined on $\mathcal{E}_{h}^{i}$.

An equivalent formulation of  \eqref{eq:sourceRRM} is to find $(u_{h}, \delta_{h}) \in V_{h0}^{\rm{W}} \times  P_{0}(\mathcal{E}_{h}^{i})$, such that
\begin{equation}\label{eq:eqformulP}
\left\{
\begin{split}
&  a_{h}(u_{h},v_{h}) + \sum_{e\in \mathcal{E}_{h}^{i}} \fint_{e}\jump{\partial_{n_{e}}v_{h}}\delta_{h}  =  b(f, v_{h}),\\
& \sum_{e\in \mathcal{E}_{h}^{i}} \fint_{e}\jump{\partial_{n_{e}}u_{h}}\mu_{h}   \qquad \qquad \;=  0.
\end{split}
\right.
\end{equation}

 An equivalent formulation of  \eqref{eq:eigenRRM} is to find $(\lambda_{h}, u_{h}, \delta_{h}) \in \mathbb{R}  \times V_{h0}^{\rm{W}} \times P_{0}(\mathcal{E}_{h}^{i})$ with $\|u_{h}\|_{0,\rho} = 1$, such that

\begin{equation}\label{eq:eqformulE}
\left\{
\begin{split}
&  a_{h}(u_{h},v_{h}) + \sum_{e\in \mathcal{E}_{h}^{i}} \fint_{e}\jump{\partial_{n_{e}}v_{h}}\delta_{h}  =  \lambda_{h}b(u_{h}, v_{h}),\\
& \sum_{e\in \mathcal{E}_{h}^{i}} \fint_{e}\jump{\partial_{n_{e}}u_{h}}\mu_{h}   \qquad \qquad \;=  0.
\end{split}
\right.
\end{equation}

Problem \eqref{eq:eqformulP} admits a solution $(u_{h}, \delta_{h})$, where $u_{h}$ solves problem \eqref{eq:sourceRRM}, and problem \eqref{eq:eqformulE} admits a solution $(\lambda_{h}, u_{h}, \delta_{h})$, where $(\lambda_{h}, u_{h})$ solves problem \eqref{eq:eigenRRM}.

\begin{remark}{\rm
It's worth mentioning that formulations \eqref{eq:eqformulP} and \eqref{eq:eqformulE} can be applied to an arbitrary quadrilateral grid \cite{Shuo.Zhang2017}. For the case wherein the grid comprises rectangles, a detailed construction process of basis functions of $V_{hs}^{\rm R}$ is given in Appendix \ref{sec:appB}, based on which \eqref{eq:sourceRRM} and \eqref{eq:eigenRRM} can be implemented directly in elliptic formulation.}
\end{remark}

\subsubsection{Numerical experiments}
Let $\Omega=(0,1)^{2}$. We consider nonuniform meshes (see Figure \ref{fig:nonuniformgrid}) with $\frac{h_{x,K}}{h_{y,K}} \in \Big\{ \frac{0.35}{0.65},\frac{0.65}{0.35},\frac{1}{1}\Big\}$, and uniform meshes with $\frac{h_{x,K}}{h_{y,K}} = \frac{1}{2}$. Numerical examples of the source problem and the eigenvalue problem are given below. 
\begin{figure}
\begin{minipage}[t]{0.5\linewidth}
\centering
\includegraphics[width=2.8in]{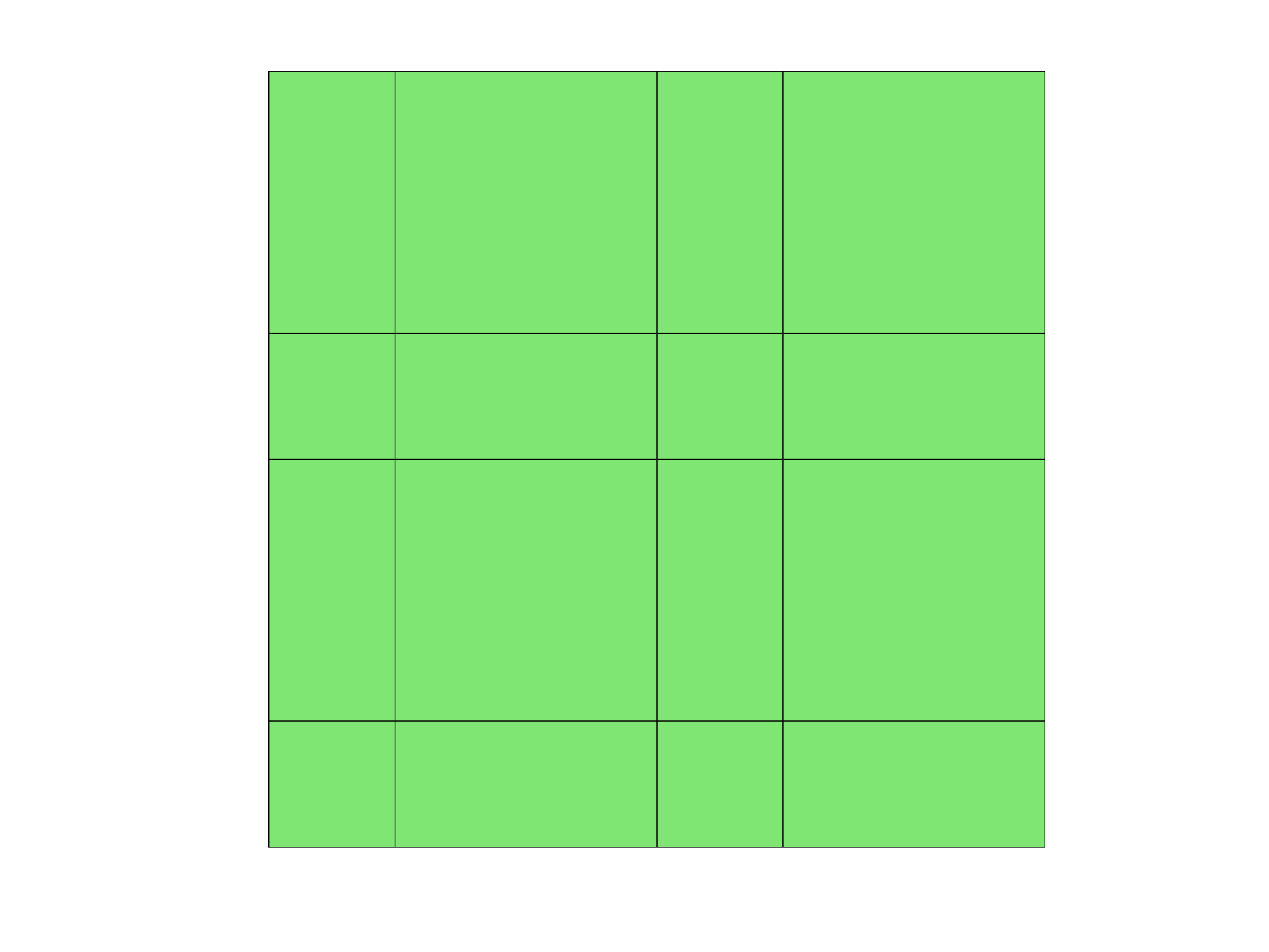}
\end{minipage}%
\begin{minipage}[t]{0.5\linewidth}
\centering
\includegraphics[width=2.8in]{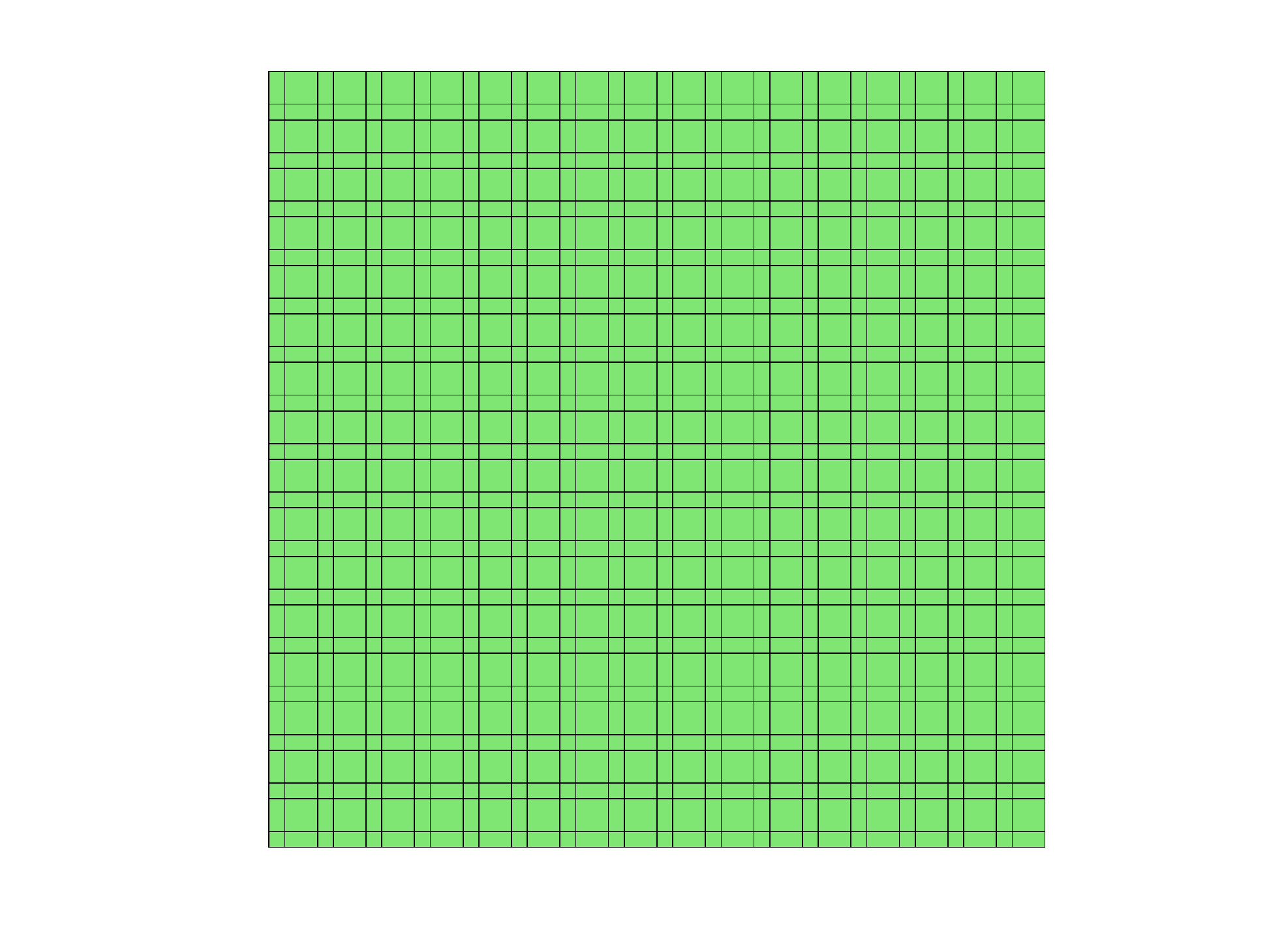}
\end{minipage}
\caption{Illustration of a nonuniform shape regular subdivision. The partition in the right is a combination of small patterns as the left one.}\label{fig:nonuniformgrid}
\end{figure}

\noindent{\bf Example 1 for the source problem.} 

\noindent Consider \eqref{eq:Poisson} with $f = 2 \pi^2 sin(\pi x) sin(\pi y)$. The exact solution $u$ is computed as $u = sin(\pi x) sin(\pi y)$. Apply \eqref{eq:eqformulP} to get the discrete solutions on uniform and nonuniform meshes. From Figure~\ref{fig:errorPoisson}, the convergence rate is $\mathcal{O}(h)$ in the energy norm, and $\mathcal{O}(h^{2})$ in $L^{2}$-norm, on a nonuniform mesh. Both rates reach $\mathcal{O}(h^{2})$ order on uniform grids.
\begin{figure}
\begin{minipage}[t]{0.5\linewidth}
\centering
\includegraphics[width=3in]{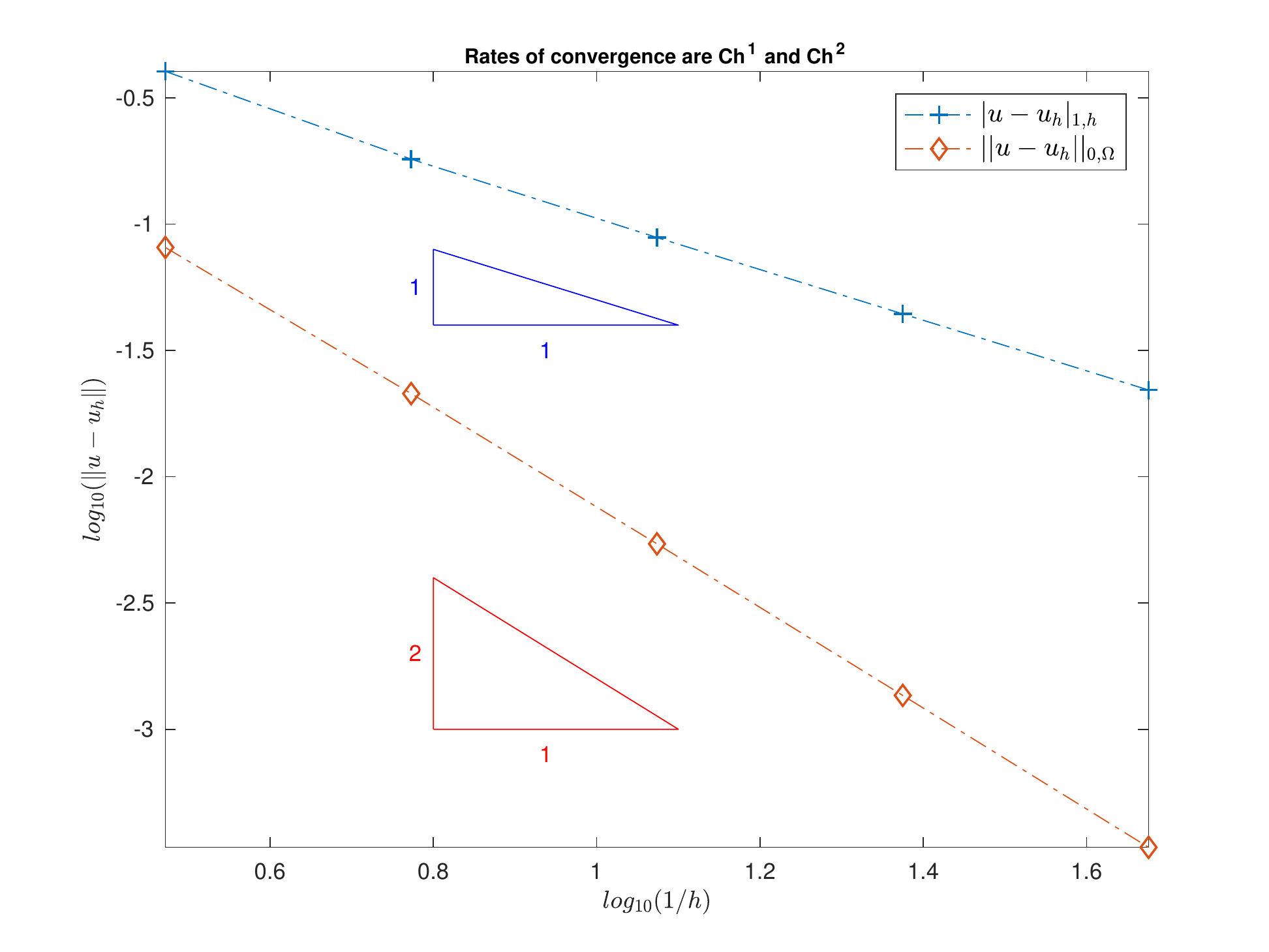}
\end{minipage}%
\begin{minipage}[t]{0.5\linewidth}
\centering
\includegraphics[width=3in]{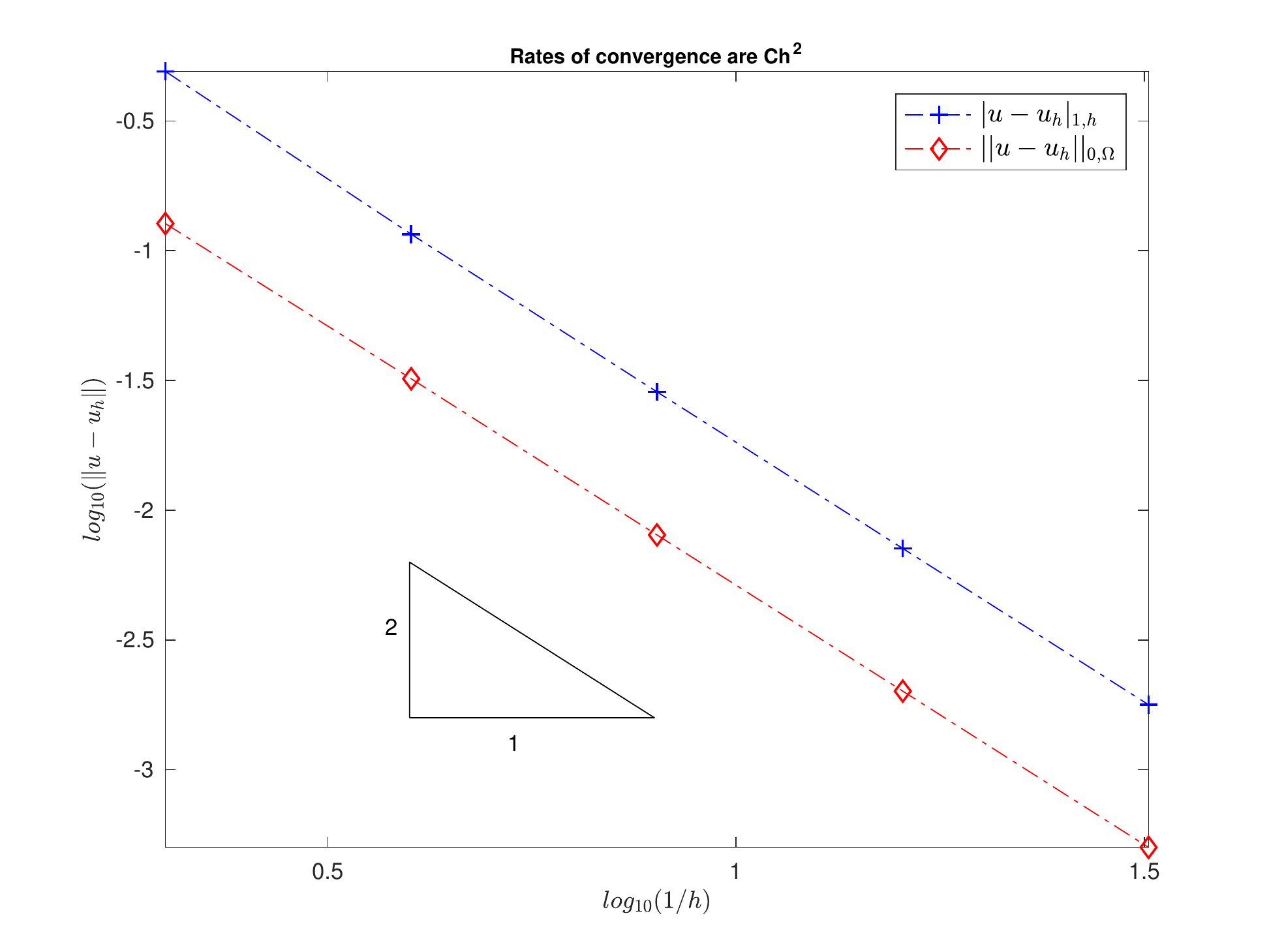}
\end{minipage}
\caption{Source problem on rectangle domain: Errors in the energy norm and $L^{2}$-norm with nonuniform (Left) or uniform (Right) subdivisions.}\label{fig:errorPoisson}
\end{figure}

\noindent{\bf Example 2 for the eigenvalue problem.}  

\noindent Consider  \eqref{eq:eigen} with $\rho = 1$. Then we have the exact eigenfunctions $u_{k,l} = 2sin(k\pi x)sin(l\pi y)$ and eigenvalues $\lambda_{k,l} = (k^{2}+l^{2})\pi^{2} \ (k,l \in \mathbb{N}^{+}).$ Arrange them by increasing order. Apply \eqref{eq:eqformulE} to get the the smallest six discrete eigenvalues. From Figure \ref{fig:errEigen}, the convergence rates of eigenvalues almost reach $\mathcal{O}(h^{2})$ order in both nonuniform  and uniform cases. Moreover, from tables \ref{tab:nonuniformEigenvalues} and \ref{tab:uniformEigenvalues}, the eigenvalue approximations by the RRM element converge monotonically from below to the exact ones.
\begin{figure}
\begin{minipage}[t]{0.5\linewidth}
\centering
\includegraphics[width=3in]{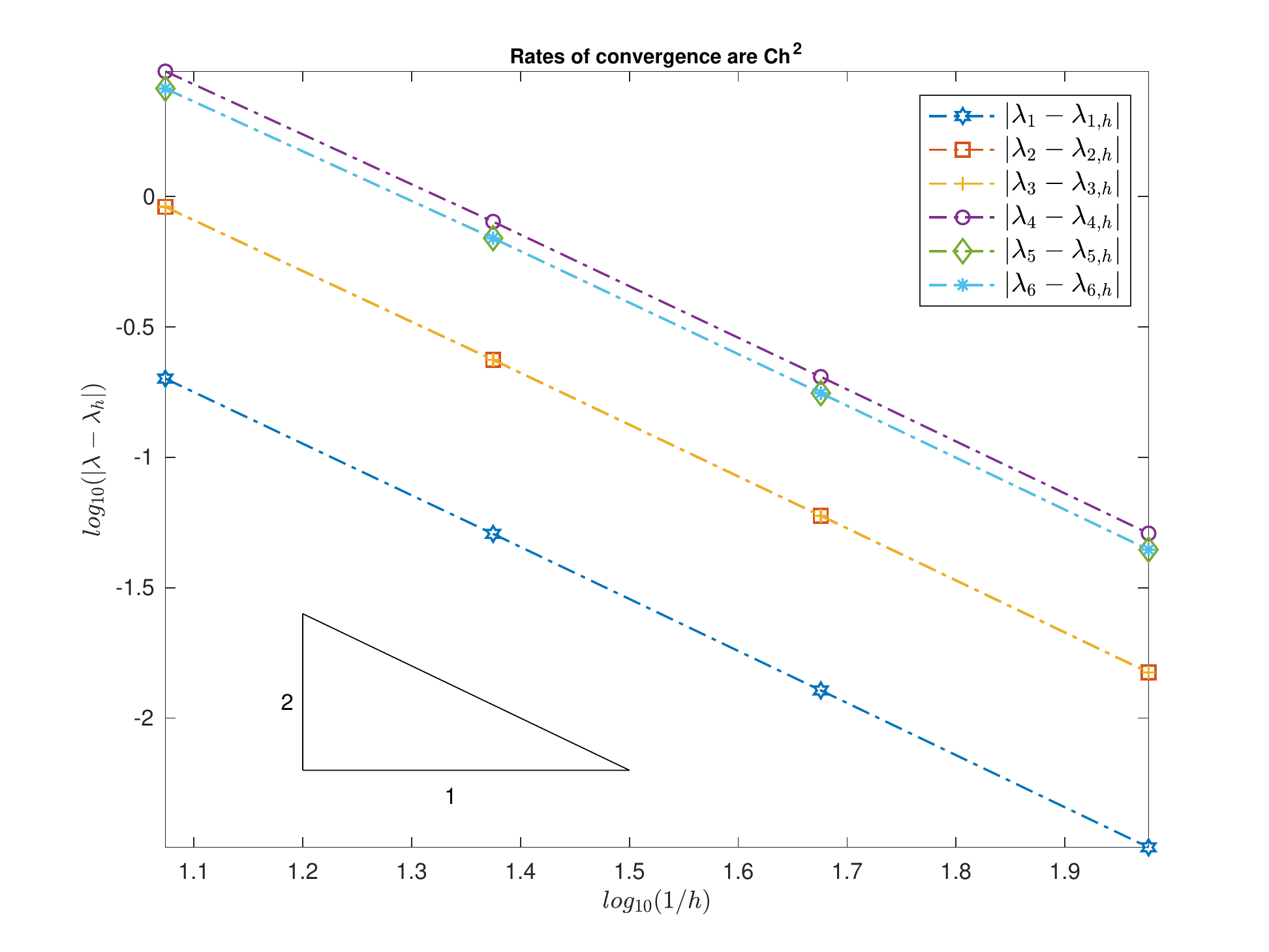}
\end{minipage}%
\begin{minipage}[t]{0.5\linewidth}
\centering
\includegraphics[width=3in]{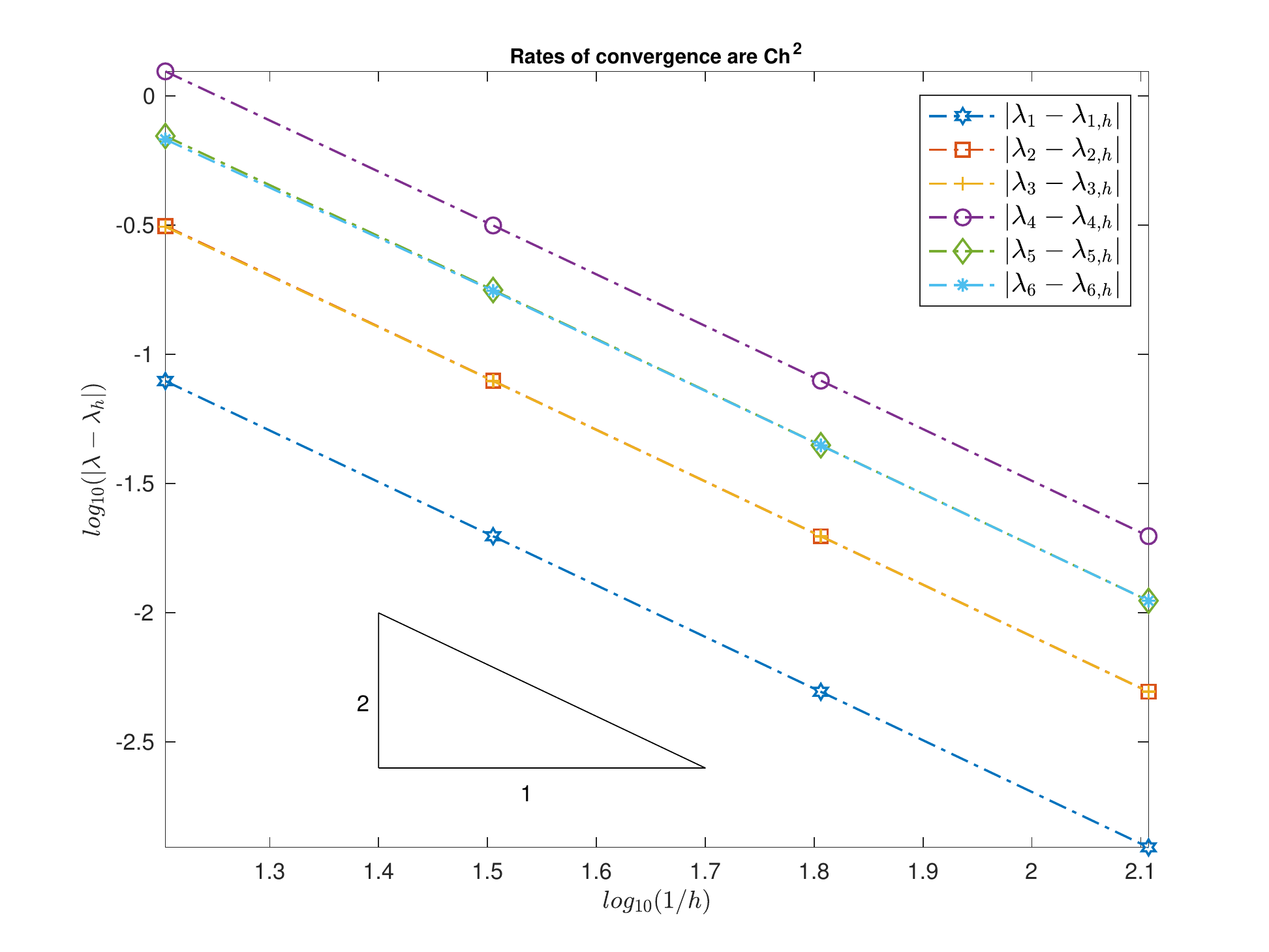}
\end{minipage}
\caption{Eigenvalue problem on rectangle domain: Errors of eigenvalues with nonuniform subdivisions (Left) and uniform subdivisions (Right).}\label{fig:errEigen}
\end{figure}
\begin{table}[htbp]
\caption{Eigenvalues computed by the RRM element on nonuniform grids.}
\footnotesize
\centering
\begin{tabular}{c|cc|ccc|cc|ccc}
\hline
$h$ & $\lambda_{1,h}$ & order & $\lambda_{2,h}$ &  $\lambda_{3,h}$ & order  &  $\lambda_{4,h}$ & order  & $\lambda_{5,h}$ & $\lambda_{6,h}$ & order\\
\hline
$0.3375$ & 17.178 &  & 38.419 & 39.047 & & 47.604 &  & 65.324 & 78.711 & \\
$0.1688$ & 18.982 & 1.690 & 46.125 & 46.126 &1.696,1.599 & 68.932 & 1.564 & 90.210& 90.236& 1.966,1.181\\
$0.084375$ & 19.539 & 1.892 & 48.437 & 48.437 & 1.768, 1.768 & 75.941 & 1.662 & 96.105 & 96.106 &1.638,1.633\\
$0.0421875$ & 19.688 & 1.969 & 49.112 & 49.112 & 1.928, 1.928 & 78.157 & 1.884 & 98.004 & 98.004 &1.873,1.873\\
$0.02109375$ & 19.726 & 1.992 & 49.288 & 49.288 & 1.980,1.980 & 78.754 & 1.968 & 98.520 & 98.520 &1.964,1.964\\
$0.010546875$ & 19.736 & 1.998 & 49.333 & 49.333 & 1.995,1.995 & 78.906 & 1.992 & 98.652 & 98.652 &1.991,1.991 \\
Trend & $\nearrow$ & & $\nearrow$ & $\nearrow$ & & $\nearrow$ &  & $\nearrow$ & $\nearrow$&\\
Exact $\lambda_{j}$ & 19.739 &--  & 49.348 &
49.348 & & 78.957 & & 98.696 & 98.696 & \\
\hline
 \end{tabular}\label{tab:nonuniformEigenvalues}
\end{table}
\begin{table}[htbp]
\caption{Eigenvalues computed by the RRM element on uniform grids.}
\footnotesize
\centering
\begin{tabular}{c|cc|ccc|cc|ccc}
\hline
$h \ (=h_{x})$ & $\lambda_{1,h}$ & order & $\lambda_{2,h}$ &  $\lambda_{3,h}$ & order  &  $\lambda_{4,h}$ & order  & $\lambda_{5,h}$ & $\lambda_{6,h}$ & order\\
\hline
$0.25$ & 18.559 & & 44.961 & 45.655 & & 63.427 &  & 90.249 & 95.913 & \\
$0.125$ & 19.428 & 1.896 & 48.127 & 48.163 &1.796,1.558 & 74.233 & 1.644 & 96.050 & 96.427 & 1.596,0.613\\
$0.0625$ & 19.660 & 1.973 & 49.034 & 49.036 & 1.944,1.899 & 77.711 &1.896 & 97.996 & 98.016 &1.890,1.670\\
$0.03125$ & 19.719 & 1.993 & 49.269 & 49.269 & 1.986, 1.975 & 78.641 &1.973 & 98.519 & 98.520 &1.972,1.928\\
$0.015625$ & 19.734 & 1.998 & 49.328 & 49.328 & 1.996,1.994 & 78.878 & 1.993 & 98.652 & 98.652 &1.993,1.983\\
$0.0078125$ & 19.738 & 2.000 & 49.343 & 49.343 & 2.000,1.996 & 78.937 & 1.998 & 98.685 & 98.685 &1.998,1.996\\
Trend & $\nearrow$ & & $\nearrow$ & $\nearrow$ & & $\nearrow$ &  & $\nearrow$ & $\nearrow$& \\
Exact $\lambda_{j}$ & 19.739 &  & 49.348 &
49.348 & & 78.957 &  & 98.696 & 98.696 & \\
\hline
 \end{tabular}\label{tab:uniformEigenvalues}
\end{table}

%
%
%
\section{Conclusions and discussions}
\label{sec:concdisc}

\subsection{Concluding remarks}
In this paper, we present a reduced rectangular Morley element scheme for $H^{1}$ problems. Technically, the exactness relation between the RRM element and the PS element is figured out, and the approximation error estimate is established by an auxiliary Stokes problem. For the source problem, the convergence rate of this scheme is $\mathcal{O}(h)$ in the energy norm and $\mathcal{O}(h^{2})$ in $L^{2}$-norm, on general meshes. The error estimate in the energy norm reaches $\mathcal{O}(h^{2})$ order on uniform grids.  
Besides, a lower bound of the $L^{2}$-norm error is proved, and the best $L^{2}$-norm error estimate is at most  $\mathcal{O}(h^{2})$.
For the eigenvalue problem, the discrete eigenvalues by the RM element and the RRM element are both proved to be lower bounds of the exact ones. In fact, the inequality \eqref{eq:propertyPiRM}, reads $a_{h}(w-\Pi_{h}^{\rm M}w,\Pi_{h}^{\rm M}w) \geqslant \alpha h^{2}$, or \eqref{eq:property1PiRM}, reads $a_{h}(w-\Pi_{h}^{\rm M}w,w) \geqslant \alpha_{1} h^{2}$, is the dominant factor for the RRM element to yield these lower bounds, where $\Pi_{h}^{\rm M}$ is the interpolation operator for the RM element. 

Roughly speaking, for schemes which provide the lower bounds of the eigenvalues, a smaller space provides a better approximation. This can be viewed as a motivation for the optimal space.

\subsection{Further discussions}
In this paper, we mainly focus on the convex domain (rectangle domain) case. Also, for the eigenvalue problem, we pay special attention to the computation of eigenvalues. Some more numerical experiments illustrate that the schemes can perform even better than the theoretical description in this paper. These can stimulate further research, and we list part of them below. 

   (1) Consider the eigenvalue problem \eqref{eq:eigen} with $\rho = 1$ on $\Omega = (0,1)^{2}$. From Figure \ref{fig:1stEigenfunction}, right, the convergence rate of the first eigenfunction in $L^{2}$-norm reaches $\mathcal{O}(h^{3})$ order on uniform grids, while in Theorem \ref{thm:EerrorRRM} we can only derive $\big\|u_{j}-u_{j,h}^{\rm R}\big\|_{0,\rho} \leqslant \big|u_{j}-u_{j,h}^{\rm R}\big|_{1,h}\lesssim h^{2}$. Although,  there exists a lower bound of the $L^{2}$-norm error  for the source problem, it may not holds for the eigenvalue problem, and it is possible that the convergence rate of the eigenfunctions in $L^{2}$-norm may be higher than that in the energy norm.   
   
   (2) Consider the source problem  \eqref{eq:Poisson}  on L-shape domain: $\Omega = (0,2)^{2}\backslash [1,2]^{2}$. From Figure \ref{fig:LshapePoisson}, the convergence rates are consistent with the results derived on $\Omega = (0,1)^{2}$. Although Theorem~\ref{thm:errorRRM} for the source problem is based on the assumption of convex domain, this example implies that the RRM element may also be applicable to non-convex regions. 
   
    (3) Consider the eigenvalue problem \eqref{eq:eigen} with $\rho = 1$ on L-shape domain. The eigenfunctions and eigenvalues are unknown, and eigenfunctions may have singularities around the reentrant corner. From \cite{Babuska1991}, the third eigenfunction is analytic: $u_{3} = sin(\pi x)sin(\pi x)$. We present the errors of $u_{3}$ in Figure \ref{fig:3rdEigenfunction}, and observe that the convergence rates for error on L-shape domain are the same as that on a rectangle region. Moreover, the third eigenvalue computed by the RRM scheme satisfies $\lambda_{3,h}\leqslant \lambda_{3} = 2\pi^{2}$ and the convergence rate is $\mathcal{O}(h^{2})$. The perfermance of the smallest six eigenvalues with nonuniform and uniform subdivisions are listed in Tables \ref{tab:LshapenonuniformEigenvalues} and \ref{tab:LshapeuniformEigenvalues}. 			
\begin{table}[htbp]
\footnotesize
\caption{Eigenvalues computed on L-shape domain with nonuniform subdivisions.}
\centering
\begin{tabular}{c|c|c|cc|c|c|c}
\hline
$h$ & $\lambda_{1,h}$ & $\lambda_{2,h}$ &  $\lambda_{3,h}$ & order  & $\lambda_{4,h}$ & $\lambda_{5,h}$ & $\lambda_{6,h}$\\
\hline
$0.675$ & 8.999 & 11.157 & 11.961 &  & 16.776 & 17.955  & 23.829\\
$0.3375$ & 9.637 & 13.948 & 17.219 & 1.626 & 25.337 & 27.892 & 36.616 \\
$0.16875$ & 9.708  & 14.840 & 18.984 & 1.739 & 28.190 & 30.865 & 40.131 \\
$0.084375$  & 9.691 & 15.104 & 19.539 & 1.916 & 29.160 & 31.713 & 41.140 \\
$0.0421875$ & 9.667 & 15.174 & 19.688 & 1.977 & 29.429 & 31.897 & 41.412 \\
$0.02109375$ & 9.652 & 15.191 & 19.726 & 1.994 & 29.498 & 31.923 & 41.469 \\
$0.010546875$ & 9.645 & 15.196 & 19.736 & 1.999 & 29.516 & 31.921 & 41.477 \\
Trend &  &$\nearrow$ & $\nearrow$ & & $\nearrow$ & & \\
 $\lambda_{j} \approx $ & 9.640 & 15.197  & 19.739 &  & 29.522 & 31.913 & 41.475\\
\hline
 \end{tabular}\label{tab:LshapenonuniformEigenvalues}
\end{table}
\begin{table}[htbp]
\footnotesize
\caption{Eigenvalues computed on L-shape domain with uniform subdivisions.}
\centering
\begin{tabular}{c|c|c|cc|c|c|c}
\hline
$h_{x}=2h_{y}$ & $\lambda_{1,h}$ & $\lambda_{2,h}$ &  $\lambda_{3,h}$ & order  & $\lambda_{4,h}$ & $\lambda_{5,h}$ & $\lambda_{6,h}$\\
\hline
$0.5$ & 9.894 & 13.443 & 15.857 &  & 23.914 & 26.659 & 33.739\\
$0.25$ & 9.811 & 14.696 & 18.558 & 1.717 & 27.656 & 30.410 & 40.808 \\
$0.125$ & 9.743 & 15.068 & 19.428 & 1.923 & 29.015 & 31.687 & 41.262 \\
$0.0625$  & 9.691 & 15.165 & 19.660 & 1.980 & 29.392 & 31.921 & 41.456 \\
$0.03125$ & 9.663 & 15.189 & 19.719 & 1.995 & 29.489 & 31.941 & 41.488 \\
$0.015625 $ & 9.650 & 15.195 & 19.734 & 1.999 & 29.513 & 31.930 & 41.485 \\
Trend &  &$\nearrow$ & $\nearrow$ & & $\nearrow$ & & \\
 $\lambda_{j} \approx $ & 9.640  & 15.197  & 19.739 & & 29.522 & 31.913 & 41.475\\
\hline
 \end{tabular}\label{tab:LshapeuniformEigenvalues}
\end{table}
These examples suggest that the RRM element may have better numerical applications, and these will be studied in our future work.
\begin{figure}
\begin{minipage}[t]{0.5\linewidth}
\centering
\includegraphics[width=3in]{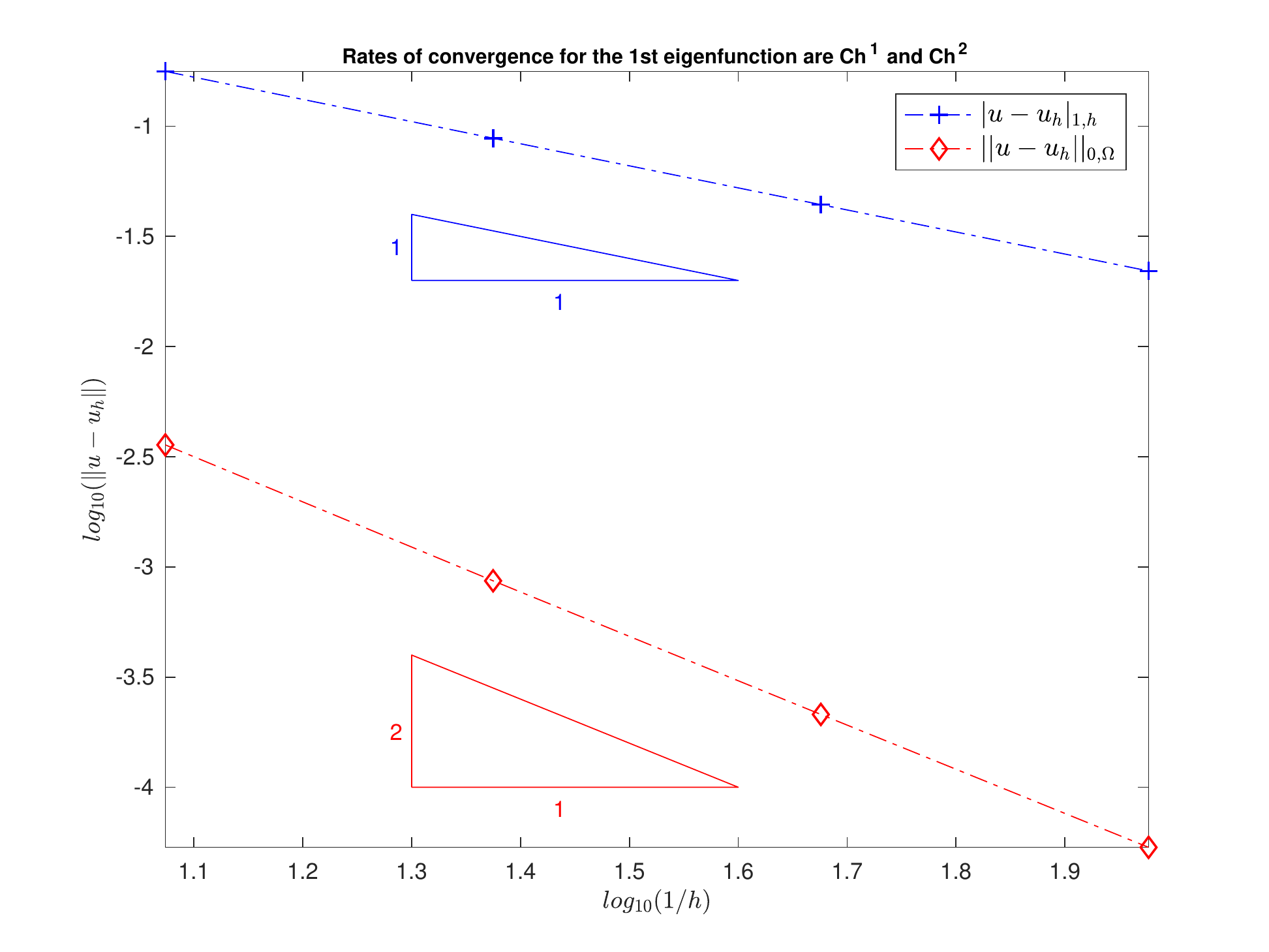}
\end{minipage}%
\begin{minipage}[t]{0.5\linewidth}
\centering
\includegraphics[width=3in]{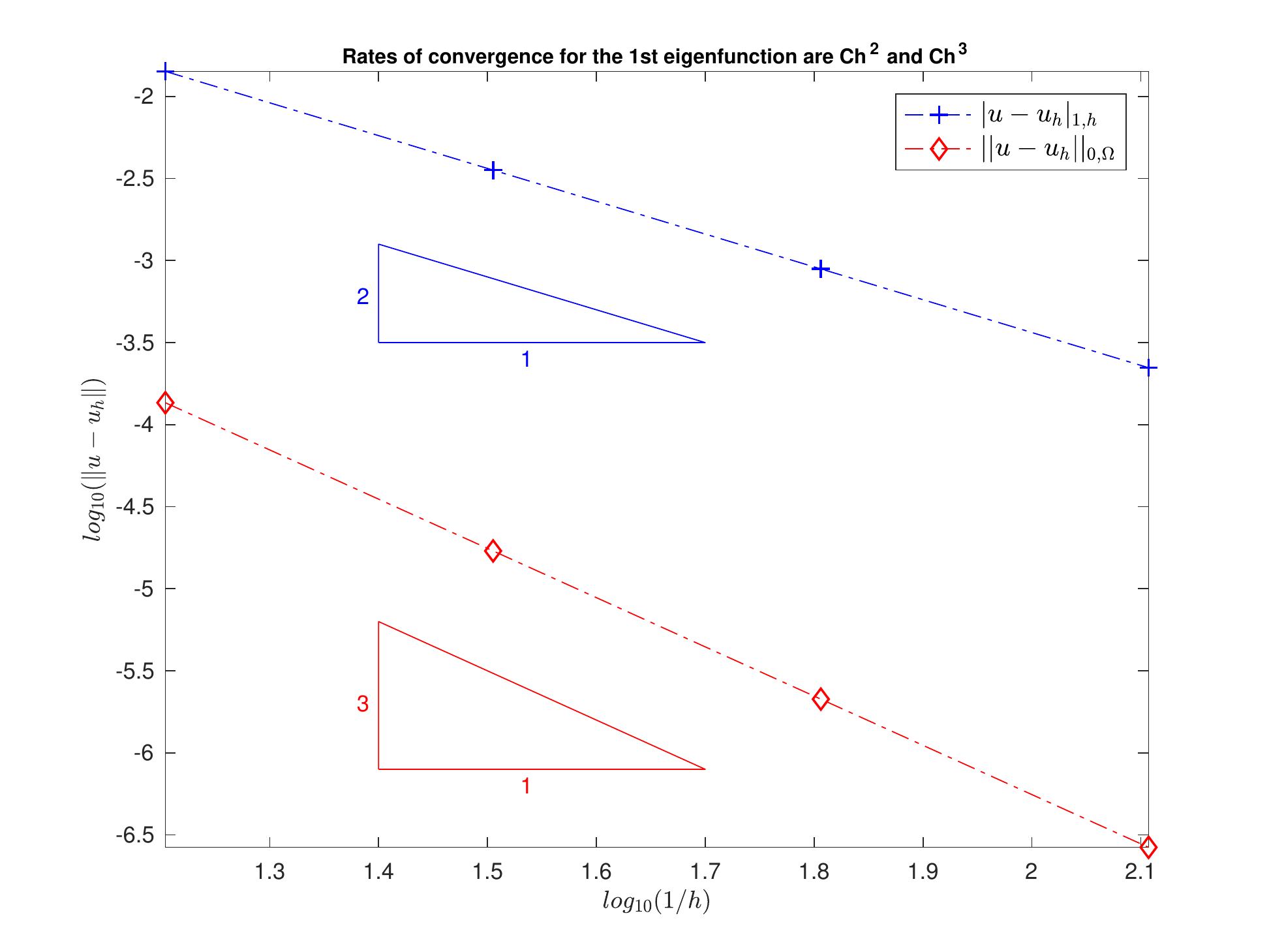}
\end{minipage}
\caption{Eigenvalue problem on rectangle domain: Convergence rates of the 1st eigenfunction with nonuniform (Left) and uniform subdivisions (Right).}\label{fig:1stEigenfunction}
\end{figure}
\begin{figure}
\begin{minipage}[t]{0.5\linewidth}
\centering
\includegraphics[width=3in]{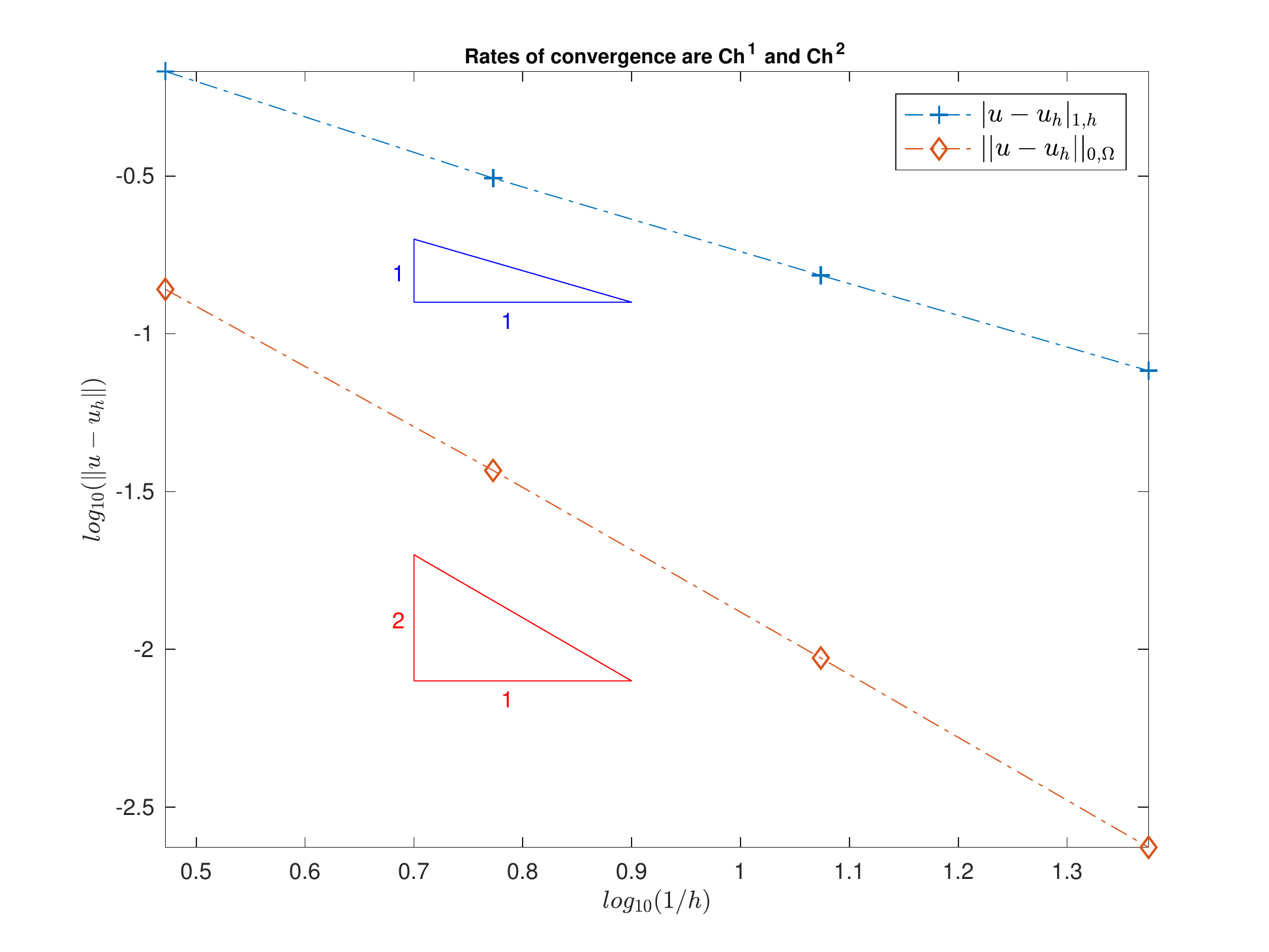}
\end{minipage}%
\begin{minipage}[t]{0.5\linewidth}
\centering
\includegraphics[width=3in]{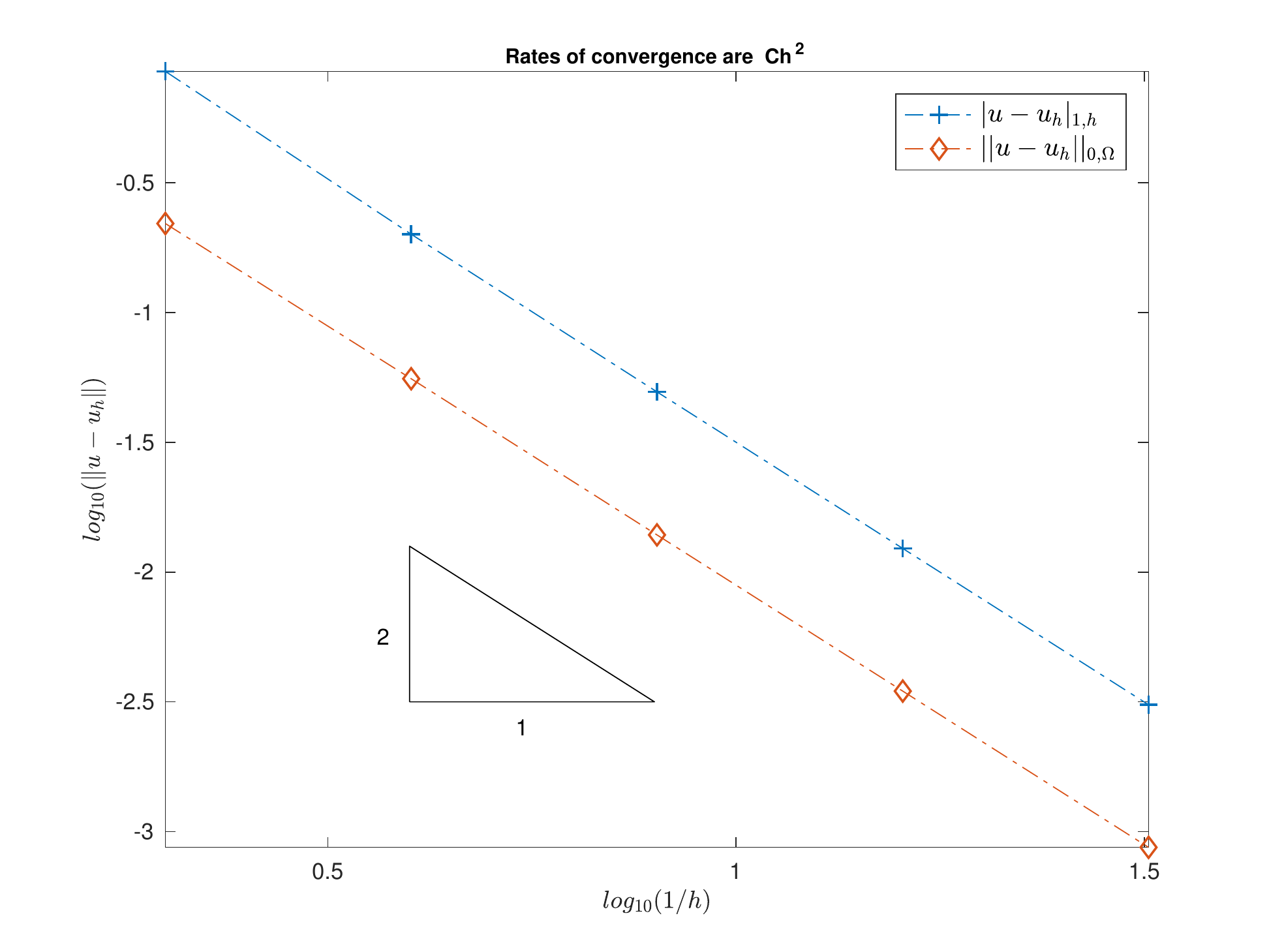}
\end{minipage}
\caption{Source problem on L-shape domain: Errors in the energy norm and $L^{2}$-norm with nonuniform (Left) and uniform subdivisions (Right).}\label{fig:LshapePoisson}
\end{figure}
\begin{figure}
\begin{minipage}[t]{0.5\linewidth}
\centering                            
\includegraphics[width=3in]{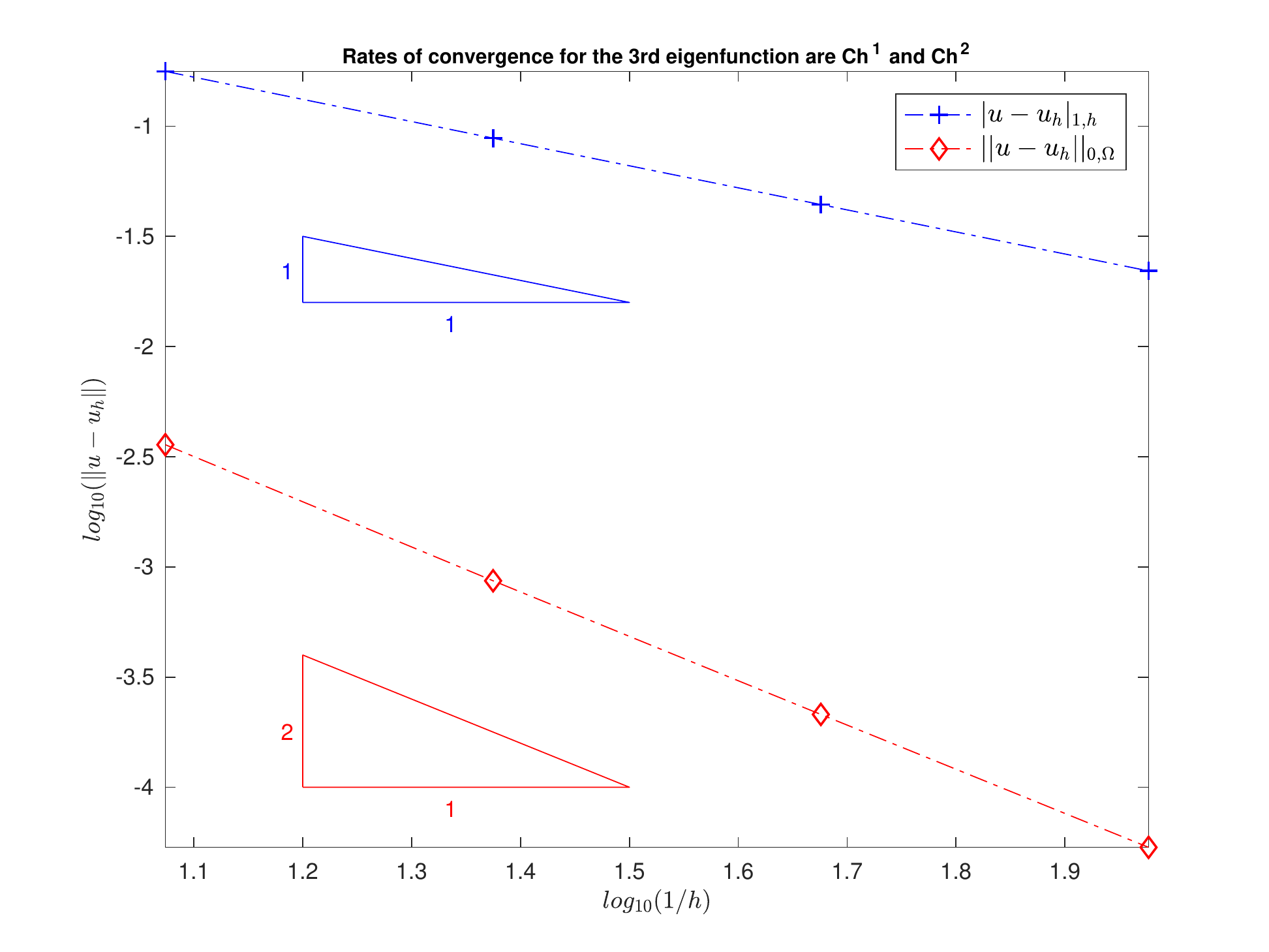}
\end{minipage}%
\begin{minipage}[t]{0.5\linewidth}
\centering
\includegraphics[width=3in]{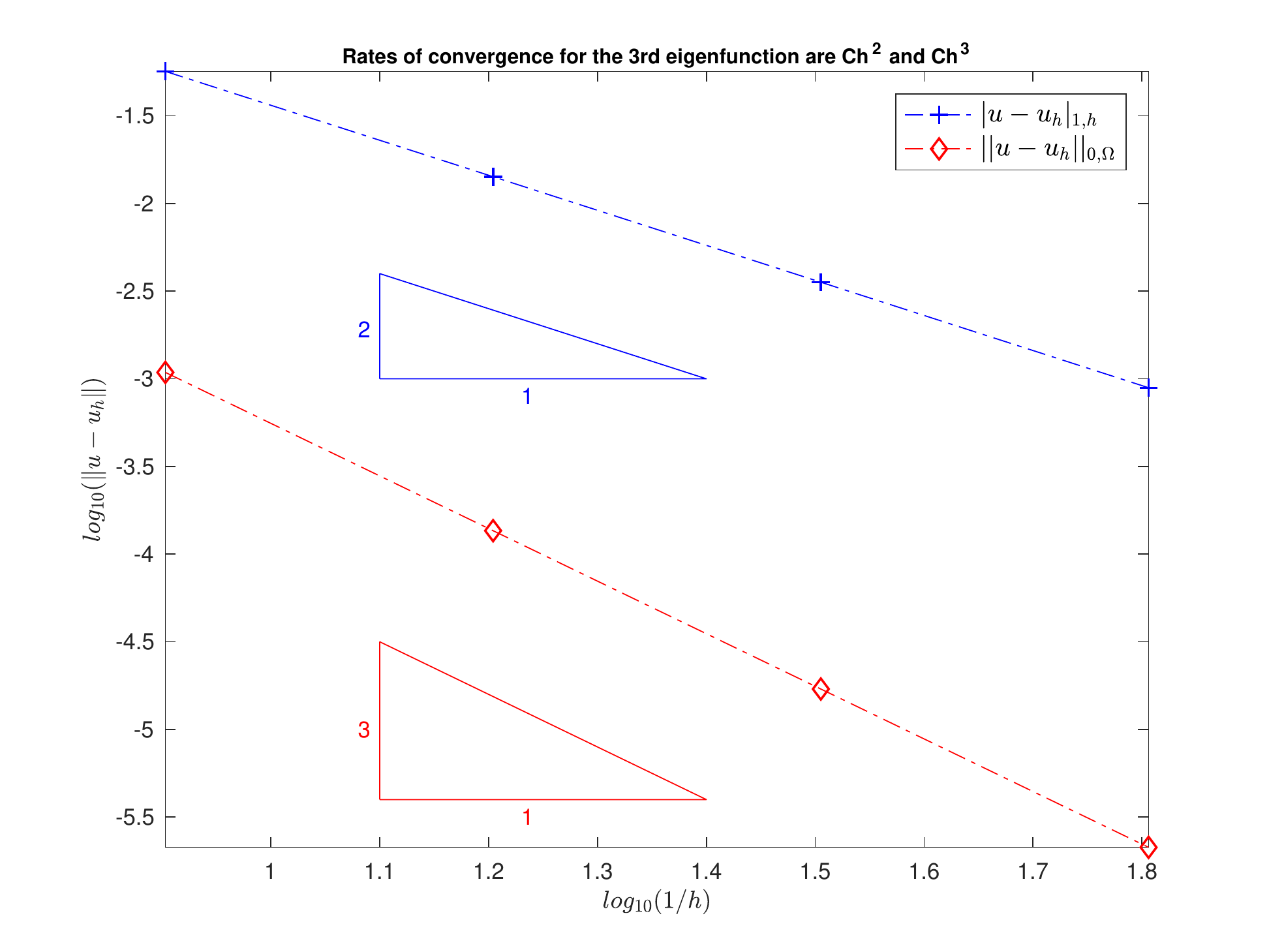}
\end{minipage}
\caption{Eigenvalue problem on L-shape domain: Convergence rates of the 3rd eigenfunction with nonuniform (Left) and uniform subdivisions (Right).}\label{fig:3rdEigenfunction}
\end{figure}

\bibliography{ZZZ_bib}
\bibliographystyle{abbrv}

\appendix

\section{Construction of  basis functions for the moment-continuous (MC) element space}
\label{sec:appA}
\noindent Let $\Omega\subset\mathbb{R}^2$ be a polygonal region subdivided into a rectangular grid $\mathcal{G}_h$. Define the moment-continuous  (MC) element spaces as
\begin{align}
V_h^{\rm MC}:= \Big\{ w_h\in L^2(\Omega):  w_h|_K\in P_2(K), \text{ and } w_h\ \text{is\ moment-continuous}\ \text{on}\ \mathcal{G}_h \Big\};
\end{align}
\begin{align}
V_{h0}^{\rm MC}:= \Big\{w_h\in V_h^{\rm MC}:  w_h\ \text{is\ moment-homogeneous\ on}\ \mathcal{G}_h \Big\}.
\end{align}
They have the following equivalent definitions:
\begin{equation}
\begin{split}
V_h^{\rm MC}:= \Big\{ w_h\in L^2(\Omega): \ w_h|_K\in P_2(K), \text{ and } w_h\  & \text{is\ continuous at second Gauss--Legendre}
\\
 & \text{ points of any }\  e \in \mathcal{E}_h \Big\};
\end{split}
\end{equation}
\begin{equation}
V_{h0}^{\rm MC}:= \Big\{w_h\in V_h^{\rm MC}:\ w_h\ \text{vanishes at second}  \text{ Gauss--Legendre points on any } e \in \mathcal{E}_h^{b}\Big\}.
\end{equation}
\noindent {In this section, we will present available sets of basis functions of $V_{h}^{\rm MC}$ and $V_{h0}^{\rm MC}$. }

\subsection{Compatibility conditions}
Let $K$ be a rectangle with $a_i$ the vertices and $g_{ij}$ the Gauss--Legendre points on the boundary (see Figure \ref{fig:APprect}), where $i=1,2,3,4$ and $j=1,2$. Let $\theta_1=\frac{1}{2}(1-\sqrt{\frac{1}{3}})$
and $\theta_2=\frac{1}{2}(1+\sqrt{\frac{1}{3}})$ be the coordinates of second-order Gauss-Legendre points on $[0,1]$. By a pure linear algebra argument, we have the following description of $P_2(K)$.

\begin{figure}[htbp]
\setlength{\unitlength}{0.75mm}

\begin{picture}(90,70)(0,-5)
\thicklines\color{black}

\put(0,0){\line(0,1){50}}
\put(0,0){\line(1,0){80}}
\put(0,50){\line(1,0){80}}
\put(80,0){\line(0,1){50}}


\put(-1,10.566245){$\bullet$}
\put(-1,39.433755){$\bullet$}
\put(79,10.566245){$\bullet$}
\put(79,39.433755){$\bullet$}
\put(16.905984,-1){$\bullet$}
\put(63.094008,-1){$\bullet$}
\put(16.905984,49){$\bullet$}
\put(63.094008,49){$\bullet$}

\put(-6,-5){{$a$}{${}_1$} {$(0,0)$}}
\put(80,-5){{$a$}{${}_2$} {$(\xi,0)$}}
\put(80,53){{$a$}{${}_3$} {$(\xi,\eta)$}}
\put(-6,53){{$a$}{${}_4$} {$(0,\eta)$}}

\put(-1,-1){$\bullet$}
\put(79,-1){$\bullet$}
\put(-1,49){$\bullet$}
\put(79,49){$\bullet$}

\put(-10.2,10.566245){{\Large$g_{42}$}}
\put(-10.2,39.433755){\Large$g_{41}$}
\put(82,10.566245){\Large$g_{21}$}
\put(82,39.433755){\Large$g_{22}$}
\put(16.905984,-5){\Large$g_{11}$}
\put(63.094008,-5){\Large$g_{12}$}
\put(16.905984,53){\Large$g_{32}$}
\put(63.094008,53){\Large$g_{31}$}

\dashline{1}(0,0)(80,50)


\put(16.906984,10.566245){$\bullet$}
\put(13.906984,14.566245){\large$g_{131}$}
\put(63.094008,39.433755){$\bullet$}
\put(65.094008,37.433755){\large$g_{132}$}
\end{picture}

\caption{Illustration of Gauss points on the boundary of a rectangle.}\label{fig:APprect}

\end{figure}
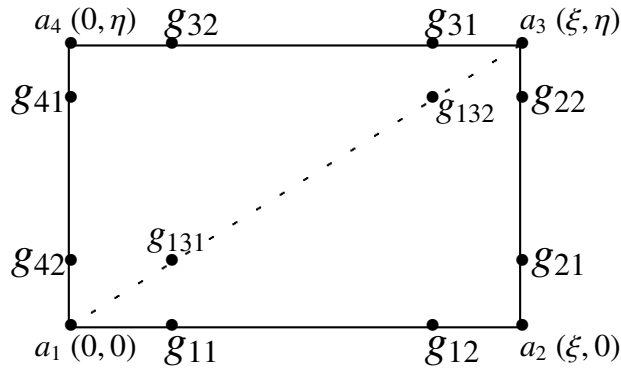

\begin{lemma}\label{lem:consisMC}
Given $\mathbb{g}_{ij}\in\mathbb{R}$, $i=1,2,3,4$, $j=1,2$. There exists $p\in P_2(K)$, such that $p(g_{ij})=\mathbb{g}_{ij}$, if and only if the following compatibility conditions are satisfied on $K$,
\begin{eqnarray}
\mathbb{g}_{11}-\mathbb{g}_{12}+\mathbb{g}_{21}-\mathbb{g}_{22}+\mathbb{g}_{31}-\mathbb{g}_{32}+\mathbb{g}_{41}-\mathbb{g}_{42}=0;\label{eq:cond1}\\
\theta_1(\mathbb{g}_{11}-\mathbb{g}_{32})+\theta_2(\mathbb{g}_{31}-\mathbb{g}_{12})+(\mathbb{g}_{21}-\mathbb{g}_{22})=0;\label{eq:cond2} \\
\theta_1(\mathbb{g}_{11}-\mathbb{g}_{12})+(\theta_2-\theta_1)(\mathbb{g}_{22}-\mathbb{g}_{41})+\theta_2(\mathbb{g}_{32}-\mathbb{g}_{31})=0.\label{eq:cond3}
\end{eqnarray}
\end{lemma}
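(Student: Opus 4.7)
The plan is to treat Lemma~\ref{lem:consisMC} as a linear-algebra statement about the evaluation map
$$
\Phi:P_2(K)\longrightarrow\mathbb{R}^8,\qquad p\longmapsto\bigl(p(g_{ij})\bigr)_{i=1,\ldots,4,\ j=1,2}.
$$
Conditions \eqref{eq:cond1}--\eqref{eq:cond3} define three linear functionals on $\mathbb{R}^8$, and the lemma asserts that $\mathrm{Im}(\Phi)$ is exactly their common kernel. I would establish this in three steps: necessity by direct computation on a basis, independence of the three functionals, and a kernel computation for $\Phi$.

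For necessity I would fix the monomial basis $\{1,x,y,x^2,xy,y^2\}$ of $P_2(K)$, read off the Gauss-point values from the coordinates in Figure~\ref{fig:APprect} with $a_1=(0,0)$, $a_2=(\xi,0)$, $a_3=(\xi,\eta)$, $a_4=(0,\eta)$, and substitute into each identity; every check collapses to $0$ after invoking only $\theta_1+\theta_2=1$ (and, for the quadratic monomials, $\theta_1\theta_2=\tfrac16$), and by linearity this extends to all $p\in P_2(K)$. For linear independence of the three functionals on $\mathbb{R}^8$, I would examine their coefficients at the $\mathbb{g}_{42}$, $\mathbb{g}_{21}$, and $\mathbb{g}_{11}$ slots: \eqref{eq:cond1} is the only one with a nonzero $\mathbb{g}_{42}$-coefficient, \eqref{eq:cond1}--\eqref{eq:cond2} are the only ones with a nonzero $\mathbb{g}_{21}$-coefficient, and all three have a nonzero $\mathbb{g}_{11}$-coefficient, which gives a triangular system forcing any vanishing linear combination to be trivial. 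The common kernel of the three functionals is therefore a $5$-dimensional subspace of $\mathbb{R}^8$, and the remaining task is to prove $\dim\mathrm{Im}(\Phi)=5$, or equivalently $\dim\ker(\Phi)=1$.

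The kernel computation is the main obstacle. If $p\in P_2(K)$ vanishes at all eight Gauss points, then its restriction to each edge is a univariate quadratic whose two roots on that edge are fixed, so each of $p|_{y=0}$, $p|_{y=\eta}$, $p|_{x=0}$, $p|_{x=\xi}$ is determined by a single scalar. Writing $p(x,y)=\sum_{i+j\le 2}\alpha_{ij}x^iy^j$ and matching the four edge restrictions against the six coefficients $\alpha_{ij}$ produces a linear system in the four edge scalars whose solution space is one-dimensional, a generator being
$$
p_0(x,y)=(x-\theta_1\xi)(x-\theta_2\xi)+\tfrac{\xi^2}{\eta^2}\,y(y-\eta).
$$
A direct check confirms that $p_0\in P_2(K)$ and vanishes at all eight Gauss points, so $\dim\ker(\Phi)=1$ and hence $\dim\mathrm{Im}(\Phi)=5$. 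The necessity step provides the inclusion of $\mathrm{Im}(\Phi)$ in the common kernel of the three conditions, and equality of two $5$-dimensional subspaces follows. The subtle point is that the constraint $p\in P_2(K)$ (rather than $Q_2(K)$) is precisely what keeps the kernel one-dimensional: the absence of the monomials $x^2y$, $xy^2$, $x^2y^2$ rigidly couples the four edge scalars and collapses what would otherwise be a larger family.
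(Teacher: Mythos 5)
Your proposal is correct, and it is both a genuinely different route and strictly more complete than the paper's own argument. The paper proves only the necessity direction, and does so structurally: condition \eqref{eq:cond1} is obtained by cutting $K$ along the diagonal $a_1a_3$ and adding the two Fortin--Souli\'e alternating-sum identities for $P_2$ on a triangle (the contributions of the two Gauss points $g_{131},g_{132}$ on the diagonal cancel), while \eqref{eq:cond2} and \eqref{eq:cond3} are attributed in one line to the fact that the directional derivatives of $p$ lie in $P_1(K)$; the sufficiency direction of the ``if and only if'' is not argued at all in the printed proof and is left implicit in Remark \ref{rem:P2bubble} and the surrounding dimension counts. You instead verify necessity by brute force on the monomial basis (I checked: all three identities reduce to $0$ using only $\theta_1+\theta_2=1$; the relation $\theta_1\theta_2=\tfrac16$ is never actually needed), establish independence of the three functionals via the triangular structure in the $\mathbb{g}_{42}$-, $\mathbb{g}_{21}$-, $\mathbb{g}_{11}$-slots (which does force any vanishing combination to be trivial), and close the loop with the kernel computation $\ker\Phi=\mathrm{span}\{p_0\}$ for $p_0=(x-\theta_1\xi)(x-\theta_2\xi)+\tfrac{\xi^2}{\eta^2}y(y-\eta)$; this $p_0$ is, up to scaling and an apparent typo in the constant term of the paper's formula, exactly the bubble $\varphi_{0,K}$ of Remark \ref{rem:P2bubble}, and your edge-restriction argument (four edge scalars rigidly coupled because $x^2y$, $xy^2$, $x^2y^2$ are absent from $P_2$) does pin the kernel down to dimension one. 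The count $5=8-3=6-1$ then gives equality of the image of the evaluation map with the common kernel of the three conditions, i.e., both directions at once. What the paper's route buys is a conceptual explanation of where \eqref{eq:cond1}--\eqref{eq:cond3} come from; what your route buys is an actually complete proof of the stated equivalence, at the cost of a finite, unilluminating computation. The one step you should write out rather than assert is the elimination showing the four edge scalars reduce to a single parameter (matching $p|_{y=0}=a(x-\theta_1\xi)(x-\theta_2\xi)$, $p|_{y=\eta}=c(x-\theta_1\xi)(x-\theta_2\xi)$, etc.\ against the six coefficients forces $c=a$, the mixed coefficient to vanish, and $a\xi^2=b\eta^2$), but it is routine.
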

\begin{proof}
To prove \eqref{eq:cond1}, we connect the vertices $a_1$ and $a_3$. Let $g_{131}$ and $g_{132}$ be the two Gauss points on $a_1a_3$. Then we obtain that (see \cite[(2)]{Fortin.M;Soulie.M1983})
$$
p(g_{11})-p(g_{12})+p(g_{21})-p(g_{22})+p(g_{132})-p(g_{131})=0
$$
and
$$
p(g_{31})-p(g_{32})+p(g_{41})-p(g_{42})+p(g_{131})-p(g_{132})=0.
$$
Thus \eqref{eq:cond1} follows. Since the two directional derivatives of $p$ belongs to $P_1(K)$, \eqref{eq:cond2} and \eqref{eq:cond3} hold.
\end{proof}

A set of basis functions of $P_2(K)$ are listed below. Note that with $\varphi_{a_i}$ being the bilinear functions and $\varphi_{xx}$ and $\varphi_{yy}$ being the bubbles in two directions, these six functions form a set of basis functions with respect to the Wilson element.

\noindent{\textbf{Local basis functions of $P_{2}(K)$:}} \begin{equation}\label{eq:basisW}
\left\{
\begin{array}{l}
\varphi_{a_1,K}= \frac{1}{\xi\eta}(\xi-x)(\eta-y),\\
\varphi_{a_2,K}= \frac{1}{\xi\eta}x(\eta-y),\\
\varphi_{a_3,K}= \frac{1}{\xi\eta}xy,\\
\varphi_{a_4,K}= \frac{1}{\xi\eta}(\xi-x)y,\\
\varphi_{xx,K}= x(\xi-x),\\
\varphi_{yy,K}= y(\eta-y).
\end{array}
\right.
\end{equation}

\begin{lemma}\label{lem:ght}
Let $p\in P_2(K)$. The following results hold.
\begin{enumerate}
\item If $p(g_{41})=p(g_{42})=0$, then $p\in {\rm span}\{\varphi_{a_2},\varphi_{a_3},\varphi_{xx},\varphi_{0,K}\};$ 
\item If $p(g_{11})=p(g_{12})=p(g_{41})=p(g_{42})=0$, then $p\in {\rm span}\{\varphi_{a_3},\varphi_{0,K}\};$ 
\item If $p(g_{11})=p(g_{12})=p(g_{21})=p(g_{22})=p(g_{41})=p(g_{42})=0$, then $p\in {\rm span}\{\varphi_{0,K}\}$.
\end{enumerate}
\end{lemma}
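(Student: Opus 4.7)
The plan is to prove all three parts by dimension counting in $P_2(K)$ (which has dimension $6$), combining explicit verification of the listed basis elements with Lemma~\ref{lem:consisMC} to control the rank of the evaluation functionals.

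First I would note that the function $\varphi_{0,K}$ used in the statement is the (up to scaling unique) nonzero element of $P_2(K)$ vanishing at all eight boundary Gauss--Legendre points; its existence follows from Lemma~\ref{lem:consisMC}, since the three compatibility conditions \eqref{eq:cond1}--\eqref{eq:cond3} cut the image of the evaluation map $p\mapsto(p(g_{ij}))$ down to dimension at most $5$ in $\mathbb{R}^8$, forcing a kernel of dimension at least $1$ in the $6$-dimensional space $P_2(K)$. An explicit representative is $\varphi_{0,K}(x,y)=(x^2-\xi x+\tfrac{\xi^2}{6})+\tfrac{\xi^2}{\eta^2}(y^2-\eta y)$, which I would check vanishes at every $g_{ij}$ via the identity $\theta_1\theta_2=\tfrac16$.

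For part (1), the strategy is to show that the four listed functions lie in the kernel of the two evaluation functionals and are linearly independent, and that the two functionals themselves are independent. The functions $\varphi_{a_2,K}$, $\varphi_{a_3,K}$ and $\varphi_{xx,K}$ each vanish identically on the left edge $\{x=0\}$ (from their definitions in \eqref{eq:basisW}), hence at $g_{41}$ and $g_{42}$; $\varphi_{0,K}$ vanishes at every boundary Gauss point. Linear independence is immediate because $\varphi_{a_2,K},\varphi_{a_3,K}$ contribute the monomial $xy$ with distinct other terms, $\varphi_{xx,K}$ is the unique pure $x^2$ contribution, and $\varphi_{0,K}$ introduces a $y^2$ term absent from the other three. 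Independence of the two evaluation functionals is witnessed by $\varphi_{a_4,K}$, which takes distinct values $\theta_2,\theta_1$ at $g_{41},g_{42}$, so the kernel has dimension exactly $6-2=4$.

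For part (2), I would verify that $\varphi_{a_3,K}=xy/(\xi\eta)$ vanishes on both $\{x=0\}$ and $\{y=0\}$, hence at $g_{11},g_{12},g_{41},g_{42}$, while $\varphi_{0,K}$ again vanishes at all eight points. These two are linearly independent ($\varphi_{a_3,K}$ has an $xy$-term, $\varphi_{0,K}$ does not). To pin down the dimension as $2=6-4$, I would exhibit the $4\times 6$ evaluation matrix in the monomial basis $\{1,x,y,x^2,xy,y^2\}$ and check that rows on the bottom edge and rows on the left edge together have rank $4$ (using $\theta_1\neq\theta_2$ on each edge, and $a_4$-independence between edges).

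Part (3) is the one where pure dimension counting is not enough, so I would invoke Lemma~\ref{lem:consisMC} directly. Setting $\mathbb{g}_{11}=\mathbb{g}_{12}=\mathbb{g}_{21}=\mathbb{g}_{22}=\mathbb{g}_{41}=\mathbb{g}_{42}=0$ in \eqref{eq:cond1}--\eqref{eq:cond3}, the conditions reduce to
\[
\mathbb{g}_{31}-\mathbb{g}_{32}=0,\qquad -\theta_1\mathbb{g}_{32}+\theta_2\mathbb{g}_{31}=0,
\]
and $\theta_2(\mathbb{g}_{32}-\mathbb{g}_{31})=0$; together with $\theta_2\neq\theta_1$, these force $\mathbb{g}_{31}=\mathbb{g}_{32}=0$. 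Hence any $p\in P_2(K)$ satisfying the six hypotheses automatically vanishes at all eight boundary Gauss points, so $p$ lies in the one-dimensional kernel spanned by $\varphi_{0,K}$.

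The only real obstacle is the interplay in part (3) between the six prescribed zeros and the three global compatibility conditions; the rest is essentially bookkeeping. Throughout, I would write all three parts uniformly as "$\dim\text{(kernel)}=6-\text{rank of evaluations on }P_2(K)$," using Lemma~\ref{lem:consisMC} to compute the rank when the naive count overshoots.
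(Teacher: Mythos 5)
Your argument is correct. The paper states Lemma~\ref{lem:ght} without proof, and the route you take --- computing $\dim(\ker)=6-\operatorname{rank}$ of the relevant evaluation functionals via Lemma~\ref{lem:consisMC} and exhibiting the listed functions inside each kernel --- is evidently the ``pure linear algebra argument'' the surrounding text alludes to, so there is nothing substantive to contrast. Two small repairs are worth recording. First, your explicit bubble is fine, but it is proportional to the $\varphi_{0,K}$ of Remark~\ref{rem:P2bubble} only after the additive constant there is corrected from $2\theta_1\theta_2$ to $\theta_1\theta_2$: with the printed constant, $\varphi_{0,K}$ equals $-\theta_1\theta_2$ at every $g_{ij}$, and part~(3) would then be false as literally stated. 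Your proof implicitly uses the corrected normalization, which is clearly what is intended. Second, in part~(1) the observation that $\varphi_{a_4,K}$ takes the distinct values $\theta_2,\theta_1$ at $g_{41},g_{42}$ shows only that the two evaluation functionals are distinct, not that they are linearly independent; pair it with the constant function $1$, giving the invertible matrix $\bigl(\begin{smallmatrix}1&1\\ \theta_2&\theta_1\end{smallmatrix}\bigr)$, to conclude $\operatorname{rank}=2$ and hence the exact kernel dimension $4$ (the analogous refinement is already implicit in your rank-$4$ computation for part~(2)). With these one-line fixes all three parts go through exactly as you describe.
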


\begin{remark}\label{rem:P2bubble}{\rm
A polynomial $p\in P_2(K)$ is uniquely determined by its evaluations on $g_{ij}$ only up to $\varphi_{0,K}(x,y):= \frac{1}{\xi^{2}}\varphi_{xx,K} + \frac{1}{\eta^{2}}\varphi_{yy,K} - 2\theta_1\theta_2$ multiplicated by a constant.}
\end{remark}

\subsection{Patterns of supports of basis functions in $V_{h}^{\rm MC}$ and $V_{h0}^{\rm MC}$}
Suppose that the domain is divided into $m\times n$ rectangles; see Figure \ref{fig:domain and grid}. In $x$ direction, it is decomposed to $m$ rows, each being $\mathcal{G}_{i}$ ($1\leqslant i\leqslant m$), and in $y$ direction, it is decomposed to $n$ columns, each being $\mathcal{G}^{j}$ ($1\leqslant j\leqslant n$). The vertices are labeled by $X_i^j$, and the cells by $K^j_i$. That is, $K^j_i=\mathcal{G}_{i}\cap \mathcal{G}^{j}$, and it has four vertices as $X^{j-1}_{i-1}$, $X^{j-1}_{i}$, $X^{j}_{i-1}$, and $X^{j}_{i}$.
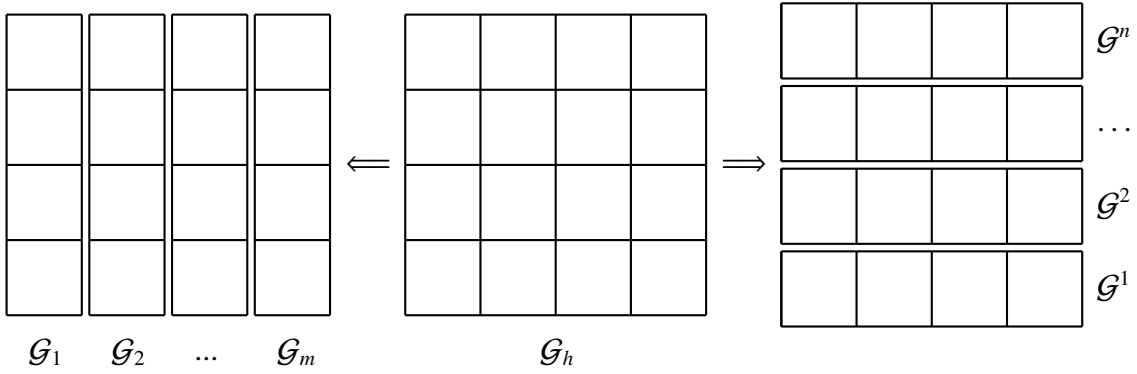
\begin{figure}[htbp]
\setlength{\unitlength}{1mm}

\begin{picture}(150,50)(-15,-5)
\thicklines\color{black}

\put(30,0){\line(0,1){40}}
\put(20,0){\line(0,1){40}}
\put(19,0){\line(0,1){40}}
\put(9,0){\line(0,1){40}}
\put(8,0){\line(0,1){40}}
\put(-2,0){\line(0,1){40}}
\put(-3,0){\line(0,1){40}}
\put(-13,0){\line(0,1){40}}
\put(-10,-6){$\mathcal{G}_1$}
\put(1,-6){$\mathcal{G}_2$}
\put(12,-6){$...$}
\put(23,-6){$\mathcal{G}_m$}

\put(-13,0){\line(1,0){10}}
\put(-2,0){\line(1,0){10}}
\put(9,0){\line(1,0){10}}
\put(20,0){\line(1,0){10}}

\put(-13,10){\line(1,0){10}}
\put(-2,10){\line(1,0){10}}
\put(9,10){\line(1,0){10}}
\put(20,10){\line(1,0){10}}

\put(-13,20){\line(1,0){10}}
\put(-2,20){\line(1,0){10}}
\put(9,20){\line(1,0){10}}
\put(20,20){\line(1,0){10}}

\put(-13,30){\line(1,0){10}}
\put(-2,30){\line(1,0){10}}
\put(9,30){\line(1,0){10}}
\put(20,30){\line(1,0){10}}

\put(-13,40){\line(1,0){10}}
\put(-2,40){\line(1,0){10}}
\put(9,40){\line(1,0){10}}
\put(20,40){\line(1,0){10}}

\put(32,19){$\Longleftarrow$}

\put(40,0){\line(1,0){40}}
\put(40,10){\line(1,0){40}}
\put(40,20){\line(1,0){40}}
\put(40,30){\line(1,0){40}}
\put(40,40){\line(1,0){40}}
\put(40,0){\line(0,1){40}}
\put(50,0){\line(0,1){40}}
\put(60,0){\line(0,1){40}}
\put(70,0){\line(0,1){40}}
\put(80,0){\line(0,1){40}}

\put(58,-6){$\mathcal{G}_h$}

\put(82,19){$\Longrightarrow$}

\put(90,-1.5){\line(1,0){40}}
\put(90,8.5){\line(1,0){40}}
\put(90,9.5){\line(1,0){40}}
\put(90,19.5){\line(1,0){40}}
\put(90,20.5){\line(1,0){40}}
\put(90,30.5){\line(1,0){40}}
\put(90,31.5){\line(1,0){40}}
\put(90,41.5){\line(1,0){40}}

\put(132,2.5){$\mathcal{G}^1$}
\put(132,13.5){$\mathcal{G}^2$}
\put(132,24.5){$\dots$}
\put(132,35.5){$\mathcal{G}^n$}

\put(90,-1.5){\line(0,1){10}}
\put(90,9.5){\line(0,1){10}}
\put(90,20.5){\line(0,1){10}}
\put(90,31.5){\line(0,1){10}}

\put(90,-1.5){\line(0,1){10}}
\put(90,9.5){\line(0,1){10}}
\put(90,20.5){\line(0,1){10}}
\put(90,31.5){\line(0,1){10}}

\put(100,-1.5){\line(0,1){10}}
\put(100,9.5){\line(0,1){10}}
\put(100,20.5){\line(0,1){10}}
\put(100,31.5){\line(0,1){10}}

\put(110,-1.5){\line(0,1){10}}
\put(110,9.5){\line(0,1){10}}
\put(110,20.5){\line(0,1){10}}
\put(110,31.5){\line(0,1){10}}

\put(120,-1.5){\line(0,1){10}}
\put(120,9.5){\line(0,1){10}}
\put(120,20.5){\line(0,1){10}}
\put(120,31.5){\line(0,1){10}}

\put(130,-1.5){\line(0,1){10}}
\put(130,9.5){\line(0,1){10}}
\put(130,20.5){\line(0,1){10}}
\put(130,31.5){\line(0,1){10}}

\end{picture}%

\caption{Illustration of the domain and the grid.} \label{fig:domain and grid}
\end{figure}

We call the support set of a basis function a pattern. Below we present four kinds of patterns sequentially, namely, cell patterns in Lemma \ref{lem:cellpatMC}, vertex patterns in Lemma \ref{lem:verpatMC}, column patterns and row patterns in Lemma \ref{lem:colrowpatMC}. 
\begin{lemma}\label{lem:cellpatMC}
Let $\varphi_{0,K_{i}^{j}}$ be defined in Remark  \ref{rem:P2bubble} on $K_{i}^{j}$, $1\leqslant i\leqslant m$, $1\leqslant j\leqslant n$. Then,  $\varphi_{0,K_{i}^{j}} \in V_{h0}^{MC}$.
\end{lemma}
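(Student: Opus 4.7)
The plan is to use the equivalent pointwise characterization of $V_{h0}^{\rm MC}$ stated in the paragraph after Theorem \ref{thm:nobc}: a piecewise-$P_2$ function lies in $V_{h0}^{\rm MC}$ if and only if it is continuous at the second-order Gauss-Legendre points of every interior edge and vanishes at those of every boundary edge. I would check this characterization directly for $\varphi_{0,K_i^j}$ extended by zero to $\Omega\setminus K_i^j$.

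First, $\varphi_{0,K_i^j}$ is trivially piecewise $P_2$, being quadratic on $K_i^j$ and identically zero on every other cell. The core of the argument is therefore the algebraic identity $\varphi_{0,K_i^j}(g_{st})=0$ for each of the eight Gauss-Legendre points $g_{st}$ on $\partial K_i^j$. This identity is precisely what Remark \ref{rem:P2bubble} records: $\varphi_{0,K}$ spans the one-dimensional kernel of the evaluation map $P_2(K)\to\mathbb{R}^8$ at the eight boundary Gauss points. I would verify it by a direct computation, evaluating the two bubbles $\frac{1}{\xi^2}\varphi_{xx,K_i^j}$ and $\frac{1}{\eta^2}\varphi_{yy,K_i^j}$ at each $g_{st}$ and using the two-point Gauss-Legendre identity $\theta_r(1-\theta_r)=\theta_1\theta_2$ for $r=1,2$; on any edge parallel to one axis, one of the two bubble contributions vanishes at the Gauss points while the other produces the constant that exactly cancels the additive term in the definition of $\varphi_{0,K}$.

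With the vanishing at the eight boundary Gauss points in hand, the rest reduces to a routine case split over edges. For any $e\in\mathcal{E}_h$: either $e\subset\partial K_i^j$, in which case the trace from the $K_i^j$-side is zero at both Gauss points of $e$ by the identity just proved, and the trace from the other side (an adjacent cell for $e$ interior, nothing for $e$ boundary) is identically zero since $\varphi_{0,K_i^j}\equiv0$ off $K_i^j$; or $e$ is not a side of $K_i^j$, and $\varphi_{0,K_i^j}$ vanishes on both sides of $e$. In every case the jump at each Gauss point of any interior edge is zero and the value at each Gauss point of any boundary edge is zero, so the equivalent characterization is satisfied.

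The only real content is the algebraic identity in the second paragraph; the case analysis in the third paragraph is bookkeeping, and no substantive obstacle is anticipated.
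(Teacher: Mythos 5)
Your proof is correct and is exactly the argument the paper intends (the paper states this lemma without proof, relying on Remark \ref{rem:P2bubble}): the only real content is that $\varphi_{0,K}$ vanishes at the eight boundary Gauss--Legendre points, after which the zero extension trivially satisfies the pointwise Gauss-point characterization of $V_{h0}^{\rm MC}$ via your edge-by-edge bookkeeping. One small observation: carrying out your computation (using $\theta_r(1-\theta_r)=\theta_1\theta_2$) shows the additive constant in the paper's formula for $\varphi_{0,K}$ should be $\theta_1\theta_2$ rather than $2\theta_1\theta_2$ for the vanishing to hold, so your phrase ``the constant that exactly cancels the additive term'' is the right reading and the paper's displayed constant appears to be a typo.
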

\begin{lemma}\label{lem:verpatMC}
 Let $\varphi_{X_{i}^{j}}$ denote a function defined on the support patch $\omega$ associated with $X_{i}^{j}$, which is bilinear on every element in $\omega$. Then, $\varphi_{X_{i}^{j}} \in V_{h}^{MC}$, $\forall X_{i}^{j} \in \mathcal{N}_{h}$, and $\varphi_{X_{i}^{j}} \in V_{h0}^{MC}$, $\forall X_{i}^{j} \in \mathcal{N}_{h}^{i}$.
 \end{lemma}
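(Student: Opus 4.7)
The plan is straightforward, exploiting the well-known $H^1$-conformity of piecewise bilinear functions. The first observation is that $Q_1(K)\subset P_2(K)$, since $1,x,y,xy$ all have total degree at most~$2$. Consequently, on each rectangle $K$ in the support patch $\omega$ of $X_i^j$, the restriction $\varphi_{X_i^j}|_K$ lies in the local shape space $P_2(K)$ used to define $V_h^{\rm MC}$. So the membership question reduces entirely to checking the continuity and boundary conditions.

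Next, I would argue moment-continuity across every interior edge. On any edge $e=K_1\cap K_2$ (horizontal or vertical), the trace of a bilinear function from either side is a univariate linear polynomial on $e$, determined by its values at the two endpoints of $e$. By the definition of the nodal vertex basis function, $\varphi_{X_i^j}$ takes a single well-defined value at each vertex of the mesh (namely $1$ at $X_i^j$ and $0$ at every other vertex); hence the two traces agree at both endpoints of $e$, and therefore coincide on all of $e$. This gives $\llbracket \varphi_{X_i^j}\rrbracket_e\equiv 0$, so
\[
\int_e \llbracket \varphi_{X_i^j}\rrbracket_e\, v\,\ud s = 0, \quad \forall v\in P_1(e), \; \forall e\in\mathcal{E}_h^i,
\]
which is exactly the moment-continuity condition, proving $\varphi_{X_i^j}\in V_h^{\rm MC}$ for every $X_i^j\in \mathcal{N}_h$.

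Finally, to upgrade to $V_{h0}^{\rm MC}$ when $X_i^j\in\mathcal{N}_h^i$, I would check moment-homogeneity on boundary edges. Fix $e\in\mathcal{E}_h^b$; the trace $\varphi_{X_i^j}|_e$ is again linear in one variable. Both endpoints of $e$ are boundary vertices, hence both are distinct from the interior vertex $X_i^j$, so the nodal definition forces $\varphi_{X_i^j}$ to vanish at both endpoints. A linear function on an interval vanishing at its two endpoints is identically zero, so $\varphi_{X_i^j}|_e\equiv 0$ and hence $\int_e \varphi_{X_i^j}\, v\,\ud s =0$ for every $v\in P_1(e)$. This is moment-homogeneity and yields $\varphi_{X_i^j}\in V_{h0}^{\rm MC}$.

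I do not anticipate any genuine obstacle: once the inclusion $Q_1(K)\subset P_2(K)$ is noted, the two conditions follow immediately from the global continuity of the $Q_1$ hat function and its nodal vanishing property on boundary vertices. The only thing worth highlighting in the write-up is that the $P_1(e)$-moment conditions become trivial because the integrand itself is zero edge-wise, not merely in the moment sense.
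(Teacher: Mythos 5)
Your proof is correct and is exactly the standard argument the paper tacitly relies on (it states this lemma without proof): $Q_1(K)\subset P_2(K)$, the $Q_1$ hat function is globally continuous so all interior jumps vanish identically, and for an interior vertex its trace on every boundary edge is a linear function vanishing at both endpoints, hence zero. Nothing is missing.
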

\begin{lemma}\label{lem:colrowpatMC}
 Let $\omega$ be a patch with boundaries $\Gamma_{l}$ $(l=1:4)$, anticlockwise; see Figure \ref{fig:APglobalMC}. Let $V_{h}^{\rm MC}(\omega)$ denote a moment-continuous space restricted on $\omega$. 

\noindent{(a)} Let $\omega = \mathcal{G}_{i} $ be the union of elements in the $i$-th column; see Figure \ref{fig:APglobalMC} (Left).  We define 
\begin{equation*}
 V_{{\rm col},i}^{\rm MC} := \Big\{ v_{h} \in V_{h}^{\rm MC}(\omega); \ v_{h} \text{ vanishes at } \text{all Gauss-Legendre points on } e \subset \Gamma_{l}\ (l = 1,3)\Big\}.
\end{equation*}
Let $\varphi_{i}^{x}$ be a function defined on $\omega$, which is equal to $\varphi_{xx,K_{i}^{j}}$ on every $K_{i}^{j}\in \omega$. Then $\varphi_{i}^{x} \in V_{{\rm col},i}^{\rm MC}$, and furthermore,  $V_{{\rm col},i}^{\rm MC} = {\rm{span}} \big\{\varphi_{i}^{x}\big\} \oplus {\rm{span}}\big
\{\varphi_{0,K_{i}^{j}}\big\}_{j=1}^{n}$.

\noindent{(b)} Let $\omega : = \mathcal{G}^{j} $ be the union of elements in the $j$-th row; see Figure \ref{fig:APglobalMC} (Right). We define 
\begin{equation*}
 V_{{\rm row},j}^{\rm MC} := \Big\{ v_{h} \in V_{h}^{\rm MC}(\omega); \ v_{h} \text{ vanishes at } \text{all Gauss-Legendre points on } e \subset \Gamma_{l} \ (l = 2,4)\Big\}
\end{equation*}

\noindent Let $\varphi_{j}^{y}$ be a function defined on $\omega$, which is equal to $\varphi_{yy,K_{i}^{j}}$ on every $K_{i}^{j}\in \omega$. Then $\varphi_{j}^{y} \in V_{{\rm row},j}^{\rm MC}$, and furthermore, $V_{{\rm row},j}^{\rm MC} = {\rm{span}} \big\{\varphi_{j}^{y} \big\} \oplus {\rm{span}} \big\{ \varphi_{0,K_{i}^{j}} \big\}_{i=1}^{m}$.
\end{lemma}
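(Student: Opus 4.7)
My plan is to handle part (a) in three stages: (i) verify that $\varphi_i^x$ and each $\varphi_{0,K_i^j}$ actually sit in $V_{\rm col,i}^{\rm MC}$, (ii) show, cell by cell, that any element of $V_{\rm col,i}^{\rm MC}$ must have the form $\alpha_j\,\varphi_{xx,K_i^j}+\beta_j\,\varphi_{0,K_i^j}$, and (iii) glue the cell-wise pieces together by the moment (equivalently, Gauss-point) continuity across horizontal interior edges, which will force all $\alpha_j$ to coincide. Part (b) is then obtained by exchanging the roles of $x$ and $y$.

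For (i): on every $K_i^j$ the local expression $\varphi_{xx,K_i^j}(x,y)=x(\xi-x)$ depends only on $x$, so (up to the minor care needed to match the local coordinates on the two sides) its restriction to the top edge of $K_i^j$ and to the bottom edge of $K_i^{j+1}$ are identical. Hence $\varphi_i^x$ is actually continuous across every interior horizontal edge of the column, and in particular moment-continuous; it also vanishes identically on the two vertical sides of each cell, hence at all Gauss points of $\Gamma_1\cup\Gamma_3$. For $\varphi_{0,K_i^j}$, I use the property (Remark \ref{rem:P2bubble} and Lemma \ref{lem:ght}(3)) that it vanishes at every boundary Gauss point of $K_i^j$, so its trivial extension by zero belongs to $V_{h0}^{\rm MC}$ and in particular to $V_{\rm col,i}^{\rm MC}$.

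For (ii) I need a strengthening of Lemma \ref{lem:ght}(1): if $p\in P_2(K)$ vanishes at the four Gauss points $g_{21},g_{22},g_{41},g_{42}$ on the two vertical edges, then $p\in{\rm span}\{\varphi_{xx,K},\varphi_{0,K}\}$. This is a short linear-algebra check using the basis \eqref{eq:basisW}: writing $p=\sum_{i=1}^4 a_i\varphi_{a_i}+b\,\varphi_{xx,K}+c\,\varphi_{yy,K}$ and evaluating at the four Gauss points yields a linear system whose only solutions force $a_1=a_2=a_3=a_4$ to be a fixed multiple of $c$, recovering exactly the two-parameter family spanned by $\varphi_{xx,K}$ and $\varphi_{0,K}$. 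Applied on each $K_i^j$, any $v\in V_{\rm col,i}^{\rm MC}$ admits $v|_{K_i^j}=\alpha_j\varphi_{xx,K_i^j}+\beta_j\varphi_{0,K_i^j}$.

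For (iii), at any Gauss point on the horizontal interface between $K_i^j$ and $K_i^{j+1}$, the bubble $\varphi_{0,K_i^{\cdot}}$ contributes $0$ on each side; the remaining terms evaluate to $\alpha_j\theta_1\theta_2\xi_i^2$ from below and $\alpha_{j+1}\theta_1\theta_2\xi_i^2$ from above. Moment-continuity forces $\alpha_j=\alpha_{j+1}$, so a common value $\alpha$ gives $v=\alpha\,\varphi_i^x+\sum_j\beta_j\,\varphi_{0,K_i^j}$. The directness of the sum is immediate: evaluating a putative relation $\alpha\varphi_i^x+\sum_j\beta_j\varphi_{0,K_i^j}=0$ at, e.g., a Gauss point on the bottom edge of $K_i^j$ (where $\varphi_{0,K_i^j}$ vanishes and $\varphi_{xx,K_i^j}$ does not) gives $\alpha=0$, and then $\beta_j=0$ cell by cell. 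The main obstacle I anticipate is only notational—keeping the conventions for $\Gamma_l$ and for the local coordinates on each $K_i^j$ consistent so that the same expression $x(\xi_i-x)$ appears on both sides of every horizontal interior edge; all the substantive work is the short linear-algebra refinement of Lemma \ref{lem:ght}(1), and part (b) is a verbatim transcription with $x\leftrightarrow y$.
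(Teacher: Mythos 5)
Your proof is correct and arrives at the lemma by a mildly different organization of the same underlying linear algebra. The paper's proof takes the Gauss-point values $\delta_1,\dots,\delta_{2n+2}$ on the horizontal edges of the column as the unknowns, applies the compatibility conditions of Lemma \ref{lem:consisMC} on each cell (with the vertical-edge values set to zero) to deduce $\delta_1=\delta_2=\cdots=\delta_{2n+2}$, and then invokes Remark \ref{rem:P2bubble} to identify $v_h$ with a multiple of $\varphi_i^x$ modulo the cell bubbles; you instead classify the local shape first and glue afterwards. Your refinement of Lemma \ref{lem:ght}(1) --- that vanishing at the four Gauss points of the two vertical edges forces $p\in{\rm span}\{\varphi_{xx,K},\varphi_{0,K}\}$ --- is a correct short computation in the basis \eqref{eq:basisW}: one finds $a_1=a_2=a_3=a_4=-c\,\theta_1\theta_2\eta^2$ with $b$, $c$ free, a two-dimensional space containing the two named functions. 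Your subsequent matching of the coefficients $\alpha_j$ across the horizontal interfaces via Gauss-point continuity is the dual of the paper's propagation of the $\delta_k$, and your directness check and the observation that $\varphi_i^x$ is a globally continuous function of $x$ alone on the column are both sound. The one point worth flagging is that steps (ii) and (iii) of your argument use that $\varphi_{0,K}$ vanishes at all eight boundary Gauss points; this is the intended content of Remark \ref{rem:P2bubble}, but it holds only if the additive constant there is $\theta_1\theta_2$ rather than the printed $2\theta_1\theta_2$ (at any boundary Gauss point exactly one of $\frac{1}{\xi^2}\varphi_{xx,K}$ and $\frac{1}{\eta^2}\varphi_{yy,K}$ equals $\theta_1\theta_2$ and the other vanishes). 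With that normalization understood --- which is also what the paper's own proof implicitly uses --- your argument is complete, and part (b) follows by exchanging $x$ and $y$ exactly as you say.
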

\begin{proof}
We present the proof of (a), and omit the proof of (b), which can be obtained similarly. Suppose that $V_{{\rm col},i}^{\rm MC} \ne \varnothing $. Let $v_{h}$ be a function in $V_{{\rm col},i}^{\rm MC}$. Denote by $\delta_{i}$ the value of $v_{h}$ on a Gauss point  in Figure \ref{fig:APglobalMC}. According to Lemma \ref{lem:consisMC}, it holds on the element $K_{i}^{1}$ that
\begin{align}
\delta_{1} - \delta_{2} +\delta_{4} - \delta_{3} = 0;\label{eq:A1}
\\ 
\theta_{1}(\delta_{1} - \delta_{3}) + \theta_{2}(\delta_{4} - \delta_{2}) = 0;\label{eq:A2}
\\ 
\theta_{1}(\delta_{1} - \delta_{2}) + \theta_{2}(\delta_{3} - \delta_{4}) = 0. \label{eq:A3}
\end{align}
It follows from \eqref{eq:A1} - \eqref{eq:A3} that $ \delta_{1} = \delta_{2} = \delta_{3} = \delta_{4}.$
Apply Lemma \ref{lem:consisMC}  to $v_{h}$ on $K_{i}^{2}$, and we obtain
$\delta_{3} = \delta_{4} = \delta_{5} = \delta_{6}.$
Repeat the process for the whole row, and we have $\delta_{1} = \delta_{2} = \cdots = \delta_{2n+1} = \delta_{2n+2}.$ By definition, it is obvious that $\varphi_{i}^{x} \in V_{{\rm col},i}^{\rm MC}$. From Remark \ref{rem:P2bubble}, we derive that
$V_{{\rm col},i}^{\rm MC} = \text{span}  \big\{\varphi_{i}^{x} \big\} \oplus \text{span} \big\{ \varphi_{0,K_{i}^{j}} \big\}_{j=1}^{n}$.
 \end{proof}
\begin{figure}[!htbp]
\setlength{\unitlength}{1.4mm}
\begin{picture}(90,46)(10,-5)
\thicklines\color{black}
\put(20,0){\line(1,0){10}}
\put(20,10){\line(1,0){10}}
\put(20,20){\line(1,0){10}}
\put(20,30){\line(1,0){10}}
\put(20,40){\line(1,0){10}}
\dashline{1}(20,0)(20,40)
\dashline{1}(30,0)(30,40)
\put(24,-4){$\Gamma_2$}
\put(24,44){$\Gamma_4$}
\put(15,20){$\Gamma_1$}
\put(32,20){$\Gamma_3$}

\put(22,-0.6){$\bullet$}
\put(27,-0.6){$\bullet$}
\put(22,9.4){$\bullet$}
\put(27,9.4){$\bullet$}
\put(22,19.4){$\bullet$}
\put(27,19.4){$\bullet$}
\put(22,29.4){$\bullet$}
\put(27,29.4){$\bullet$}
\put(22,39.4){$\bullet$}
\put(27,39.4){$\bullet$}
\put(24,4){$K_{i}^{1}$}
\put(24,14){$K_{i}^{2}$}
\put(24,24){$...$}
\put(24,34){$K_{i}^{n}$}

\put(22,1){$\delta_{1}$}
\put(27,1){$\delta_{2}$}
\put(22,11){$\delta_{3}$}
\put(27,11){$\delta_{4}$}
\put(22,21){$\delta_{5}$}
\put(27,21){$\delta_{6}$}
\put(20,41.5){$\delta_{2n+1}$}
\put(27,41.5){$\delta_{2n+2}$}

\dashline{1}(50,10)(90,10)
\dashline{1}(50,20)(90,20)
\put(50,10){\line(0,1){10}}
\put(60,10){\line(0,1){10}}
\put(70,10){\line(0,1){10}}
\put(80,10){\line(0,1){10}}
\put(90,10){\line(0,1){10}}

\put(49.5,12){$\bullet$}
\put(49.5,17){$\bullet$}
\put(59.5,12){$\bullet$}
\put(59.5,17){$\bullet$}
\put(69.5,12){$\bullet$}
\put(69.5,17){$\bullet$}
\put(79.5,12){$\bullet$}
\put(79.5,17){$\bullet$}
\put(89.5,12){$\bullet$}
\put(89.5,17){$\bullet$}
\put(54,14){$K_{1}^{j}$}
\put(64,14){$K_{2}^{j}$}
\put(74,14){$...$}
\put(84,14){$K_{m}^{j}$}

\put(46,14){$\Gamma_{1}$}
\put(92,14){$\Gamma_{3}$}
\put(69,22){$\Gamma_{4}$}
\put(69,6){$\Gamma_{2}$}
\end{picture}
\caption{Illustration of the column patterns and row patterns of basis functions in $V_{h}^{\text{MC}}$. These functions vanishes at the Gauss-Legendre points at the dotted lines.}\label{fig:APglobalMC}
\end{figure}
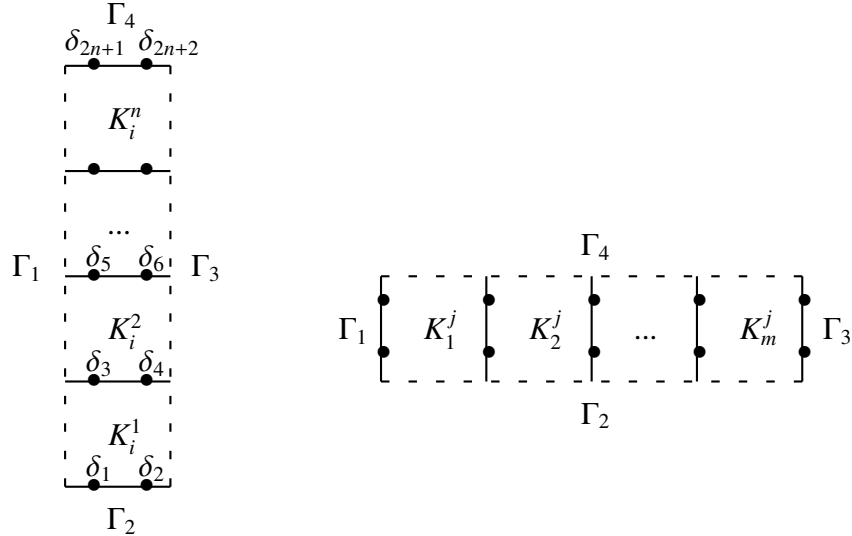
Here and throughout this paper, we do not distinct $\varphi_{0,K_{i}^{j}}$, $\varphi_{X_{i}^{j}}$, $\varphi_{i}^{x}$, $\varphi_{j}^{y}$ and their respective extension onto the whole domain by zero. Thus we also obtain $\varphi_{i}^{x}\in V_{h}^{\rm MC}$ and $\varphi_{j}^{y} \in V_{h}^{\rm MC}$.

\subsection{Structure of the MC element space}
Here we will present the construction of basis functions in spaces $V_{h0}^{\rm MC}$ and $V_{h}^{\rm MC}$.

\begin{theorem}\label{thm:hombcMC}
Let $\mathcal{G}_{h}$ be a $m \times n$ rectangular subdivision of $\Omega$. Then, $V_{h0}^{\rm MC}=V^{\rm BL}_{h0}\oplus {\rm{span}}\{\varphi_{0,K}\}_{K\in\mathcal{G}_h}.$
\end{theorem}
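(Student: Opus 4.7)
The plan is to verify that each summand lies in $V_{h0}^{\rm MC}$, establish that the sum is direct, and then close the argument by a dimension count against Theorem \ref{thm:hombc}.

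First I would check the inclusion $V_{h0}^{\rm BL}+{\rm span}\{\varphi_{0,K}\}_{K\in\mathcal{G}_h}\subset V_{h0}^{\rm MC}$. For any $v\in V_{h0}^{\rm BL}$, the restriction $v|_K\in Q_1(K)\subset P_2(K)$, global $H^1$-conformity gives $\llbracket v\rrbracket_e=0$ on every $e\in\mathcal{E}_h^i$ (so all first-order moments across interior edges vanish), and the condition $v(a)=0$ at every boundary vertex, together with the fact that $v$ is linear along each boundary edge, forces $v\equiv0$ on $\partial\Omega$. Hence $v$ is moment-homogeneous. The inclusion $\varphi_{0,K}\in V_{h0}^{\rm MC}$ is exactly Lemma \ref{lem:cellpatMC}.

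Next I would prove directness. Suppose $v+\sum_{K\in\mathcal{G}_h}c_K\varphi_{0,K}=0$ for some $v\in V_{h0}^{\rm BL}$ and scalars $c_K$. By construction (see Remark \ref{rem:P2bubble}) every $\varphi_{0,K}$ vanishes at all eight boundary Gauss--Legendre points of $K$, so evaluating the identity at each such point shows that $v$ vanishes at both Gauss--Legendre points of every edge $e\in\mathcal{E}_h$. Since $v|_e$ is the restriction of a $Q_1$ function, it is affine on $e$, and an affine function on an edge vanishing at two distinct Gauss--Legendre points is identically zero. Thus $v$ vanishes on the whole skeleton of $\mathcal{G}_h$, and a $Q_1$-function on a rectangle that vanishes on its boundary must be zero; hence $v\equiv 0$. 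The remaining identity $\sum_K c_K\varphi_{0,K}=0$ immediately gives $c_K=0$ for each $K$ because the supports $\{K\}$ are pairwise essentially disjoint.

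Finally I would close by dimension count. The space $V_{h0}^{\rm BL}$ has dimension equal to the number of interior vertices, i.e.\ $(m-1)(n-1)=mn-m-n+1$, while ${\rm span}\{\varphi_{0,K}\}_{K\in\mathcal{G}_h}$ has dimension $mn$ (the bubbles are linearly independent as they live on disjoint cells). The direct sum therefore has dimension $2mn-m-n+1$, which by Theorem \ref{thm:hombc} equals $\dim V_{h0}^{\rm MC}$. Combined with the inclusion established in step one, this forces equality. The main (and really the only) subtlety is ensuring that two Gauss--Legendre points per edge are enough to kill the bilinear piece; everything else is bookkeeping.
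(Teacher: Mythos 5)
Your first two steps are sound: the inclusion $V^{\rm BL}_{h0}+\mathrm{span}\{\varphi_{0,K}\}\subset V_{h0}^{\rm MC}$ and the directness of the sum are both argued correctly (and the paper itself dismisses these as the easy half, noting only that the intersection is trivial). The problem is the third step. You close the argument by invoking the dimension formula $\dim V_{h0}^{\rm MC}=2mn-m-n+1$ from Theorem \ref{thm:hombc}, but in this paper that theorem is stated without proof in Section \ref{sec:pre} and its proof is explicitly deferred to Appendix \ref{sec:appA}, where the dimension is obtained as a \emph{corollary} of Theorem \ref{thm:hombcMC} --- the very decomposition you are trying to prove. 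As written, your argument is circular within the paper's logical structure.

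Nor can the dimension formula be treated as cheap bookkeeping to be supplied independently. A naive count gives $6mn$ local degrees of freedom and $2$ Gauss-point constraints per edge, i.e.\ $2\bigl(2mn-m-n\bigr)+2(2m+2n)=4mn+2m+2n$ constraints, which would yield $2mn-2m-2n$ --- short of the true dimension by $m+n+1$. The whole difficulty is that the Gauss-point constraints are linearly dependent (this is the content of the compatibility conditions \eqref{eq:cond1}--\eqref{eq:cond3} in Lemma \ref{lem:consisMC}), and quantifying that rank deficiency is precisely what the paper's sweeping procedure accomplishes: it takes an arbitrary $v_h\in V_{h0}^{\rm MC}$ and, cell by cell using Lemma \ref{lem:ght}, peels off a vertex function $\varphi_{X_i^j}$ and a bubble $\varphi_{0,K_i^j}$ until nothing remains, thereby proving the hard inclusion $V_{h0}^{\rm MC}\subset V^{\rm BL}_{h0}\oplus\mathrm{span}\{\varphi_{0,K}\}$ constructively. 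Your proposal never addresses that inclusion, so the essential content of the theorem is missing. To repair it you would either have to reproduce something like the sweeping argument, or give a genuinely independent computation of the rank of the constraint system.
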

\begin{proof}
It is obvious that $V^{\rm BL}_{h0}\cap \text{span}\{\varphi_{0,K}\}_{K\in\mathcal{G}_h}= 0$. We only have to show that $V_{h0}^{\rm MC}\subset V^{\rm BL}_{h0}\oplus \text{span}\{\varphi_{0,K}\}_{K\in\mathcal{G}_h}$, i.e., any function in the former is the combination of functions in the latter.
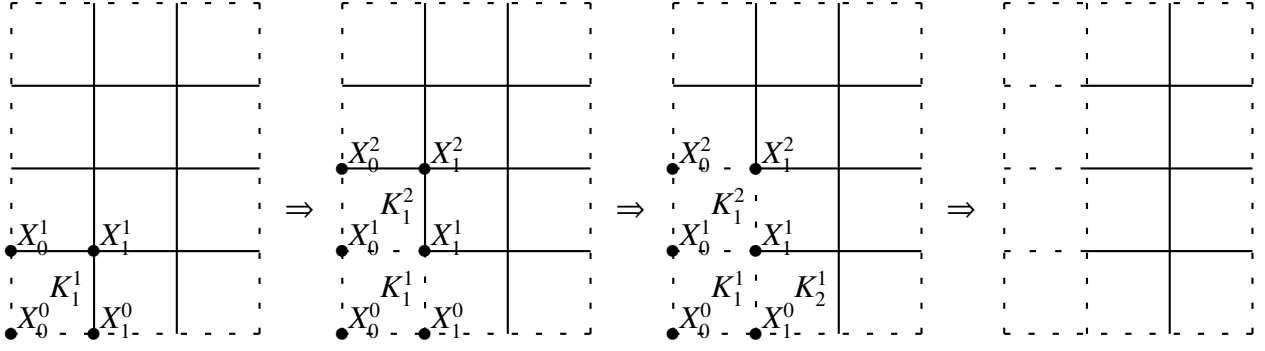
\begin{figure}[htbp]
\setlength{\unitlength}{1.1mm}

\begin{picture}(150,45)(-1,-1)
\thicklines\color{black}

\dashline{1}(0,0)(0,40)
\dashline{1}(0,40)(30,40)
\dashline{1}(30,0)(30,40)
\put(0,10){\line(1,0){30}}
\put(0,30){\line(1,0){30}}
\put(0,20){\line(1,0){30}}
\dashline{1}(0,0)(30,0)
\put(10,0){\line(0,1){40}}
\put(20,0){\line(0,1){40}}
\put(-1,-1){$\bullet$}
\put(9,-1){$\bullet$}
\put(-1,9){$\bullet$}
\put(9,9){$\bullet$}
\put(0.7,0.7){$X_0^0$}
\put(10.7,0.7){$X^0_1$}
\put(0.7,10.7){$X^1_0$}
\put(10.7,10.7){$X_1^1$}
\put(4.5,4.5){$K_1^1$}

\put(33,14){$\Rightarrow$}

\dashline{1}(40,0)(70,0)
\dashline{1}(40,0)(40,40)
\dashline{1}(50,10)(50,0)
\dashline{1}(40,10)(50,10)
\dashline{1}(40,40)(70,40)
\dashline{1}(70,0)(70,40)
\put(50,10){\line(1,0){20}}
\put(40,20){\line(1,0){30}}
\put(40,30){\line(1,0){30}}
\put(50,10){\line(0,1){30}}
\put(60,0){\line(0,1){40}}
\put(44.5,4.5){$K_1^1$}
\put(44.5,14.5){$K_1^2$}
\put(39,-1){$\bullet$}
\put(49,-1){$\bullet$}
\put(39,9){$\bullet$}
\put(49,9){$\bullet$}
\put(39,19){$\bullet$}
\put(49,19){$\bullet$}
\put(40.7,0.7){$X_0^0$}
\put(50.7,0.7){$X^0_1$}
\put(40.7,10.7){$X^1_0$}
\put(50.7,10.7){$X_1^1$}
\put(40.7,20.7){$X_0^2$}
\put(50.7,20.7){$X^2_1$}

\put(73,14){$\Rightarrow$}

\dashline{1}(80,0)(80,40)
\dashline{1}(90,0)(90,20)
\dashline{1}(80,0)(110,0)
\dashline{1}(80,10)(90,10)
\dashline{1}(80,20)(90,20)
\dashline{1}(80,40)(110,40)
\dashline{1}(110,0)(110,40)
\put(90,10){\line(1,0){20}}
\put(90,20){\line(1,0){20}}
\put(80,30){\line(1,0){30}}
\put(90,20){\line(0,1){20}}
\put(100,0){\line(0,1){40}}
\put(84.5,4.5){$K_1^1$}
\put(94.5,4.5){$K_2^1$}
\put(84.5,14.5){$K_1^2$}
\put(79,-1){$\bullet$}
\put(89,-1){$\bullet$}
\put(79,9){$\bullet$}
\put(89,9){$\bullet$}
\put(79,19){$\bullet$}
\put(89,19){$\bullet$}
\put(80.7,0.7){$X_0^0$}
\put(90.7,0.7){$X^0_1$}
\put(80.7,10.7){$X_0^1$}
\put(90.7,10.7){$X_1^1$}
\put(80.7,20.7){$X^2_0$}
\put(90.7,20.7){$X^2_1$}

\put(113,14){$\Rightarrow$}

\dashline{1}(120,0)(150,0)
\dashline{1}(120,0)(120,40)
\dashline{1}(130,0)(130,40)
\dashline{1}(120,10)(130,10)
\dashline{1}(120,20)(130,20)
\dashline{1}(120,30)(130,30)
\dashline{1}(120,40)(150,40)
\dashline{1}(150,0)(150,40)
\put(130,30){\line(1,0){20}}
\put(140,0){\line(0,1){40}}
\put(130,20){\line(1,0){20}}
\put(130,10){\line(1,0){20}}

\end{picture}

\caption{ Elimination of one column.} 
\end{figure}\label{fig:APhalfbc}
Here we use a sweeping procedure.
Let $v_h\in V_{h0}^{\rm MC}$. First, by Lemma \ref{lem:ght}, we have that $v_h|_{K_1^1}=\alpha_1^1\cdot \varphi_{X_1^1}|_{K_1^1}+\gamma_1^1\cdot\varphi_{0,K_1^1}$ with some constants $\alpha_1^1$ and $\gamma_1^1$. Therefore, $v_h=v_h^{1,1}+\alpha_1^1\cdot\varphi_{X_1^1}|_{K_1^1}+\gamma_1^1\cdot\varphi_{0,K_1^1}$ with $v_h^{1,1}\in V_{h0}^{\rm MC}$ and $v_h^{1,1}$ vanishing on $K_1^1$. Second, $v_h^{1,1}=v_h^{1,2}+\alpha_1^2\cdot\varphi_{X_1^2}+\gamma_1^2\varphi_{0,K_1^2}$ with $v_h^{1,2}\in V_{h0}^{\rm MC}$ and $v_h^{1,2}$ vanishing on $K_1^1$ and $K_1^2$. Furthermore, repeat this process on all the cells of the first column, and we obtain that
$$
v_h=v_h^1+\sum_{j=1}^{n-1} \alpha_1^j\varphi_{X_1^j}+\sum_{j=1}^n \gamma_1^j\varphi_{0,K_1^j},
$$
where $v_h^1\in V_{h0}^{\rm MC}$ and $v_h^1$ vanishes on the whole column $\mathcal{G}_1$.
Finally, we repeat the process from $\mathcal{G}_1$ to $\mathcal{G}_n$, and obtain that
$$
v_h=\sum_{i=1}^{m-1}\sum_{j=1}^{n-1}\alpha_i^j\varphi_{X_i^j}+\sum_{i=1}^m\sum_{j=1}^n\gamma_i^j\varphi_{0,K_i^j}.
$$
Hence the result.
\end{proof}

\begin{theorem}
Let $\mathcal{G}_{h}$ be a $m \times n$ rectangular subdivision of $\Omega$. Then, we have
$$V_h^{\rm MC}=V^{\rm BL}_h+ ({\rm{span}}\{\varphi_{0,K}\}_{K\in\mathcal{G}_h}\oplus {\rm{span}}\{\varphi_{i}^{x}\}_{i=1}^m\oplus {\rm{span}}\{\varphi_{j}^{y}\}_{j=1}^n).$$
\end{theorem}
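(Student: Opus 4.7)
The plan is to prove the identity by dimension counting. From Theorem~\ref{thm:nobc}, $\dim V_h^{\rm MC}=2mn+2m+2n$, while the four summands on the right have dimensions $(m+1)(n+1)$, $mn$, $m$, and $n$, totaling $2mn+2m+2n+1$. Accordingly, I would establish that each summand lies in $V_h^{\rm MC}$, that the three bubble subspaces on the right are in direct sum with one another, and that the full sum is subject to exactly one nontrivial linear relation involving $V^{\rm BL}_h$, which will bring the total dimension down by one to match $\dim V_h^{\rm MC}$.

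The containments are easy: $Q_1(K)\subset P_2(K)$ and globally continuous bilinears are trivially moment-continuous, so $V^{\rm BL}_h\subset V_h^{\rm MC}$; the other three follow from Lemmas~\ref{lem:cellpatMC} and~\ref{lem:colrowpatMC}. To see that the three bubble subspaces form a direct sum inside $V_h^{\rm MC}$, I would test a putative identity $\sum_K c_K\varphi_{0,K}+\sum_i a_i\varphi_i^x+\sum_j b_j\varphi_j^y\equiv 0$ on a single cell $K_i^j$ and decompose in the monomial basis $\{1,x,y,xy,x^2,y^2\}$: the constant piece isolates $c_{i,j}$, while the pure $x^2$ and pure $y^2$ pieces isolate $a_i$ and $b_j$ separately.

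The core step is to locate the single linear dependence tying $V^{\rm BL}_h$ to the bubble part. Suppose
\begin{equation*}
f_1+\sum_K c_K\varphi_{0,K}+\sum_i a_i\varphi_i^x+\sum_j b_j\varphi_j^y=0,\qquad f_1\in V^{\rm BL}_h.
\end{equation*}
On each cell $K_i^j$ with side lengths $\xi_i$, $\eta_j$, I would substitute the explicit expression for $\varphi_{0,K_i^j}$ from Remark~\ref{rem:P2bubble} as a combination of $\varphi_{xx,K_i^j}/\xi_i^2$, $\varphi_{yy,K_i^j}/\eta_j^2$ and a constant, and match monomial coefficients. The $x^2$ and $y^2$ equations give $c_{i,j}=-a_i\xi_i^2=-b_j\eta_j^2$, forcing $a_i\xi_i^2=b_j\eta_j^2\equiv A$ to be a single global scalar; the $xy$, $x$, $y$ equations then reduce $f_1|_{K_i^j}$ to a constant; and the constant equation expresses that constant in terms of $A$. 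Since $f_1$ is a globally continuous bilinear function taking the same constant value on every cell, it must equal that constant on all of $\Omega$, so the kernel is exactly one-dimensional.

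A dimension count then closes the argument: $\dim W=(m+1)(n+1)+mn+m+n-1=2mn+2m+2n=\dim V_h^{\rm MC}$, so the inclusion $W\subseteq V_h^{\rm MC}$ is an equality. The main obstacle, as I see it, is the cell-wise bookkeeping required to pin down the single linear relation, since $\varphi_{0,K}$ contributes both to the pure-quadratic and to the linear terms of the local $P_2(K)$ decomposition; once that disentanglement is carried out cell by cell, the one-dimensional kernel emerges transparently and the theorem follows.
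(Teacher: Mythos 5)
Your computation of the right-hand side is essentially correct: the containments are immediate, the three bubble summands are in direct sum by the cell-wise monomial test, and your identification of the unique linear relation (the $x^2$ and $y^2$ coefficients force $c_{i,j}=-a_i\xi_i^2=-b_j\eta_j^2$ to be one global scalar, the $x$, $y$, $xy$ contributions of the bubbles then cancel automatically so that $f_1$ collapses to a single global constant) is a nice observation that also explains why the theorem writes $+$ rather than $\oplus$ in front of $V_h^{\rm BL}$.

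The gap is in the closing step. You compare $\dim W=2mn+2m+2n$ with $\dim V_h^{\rm MC}$ as given by Theorem \ref{thm:nobc}, but in this paper Theorem \ref{thm:nobc} has no independent proof: its ``detailed proof'' in Appendix \ref{sec:appA} is exactly the remark that the dimension follows from the two structure theorems, one of which is the statement you are proving. Your argument is therefore circular. What you have actually established is $W\subseteq V_h^{\rm MC}$ together with $\dim W=2mn+2m+2n$, which only yields the lower bound $\dim V_h^{\rm MC}\geqslant 2mn+2m+2n$ --- the same bound one gets for free by subtracting the number of moment-continuity constraints, $2\cdot\#\mathcal{E}_h^i=4mn-2m-2n$, from $6mn$. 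The substantive content of the theorem is the reverse inclusion, equivalently the linear independence of those constraints, and that is precisely what the paper proves constructively: using the local Wilson basis \eqref{eq:basisW} and Lemma \ref{lem:ght}, an arbitrary $v_h\in V_h^{\rm MC}$ is peeled off cell by cell along the first column, then the first row, then the remaining block, until nothing is left. To repair your proof you must either carry out such a sweeping/elimination argument yourself (thereby proving Theorem \ref{thm:nobc} rather than citing it), or prove directly that the jump map from the space of discontinuous piecewise $P_2$ functions onto $\prod_{e\in\mathcal{E}_h^i}P_1(e)$ is surjective; as it stands, the key inequality $\dim V_h^{\rm MC}\leqslant 2mn+2m+2n$ is assumed, not proved.
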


\begin{proof}
It is obvious that $V_h^{\rm MC} \supset V^{\rm BL}_h+ (\text{span}\{\varphi_{0,K}\}_{K\in\mathcal{G}_h}\oplus \text{span}\{\varphi_{i}^x\}_{i=1}^m\oplus \text{span}\{\varphi_{j}^y\}_{j=1}^n)$. Here we have noted that, if $\varphi_1+\varphi_2+\varphi_3=0$ with $\varphi_1\in {\rm{span}}\{\varphi_{0,K}\}_{K\in\mathcal{G}_h}$, $\varphi_2\in \text{span}\{\varphi_{i}^x\}_{i=1}^m$, and $\varphi_3\in \text{span}\{\varphi_{j}^y\}_{j=1}^n$, then $\varphi_1=\varphi_2=\varphi_3=0$. We only have to show the other direction. Let $v_h\in V_h^{\rm MC}$.

 First, by \eqref{eq:basisW}, there exists unique constants $\alpha_0^0, \alpha_0^1,\alpha_1^0,\alpha_1^1,\kappa_1$, and $\sigma_1$, such that
$$
v_h|_{K_1^1}=\alpha_0^0\varphi_{X_0^0}|_{K_1^1}+\alpha_0^1\varphi_{X_0^1}|_{K_1^1}+\alpha_1^0\varphi_{X_1^0}|_{K_1^1}+\alpha_1^1\varphi_{X_1^1}|_{K_1^1}+\kappa_1\varphi_1^x|_{K_1^1}+\sigma_1\varphi_1^y|_{K_1^1}.
$$
Thus, we have
$
v_h=v_h^{1,1}+\alpha_0^0\varphi_{X_0^0}+\alpha_0^1\varphi_{X_0^1}+\alpha_1^0\varphi_{X_1^0}+\alpha_1^1\varphi_{X_1^1}+
\kappa_1\varphi_1^x+\sigma_1\varphi_1^y
$
with $v_h^{1,1}\in V_h^{\rm MC}$ and $v_h^{1,1}|_{K_{1}^1}=0$.
Second, by Lemma \ref{lem:ght}, we have
$
v_h^{1,1}|_{K_1^2}=\alpha_0^2\varphi_{X_0^2}|_{K_1^2}+\alpha_1^2\varphi_{X_1^2}|_{K_1^2}+\gamma_1^2\varphi_{0,K_1^2}+\sigma_2\varphi_2^y|_{K_1^2}.
$
Therefore, we obtain
$
v_h^{1,1}=v_h^{1,2}+\alpha_0^2\varphi_{X_0^2}+\alpha_1^2\varphi_{X_1^2}+\gamma_1^2\varphi_{0,K_1^2}+\sigma_2\varphi_2^y
$
with $v_h^{1,2}\in V_h^{\rm MC}$ and $v_h^{1,2}|_{K_{1}^1\cup K_1^2}=0$.
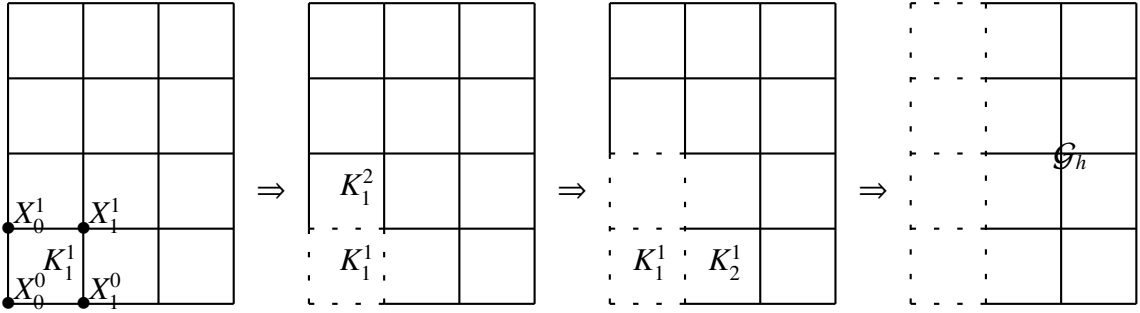
\begin{figure}[htbp]
\setlength{\unitlength}{1mm}

\begin{picture}(150,43)(-1,-1)
\thicklines\color{black}

\put(0,0){\line(1,0){30}}
\put(0,10){\line(1,0){30}}
\put(0,30){\line(1,0){30}}
\put(0,20){\line(1,0){30}}
\put(0,40){\line(1,0){30}}
\put(0,0){\line(0,1){40}}
\put(10,0){\line(0,1){40}}
\put(30,0){\line(0,1){40}}
\put(20,0){\line(0,1){40}}
\put(-1,-1){$\bullet$}
\put(9,-1){$\bullet$}
\put(-1,9){$\bullet$}
\put(9,9){$\bullet$}
\put(0.7,0.7){$X_{0}^{0}$}
\put(10.7,0.7){$X_{1}^{0}$}
\put(0.7,10.7){$X_{0}^{1}$}
\put(10.7,10.7){$X_{1}^{1}$}
\put(4.5,4.5){$K_1^1$}

\put(33,14){$\Rightarrow$}

\dashline{1}(40,0)(50,0)
\dashline{1}(40,0)(40,10)
\dashline{1}(50,10)(50,0)
\dashline{1}(40,10)(50,10)
\put(50,0){\line(1,0){20}}
\put(50,10){\line(1,0){20}}
\put(40,20){\line(1,0){30}}
\put(40,30){\line(1,0){30}}
\put(40,40){\line(1,0){30}}
\put(40,10){\line(0,1){30}}
\put(50,10){\line(0,1){30}}
\put(60,0){\line(0,1){40}}
\put(70,0){\line(0,1){40}}
\put(44,4.5){$K_1^1$}
\put(44,14.5){$K_1^2$}

\put(73,14){$\Rightarrow$}

\dashline{1}(80,0)(80,20)
\dashline{1}(90,0)(90,20)
\dashline{1}(80,0)(90,0)
\dashline{1}(80,10)(90,10)
\dashline{1}(80,20)(90,20)
\put(90,0){\line(1,0){20}}
\put(90,10){\line(1,0){20}}
\put(90,20){\line(1,0){20}}
\put(80,30){\line(1,0){30}}
\put(80,40){\line(1,0){30}}
\put(80,20){\line(0,1){20}}
\put(90,20){\line(0,1){20}}
\put(100,0){\line(0,1){40}}
\put(110,0){\line(0,1){40}}
\put(83,4.5){$K_1^1$}
\put(93,4.5){$K_2^1$}

\put(113,14){$\Rightarrow$}

\dashline{1}(120,0)(130,0)
\dashline{1}(120,0)(120,40)
\dashline{1}(130,0)(130,40)
\dashline{1}(120,10)(130,10)
\dashline{1}(120,20)(130,20)
\dashline{1}(120,30)(130,30)
\dashline{1}(120,40)(130,40)
\put(130,30){\line(1,0){20}}
\put(140,0){\line(0,1){40}}
\put(130,20){\line(1,0){20}}
\put(150,0){\line(0,1){40}}
\put(130,0){\line(1,0){20}}
\put(130,10){\line(1,0){20}}
\put(130,40){\line(1,0){20}}
\put(139,18.59){$\mathcal{G}_h$}

\end{picture}

\caption{ Elimination of left column.} 
\end{figure}

\noindent Furthermore, repeat this process on the column $\mathcal{G}_1$, and we obtain
\begin{align}\label{eq:A12}
v_h=v_h^1+\sum_{j=0}^n(\alpha_0^j\varphi_{X_0^j}+\alpha_1^j\varphi_{X_1^j})+(\sum_{j=1}^n \gamma_1^j\varphi_{0,K_1^j}) - \varphi_{0,K_1^1}+\sum_{j=1}^n \sigma_j\varphi_j^y+\kappa_1\varphi_1^x,
\end{align}
where $v_{h}^1\in V_h^{\rm MC}$ and $v_{h}^1|_{\mathcal{G}_{1}}=0$.

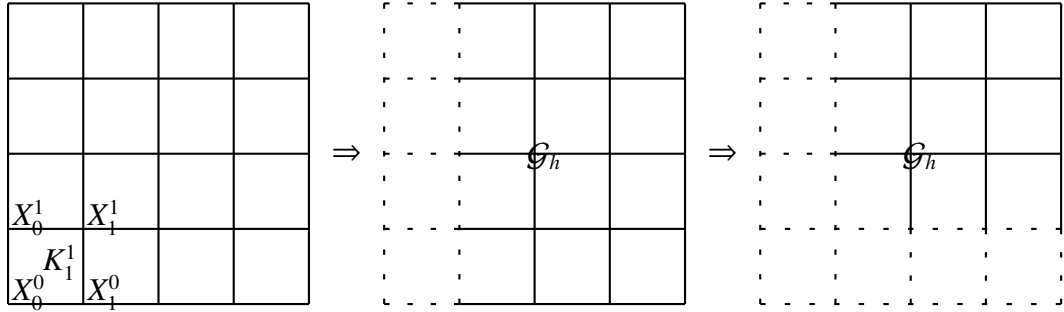
\begin{figure}[htbp]
\setlength{\unitlength}{1mm}

\begin{picture}(150,45)(-1,-1)
\thicklines\color{black}

\put(0,0){\line(1,0){40}}
\put(0,10){\line(1,0){40}}
\put(0,30){\line(1,0){40}}
\put(0,20){\line(1,0){40}}
\put(0,40){\line(1,0){40}}
\put(0,0){\line(0,1){40}}
\put(10,0){\line(0,1){40}}
\put(30,0){\line(0,1){40}}
\put(20,0){\line(0,1){40}}
\put(40,0){\line(0,1){40}}
\put(0.5,0.5){$X_{0}^{0}$}
\put(10.51,0.5){$X_{1}^{0}$}
\put(0.51,10.51){$X_{0}^{1}$}
\put(10.51,10.51){$X_{1}^{1}$}
\put(4.5,4.5){$K_1^1$}

\put(43,19){$\Rightarrow$}

\dashline{1}(50,0)(60,0)
\dashline{1}(50,0)(50,40)
\dashline{1}(60,0)(60,40)
\dashline{1}(50,10)(60,10)
\dashline{1}(50,20)(60,20)
\dashline{1}(50,30)(60,30)
\dashline{1}(50,40)(60,40)
\put(60,30){\line(1,0){30}}
\put(70,0){\line(0,1){40}}
\put(60,20){\line(1,0){30}}
\put(80,0){\line(0,1){40}}
\put(90,0){\line(0,1){40}}
\put(60,0){\line(1,0){30}}
\put(60,10){\line(1,0){30}}
\put(60,40){\line(1,0){30}}
\put(69,18.59){$\mathcal{G}_h$}

\put(93,19){$\Rightarrow$}

\dashline{1}(100,0)(140,0)
\dashline{1}(100,0)(100,40)
\dashline{1}(110,0)(110,40)
\dashline{1}(100,10)(140,10)
\dashline{1}(100,20)(110,20)
\dashline{1}(100,30)(110,30)
\dashline{1}(100,40)(110,40)
\dashline{1}(120,0)(120,10)
\dashline{1}(130,0)(130,10)
\dashline{1}(140,0)(140,10)
\put(110,30){\line(1,0){30}}
\put(110,20){\line(1,0){30}}
\put(120,10){\line(0,1){30}}
\put(130,10){\line(0,1){30}}
\put(140,10){\line(0,1){30}}
\put(110,40){\line(1,0){30}}
\put(119,18.59){$\mathcal{G}_h$}

\end{picture}

\caption{Elimination of left column and bottom row.} 
\end{figure}

\noindent Similarly, repeat this process on the row $\mathcal{G}^1$, and we have
\begin{align}\label{eq:A13}
v_h^1=\tilde{v}_h^{1,1}+\sum_{i=2}^m(\alpha_i^0\varphi_{X_i^0}+\alpha_i^1\varphi_{X_i^1})+\sum_{i=2}^m \gamma_i^1\varphi_{0,K_i^1}+\sum_{i=2}^m \kappa_i\varphi_i^x,
\end{align}
with $\tilde{v}_h^{1,1}\in V_{h}^{\rm MC}$, and $v_h^{1,1}|_{\mathcal{G}_1\cup \mathcal{G}^1}=0$.
Finally, by the same technique as used in the proof of Theorem~\ref{thm:hombcMC}, we can prove that
\begin{align}\label{eq:A14}
\tilde{v}_h^{1,1}=\sum_{\substack{2\leqslant i\leqslant m\\ 2\leqslant j\leqslant n}}\alpha_i^j\varphi_{X_i^j}+\sum_{\substack{2\leqslant i\leqslant m\\ 2\leqslant j\leqslant n}}\gamma_i^j\varphi_{0,K_i^j}.
\end{align}
A combination of \eqref{eq:A12},  \eqref{eq:A13}, and \eqref{eq:A14} leads to 
$$
v_h=\sum_{\substack{0\leqslant i\leqslant m \\ 0\leqslant j\leqslant n}}\alpha_i^j\varphi_{X_i^j}+(\sum_{\substack{1\leqslant i\leqslant m \\ 1\leqslant j\leqslant n}}\gamma_i^j\varphi_{0,K_i^j}) - \gamma_1^1\varphi_{0,K_1^1}
+\sum_{i=1}^m\kappa_i\varphi_i^x+\sum_{j=1}^n\sigma_j\varphi_j^y.
$$
Hence the result.
\end{proof}

\begin{remark}{\rm
From the above two theorems, it holds that ${\rm dim} (V_{h0}^{\rm MC} )= 2mn-m-n+1$ and ${\rm dim} (V_{h}^{\rm MC} )= 2mn+2m+2n$.}
\end{remark}
\begin{proposition}\label{comparisionMC}
Define $V^{(2)}_h:=\Big\{v_h\in H^1(\Omega): v_h|_K \in  P_2(K), \forall K \in\mathcal{G}_h\Big\}$, and $V^{(2)}_{h0}:=V^{(2)}_h\cap H^1_0(\Omega)$. That is, $V^{(2)}_h$ and $V^{(2)}_{h0}$ are conforming $P_2$ element spaces. Then, 
\begin{enumerate}
\item $V^{(2)}_h=V^{\rm BL}_h+{\rm span}\{\varphi_i^x,\varphi^y_j\}$, and $\ V^{(2)}_{h0}=V^{\rm BL}_{h0};$
\item $V_h^{\rm MC}=V^{(2)}_h+{\rm span}\{\varphi_{0,K}\}_{K\in\mathcal{G}_h}$, and $ V_{h0}^{\rm MC}=V^{(2)}_{h0}+{\rm span}\{\varphi_{0,K}\}_{K\in\mathcal{G}_{h}}.$
\end{enumerate}
\end{proposition}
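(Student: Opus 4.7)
The plan is to establish part (1) from first principles by analyzing how the polynomial coefficients must be constrained by the $H^1$-continuity across edges, and then to deduce part (2) almost immediately by combining part (1) with the structural decompositions of $V_h^{\rm MC}$ and $V_{h0}^{\rm MC}$ that were proved just above.

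For part (1), I would write any $v\in V_h^{(2)}$ in global Cartesian coordinates as $v|_K=a_K+b_Kx+c_Ky+d_Kx^2+e_Kxy+f_Ky^2$. The key observation is that on a vertical interior edge $e$ between two cells $K$ and $K'$, the trace $v|_e$ is a quadratic in $y$ on each side, and continuity forces these two quadratics to coincide. Reading off the $y^2$-coefficients gives $f_K=f_{K'}$, and repeating this down each row shows that $f_K$ depends only on the row index $j$ of $K$; analogously, the $x^2$-coefficient $d_K$ depends only on the column index $i$. Since $\varphi_i^x$ restricted to $K_i^j$ contributes exactly a constant multiple of $x^2$ (plus lower-order terms) and similarly for $\varphi_j^y$, I can form $w:=v+\sum_{i}d_i\varphi_i^x+\sum_{j}f_j\varphi_j^y$ with suitably chosen scalars and obtain a function whose local expression on every cell has no $x^2$ and no $y^2$ terms, i.e.\ $w|_K$ is bilinear. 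Since $w$ is still continuous (both $\varphi_i^x$ and $\varphi_j^y$ are continuous), $w\in V_h^{\rm BL}$. The reverse inclusion $V_h^{\rm BL}+\mathrm{span}\{\varphi_i^x,\varphi_j^y\}\subset V_h^{(2)}$ is immediate from continuity.

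For the homogeneous statement $V_{h0}^{(2)}=V_{h0}^{\rm BL}$, I would argue that if $v\in V_h^{(2)}$ vanishes on $\partial\Omega$ then on any boundary edge (say the bottom edge of a cell sitting on $\partial\Omega$) the trace $a_K+b_Kx+d_Kx^2\equiv 0$, forcing $d_K=0$ on that cell. Because every column touches both the top and bottom boundaries of the rectangle and $d_K$ is column-constant, we get $d_K=0$ on every cell; by the same argument using the left and right boundaries, $f_K=0$ on every cell. Hence $v$ is bilinear on each cell, so $V_{h0}^{(2)}\subset V_{h0}^{\rm BL}$, and the reverse is obvious.

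For part (2), no new work is really needed: the previous theorem shows $V_h^{\rm MC}=V_h^{\rm BL}+(\mathrm{span}\{\varphi_{0,K}\}_{K\in\mathcal{G}_h}\oplus\mathrm{span}\{\varphi_i^x\}\oplus\mathrm{span}\{\varphi_j^y\})$, and substituting the identity $V_h^{\rm BL}+\mathrm{span}\{\varphi_i^x,\varphi_j^y\}=V_h^{(2)}$ from part (1) yields $V_h^{\rm MC}=V_h^{(2)}+\mathrm{span}\{\varphi_{0,K}\}_{K\in\mathcal{G}_h}$. Likewise, Theorem \ref{thm:hombcMC} gives $V_{h0}^{\rm MC}=V_{h0}^{\rm BL}\oplus\mathrm{span}\{\varphi_{0,K}\}_{K\in\mathcal{G}_h}$, and combining with $V_{h0}^{\rm BL}=V_{h0}^{(2)}$ produces the desired formula. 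The only genuine difficulty is the coefficient-extraction argument in part (1); once the column/row-constancy of $d_K$ and $f_K$ is established, everything else is a bookkeeping exercise.
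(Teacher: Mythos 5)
Your proposal is correct; the paper states Proposition \ref{comparisionMC} without any proof, so there is no written argument to compare against, but your derivation of part (2) by substituting part (1) into Theorem \ref{thm:hombcMC} and its inhomogeneous companion is exactly the route the surrounding text implies. Your coefficient-matching argument for part (1) --- trace continuity across vertical (resp.\ horizontal) interior edges forces the global $y^2$-coefficient of $v|_K$ to be constant along each row and the $x^2$-coefficient constant along each column, so subtracting suitable multiples of $\varphi_i^x$ and $\varphi_j^y$ leaves a continuous piecewise-$Q_1$ function, while in the homogeneous case the vanishing boundary traces kill these coefficients outright --- correctly supplies the one step the paper leaves implicit.
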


\begin{proposition}\label{pro:MCWIL}
Let $V_h^{\rm W}$ be the Wilson element space and $V_{h0}^{\rm W}$ be its homogeneous subspace. Then 

\begin{enumerate}
\item $\nabla_hV_h^{\rm MC}\subset \nabla_hV_h^{\rm W}$, and $ \nabla_hV_{h0}^{\rm MC}\subset\nabla_hV_{h0}^{\rm W};$
\item $V_h^{\rm MC}\setminus V_h^{\rm W}\neq \varnothing$, $V_h^{\rm W}\setminus V_h^{\rm MC}\neq \varnothing$,\ $V_{h0}^{\rm MC}\setminus V_{h0}^{\rm W}\neq \varnothing$, and $V_{h0}^{\rm W}\setminus V_{h0}^{\rm MC}\neq \varnothing$.
\end{enumerate}
\end{proposition}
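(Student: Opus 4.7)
My plan is to settle (1) by combining the decompositions
\[V_h^{\rm MC}=V_h^{(2)}+\mathrm{span}\{\varphi_{0,K}\}_{K\in\mathcal{G}_h},\qquad V_{h0}^{\rm MC}=V_{h0}^{(2)}+\mathrm{span}\{\varphi_{0,K}\}_{K\in\mathcal{G}_h}\]
from Proposition \ref{comparisionMC} with the observation that each summand admits a Wilson-admissible representative having the same broken gradient. The conforming part is immediate: any $v\in V_h^{(2)}\subset H^1(\Omega)$ is globally continuous and in particular continuous at every interior vertex, so $V_h^{(2)}\subset V_h^{\rm W}$; likewise $V_{h0}^{(2)}\subset V_{h0}^{\rm W}$, since boundary-vanishing $H^1$ functions vanish at boundary vertices. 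The only obstruction is the cell bubble $\varphi_{0,K}$, whose one-sided vertex value from $K$ equals $-2\theta_1\theta_2\neq 0$ while the one-sided value from any neighbour equals $0$.

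The crucial trick is that this discontinuity is a cellwise constant and is therefore invisible to the broken gradient. Setting
\[
\tilde\varphi_{0,K}:=\varphi_{0,K}+2\theta_1\theta_2\,\mathbf{1}_K=\tfrac{1}{\xi^2}\varphi_{xx,K}+\tfrac{1}{\eta^2}\varphi_{yy,K}\text{ on }K,\ \ 0\text{ elsewhere},
\]
and using that $\varphi_{xx,K}$ and $\varphi_{yy,K}$ both vanish at the four vertices of $K$, I see that $\tilde\varphi_{0,K}$ vanishes at every vertex of $\mathcal{G}_h$, whence $\tilde\varphi_{0,K}\in V_{h0}^{\rm W}\subset V_h^{\rm W}$. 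Since $\mathbf{1}_K$ is cellwise constant, $\nabla_h\mathbf{1}_K\equiv 0$, so $\nabla_h\tilde\varphi_{0,K}=\nabla_h\varphi_{0,K}$. Replacing each $\varphi_{0,K}$ in the MC decomposition by $\tilde\varphi_{0,K}$ therefore sends every $u\in V_h^{\rm MC}$ (resp.\ $V_{h0}^{\rm MC}$) to some $w\in V_h^{\rm W}$ (resp.\ $V_{h0}^{\rm W}$) with $\nabla_h w=\nabla_h u$, which is the content of (1).

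For (2) I would exhibit explicit witnesses, all drawn from the local Wilson basis \eqref{eq:basisW} and assuming the grid has at least one interior edge and one interior vertex (the $1\times 1$ case being degenerate). For a cell $K_\star$ possessing an interior vertex $a_\star$, Lemma \ref{lem:cellpatMC} gives $\varphi_{0,K_\star}\in V_{h0}^{\rm MC}\subset V_h^{\rm MC}$, yet its one-sided values at $a_\star$ are $-2\theta_1\theta_2\neq 0$ from $K_\star$ and $0$ from the neighbour, violating Wilson continuity; this single witness settles both $V_h^{\rm MC}\setminus V_h^{\rm W}\ne\varnothing$ and $V_{h0}^{\rm MC}\setminus V_{h0}^{\rm W}\ne\varnothing$. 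Conversely, for a cell $K_\sharp$ with an interior horizontal edge (or, failing that, a vertical one), the zero-extension of $\varphi_{xx,K_\sharp}$ (respectively $\varphi_{yy,K_\sharp}$) vanishes at every vertex of $\mathcal{G}_h$ and hence lies in $V_{h0}^{\rm W}\subset V_h^{\rm W}$, yet its jump across the chosen interior edge is the quadratic $x(\xi-x)$, whose zeroth moment $\int_0^\xi x(\xi-x)\,dx=\xi^3/6\ne 0$ obstructs the moment-continuity defining $V_h^{\rm MC}$; this settles $V_h^{\rm W}\setminus V_h^{\rm MC}\ne\varnothing$ and $V_{h0}^{\rm W}\setminus V_{h0}^{\rm MC}\ne\varnothing$. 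The only nontrivial step in the whole argument is the indicator-shift trick in paragraph two; the rest reduces to reading off vertex values of $\varphi_{xx,K}$ and $\varphi_{yy,K}$ or computing a single one-dimensional moment.
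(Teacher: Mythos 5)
Your proof is correct. The paper states Proposition \ref{pro:MCWIL} without any proof, so there is nothing to compare against; your route --- splitting a function via Proposition \ref{comparisionMC} and absorbing the cellwise-constant discontinuity of $\varphi_{0,K}$ into the vertex-vanishing Wilson bubble $\xi^{-2}\varphi_{xx,K}+\eta^{-2}\varphi_{yy,K}$ for part (1), then exhibiting $\varphi_{0,K_\star}$ and the zero-extension of $\varphi_{xx,K_\sharp}$ as explicit witnesses for part (2) --- is the natural argument and surely the intended one. Two minor remarks: your caveat about degenerate grids is genuinely needed (on a $1\times n$ grid $\mathcal{N}_h^i=\varnothing$, so $V_h^{\rm MC}\subset V_h^{\rm W}$ and the first claim of (2) fails; the proposition implicitly assumes $m,n\geqslant 2$); and the vertex value $-2\theta_1\theta_2$ you quote comes from the paper's formula for $\varphi_{0,K}$, whose constant appears to carry a spurious factor of $2$ (the bubble vanishing at the eight boundary Gauss points is $\xi^{-2}\varphi_{xx,K}+\eta^{-2}\varphi_{yy,K}-\theta_1\theta_2$), but since the constant is nonzero in either normalization your argument is unaffected.
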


\section{Construction of basis functions for the reduced rectangular Morley (RRM) element space}
\label{sec:appB}
\noindent Let $\Omega \in \mathbb{R}^{2}$ be rectangle domain divided by $m\times n$ rectangles. Define the reduced rectangular Morley element spaces as
\begin{align}
\begin{split}
V_{h}^{\rm{R}} := \Big\{w\in L^{2}(\Omega) : w|_{K} \in P_{2}(K), & \ w_{h}(a)\text{ is continuous at } a\in \mathcal{N}_{h}^{i}, \text{and}\\
& \fint_{e}\partial_{n_{e}} w_{h} \ud s \text{ is continuous across } e \in \mathcal{E}_{h}^{i}\Big\};
\end{split}
\end{align}
\begin{align}
V_{hs}^{\rm{R}} :=\Big\{w_{h}\in V_{h}^{\rm{R}} :  w_{h}(a)\text{ vanishes at } a\in \mathcal{N}_{h}^{b} \Big\}.
\end{align}
\noindent In this section, we will present an available set of  basis functions of $V_{hs}^{\rm{R}}$.

\subsection{Compatibility conditions}
By a pure linear algebra argument, the following description holds for $P_{2}(K)$.
\begin{lemma}{\rm(\cite[Lemma 15]{Shuo.Zhang2017})}
Let $K$ be a rectangle with vertices $a_{i}$ and edges $\Gamma_{l}$ ($i,l = 1:4$). Denote its length and width by $L$ and $H$, respectively; see Figure \ref{fig:subdivision} (Left). Then, given $\alpha_{i}, \beta_{i} \in \mathbb{R}$, there exists a uniquely $p\in P_{2}(K)$ satisfying
$$
p(a_{i}) = \alpha_{i}, \fint_{\Gamma_{1}}\partial_{x}p=\beta_{1}, \fint_{\Gamma_{2}}\partial_{y}p=\beta_{2}, \fint_{\Gamma_{3}}\partial_{x}p=\beta_{3}, \fint_{\Gamma_{4}}\partial_{y}p=\beta_{4}.
$$
if and only if the following compatibility conditions are satisfied on $K$,
\begin{align}
\frac{\alpha_{3}-\alpha_{4}}{L} + \frac{\alpha_{2}-\alpha_{1}}{L} = \beta_{1}+\beta_{3};\label{eq:c1}\\
\frac{\alpha_{3}-\alpha_{2}}{H} + \frac{\alpha_{4}-\alpha_{1}}{H} = \beta_{2}+\beta_{4}.\label{eq:c2}
\end{align}
\end{lemma}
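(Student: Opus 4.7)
The plan is to combine a direct calculation for necessity with a dimension-counting argument for sufficiency and uniqueness. I will first fix coordinates so that $K = [0,L] \times [0,H]$ with $a_1 = (0,0)$, $a_2 = (L,0)$, $a_3 = (L,H)$, $a_4 = (0,H)$, taking $\Gamma_1, \Gamma_3$ to be the two vertical edges (so that $\partial_x$ is the normal direction there) and $\Gamma_2, \Gamma_4$ to be the two horizontal edges.

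For necessity, the key observation is that for $p \in P_2(K)$ the partial $\partial_x p$ is a polynomial of degree at most one, so averages over an edge equal midpoint values and integrals along a segment equal length times midpoint value. In particular $\beta_1 + \beta_3 = \partial_x p(0, H/2) + \partial_x p(L, H/2) = 2\partial_x p(L/2, H/2)$, and at the same time the fundamental theorem of calculus gives $\int_0^L \partial_x p(x,0)\,dx + \int_0^L \partial_x p(x,H)\,dx = (\alpha_2-\alpha_1) + (\alpha_3-\alpha_4)$. Exploiting linearity of $\partial_x p$ in $y$, the left-hand side here equals $2L\,\partial_x p(L/2, H/2)$, and dividing through by $L$ yields \eqref{eq:c1}. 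The analogous computation with $\partial_y p$ yields \eqref{eq:c2}.

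For the converse, I would introduce the linear map $T : P_2(K) \to \mathbb{R}^8$ sending $p$ to its $8$-tuple of nodal values and average directional derivatives. The two compatibility equations are clearly linearly independent as functionals on $\mathbb{R}^8$ (their $\beta$-parts involve disjoint coordinates), so they cut out a $6$-dimensional subspace $S \subset \mathbb{R}^8$ that contains $\mathrm{Im}(T)$ by the necessity step. Since $\dim P_2(K) = 6$, once $T$ is shown to be injective it follows that $\mathrm{Im}(T) = S$, which gives existence for every compatible data tuple together with uniqueness in one stroke.

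Injectivity is the only real work. Assuming $T(p) = 0$, writing $p$ in the monomial basis and imposing vanishing at the four vertices reduces $p$ to the two-dimensional subspace $\mathrm{span}\{x(x-L),\, y(y-H)\}$ by a short elimination (the cross term $xy$ is killed by the value at $a_3$ once the linear coefficients have been expressed via the values at $a_2$ and $a_4$). A direct evaluation on $\Gamma_1$ and $\Gamma_2$ then shows that $\beta_1$ and $\beta_2$ are nonzero scalar multiples of the two surviving coefficients, so their vanishing forces $p \equiv 0$. The only subtlety throughout is keeping careful track of which partial derivative is normal to which edge, so that the identities come out in the exact algebraic form of \eqref{eq:c1}--\eqref{eq:c2}.
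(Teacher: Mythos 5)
Your proof is correct. Note that the paper itself does not prove this lemma: it is quoted as Lemma 15 of the reference by S.~Zhang on quadrilateral grids, with only the remark that it holds ``by a pure linear algebra argument.'' Your write-up is a complete and valid instantiation of exactly that argument. The necessity step is right: since $\nabla p$ is affine for $p\in P_2(K)$, edge averages reduce to midpoint values, and combining $\beta_1+\beta_3=2\,\partial_x p(L/2,H/2)$ with $\int_0^L\partial_x p(x,0)\,dx+\int_0^L\partial_x p(x,H)\,dx=(\alpha_2-\alpha_1)+(\alpha_3-\alpha_4)=2L\,\partial_x p(L/2,H/2)$ gives \eqref{eq:c1}, and symmetrically \eqref{eq:c2}. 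The sufficiency-plus-uniqueness step via the map $T:P_2(K)\to\mathbb{R}^8$ is also sound: the two compatibility functionals are independent, $\operatorname{Im}(T)$ lies in the resulting $6$-dimensional subspace, and your injectivity computation (vertex values force $p=e\,x(x-L)+f\,y(y-H)$, after which $\beta_1=-eL$ and $\beta_2=-fH$ kill the remaining coefficients) closes the dimension count. No gaps.
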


Recall the definitions of $\mathcal{G}_{h}$,  $\mathcal{G}_{i}$, and $\mathcal{G}^j$ in Appendix A. Also, the vertices are labeled by $X_i^j$, the midpoints on any edge by $Y^j_i$ and $Z^j_i$, and the cells by $K^j_i$; see Figure \ref{fig:subdivision} (Right). Next we present some local or global functions in $V_{hs}^{\rm{R}}$ by giving their value on $X_{i}^{j}\in \mathcal{E}_{h}^{i}$ and derivative on the midpoint of $e\in \mathcal{E}_{h}$.
\begin{figure}[h]
\setlength{\unitlength}{0.75mm}
\centering

\begin{picture}(140,90)(-60,-5)%
\thicklines\color{black}%
\put(-60,0){\line(1,0){35}}
\put(-60,30){\line(1,0){35}}
\put(-60,0){\line(0,1){30}}
\put(-25,0){\line(0,1){30}}
\put(-60.8,-0.8){$\bullet$}
\put(-60.8,29.2){$\bullet$}
\put(-25.8,-0.8){$\bullet$}
\put(-25.8,29.2){$\bullet$}

\put(-59,1){$a_1$}
\put(-59,31){$a_4$}
\put(-24,1){$a_2$}
\put(-24,31){$a_3$}

\put(-59,14){$\Gamma_1$}
\put(-24,14){$\Gamma_3$}
\put(-41,1){$\Gamma_2$}
\put(-41,31){$\Gamma_4$}
\put(-42,13){$K$}

\put(0,0){\line(1,0){80}}
\put(0,20){\line(1,0){80}}
\put(0,40){\line(1,0){80}}
\put(0,60){\line(1,0){80}}
\put(0,80){\line(1,0){80}}

\put(0,0){\line(0,1){80}}
\put(20,0){\line(0,1){80}}
\put(40,0){\line(0,1){80}}
\put(60,0){\line(0,1){80}}
\put(80,0){\line(0,1){80}}

\put(-7.5,10){$H_{1}$}
\put(-7.5,30){$H_{2}$}
\put(-7.5,50){$\cdots$}
\put(-7.5,70){$H_{n}$}

\put(10,83){$L_{1}$}
\put(30,83){$L_{2}$}
\put(50,83){$\cdots$}
\put(70,83){$L_{m}$}

\put(19,8.5){$\bullet$}
\put(9,19){$\bullet$}
\put(19,19){$\bullet$}
\put(-1,8.5){$\bullet$}
\put(9,-1){$\bullet$}
\put(19,39){$\bullet$}
\put(39,39){$\bullet$}
\put(39,19){$\bullet$}

\put(22,10){$Z^1_2$}
\put(2,10){$Z^1_1$}
\put(12,21){$Y^2_1$}
\put(12,1){$Y^1_1$}
\put(21,21){$X^1_1$}
\put(21,41){$X^2_1$}
\put(41,21){$X^1_2$}
\put(41,41){$X^2_2$}

\put(83,9){$\mathcal{G}^1$}
\put(83,29){$\mathcal{G}^2$}
\put(83,49){$\cdots$}
\put(83,69){$\mathcal{G}^n$}

\put(9,-6){$\mathcal{G}_1$}
\put(29,-6){$\mathcal{G}_2$}
\put(49,-6){$\cdots$}
\put(69,-6){$\mathcal{G}_m$}

\put(9,9){$K^1_1$}
\put(29,9){$K^1_2$}
\put(9,29){$K^2_1$}
\put(29,29){$K^2_2$}

\end{picture}
\vskip0mm
\caption{Left: Illustration of a rectangle $K$ with width $L$ and height $H$. Right: Illustration of the grid $\mathcal{G}_h$: $X^j_i$ denotes the vertices, $Y^j_i$ and $Z^j_i$ denote the midpoints and $K^j_i$ denotes the cells. $L_{i}$ and $H_{i}$  denote the widths (heights) of the cells in the same column (row).}\label{fig:subdivision}
\end{figure}
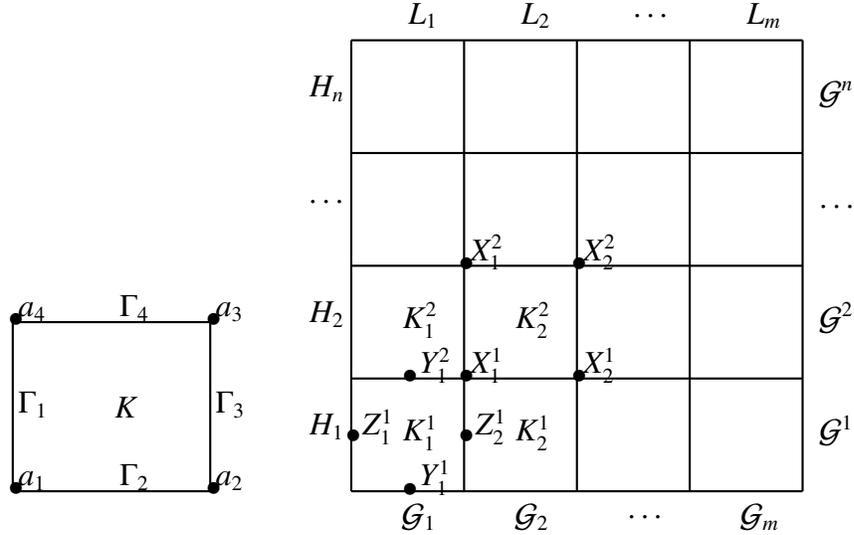

\subsection{Patterns of supports of basis functions in $V_{hs}^{\rm{R}}$}
Associated with $\mathcal{G}_{h}$, we present some patterns, i.e., the support sets of basis functions in the RRM element space. To begin with, we introduce some notations. An interior edge, denoted by $e_{{\rm bot},i}$ $(2\leqslant i \leqslant m-1)$, is called a bottom interior edge if its two endpoints are interior points, and its lower opposite edge is on the bottom of $\mathcal{G}_{h}$. A top interior edge $e_{{\rm top},i}$, a left interior edge $e_{{\rm lef},j}$, and a right interior edge $e_{{\rm rig},j}$ are defined in a similar way $(2\leqslant j \leqslant n-1)$. 
 
 In the following lemmas, we always denote, by $\omega$, a generic patch with boundaries $\Gamma_{l}$ ($l=1:4$), anticlockwise. 
 
\begin{lemma}\label{lem:basis1} 
Let $e_{{\rm bot},i}$ ($2\leqslant i \leqslant m-1$) be a bottom interior edge with endpoints $X_{i-1}^{1}$ and $X_{i}^{1}$ . Let $\omega$ be a $3\times 2$ cells patch as shown in Figure \ref{fig:APfunction11}, left. Define 
$$
V_{{\rm bot},i}^{\rm R} :=\Big \{v_{h}\in V_{hs}^{\rm{R}}(\omega) :  \fint_{e} \partial_{n_{e}}v_{h} \ud s =0, \ \forall e \subset \Gamma_{l}, \ l = 1,3,4 \Big \}.
$$
Likewise, we can define $V_{{\rm top},i}^{\rm R}$ (see Figure \ref{fig:APfunction11}, right), $V_{{\rm lef},j}^{\rm R}$, and  $V_{{\rm rig},j}^{\rm R}$ (see Figure \ref{fig:APfunction12}, left and right), where $2\leqslant i \leqslant m-1$ and $2\leqslant j \leqslant n-1$. Then we have ${\rm dim}(V_{{\rm bot},i}^{\rm R})= {\rm dim}(V_{{\rm top},i}^{\rm R}) = {\rm dim}(V_{{\rm lef},j}^{\rm R}) ={\rm dim}(V_{{\rm rig},j}^{\rm R}) = 1$.
\end{lemma}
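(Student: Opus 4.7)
The plan is to parametrize a generic $v_h\in V_{{\rm bot},i}^{\rm R}$ explicitly, use the compatibility relations \eqref{eq:c1}--\eqref{eq:c2} together with continuity across interior edges of $\omega$ to determine everything in terms of at most two scalars, and then read off from the homogeneous condition on $\Gamma_3$ that exactly one linear relation between those two scalars survives. The other three spaces $V_{{\rm top},i}^{\rm R}$, $V_{{\rm lef},j}^{\rm R}$, $V_{{\rm rig},j}^{\rm R}$ follow from an obvious reflection or $90^\circ$ rotation of the argument, so I concentrate on $V_{{\rm bot},i}^{\rm R}$.

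First I would enumerate the free vertex data. The $3\times 2$ patch $\omega=\bigcup_{1\leqslant j\leqslant 2,\,i-1\leqslant k\leqslant i+1}K_k^j$ has twelve vertices, but only the two endpoints $X_{i-1}^1$ and $X_i^1$ of $e_{{\rm bot},i}$ are strictly interior to $\omega$. At each of the remaining ten vertices $v_h$ must vanish: vertices on $\Gamma_2$ lie on $\partial\Omega$ and so carry the homogeneous data of $V_{hs}^{\rm R}$, while at any vertex on $\Gamma_1\cup\Gamma_3\cup\Gamma_4$ one has $v_h=0$ so that the extension-by-zero across these interior edges of $\mathcal{G}_h$ is consistent with the vertex-continuity required by $V_{hs}^{\rm R}$. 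Thus only $\mu:=v_h(X_{i-1}^1)$ and $\nu:=v_h(X_i^1)$ can be nonzero.

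Next I would propagate the edge averages $\beta_1,\ldots,\beta_4$ through the six cells. Starting from the top-left cell $K_{i-1}^2$, where three of the four vertex values vanish and both $\beta_1$ (on $\Gamma_1$) and $\beta_4$ (on $\Gamma_4$) are zero by hypothesis, the compatibility relations \eqref{eq:c1}--\eqref{eq:c2} pin down the right- and bottom-edge averages as explicit multiples of $\mu$. Feeding these into the adjacent cells via interior-edge continuity and continuing through $K_i^2,K_{i-1}^1,K_i^1,K_{i+1}^2,K_{i+1}^1$ in turn, every interior edge-average of $\omega$ becomes an explicit affine function of $(\mu,\nu)$; the three $\partial_y v_h$-averages on $\Gamma_2$ remain genuinely free since no homogeneous condition is imposed there. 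The only information not yet used are the two conditions $\beta_3=0$ on the edges of $\Gamma_3$; substituting the propagated formulae into \eqref{eq:c1} for $K_{i+1}^1$ and $K_{i+1}^2$ produces, after cancellation, the single linear relation
\begin{align*}
\mu\Bigl(\tfrac{1}{L_{i-1}}+\tfrac{1}{L_i}\Bigr)=\nu\Bigl(\tfrac{1}{L_i}+\tfrac{1}{L_{i+1}}\Bigr)
\end{align*}
with nontrivial coefficients on both sides. Its solution set in $(\mu,\nu)$ is one-dimensional, and the propagation produces a genuine element of $V_{{\rm bot},i}^{\rm R}$ for every such pair, so $\dim V_{{\rm bot},i}^{\rm R}=1$.

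The main obstacle, which is where the lemma really has content, is the consistency implicit in the last step: the two equations $\beta_3=0$, coming from two separate cells in the rightmost column of $\omega$, must collapse to the same relation on $(\mu,\nu)$. Were they independent, one would obtain $\dim=0$; were one of them a tautology, $\dim=2$. That both equations coincide is not accidental but reflects the rank-$2$ structure of \eqref{eq:c1}--\eqref{eq:c2} per cell together with the precise shape of the $3\times 2$ patch, and it has to be verified by a direct propagation along both rows of cells before the dimension count can be concluded.
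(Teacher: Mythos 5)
Your proposal is essentially the paper's own proof: the paper likewise reduces everything to the two interior vertex values, writes the compatibility conditions \eqref{eq:c1}--\eqref{eq:c2} cell by cell, and observes that the twelve resulting equations carry exactly one linear dependency, namely $\eqref{eq:3}-\eqref{eq:1}-\eqref{eq:5}-\bigl[\eqref{eq:9}-\eqref{eq:7}-\eqref{eq:11}\bigr]=0$, which is precisely your statement that the two rows of cells produce the same relation $\mu\bigl(\tfrac{1}{L_{i-1}}+\tfrac{1}{L_i}\bigr)=\nu\bigl(\tfrac{1}{L_i}+\tfrac{1}{L_{i+1}}\bigr)$. One small correction: the three $\partial_y$-averages on $\Gamma_2$ are not ``genuinely free'' --- each is determined by $(\mu,\nu)$ via the vertical compatibility condition in the top cell of its column (which fixes the interior horizontal edge average) followed by that of the bottom cell; if they really were free parameters your own count would yield dimension $4$ rather than $1$, so the correct statement is only that they impose no additional constraint on $(\mu,\nu)$.
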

\begin{proof}
First, we consider $V_{{\rm bot},i}^{\rm R}$. Let the geometric features of $\omega$ be represented as in Figure \ref{fig:APfunction11}, left. Given $\varphi \in V_{{\rm bot},i}^{\rm R}$, denote by $x_{i}^{j}:=\varphi(X_{i}^{j})$, $y_{i}^{j}:=\partial_y\varphi(Y_{i}^{j})$, and $z_{i}^{j}:=\partial_x\varphi(Z_{i}^{j})$. Apply conditions \eqref{eq:c1} and \eqref{eq:c2} on every element, we have, row by row,
\begin{align}
& \frac{x_{i-1}^1}{L_{i-1}} = z_{i}^1,\  \frac{x_{i-1}^1}{H_{1}} = y_{i-1}^1+y_{i-1}^2,\  \frac{x_{i}^1-x_{i-1}^1}{L_{i}} = z_{i}^1+z_{i+1}^1,\ \frac{x_{i}^1+x_{i-1}^1}{H_{1}} = y_{i}^1+y_{i}^2,
\\
& -\frac{x_{i}^1}{L_{i+1}} = z_{i+1}^1,\  \frac{x_{i}^1}{H_{1}} = y_{i+1}^1+y_{i+1}^2,\  \frac{x_{i-1}^1}{L_{i-1}} = z_{i}^2,\  -\frac{x_{i-1}^1}{H_{2}} = y_{i-1}^2,
\\
& \frac{x_{i}^1-x_{i-1}^1}{L_{i}} = z_{i}^2+z_{i+1}^2,\  -\frac{x_{i-1}^1+x_{i}^1}{H_{2}} = y_{i}^2,\  -\frac{x_{i}^1}{L_{i+1}} = z_{i+1}^2,\  -\frac{x_{i}^1}{H_{2}} = y_{i+1}^2.
\end{align}
Rewrite the system after adjusting the order,\begin{align}
 \frac{1}{L_{i-1}}x_{i-1}^1 - z_{i}^1 =0,\label{eq:1}
\end{align}
\begin{align}
 \frac{1}{H_{1}}x_{i-1}^1 - y_{i-1}^1 - y_{i-1}^2   =0,
\end{align}
\begin{align}
 -\frac{1}{L_{i}}x_{i-1}^1 + \frac{1}{L_{i}}x_{i}^1 - z_{i}^1  - z_{i+1}^1  =0,\label{eq:3}
\end{align}
\begin{align}
\frac{1}{H_{1}}x_{i-1}^1 + \frac{1}{H_{1}}x_{i}^1  - y_{i}^1 - y_{i}^2   =0,
\end{align}
\begin{align}
- \frac{1}{L_{i+1}}x_{i}^1 - z_{i+1}^1 =0,\label{eq:5}
\end{align}
\begin{align}
\frac{1}{H_{1}}x_{i}^1 - y_{i+1}^1 - y_{i+1}^2   =0,
\end{align}
\begin{align}
\frac{1}{L_{i-1}}x_{i-1}^1 - z_{i}^2 = 0,\label{eq:7}
\end{align}
\begin{align}
-\frac{1}{H_{2}}x_{i-1}^1 - y_{i-1}^2 = 0,
\end{align}
\begin{align}
-\frac{1}{L_{i}}x_{i-1}^1 + \frac{1}{L_{i}}x_{i}^1 - z_{i}^2 -z_{i+1}^2= 0,\label{eq:9}
\end{align}
\begin{align}
-\frac{1}{H_{2}}x_{i-1}^1 -\frac{1}{H_{2}}x_{i}^1- y_{i}^2= 0,
\end{align}
\begin{align}
-\frac{1}{L_{i+1}}x_{i}^1 - z_{i+1}^2 = 0,\label{eq:11}
\end{align}
\begin{align}
-\frac{1}{H_{2}}x_{i}^1 - y_{i+1}^2 = 0.
\end{align}

\noindent It is straightforward to verify that $\eqref{eq:3} - \eqref{eq:1} -\eqref{eq:5} -\big[\eqref{eq:9} - \eqref{eq:7} -\eqref{eq:11}\big ] =0.$ The system admits a one-dimension solution space. Thus we obtain ${\rm dim} (V_{{\rm bot},i}^{\rm R}) =1$. Likewise, we have ${\rm dim} (V_{{\rm top},i}^{\rm R}) = {\rm dim} (V_{{\rm lef},j}^{\rm R} )= {\rm dim} (V_{{\rm rig},j}^{\rm R}) =1 \ (\forall \ 2\leqslant i \leqslant m-1, 2\leqslant j \leqslant n-1$).
\end{proof}

\begin{figure}[!htbp]
\setlength{\unitlength}{0.9mm}
\centering
\begin{picture}(150,50)(-90,-5)%
\thicklines\color{black}%
\dashline{1}(-85,40)(-25,40)
\dashline{1}(-85,0)(-85,40)
\dashline{1}(-25,0)(-25,40)
\put(-85,0){\line(1,0){60}}
\put(-85,20){\line(1,0){60}}
\put(-65,0){\line(0,1){40}}
\put(-45,0){\line(0,1){40}}

\put(-77,-5){$L_{i-1}$}
\put(-58,-5){$L_{i}$}
\put(-38,-5){$L_{i+1}$}
\put(-92,9){$H_{1}$}
\put(-92,29){$H_{2}$}

\put(-77,-1){$\bullet$}
\put(-77,19){$\bullet$}
\put(-76,1.5){$Y_{i-1}^{1}$}
\put(-76,21.5){$Y_{i-1}^{2}$}

\put(-56,-1){$\bullet$}
\put(-56,19){$\bullet$}
\put(-55,1.5){$Y_{i}^{1}$}
\put(-55,21.5){$Y_{i}^{2}$}
\put(-55,15){$e_{{\rm bot},i}$}

\put(-35,-1){$\bullet$}
\put(-35,19){$\bullet$}
\put(-33.5,1.5){$Y_{i+1}^{1}$}
\put(-33.5,21.5){$Y_{i+1}^{2}$}

\put(-66,9){$\bullet$}
\put(-66,19){$\bullet$}
\put(-66,29){$\bullet$}
\put(-64,9){$Z_{i}^{1}$}
\put(-64,29){$Z_{i}^{2}$}

\put(-46,9){$\bullet$}
\put(-46,19){$\bullet$}
\put(-46,29){$\bullet$}
\put(-44,9){$Z_{i+1}^{1}$}
\put(-44,29){$Z_{i+1}^{2}$}

\put(-64,21.5){$X_{i-1}^{1}$}
\put(-44,21.5){$X_{i}^{1}$}

\dashline{1}(0,0)(60,0)
\dashline{1}(0,0)(0,40)
\dashline{1}(60,0)(60,40)
\put(0,20){\line(1,0){60}}
\put(0,40){\line(1,0){60}}
\put(20,0){\line(0,1){40}}
\put(40,0){\line(0,1){40}}

\put(8,-5){$L_{i-1}$}
\put(27,-5){$L_{i}$}
\put(46,-5){$L_{i+1}$}
\put(-8,9){$H_{n-1}$}
\put(-7,29){$H_{n}$}

\put(8,19){$\bullet$}
\put(8,39){$\bullet$}
\put(9,21.5){$Y_{i-1}^{n}$}
\put(9,41.5){$Y_{i-1}^{n+1}$}

\put(29,19){$\bullet$}
\put(29,39){$\bullet$}
\put(31.5,21.5){$Y_{i}^{n}$}
\put(30,41.5){$Y_{i}^{n+1}$}
\put(30,15){$e_{{\rm top},i}$}

\put(50,19){$\bullet$}
\put(50,39){$\bullet$}
\put(52,21.5){$Y_{i+1}^{n}$}
\put(51.5,41.5){$Y_{i+1}^{n+1}$}

\put(19,9){$\bullet$}
\put(19,19){$\bullet$}
\put(19,29){$\bullet$}
\put(21,9){$Z_{i}^{n-1}$}
\put(21,29){$Z_{i}^{n}$}

\put(39,9){$\bullet$}
\put(39,19){$\bullet$}
\put(39,29){$\bullet$}
\put(41,9){$Z_{i+1}^{n-1}$}
\put(41,29){$Z_{i+1}^{n}$}

\put(20,21.5){$X_{i-1}^{n-1}$}
\put(40.5,21.5){$X_{i}^{n-1}$}

\end{picture}
\vskip0mm

Illustration of the column patterns and row patterns of basis functions

\caption{Left: Illustration of a $3\times2$ pattern associated with $e_{\text{bot}}^{i}$. Right: Illustration of a $3\times 2$ pattern associated with $e_{\text{top},i}$. The values or derivatives vanish at edges on the dotted lines.}
\label{fig:APfunction11}
\end{figure}

\begin{figure}[!htbp]
\setlength{\unitlength}{0.9mm}
\centering
\begin{picture}(135,70)(-80,-5)%
\thicklines\color{black}%
\dashline{1}(-75,0)(-25,0)
\dashline{1}(-75,60)(-25,60)
\dashline{1}(-25,0)(-25,60)
\put(-75,0){\line(0,1){60}}
\put(-50,0){\line(0,1){60}}
\put(-75,20){\line(1,0){50}}
\put(-75,40){\line(1,0){50}}

\put(-51,19){$\bullet$}
\put(-51,39){$\bullet$}
\put(-49,21.5){$X_{1}^{j-1}$}
\put(-49,41.5){$X_{1}^{j}$}
\put(-76,9){$\bullet$}
\put(-76,29){$\bullet$}
\put(-76,49){$\bullet$}
\put(-74,9){$Z_{1}^{j-1}$}
\put(-74,29){$Z_{1}^{j}$}
\put(-74,49){$Z_{1}^{j+1}$}
\put(-51,9){$\bullet$}
\put(-51,29){$\bullet$}
\put(-51,49){$\bullet$}
\put(-49,9){$Z_{2}^{j-1}$}
\put(-49,29){$Z_{2}^{j}$}
\put(-59,29){$e_{{\rm lef},j}$}

\put(-49,49){$Z_{2}^{j+1}$}
\put(-63.5,19){$\bullet$}
\put(-38.5,19){$\bullet$}
\put(-62.5,21.5){$Y_{1}^{j}$}
\put(-37.5,21.5){$Y_{2}^{j}$}
\put(-63.5,39){$\bullet$}
\put(-38.5,39){$\bullet$}
\put(-62.5,41.5){$Y_{1}^{j+1}$}
\put(-37.5,41.5){$Y_{2}^{j+1}$}

\put(-62.5,-5){$L_{1}$}
\put(-37.5,-5){$L_{2}$}
\put(-85,9){$H_{j-1}$}
\put(-85,29){$H_{j}$}
\put(-85,49){$H_{j+1}$}

\dashline{1}(0,0)(50,0)
\dashline{1}(0,60)(50,60)
\dashline{1}(0,0)(0,60)
\put(50,0){\line(0,1){60}}
\put(25,0){\line(0,1){60}}
\put(0,20){\line(1,0){50}}
\put(0,40){\line(1,0){50}}

\put(24,19){$\bullet$}
\put(24,39){$\bullet$}
\put(26,22){$X_{m-1}^{j-1}$}
\put(26,42){$X_{m-1}^{j}$}
\put(24,9){$\bullet$}
\put(24,29){$\bullet$}
\put(24,49){$\bullet$}
\put(26,9){$Z_{m}^{j-1}$}
\put(26,29){$Z_{m}^{j}$}
\put(16,29){$e_{{\rm rig},j}$}
\put(26,49){$Z_{m}^{j+1}$}
\put(49,9){$\bullet$}
\put(49,29){$\bullet$}
\put(49,49){$\bullet$}
\put(51,9){$Z_{m+1}^{j-1}$}
\put(51,29){$Z_{m+1}^{j}$}
\put(51,49){$Z_{m+1}^{j+1}$}
\put(11.5,19){$\bullet$}
\put(37.5,19){$\bullet$}
\put(10.5,22){$Y_{m-1}^{j}$}
\put(36.5,22){$Y_{m}^{j}$}
\put(12.5,39){$\bullet$}
\put(37.5,39){$\bullet$}
\put(10.5,42.5){$Y_{m-1}^{j+1}$}
\put(36.5,42.5){$Y_{m}^{j+1}$}

\put(11.5,-5){$L_{m-1}$}
\put(36.5,-5){$L_{m}$}
\put(-8.5,9){$H_{j-1}$}
\put(-8.5,29){$H_{j}$}
\put(-8.5,49){$H_{j+1}$}
\end{picture}
\vskip0mm
\caption{Left: Illustration of a $2\times3$ pattern associated with $e_{\text{lef},j}$. Right: Illustration of a $2\times3$ pattern associated with $e_{\text{rig},j}$. The values or derivatives vanish at edges on the dotted lines.
}\label{fig:APfunction12}
\end{figure} 
\noindent With similar procedures in Lemma \ref{lem:basis2}, the following two lemmas can be obtained.
\begin{lemma}\label{lem:basis2}
Let $X_{m-1}^{n-1}$ be an interior node in the northeast corner. Let $\omega$ be a $2\times 2$ patch as shown in Figure \ref{fig:APfunction3and4}, left. Define $V_{X}^{\rm R} :=\Big\{v_{h}\in V_{hs}^{\rm{R}}(\omega): \fint_{e} \partial_{n_{e}}v_{h} \ud s =0,\  \forall e \subset \Gamma_{l}\ (l = 1,2)\Big\}$. Then ${\rm dim}(V_{X}^{\rm R}) =1$. 
\end{lemma}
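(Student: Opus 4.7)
The plan is to mimic the strategy used in the proof of Lemma \ref{lem:basis1}: introduce unknowns for the vertex values of $v_h$ and for the normal-derivative averages on each edge of $\omega$, apply the compatibility conditions \eqref{eq:c1} and \eqref{eq:c2} cell by cell, and verify that the resulting linear system has a one-dimensional solution space.

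First I would fix the configuration. The patch $\omega$ consists of the four cells $K_{m-1}^{n-1}$, $K_m^{n-1}$, $K_{m-1}^n$, $K_m^n$ with nine vertices $X_i^j$ for $i\in\{m-2,m-1,m\}$ and $j\in\{n-2,n-1,n\}$. Because $v_h\in V_{hs}^{\rm R}$ is intended to be supported in $\omega$, its vertex values vanish at every vertex on the boundary of $\omega$: on the top row and right column this is forced by the homogeneous boundary condition of $V_{hs}^{\rm R}$ (those vertices lie on $\partial\Omega$), while on the left column and bottom row it follows from the requirement that the function extend by zero outside $\omega$. The only free vertex value is therefore $a:=v_h(X_{m-1}^{n-1})$. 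Likewise, the normal-derivative averages on the two left edges and the two bottom edges of $\omega$ vanish by the defining constraint on $\Gamma_1\cup\Gamma_2$, whereas the four internal edges inside $\omega$, together with the four boundary edges lying on $\Gamma_3\cup\Gamma_4\subset\partial\Omega$, supply eight unknown normal-derivative averages.

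Next I would apply \eqref{eq:c1} and \eqref{eq:c2} cell by cell. In the bottom-left cell $K_{m-1}^{n-1}$, the two compatibility identities together with the vanishing of three of its four vertex values and of the averages on its left and bottom edges deliver the normal-derivative averages on its interior right and top edges as $a/L_{m-1}$ and $a/H_{n-1}$, respectively. Propagating through $K_m^{n-1}$ and $K_{m-1}^n$ then determines the remaining derivative averages on the boundary edges of $\omega$ lying on $\partial\Omega$ in terms of $a$. The compatibility conditions on the last cell $K_m^n$ turn out to be automatically satisfied by the values already assigned, confirming consistency of the whole $8\times 9$ linear system.

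Consequently every admissible $v_h\in V_X^{\rm R}$ is determined up to the single scalar parameter $a$, and hence $\dim V_X^{\rm R}=1$. The main obstacle lies not in any deep difficulty but in careful bookkeeping: the orientations of the vertices $a_1,\dots,a_4$ and the signs of the outward normals appearing in \eqref{eq:c1}--\eqref{eq:c2} must be tracked consistently across the four cells, exactly as in equations \eqref{eq:1}--\eqref{eq:11} in the proof of Lemma \ref{lem:basis1}; once this is done, the system decouples cleanly and the claim follows.
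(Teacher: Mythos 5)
Your approach is exactly the one the paper intends: the paper gives no separate proof of this lemma, merely remarking that it follows by the same procedure as Lemma \ref{lem:basis1}, namely listing the vertex value $a=v_h(X_{m-1}^{n-1})$ and the eight undetermined edge averages (four interior edges, four edges on $\Gamma_3\cup\Gamma_4\subset\partial\Omega$) and imposing \eqref{eq:c1}--\eqref{eq:c2} on each of the four cells. One bookkeeping slip, though: the two compatibility conditions on the last cell $K_m^n$ are \emph{not} automatically satisfied -- they are precisely what determines the two remaining unknowns, the averages of $\partial_x$ and $\partial_y$ on the right and top edges of that corner cell (giving $-a/L_m-a/L_{m-1}$ and $-a/H_n-a/H_{n-1}$). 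If they were redundant you would have only six independent equations in nine unknowns and hence dimension three, contradicting your own conclusion; in fact, unlike the $3\times 2$ patch of Lemma \ref{lem:basis1} where one relation among the twelve equations is redundant, here each of the eight equations introduces exactly one fresh unknown with nonzero coefficient, so the system has full rank $8$ in $9$ unknowns and $\dim V_X^{\rm R}=1$ follows. With that correction the argument is complete.
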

\begin{lemma}{\rm(\cite[Lemma 15]{Shuo.Zhang2017})}\label{lem:basis3}
Let $\omega$ be a $3 \times 3 $ patch as shown in Figure \ref{fig:APfunction3and4}, right. Define $V_{K_{i}^{j}}^{\rm R} :=\Big \{v_{h}\in V_{hs}^{\rm{R}}(\omega): \fint_{e} \partial_{n_{e}}v_{h} \ud s =0, \ \forall e \subset \Gamma_{i} \ (1 \leqslant l \leqslant 4 )\Big \}$, where $2\leqslant i \leqslant m-1$ and $ \ 2\leqslant j \leqslant n-1$.
Then ${\rm dim} (V_{K_{i}^{j}}^{\rm R} )=1$.
\end{lemma}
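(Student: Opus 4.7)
The plan is to mirror the argument of Lemma~\ref{lem:basis1} but in the enlarged $3\times3$ patch. Any $v_h\in V_{K_i^j}^{\rm R}$, when extended by zero to $\Omega$, must lie in $V_{hs}^{\rm R}$. Thus every vertex value of $v_h$ on $\partial\omega$ must vanish (by continuity of vertex values) and every edge normal-derivative average on $\partial\omega$ is zero by hypothesis. The only non-trivial degrees of freedom are then the four interior vertex values
$x_{i-1}^{j-1},\ x_i^{j-1},\ x_{i-1}^{j},\ x_i^{j}$
(the four corners of the central cell $K_i^j$) together with the twelve interior edge normal-derivative averages inside~$\omega$, giving $16$ unknowns in total.

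On each of the nine cells of $\omega$, I would then apply the two compatibility conditions \eqref{eq:c1}–\eqref{eq:c2}, producing a linear system of $18$ equations in these $16$ unknowns. Because only the four central vertices carry non-zero values, the structure of the system is very similar to (and a two-directional generalization of) the $12$-equation system in the proof of Lemma~\ref{lem:basis1}: the equations on each cell relate the two adjacent corner values to the derivative averages on its four incident edges.

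I would next exhibit the dependencies among these $18$ equations. By analogy with the relation $\eqref{eq:3}-\eqref{eq:1}-\eqref{eq:5}-[\eqref{eq:9}-\eqref{eq:7}-\eqref{eq:11}]=0$ found in Lemma~\ref{lem:basis1}, one obtains three independent linear combinations of the $18$ compatibility conditions which reduce to $0=0$; these come from summing the $x$-conditions along each of the three cell-rows of $\omega$ and the $y$-conditions along each of the three cell-columns (the sums of the derivative averages telescope through the interior and vanish on $\partial\omega$ by hypothesis). This brings the rank of the system down to $15$, so the kernel has dimension $16-15=1$.

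Finally, to confirm the kernel is genuinely one-dimensional rather than zero-dimensional, I would exhibit an explicit non-trivial solution, namely the ``cell bubble'' obtained by prescribing $x_{i-1}^{j-1}=x_i^{j-1}=x_{i-1}^{j}=x_i^{j}=c\neq 0$ and solving for the twelve interior edge averages consistently via the cell-by-cell equations; by the symmetry of the $3\times 3$ layout this is easily seen to be consistent. The main obstacle is the bookkeeping: keeping track of signs, mesh sizes $L_i,H_j$, and orientations in the $18\times16$ system and verifying that the three dependencies are exactly the only ones. Once this rank computation is in hand, $\dim(V_{K_i^j}^{\rm R})=1$ follows immediately, matching \cite[Lemma~15]{Shuo.Zhang2017}.
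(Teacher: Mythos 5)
Your overall strategy---set up the $16$ unknowns (four interior vertex values plus twelve interior edge means), impose the two compatibility conditions \eqref{eq:c1}--\eqref{eq:c2} on each of the nine cells, and show the $18\times 16$ system has rank $15$---is exactly the computation the paper models in Lemma~\ref{lem:basis1} (the paper itself gives no proof of Lemma~\ref{lem:basis3}, deferring to \cite[Lemma~15]{Shuo.Zhang2017}). However, the step where you identify the dependencies is wrong as stated. The compatibility conditions are written in terms of the means of $\partial_x$ and $\partial_y$, \emph{not} of the outward normal derivative, so the two cells sharing an interior edge see that edge's $\beta$ with the \emph{same} sign; a plain sum of the $x$-conditions along a cell-row therefore produces $-2(z^{j'}_{i}+z^{j'}_{i+1})$ rather than a telescoping cancellation, and the vertex terms, weighted by the distinct $1/L_a$, do not cancel either. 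This is precisely why the paper's own computation uses the alternating combination $\eqref{eq:3}-\eqref{eq:1}-\eqref{eq:5}$. Even with the correct alternating combination, a single row (or column) does \emph{not} give $0=0$: it gives a nontrivial relation purely among the four vertex values. The actual three dependencies are: one within the nine $x$-conditions (the alternating combination of the three row-combinations, generalizing $[\eqref{eq:3}-\eqref{eq:1}-\eqref{eq:5}]-[\eqref{eq:9}-\eqref{eq:7}-\eqref{eq:11}]$), one within the nine $y$-conditions by symmetry, and a third, genuinely \emph{mixed} relation coupling the $x$- and $y$-blocks, which your row/column scheme cannot produce. Concretely, after eliminating the twelve edge means the system reduces to four relations $F(1)=F(2)=G(1)=G(2)=0$ on the four vertex values, where $F(b')=-v(X_{i-1}^{b'})\bigl(\tfrac1{L_{i-1}}+\tfrac1{L_{i}}\bigr)+v(X_{i}^{b'})\bigl(\tfrac1{L_{i}}+\tfrac1{L_{i+1}}\bigr)$ and $G$ is its $y$-analogue; these four functionals have rank $3$, which supplies the missing dependency.

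The same computation shows your proposed explicit kernel element is also incorrect in general: the one-dimensional kernel has vertex values $v(X_{i-1+a'}^{j-1+b'})=c\,\mu_{a'}\nu_{b'}$ with $\mu_1=\tfrac1{L_i}+\tfrac1{L_{i+1}}$, $\mu_2=\tfrac1{L_{i-1}}+\tfrac1{L_i}$, $\nu_1=\tfrac1{H_j}+\tfrac1{H_{j+1}}$, $\nu_2=\tfrac1{H_{j-1}}+\tfrac1{H_j}$. Taking all four values equal to the same constant $c$ is consistent only when $L_{i-1}=L_{i+1}$ and $H_{j-1}=H_{j+1}$; the appendix works on a general $m\times n$ subdivision with arbitrary $L_a,H_b$, so the ``symmetry of the $3\times3$ layout'' you invoke is not available. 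With the dependencies and the kernel element corrected as above, the rank-$15$ count closes and the argument does establish $\dim V_{K_i^j}^{\rm R}=1$; as written, though, the two central steps of your proposal would fail.
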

\begin{figure}[!htbp]
\setlength{\unitlength}{0.9mm}
\centering
\begin{picture}(160,85)(-80,-5)%
\thicklines\color{black}%
\dashline{1}(-75,0)(-75,50)
\dashline{1}(-75,0)(-25,0)
\put(-75,50){\line(1,0){50}}
\put(-25,0){\line(0,1){50}}
\put(-75,25){\line(1,0){50}}
\put(-50,0){\line(0,1){50}}
\put(-51,24){$\bullet$}
\put(-49.9,27){$X_{m-1}^{n-1}$}
\put(-49,14.5){$Z_{m}^{n-1}$}
\put(-49,39.5){$Z_{m}^{n}$}
\put(-63.5,24){$\bullet$}
\put(-63.5,49){$\bullet$}
\put(-38.5,24){$\bullet$}
\put(-38.5,49){$\bullet$}
\put(-62.5,26.5){$Y_{m-1}^{n}$}
\put(-62.5,51.5){$Y_{m-1}^{n+1}$}
\put(-36.5,26.5){$Y_{m}^{n}$}
\put(-37.5,51.5){$Y_{m}^{n+1}$}

\put(-51,11.5){$\bullet$}
\put(-51,36.5){$\bullet$}
\put(-26,11.5){$\bullet$}
\put(-26,36.5){$\bullet$}
\put(-24,14.5){$Z_{m+1}^{n-1}$}
\put(-24,39.5){$Z_{m+1}^{n}$}

\dashline{1}(0,0)(0,75)
\dashline{1}(0,0)(75,0)
\dashline{1}(0,75)(75,75)
\dashline{1}(75,0)(75,75)
\put(0,25){\line(1,0){75}}
\put(0,50){\line(1,0){75}}
\put(25,0){\line(0,1){75}}
\put(50,0){\line(0,1){75}}

\put(24,24){$\bullet$}
\put(24,49){$\bullet$}
\put(49,24){$\bullet$}
\put(49,49){$\bullet$}
\put(26,26.5){$X_{i-1}^{j-1}$}
\put(26,51.5){$X_{i-1}^{j}$}
\put(51,26.5){$X_{i}^{j-1}$}
\put(51,51.5){$X_{i}^{j}$}
\put(36.5,37.5){$K^j_i$}

\put(12.5,24){$\bullet$}
\put(37.5,24){$\bullet$}
\put(62.5,24){$\bullet$}
\put(14,26.5){$Y_{i-1}^{j}$}
\put(38.5,26.5){$Y_{i}^{j}$}
\put(63.5,26.5){$Y_{i+1}^{j}$}

\put(12.5,49){$\bullet$}
\put(37.5,49){$\bullet$}
\put(62.5,49){$\bullet$}
\put(14,51.5){$Y_{i-1}^{j+1}$}
\put(38.5,51.5){$Y_{i}^{j+1}$}
\put(63.5,51.5){$Y_{i+1}^{j+1}$}

\put(24,12.5){$\bullet$}
\put(24,37.5){$\bullet$}
\put(24,62.5){$\bullet$}
\put(26,12.5){$Z_{i}^{j-1}$}
\put(26,37.5){$Z_{i}^{j}$}
\put(26,62.5){$Z_{i}^{j+1}$}

\put(51,12.5){$Z_{i+1}^{j-1}$}
\put(51,37.5){$Z_{i+1}^{j}$}
\put(51,62.5){$Z_{i+1}^{j+1}$}

\put(-63,-5){$L_{m-1}$}
\put(-38,-5){$L_{m}$}
\put(12,-5){$L_{i-1}$}
\put(37,-5){$L_{i}$}
\put(62,-5){$L_{i+1}$}
\put(-80,12){$H_{n-1}$}
\put(-80,37){$H_{n}$}
\put(-5,12){$H_{j-1}$}
\put(-5,37){$H_{j}$}
\put(-5,62){$H_{j+1}$}
\end{picture}
\vskip0mm
\caption{Left: Illustration of a $2\times2$ pattern associated with $X_{m-1}^{n-1}$. Right: Illustration of a $3\times3$ pattern associated with $K_{i}^{j}$. The values or derivatives vanish at edges on the dotted lines.}\label{fig:APfunction3and4}
\end{figure}
\begin{lemma}\label{lem:1times3}
Let $\omega$ be a $3\times 1$ or $1\times 3$ patch in Figure \ref{fig:AP1times3}, left and right, respectively. Define 
$$V_{\omega} := \Big\{v_{h}\in V_{hs}^{\rm R}(\omega): \fint_{e}\partial_{n_{e}}v_{h} \ud s =0, \ \forall e \subset \Gamma_{l} \ ( l = 1,2,3)\Big\}.$$ 
Then, ${\rm dim}(V_{\omega}) =1$ and $v_{h}(X_{1}) = 0$ implies $v_{h} \equiv 0$ on $\omega.$
\end{lemma}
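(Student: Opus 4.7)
The plan is to mirror the argument already used in Lemma \ref{lem:basis1}. I would introduce scalar unknowns for the relevant degrees of freedom of $v_h \in V_\omega$ on the three cells of $\omega$: the values $x_i := v_h(X_i)$ at the interior vertices of $\omega$ (keeping in mind that $V_{hs}^{\rm R}$ already forces $v_h$ to vanish at any vertex of $\omega$ that lies on $\partial\Omega$), together with the edge-averaged normal derivatives $\fint_e \partial_{n_e} v_h$ on each edge of each cell. The boundary conditions defining $V_\omega$ kill these averages on every edge contained in $\Gamma_1 \cup \Gamma_2 \cup \Gamma_3$, and the internal continuity built into $V_{hs}^{\rm R}$ identifies the averages across the two interior edges of $\omega$.

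Then, on each of the three rectangles of $\omega$, I would write down the two compatibility conditions \eqref{eq:c1} and \eqref{eq:c2}, producing a system of six linear equations in the surviving unknowns. After substituting in the zero boundary averages, the system becomes homogeneous and, as in Lemma \ref{lem:basis1}, one checks by direct inspection that exactly one nontrivial linear relation among the rows is redundant, so the kernel has dimension one; this yields $\dim(V_\omega) = 1$.

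For the second assertion I would show that $x_1 = v_h(X_1)$ parameterizes this kernel. Proceeding cell-by-cell along the patch, the compatibility conditions \eqref{eq:c1} and \eqref{eq:c2} recursively express every remaining edge-average and every remaining interior vertex value as an explicit scalar multiple of $x_1$, exactly as in the sweep performed in Lemma \ref{lem:basis1}. Consequently $v_h(X_1) = 0$ forces all the unknowns to vanish, hence $v_h \equiv 0$ on $\omega$.

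The main obstacle is bookkeeping rather than analysis: one must carefully keep track of which boundary vertices of $\omega$ are interior to the global mesh (and therefore carry nonzero values) versus which lie on $\partial\Omega$, and correctly pair up the conditions \eqref{eq:c1}, \eqref{eq:c2} with the cell geometry in the $3\times 1$ and $1\times 3$ orientations. Once this set-up is made precise, the linear-algebra conclusion and the propagation from $x_1$ follow mechanically, exactly as in the proof of Lemma \ref{lem:basis1}.
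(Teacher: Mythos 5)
Your approach is correct and is exactly the compatibility\--condition bookkeeping the paper uses for the neighbouring patch lemmas (the paper in fact states Lemma \ref{lem:1times3} without a written proof, leaving it to the same sweeping computation as in Lemma \ref{lem:basis1}). One small correction to your anticipated structure: for the $3\times 1$ patch the homogeneous system has $7$ unknowns (the two values $x_1,x_2$, the three top\--edge averages $y_1,y_2,y_3$, and the two interior\--edge averages $z_1,z_2$) and $6$ equations that turn out to be \emph{all} independent, so $\dim(V_{\omega})=1$ comes from the count $7-6$ rather than from a redundant row as in Lemma \ref{lem:basis1}; eliminating via $z_1=x_1/L_1$, $z_2=-x_2/L_3$ and $x_1(1/L_1+1/L_2)=x_2(1/L_2+1/L_3)$ then expresses every unknown as a fixed multiple of $x_1$, which gives the second assertion.
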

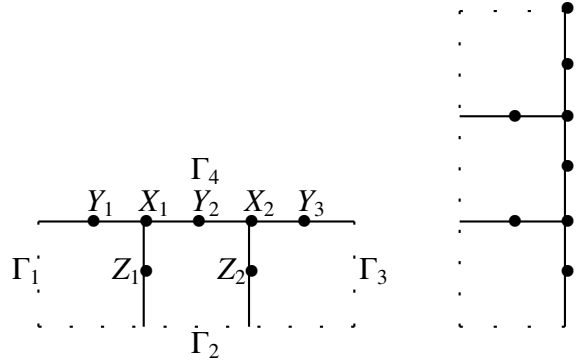
\begin{figure}[!htbp]
\setlength{\unitlength}{0.7mm}
\begin{picture}(110,65)(-85,-5)
\thicklines\color{black}
\put(-80,20){\line(1,0){60}}
\put(-60,0){\line(0,1){20}}
\put(-40,0){\line(0,1){20}}
\dashline{1}(-80,0)(-20,0)
\dashline{1}(-80,0)(-80,20)
\dashline{1}(-20,0)(-20,20)

\put(-61,18.5){$\bullet$}
\put(-41,18.5){$\bullet$}
\put(-61,22){$X_{1}$}
\put(-41,22){$X_{2}$}
\put(-71,18.5){$\bullet$}
\put(-51,18.5){$\bullet$}
\put(-31,18.5){$\bullet$}
\put(-71,22){$Y_{1}$}
\put(-51,22){$Y_{2}$}
\put(-31,22){$Y_{3}$}
\put(-61,9){$\bullet$}
\put(-41,9){$\bullet$}
\put(-66,9){$Z_{1}$}
\put(-46,9){$Z_{2}$}

\put(-51,-5){$\Gamma_{2}$}
\put(-85,9){$\Gamma_{1}$}
\put(-19,9){$\Gamma_{3}$}
\put(-51,28){$\Gamma_{4}$}

\put(20,0){\line(0,1){60}}
\put(0,20){\line(1,0){20}}
\put(0,40){\line(1,0){20}}
\dashline{1}(0,0)(20,0)
\dashline{1}(0,60)(20,60)
\dashline{1}(0,0)(0,60)

\put(19,18.5){$\bullet$}
\put(19,38.5){$\bullet$}
\put(19,59){$\bullet$}
\put(19,9){$\bullet$}
\put(19,29){$\bullet$}
\put(19,48.5){$\bullet$}
\put(9,18.5){$\bullet$}
\put(9,38.5){$\bullet$}


\end{picture}
\caption{Illustration of $3\times 1$ or $1\times 3$ patch $\omega$. The values or derivatives vanish at edges on the dotted lines.
}\label{fig:AP1times3}
\end{figure}
\begin{lemma}\label{lem:basis4}
If we denote $\omega = \mathcal{G}_{i}$ the union of elements in the $i$-th column with boundaries $\bigcup_{l=1}^{4}\Gamma_{l}$ (see Figure \ref{fig:APfunction4}, left), and define 
$$
V_{{\rm col},i}^{\rm R} := \Big\{v_{h} \in V_{hs}^{\rm{R}}(\omega): \fint_{e}  \partial_{n_{e}}v_{h} \ud s =0, \ \forall e \subset \Gamma_{l} \ (l = 1,3) \Big\},
$$ 
then ${\rm dim}(V_{{\rm col},i}^{\rm R})  =1$. If $\omega = \mathcal{G}^{j}$ (see Figure \ref{fig:APfunction4}, right),  and define 
$$V_{{\rm row},j}^{\rm R} :=\Big\{v_{h} \in V_{hs}^{\rm{R}}(\omega): \fint_{e}  \partial_{n_{e}}v_{h} \ud s =0,\ \forall e \subset \Gamma_{l} \ (l = 2,4) \Big\},$$
then  ${\rm dim}(V_{{\rm row},j}^{\rm R}) =1$. 
\end{lemma}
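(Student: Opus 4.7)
The plan is to reduce any $\varphi\in V_{{\rm col},i}^{\rm R}$ to a cell-wise system in the spirit of Lemma~\ref{lem:basis1}, exploiting the geometric fact that the column $\omega=\mathcal{G}_i$ is only one cell wide. Every vertex $X_{i-1}^j,X_i^j$ ($0\leqslant j\leqslant n$) lies on $\partial\omega$, so the homogeneous vertex condition built into $V_{hs}^{\rm R}(\omega)$ forces all $2(n+1)$ vertex values of $\varphi$ to vanish. Likewise, every vertical edge of every cell of $\omega$ sits in $\Gamma_1\cup\Gamma_3$, and the extra restriction in the definition of $V_{{\rm col},i}^{\rm R}$ kills every $x$-derivative edge average. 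The only remaining unknowns are the $n+1$ quantities $y^j:=\fint_{e}\partial_y\varphi\,ds$ on the horizontal edges of $\omega$, two of them on $\Gamma_2$ and $\Gamma_4$ and $n-1$ on the interior horizontal interfaces.

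Next I would apply the compatibility conditions \eqref{eq:c1}--\eqref{eq:c2} to each cell $K_i^j$. With all four corner values equal to zero and $\beta_1=\beta_3=0$, condition \eqref{eq:c1} collapses to the trivial identity $0=0$, while \eqref{eq:c2} becomes $y^{j-1}+y^j=0$ for $j=1,\dots,n$. This is a telescoping recursion yielding $y^j=(-1)^j y^0$, so the solution set is a one-dimensional subspace parametrized by $y^0\in\mathbb{R}$.

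To finish, I would verify that this single parameter actually produces a genuine element of $V_{{\rm col},i}^{\rm R}$. For any prescribed $y^0$, the resulting data on each $K_i^j$ satisfy the compatibility conditions of the lemma at the beginning of Appendix~\ref{sec:appB} by construction, so a unique $P_2(K_i^j)$ polynomial realizes them on each cell; continuity at interior vertices is automatic (both sides give zero), and the value $y^j$ on an interior horizontal edge is shared by the two adjoining cells by definition. Hence $\dim V_{{\rm col},i}^{\rm R}=1$. The argument for $V_{{\rm row},j}^{\rm R}$ is obtained verbatim by interchanging the roles of the $x$- and $y$-directions. The only point that really requires care is the bookkeeping of which unknowns are killed by which boundary condition; once this is done, the residual linear algebra is a one-line telescoping calculation, making this the mildest of all the patch-dimension lemmas in the appendix.
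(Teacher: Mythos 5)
Your proposal is correct and follows essentially the same route as the paper: both reduce the problem to the surviving degrees of freedom (the $y$-derivative averages on the $n+1$ horizontal edges, since all vertex values and all vertical-edge $x$-derivative averages are forced to vanish), observe that \eqref{eq:c1} is trivially satisfied while \eqref{eq:c2} gives the telescoping relation $y^{j-1}+y^{j}=0$ on each cell, and conclude that the space is one-dimensional. Your write-up is in fact slightly more complete than the paper's, since you spell out the existence direction (realizing the one-parameter data by a unique $P_2$ polynomial on each cell) where the paper only asserts $V_{{\rm col},i}^{\rm R}\ne\varnothing$.
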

\begin{proof}
Obviously, $V_{{\rm col},i}^{\rm R} \ne \varnothing$. Assume that $v_{h} \in V_{{\rm col},i}^{\rm R}$. We denote $y_{i}^{j}: = v_{h}(Y_{i}^{j})$, where $1\leqslant j \leqslant n+1$; see Figure \ref{fig:APfunction4} (Left). Apply  \eqref{eq:c1} and \eqref{eq:c2} to $v_{h}$ on each element in $\mathcal{G}_{i}$, and we obtain $y_{i}^{j} = -y_{i}^{j+1}\ (1\leqslant j \leqslant n)$. It proves that ${\rm dim}(V_{{\rm col},i}^{\rm R})  =1$.
Similarly, assume that $w_{h} \in V_{{\rm col},i}^{\rm R}$ and denote $z_{i}^{j}: = w_{h}(Z_{i}^{j})$ $(1\leqslant i \leqslant m+1)$. We have $z_{i}^{j} = -z_{i+1}^{j}\ (1\leqslant i \leqslant m)$; see Figure \ref{fig:APfunction4} (Right). Thus we  obtain ${\rm dim}(V_{{\rm row},j}^{\rm R}) =1$. 
\end{proof}
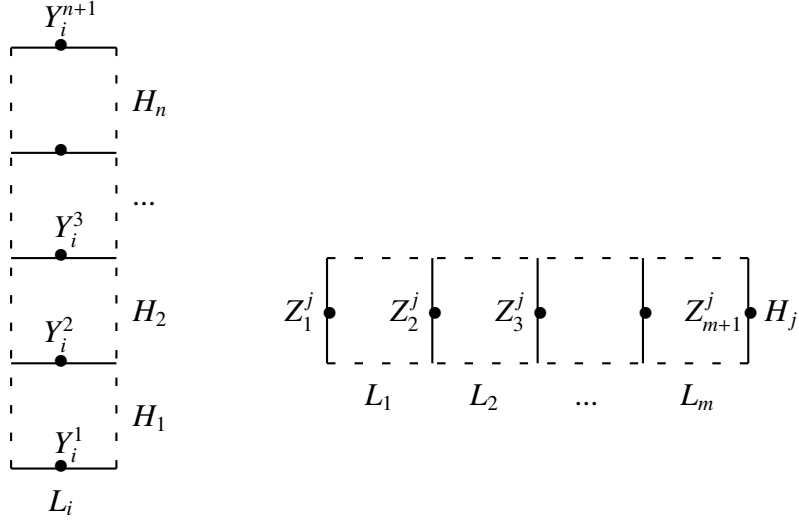
\begin{figure}[!htbp]
\setlength{\unitlength}{1.4mm}
\begin{picture}(90,50)(10,-5)
\thicklines\color{black}
\put(20,0){\line(1,0){10}}
\put(20,10){\line(1,0){10}}
\put(20,20){\line(1,0){10}}
\put(20,30){\line(1,0){10}}
\put(20,40){\line(1,0){10}}
\dashline{1}(20,0)(20,40)
\dashline{1}(30,0)(30,40)
\put(23.5,-4){$L_i$}
\put(31.5,4){$H_{1}$}
\put(31.5,14){$H_{2}$}
\put(31.5,25){$...$}
\put(31.5,34){$H_{n}$}
\put(24,1.5){$ Y_{i}^{1}$}
\put(23,12){$Y_{i}^{2}$}
\put(24,22){$Y_{i}^{3}$}
\put(23,42){$ Y_{i}^{n+1}$}
\put(24,-0.5){$\bullet$}
\put(24,9.5){$\bullet$}
\put(24,19.5){$\bullet$}
\put(24,29.5){$\bullet$}
\put(24,39.5){$\bullet$}

\dashline{1}(50,10)(90,10)
\dashline{1}(50,20)(90,20)
\put(50,10){\line(0,1){10}}
\put(60,10){\line(0,1){10}}
\put(70,10){\line(0,1){10}}
\put(80,10){\line(0,1){10}}
\put(90,10){\line(0,1){10}}

\put(46,14){$Z_{1}^{j}$}
\put(56,14){$Z_{2}^{j}$}
\put(66,14){$Z_{3}^{j}$}
\put(84,14){$ Z_{m+1}^{j}$}
\put(49.5,14){$\bullet$}
\put(59.5,14){$\bullet$}
\put(69.5,14){$\bullet$}
\put(79.5,14){$\bullet$}
\put(89.5,14){$\bullet$}

\put(53.5,6){$L_{1}$}
\put(63.5,6){$L_{2}$}
\put(73.5,6){$...$}
\put(83.5,6){$L_{m}$}
\put(91.5,14){$H_{j}$}

\end{picture}
\caption{Left: Illustration of a column pattern associated with $\mathcal{G}_{i}$. Right: Illustration of a row pattern associated with $\mathcal{G}^{j}$. The values or derivatives vanish at edges on the dotted lines.}\label{fig:APfunction4}

\end{figure}
\begin{remark}{\rm Here and throughout this paper, we do not distinct between $V_{{\rm bot},i}^{\rm R},$ $V_{{\rm top},i}^{\rm R},$ $V_{{\rm lef},j}^{\rm R},$ $V_{{\rm rig},j}^{\rm R},$ $V_{X}^{\rm R},$$V_{K_{i}^{j}}^{\rm R},$ $V_{{\rm col},i}^{\rm R},$ $V_{{\rm row},j}^{\rm R}$ and their respective extension onto the whole domain by zero. Each of them is a subspace in $V_{hs}^{\rm{R}}$.}
\end{remark}

\subsection{Structure of the RRM element space}
\begin{theorem}\label{thm:APbasisofRRM}
Let $\mathcal{G}_{h}$ be a $m\times n$ rectangular subdivision of $\Omega$. Define $V_{btlr}^{\rm R} : = \oplus_{i,j} \Big(V_{{\rm bot},i}^{\rm R} \oplus V_{{\rm top},i}^{\rm R}\oplus V_{{\rm lef},j}^{\rm R} \oplus V_{{\rm rig},j}^{\rm R} \Big)$, $V_{K}^{\rm R} :=  \oplus_{i,j} V_{K_{i}^{j}}^{\rm R} $, and $V_{{\rm glob}}^{\rm R} := V_{{\rm col},m-1}^{\rm R} \oplus V_{{\rm col},m}^{\rm R} \oplus V_{{\rm row},n-1}^{\rm R}\oplus V_{{\rm row},n}^{\rm R}$, where $2\leqslant i \leqslant m-1$, and $2\leqslant j \leqslant n-1$. Then we have $V_{hs}^{\rm{R}} = V_{btlr}^{\rm R} \oplus V_{{\rm glob}}^{\rm R} \oplus V_{K}^{\rm R} \oplus V_{X}^{\rm R}$.
\end{theorem}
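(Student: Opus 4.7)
\textbf{Proof proposal for Theorem \ref{thm:APbasisofRRM}.}

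The plan is a dimension-count argument. I would first verify the inclusion
$$
V_{btlr}^{\rm R}+V_{\rm glob}^{\rm R}+V_K^{\rm R}+V_X^{\rm R}\subseteq V_{hs}^{\rm R},
$$
which is immediate from Lemmas \ref{lem:basis1}--\ref{lem:basis4}: each building-block space is constructed as a subspace of $V_{hs}^{\rm R}$ by \emph{strengthening} the defining continuity/vanishing conditions (we impose extra vanishing of average normal derivatives on certain edges), so each summand is already in $V_{hs}^{\rm R}$. Next, using that each of the building blocks $V_{{\rm bot},i}^{\rm R},V_{{\rm top},i}^{\rm R},V_{{\rm lef},j}^{\rm R},V_{{\rm rig},j}^{\rm R}$ (Lemma \ref{lem:basis1}), $V_{K_i^j}^{\rm R}$ (Lemma \ref{lem:basis3}), $V_{{\rm col},i}^{\rm R},V_{{\rm row},j}^{\rm R}$ (Lemma \ref{lem:basis4}), and $V_X^{\rm R}$ (Lemma \ref{lem:basis2}) is one-dimensional, I would compute
$$
\dim V_{btlr}^{\rm R}=2(m-2)+2(n-2),\quad \dim V_K^{\rm R}=(m-2)(n-2),\quad \dim V_{\rm glob}^{\rm R}=4,\quad \dim V_X^{\rm R}=1,
$$
so the four dimensions sum to $mn+1$, matching $\dim V_{hs}^{\rm R}$ by Theorem \ref{thm:basisofRRM}. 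Hence, once directness is established, both sides have the same dimension and the decomposition follows.

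The main work is to prove the sum is direct. My approach is a sweeping/peeling procedure exploiting the fact that the extra vanishing conditions imposed on each building block act as ``markers.'' Concretely, assume $v_1+v_2+v_3+v_4=0$ with $v_1\in V_{btlr}^{\rm R}$, $v_2\in V_{\rm glob}^{\rm R}$, $v_3\in V_K^{\rm R}$, $v_4\in V_X^{\rm R}$. First I would detect components of $v_3$: for each interior cell $K_i^j$ with $2\leqslant i\leqslant m-1$, $2\leqslant j\leqslant n-1$, evaluate $\fint_e\partial_{n_e}(\cdot)$ on one of the four interior edges bordering $K_i^j$ which lies outside the $3\times 3$ supports of all \emph{other} interior $V_{K_{i'}^{j'}}^{\rm R}$ blocks and outside the support of all blocks in $V_{btlr}^{\rm R}$, $V_{\rm glob}^{\rm R}$, $V_X^{\rm R}$; this isolates the $V_{K_i^j}^{\rm R}$-component, forcing it to zero. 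After sweeping away $v_3$, I would next use boundary-adjacent interior edges or vertices (the value at an endpoint of $e_{{\rm bot},i}$, etc.) as markers for $V_{btlr}^{\rm R}$, which are unreachable by $V_{\rm glob}^{\rm R}$ (supported in a single column/row) and $V_X^{\rm R}$ (supported in the northeast $2\times 2$ patch). A similar argument then distinguishes $V_X^{\rm R}$ from $V_{\rm glob}^{\rm R}$ at the northeast corner vertex $X_{m-1}^{n-1}$.

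The delicate step, and the main obstacle I anticipate, is verifying that the ``markers'' actually work near the four corner regions where $V_{btlr}^{\rm R}$, $V_{\rm glob}^{\rm R}$, $V_K^{\rm R}$, and $V_X^{\rm R}$ have overlapping supports (in particular in the two last rows and columns of the grid, where $V_{{\rm col},m-1}^{\rm R}, V_{{\rm col},m}^{\rm R},V_{{\rm row},n-1}^{\rm R},V_{{\rm row},n}^{\rm R}$ live). There one must carefully check which functionals vanish on which block; the $1\times 3$/$3\times 1$ rigidity of Lemma \ref{lem:1times3} will be the key tool, because it shows that once the normal-derivative moments are pinned down on the outer edges of such a strip, the value at a single vertex determines everything on that strip. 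This lemma lets one consistently cascade the peeling through the last two columns/rows without circular dependencies. Once directness is confirmed, the dimension identity upgrades the direct sum to equality and concludes the proof.
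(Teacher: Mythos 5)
There is a genuine gap, and it is structural: your argument is circular. You propose to establish directness of the sum and then invoke Theorem \ref{thm:basisofRRM} (${\rm dim}\,V_{hs}^{\rm R}=mn+1$) to upgrade the direct sum to all of $V_{hs}^{\rm R}$. But in this paper the dimension formula of Theorem \ref{thm:basisofRRM} is not an independent fact --- its proof is explicitly deferred to Appendix \ref{sec:appB}, where it is obtained \emph{as a corollary} of the very decomposition $V_{hs}^{\rm R}=V_{btlr}^{\rm R}\oplus V_{\rm glob}^{\rm R}\oplus V_{K}^{\rm R}\oplus V_{X}^{\rm R}$ you are trying to prove (see the remark immediately following Theorem \ref{thm:APbasisofRRM}). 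The space $V_{hs}^{\rm R}$ is defined by more interelement constraints than the local polynomial dimension supports, so neither a lower nor an upper bound on its dimension is available by naive counting; indeed the paper stresses that it is not even obvious a priori that the space is nonzero. Your plan therefore only shows that the right-hand side is an $(mn+1)$-dimensional subspace of $V_{hs}^{\rm R}$; the inclusion $V_{hs}^{\rm R}\subseteq V_{btlr}^{\rm R}+V_{\rm glob}^{\rm R}+V_{K}^{\rm R}+V_{X}^{\rm R}$, which is the actual content of the theorem, is left unproved.

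The paper closes this gap constructively: given an arbitrary $v\in V_{hs}^{\rm R}$, a four-step sweeping (elimination) procedure subtracts, in a prescribed order, uniquely determined elements of $V_{{\rm bot},i}^{\rm R},\dots,V_{{\rm rig},j}^{\rm R}$ (matching values at boundary-adjacent midpoints), then of $V_{{\rm col},m-1}^{\rm R},V_{{\rm col},m}^{\rm R},V_{{\rm row},n-1}^{\rm R},V_{{\rm row},n}^{\rm R}$, then of the cell blocks $V_{K_i^j}^{\rm R}$ column by column --- using the $1\times 3$ / $3\times 1$ rigidity of Lemma \ref{lem:1times3} to propagate the vanishing through the last two rows and columns --- and finally of $V_{X}^{\rm R}$ at the northeast corner, until the remainder is identically zero. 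This simultaneously proves spanning and uniqueness of the representation, and only \emph{then} yields ${\rm dim}\,V_{hs}^{\rm R}=mn+1$. Your ``marker'' idea for directness is in the right spirit (it is essentially the uniqueness half of the sweeping argument), but to repair the proof you must replace the dimension-count step with an explicit decomposition of an arbitrary element of $V_{hs}^{\rm R}$, or otherwise supply an independent upper bound ${\rm dim}\,V_{hs}^{\rm R}\leqslant mn+1$, which the paper does not provide and which appears to be no easier than the sweeping argument itself.
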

\begin{proof}
Here we utilize the sweeping procedure again, and  divide the proof process into four steps.

\noindent{\bf Step 1.} Given $v \in V_{hs}^{\rm{R}}$. We begin with the boundaries of $\mathcal{G}_{h}$; see Figure \ref{fig:APElim12}. Recall the definition of  $e_{{\rm bot},i}$, $e_{{\rm top},i}$, $e_{{\rm lef},j}$, and $e_{{\rm rig},j}$. Associated with $e_{{\rm bot},2}$, $V_{{\rm bot},2}^{\rm R}$ is a subspace of $V$ defined in Lemma \ref{lem:basis1}. There exists a unique function $\varphi_{b,1}\in V_{{\rm bot},2}^{\rm R}$, such that $\varphi_{b,1} (Y_{1}^{1})= v(Y_{1}^{1})$. Then, $v_{1} = v-\varphi_{b,1} $ with $v_{1}\equiv 0$ on the bottom edge of $K_{1}^{1}$. Repeat the procedure for $Y_{2}^{1}$, ..., $Y_{m-2}^{1}$, and we obtain $v_{m-2} = v-\sum_{i=2}^{m-1}\varphi_{b,i}$ with $v_{m-2}$ vanishing on the first $m-2$ edges on the bottom boundary of $\mathcal{G}_{h}$. Similarly, we obtain functions $\varphi_{t,i}$, $\varphi_{l,j}$, and $\varphi_{r,j}$. Therefore, $w_{1}=v-\sum_{i=2}^{m-1}\varphi_{b,i} -\sum_{i=2}^{m-1}\varphi_{t,i}-\sum_{j=2}^{n-1}\varphi_{l,j}-\sum_{j=2}^{n-1}\varphi_{r,j}$ with $w_{1}$ vanishing on the dotted edges; see Figure \ref{fig:APElim12} (Middle).

\noindent{\bf Step 2.} There exists uniquely $\varphi_{{\rm col},i}$ in $V_{{\rm col},i}^{\rm R}$ and $\varphi_{{\rm row},j}$ in $V_{{\rm row},j}^{\rm R}$, which satisfy $\varphi_{{\rm col},i}(Y_{i}^{1}) = w_{1}(Y_{i}^{1})$, and $\varphi_{{\rm row},j}(Z_{1}^{j}) = w_{1}(Z_{1}^{j})$, respectively.
Then $w_{2} = w_{1}-\sum_{i = m-1}^{m} \varphi_{{\rm col},i} - \sum_{j = n-1}^{n} \varphi_{{\rm row},j}$ with $w_{2}$ vanishing on the dotted edges; see Figure \ref{fig:APElim12} (Right).

\noindent{\bf Step 3.} Consider elements in the first column $\mathcal{G}_{1}$; see Figure \ref{fig:AP1times3}. Note that $K_{2}^{2}$ is the only interior element whose $3\times 3$ patch contains $K_{1}^{1}$. There exists uniquely $\varphi_{1}^{1}\in V_{K_{2}^{2}}^{\rm R}$, such that $\varphi_{1}^{1}|_{K_{1}^{1}} = w_{2}|_{K_{1}^{1}}$. Therefore, $w_{2}-\varphi_{1}^{1}$ vanishes on $K_{1}^{1}$. Next, there exists a unique $\varphi_{1}^{2} \in V_{K_{2}^{3}}^{\rm R} $, such that $w_{2}-\varphi_{1}^{1}-\varphi_{1}^{2}$ vanishes on on $K_{1}^{1}\cup K_{1}^{2}$. Repeating the procedure for $\mathcal{G}_{1}$, we obtain $w_{2}-\sum_{j=1}^{n-2}\varphi^j_{1}$, which vanishes on $\cup_{j=1}^{n-2} K_1^j$. Notice that $\cup_{j=n-2}^{n}K_1^j$ is a $1\times 3$ patch $\omega$ stated in Lemma \ref{lem:1times3}; see Figure \ref{fig:APElim34} (Left). Since $w_{2}-\sum_{j=1}^{n-2}\varphi^j_{1}$ vanishes on $X_{1}^{n-2}$, it vanishes on $\omega$ and further $\mathcal{G}_{1}$.

 By repeating the procedure along the column $\mathcal{G}_{i}\ (i=2,\ \ldots,\ m-2)$, we derive $w_{3}:= w_{2}-\sum_{i=1}^{m-2}\sum_{j=1}^{n-2}\varphi^j_{i}$, which vanishes on $\bigcup_{i=1}^{m-2}\mathcal{G}_{i}$. Especially, $w_{3}(X_{m-2}^{j}) = 0$ ($1\leqslant j \leqslant n-2$). Since $\cup_{i=m-2}^{m}K_i^1$ forms a $1\times 3$ patch $\omega$ (see Figure \ref{fig:APElim34}, middle) and $w_{3}(X_{m-2}^{1}) = 0$, thus it vanishes on $\omega$. Find other $1\times 3$ or $3\times 1$ patch $\omega$, which satisfies the conditions in Lemma \ref{lem:1times3}, in columns $\mathcal{G}_{m-2}$, $\mathcal{G}_{m-1}$, and $\mathcal{G}_{m}$. Therefore, we derive that $w_{3}$ vanishes on $\mathcal{G}_{h}$ except for a $2\times 2 $ patch in the northeast corner; see Figure \ref{fig:APElim34} (Right).
 
 \begin{figure}[!htbp]
\setlength{\unitlength}{1mm}

\begin{picture}(160,50)(-10,-5)
\thicklines\color{black}
\put(0,0){\line(1,0){40}}
\put(0,10){\line(1,0){40}}
\put(0,20){\line(1,0){40}}
\put(0,30){\line(1,0){40}}
\put(0,40){\line(1,0){40}}
\put(0,0){\line(0,1){40}}
\put(10,0){\line(0,1){40}}
\put(20,0){\line(0,1){40}}
\put(30,0){\line(0,1){40}}
\put(40,0){\line(0,1){40}}
\put(-7,4){$\mathcal{G}^{1}$}
\put(-7,14){$...$}
\put(-8,24){$\mathcal{G}^{n-1}$}
\put(-7,34){$\mathcal{G}^{n}$}
\put(4,-5){$\mathcal{G}_{1}$}
\put(14,-5){$...$}
\put(22,-5){$\mathcal{G}_{m-1}$}
\put(33,-5){$\mathcal{G}_{m}$}

\put(42,19){$\Longrightarrow$}
\dashline{1}(50,0)(70,0)
\dashline{1}(50,0)(50,20)
\dashline{1}(50,40)(70,40)
\dashline{1}(90,0)(90,20)
\put(70,0){\line(1,0){20}}
\put(70,40){\line(1,0){20}}
\put(50,20){\line(0,1){20}}
\put(90,20){\line(0,1){20}}
\put(50,10){\line(1,0){40}}
\put(50,20){\line(1,0){40}}
\put(50,30){\line(1,0){40}}
\put(60,0){\line(0,1){40}}
\put(70,0){\line(0,1){40}}
\put(80,0){\line(0,1){40}}

\put(92,19){$\Longrightarrow$}
\dashline{1}(100,0)(140,0)
\dashline{1}(100,0)(100,40)
\dashline{1}(100,40)(120,40)
\dashline{1}(140,0)(140,20)
\put(100,10){\line(1,0){40}}
\put(100,20){\line(1,0){40}}
\put(100,30){\line(1,0){40}}
\put(110,0){\line(0,1){40}}
\put(120,0){\line(0,1){40}}
\put(130,0){\line(0,1){40}}
\put(120,40){\line(1,0){20}}
\put(140,20){\line(0,1){20}}
\end{picture}
\caption{Elimination process of Step 1 and 2.} \label{fig:APElim12}
\end{figure}
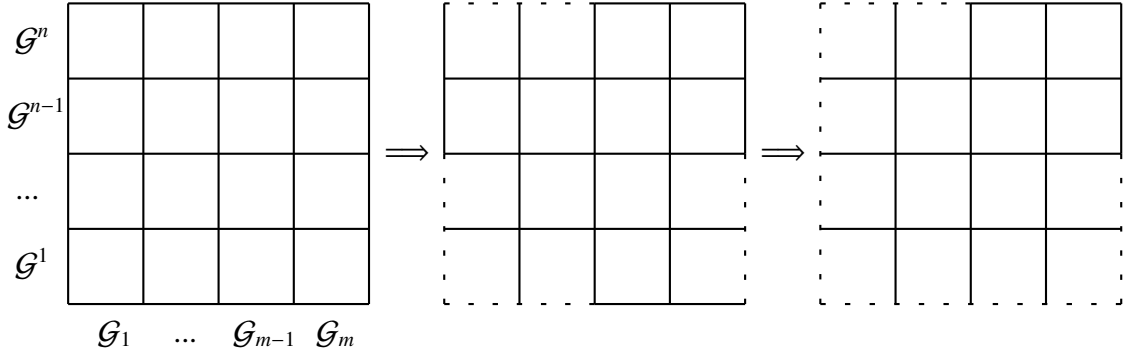
\begin{figure}[!htbp]
\setlength{\unitlength}{1mm}
\begin{picture}(160,50)(-10,-5)
\thicklines\color{black}
\dashline{1}(0,0)(0,40)
\dashline{1}(0,0)(40,0)
\dashline{1}(0,10)(10,10)
\dashline{1}(10,0)(10,10)
\dashline{1}(40,0)(40,20)
\dashline{1}(0,40)(20,40)

\put(10,10){\line(1,0){30}}
\put(0,20){\line(1,0){40}}
\put(0,30){\line(1,0){40}}
\put(10,10){\line(0,1){30}}
\put(20,0){\line(0,1){40}}
\put(30,0){\line(0,1){40}}
\put(20,40){\line(1,0){20}}
\put(40,20){\line(0,1){20}}

\put(9,19){$\bullet$}
\put(4,25){$\omega$}

\put(-7,4){$\mathcal{G}^{1}$}
\put(-7,14){$...$}
\put(-8,24){$\mathcal{G}^{n-1}$}
\put(-7,34){$\mathcal{G}^{n}$}
\put(4,-5){$\mathcal{G}_{1}$}
\put(14,-5){$...$}
\put(22,-5){$\mathcal{G}_{m-1}$}
\put(33,-5){$\mathcal{G}_{m}$}

\put(42,19){$\Longrightarrow$}
\dashline{1}(50,0)(70,0)
\dashline{1}(50,0)(50,40)
\dashline{1}(50,40)(70,40)
\dashline{1}(90,0)(90,20)
\dashline{1}(60,0)(60,40)
\dashline{1}(50,10)(60,10)
\dashline{1}(50,20)(60,20)
\dashline{1}(50,30)(60,30)
\dashline{1}(70,0)(90,0)

\put(70,40){\line(1,0){20}}
\put(90,20){\line(0,1){20}}
\put(60,10){\line(1,0){30}}
\put(60,20){\line(1,0){30}}
\put(60,30){\line(1,0){30}}
\put(70,0){\line(0,1){40}}
\put(80,0){\line(0,1){40}}

\put(69,9){$\bullet$}
\put(69,19){$\bullet$}
\put(69,29){$\bullet$}

\put(74,4){$\omega$}

\put(92,19){$\Longrightarrow$}
\dashline{1}(100,0)(140,0)
\dashline{1}(100,10)(140,10)
\dashline{1}(100,20)(140,20)
\dashline{1}(100,30)(120,30)
\dashline{1}(110,0)(110,40)
\dashline{1}(120,0)(120,40)
\dashline{1}(130,0)(130,20)

\dashline{1}(100,0)(100,40)
\dashline{1}(100,40)(120,40)
\dashline{1}(140,0)(140,20)
\put(120,30){\line(1,0){20}}
\put(130,20){\line(0,1){20}}
\put(120,40){\line(1,0){20}}
\put(140,20){\line(0,1){20}}
\put(124,4){$\omega$}
\put(124,14){$\omega$}
\put(114,24){$\omega$}
\end{picture}
\caption{ Elimination process of Step 3 and 4.}\label{fig:APElim34}
\end{figure}
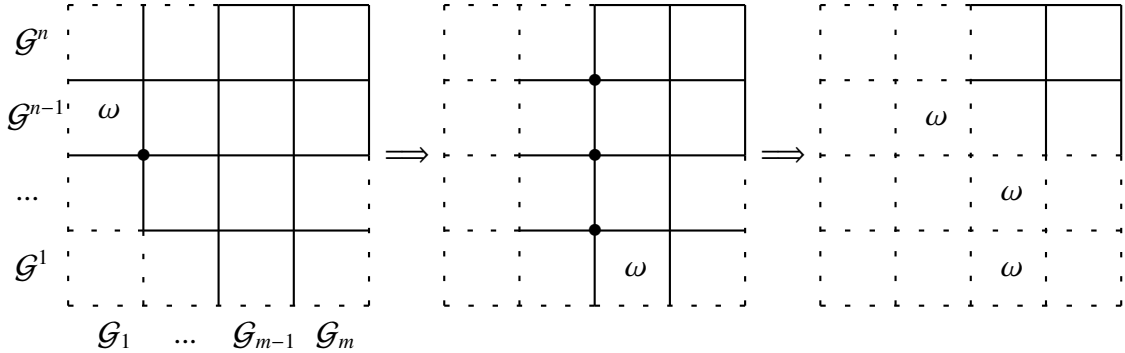

\noindent{\bf Step 4.} From  Lemma \ref{lem:basis2}, there exists a unique function $\varphi_{m-1}^{n-1}\in V_{X}^{\rm R}$ and $w_{3}-\varphi_{m-1}^{n-1}=0$.
This way, we have verified that $v = (\sum_{i=2}^{m-1}\varphi_{b,i} + \sum_{i=2}^{m-1}\varphi_{t,i} + \sum_{j=2}^{n-1}\varphi_{l,j} + \sum_{j=2}^{n-1}\varphi_{r,j}) + (\sum_{i = m-1}^{m} \varphi_{{\rm col},i} + \sum_{j = n-1}^{n} \varphi_{{\rm row},j}) + \sum_{i=1}^{m-2}\sum_{j=1}^{n-2}\varphi^j_{i} + \varphi_{m-1}^{n-1}$, and this representation by these $mn+1$ functions is unique. 
\end{proof}
\begin{remark}{\rm 
From Theorem \ref{thm:APbasisofRRM}, it holds that ${\rm dim} V_{hs}^{\rm{R}} =  mn+1$.}
\end{remark}
\begin{proposition}\label{pro:relationof3}{\rm It holds for $V_{h}^{\rm{W}}$, $V_{h0}^{\rm{W}}$, $V_{h}^{\rm{M}}$, $V_{hs}^{\rm{M}}$, $V_{h}^{\rm{R}}$, and $V_{hs}^{\rm{R}}$ that
\begin{align}
V_{h}^{\rm{R}} & = V_{h}^{\rm{M}} \cap  V_{h}^{\rm{W}};\\
V_{hs}^{\rm{R}} & = V_{hs}^{\rm{M}} \cap  V_{h0}^{\rm{W}}.
\end{align}}
\end{proposition}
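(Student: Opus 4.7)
The plan is to deduce both equalities directly from the definitions: the RRM data is precisely the conjunction of the Morley data and the Wilson data, so each identity reduces to a double inclusion with no analysis involved. I would prove $V_{h}^{\rm{R}}=V_{h}^{\rm{M}}\cap V_{h}^{\rm{W}}$ first, then transfer to the boundary-homogeneous version using the fact that all three spaces impose the identical condition $w_{h}(a)=0$ for $a\in\mathcal{N}_{h}^{b}$.

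For the forward direction $V_{h}^{\rm{R}}\subseteq V_{h}^{\rm{M}}\cap V_{h}^{\rm{W}}$, I would observe that $P_{2}(K)\subset P_{K}^{\rm{M}}=P_{2}(K)+\text{span}\{x^{3},y^{3}\}$ and $P_{2}(K)=P_{K}^{\rm{W}}$, so the cellwise shape of $V_{h}^{\rm{R}}$ fits inside the shape spaces of both parent elements. Continuity at interior vertices appears verbatim in all three definitions, and the mean-normal-derivative clause of $V_{h}^{\rm{R}}$ is precisely the extra Morley clause (and is vacuous on the Wilson side). Hence any $w_{h}\in V_{h}^{\rm{R}}$ qualifies for both sides of the intersection.

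For the reverse inclusion the only substantive point is that the sum defining $P_{K}^{\rm{M}}$ is direct (since $x^{3}$ and $y^{3}$ are linearly independent of $P_{2}(K)$), whence $P_{K}^{\rm{M}}\cap P_{K}^{\rm{W}}=P_{2}(K)$. Consequently any $w_{h}\in V_{h}^{\rm{M}}\cap V_{h}^{\rm{W}}$ satisfies $w_{h}|_{K}\in P_{2}(K)$ on every cell and inherits both RRM continuity clauses: vertex continuity from either factor, and mean-normal-derivative continuity from the Morley factor. The identity $V_{hs}^{\rm{R}}=V_{hs}^{\rm{M}}\cap V_{h0}^{\rm{W}}$ then follows by the same chain of inclusions combined with the shared boundary-vertex condition.

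I do not anticipate a genuine obstacle here: there is no integration by parts, no estimate, and no approximation-theoretic ingredient, and in particular none of the delicate basis-function bookkeeping developed in Appendix~\ref{sec:appB} is needed. The only step requiring any thought at all is confirming that $x^{3},y^{3}\notin P_{2}(K)$, which secures the direct-sum decomposition of $P_{K}^{\rm{M}}$ used above.
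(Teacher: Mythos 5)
Your proof is correct and is essentially the argument the paper intends: the paper states the identity $V_{hs}^{\rm R}=V_{hs}^{\rm M}\cap V_{h0}^{\rm W}$ as obvious in Section~5.1 and gives no written proof of Proposition~\ref{pro:relationof3}, since it reduces to the definitional double inclusion you carry out. (The only cosmetic point: the reverse inclusion does not even need the directness of $P_2(K)+\mathrm{span}\{x^3,y^3\}$, because the Wilson membership alone already forces $w_h|_K\in P_2(K)$.)
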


\end{document}